\documentclass[a4,12pt]{amsart}  
\usepackage{latexsym}
\usepackage{amssymb}
\usepackage{amsmath}

\usepackage[dvipsnames]{xcolor}
\usepackage{tikz}
\usepackage{enumerate}
\usepackage{mathrsfs}
\usepackage{amsthm}

\usepackage{comment}

\usepackage{url}

\usepackage[left=2.2cm, top=1.7cm,bottom=1.7cm,right=2.2cm]{geometry}


\usepackage{graphicx}

\usepackage[margin=1cm]{caption}

\usepackage{hyperref} 

\usetikzlibrary{shapes.misc}
\usetikzlibrary{decorations.text}

\tikzset{cross/.style={cross out, draw=CadetBlue, minimum size=2*(#1-\pgflinewidth), inner sep=0pt, outer sep=0pt},
cross/.default={4pt}}
\usepackage{subcaption}

\hypersetup{
    colorlinks,
    linkcolor=CadetBlue,
    citecolor=CadetBlue,
    urlcolor=CadetBlue
}

\usepackage{caption}
\usepackage{subcaption}
\numberwithin{equation}{section}

\usepackage[leftcaption]{sidecap}
\sidecaptionvpos{figure}{m}

\usepackage{subcaption}
\usepackage[bottom]{footmisc}

\usepackage{accents}

\usepackage{pgfplots}
\pgfplotsset{compat=1.16}

\tikzset{
    cross/.pic = {
    \draw[rotate = 45] (-#1,0) -- (#1,0);
    \draw[rotate = 45] (0,-#1) -- (0, #1);
    }
}

\definecolor{Cadet2}{rgb}{0.55, 0.65, 0.682}
\definecolor{Cadet3}{rgb}{0.73, 0.68, 0.782}


\DeclareMathOperator{\sgn}{sgn}



\newcommand{\crelaar}{a}

\newtheorem{thm}{Theorem}[section]

\newtheorem{cor}[thm]{Corollary}

\newtheorem{lemma}[thm]{Lemma}
\newtheorem{df}[thm]{Definition}

\usepackage{nomencl}
\makenomenclature


\begin{document}

\title{Continuous Kasteleyn theory for the bead model}
\author{Samuel G. G. Johnston}
\address{Strand Building, King's College London, Strand, London, United Kingdom}
\subjclass{Primary: 82B20, 82B21, 60K35. Secondary: 60J27}
\keywords{Bead configuration, interlacing, Kasteleyn theory, free probability, free energy, surface tension, Gelfand-Tsetlin pattern, Fredholm determinant, partition function, TASEP, dimer model}

\date{}

\maketitle

\begin{abstract}
Consider the semi-discrete torus $\mathbb{T}_n := [0,1) \times \{0,1,\ldots,n-1\}$ representing $n$ unit length strings running in parallel. 
A bead configuration on $\mathbb{T}_n$ is a point process on $\mathbb{T}_n$ with the property that between every two consecutive points on the same string, there lies a point on each of the neighbouring strings.
In this article we develop a continuous version of Kasteleyn theory to show that partition functions for bead configurations on $\mathbb{T}_n$ may be expressed in terms of Fredholm determinants of certain operators on $\mathbb{T}_n$. 
We obtain an explicit formula for the volumes of bead configurations on $\mathbb{T}_n$. The asymptotics of this formula confirm a recent prediction in the free probability literature. 
Thereafter we study random bead configurations on $\mathbb{T}_n$, showing that they have a determinantal structure which can be connected with exclusion processes. 
We use this machinery to construct a new probabilistic representation of TASEP on the ring.
\end{abstract}

\section{Introduction and overview} \label{sec:introduction} 

\subsection{Interlacing and bead configurations}
Let $S := \{ t_1 < t_2 < \cdots < t_k \}$ and $S' := \{ t_1' < t_2' < \cdots < t_{k'}' \}$ be two collections of real numbers with $|k'-k|\leq 1$. We say that these collections \textbf{interlace} if, between every two consecutive elements of one collection, there is an element of the other collection. In the setting where $k' = k-1$, this reads as saying
\begin{align} \label{eq:interlacer}
t_1 \leq t_1' \leq t_2 \leq  \cdots \leq  t_{k-1} \leq  t_{k-1}' \leq  t_k.
\end{align}
Interlacing is a fundamental phenomenon occuring widely across mathematics. To supply three quick examples here: if $f$ is a polynomial with real roots and $f'$ is its derivative, then the roots of $f$ and $f'$ interlace \cite{fisk}, if $A$ is a Hermitian matrix and $A'$ is any principal minor with one fewer row and column, then the eigenvalues of $A$ and $A'$ interlace \cite{taoRMT}, and if $(p_n)_{n \geq 0}$ (with $\mathrm{deg}(p_n) = n$) is a sequence of orthogonal polynomials with respect to a measure $\mu$ on $\mathbb{R}$, the roots of $p_n$ and $p_{n-1}$ interlace \cite{szego}.

By considering successive interlacing sets (say, by considering the zeros of successive derivatives of the polynomial $f$ above, or by considering the eigenvalues of successive minors of the matrix $A$ above), we obtain a \textbf{bead configuration}: a collection of sets $S_1,\ldots,S_n$ such that each $S_{i+1}$ interlaces $S_i$. We may associate a bead configuration with a subset of $\mathbb{R} \times \{1,\ldots,n\}$.

\emph{Random} bead configurations are ubiquitous across probability, mathematical physics, and random matrix theory \cite{boutillier, ffn, gordenko, JN, MOW, NV, sun}. In spite of their ubiquity, bead configurations are by no means fully understood, and the purpose of the article at hand is to provide a `local solution' to the bead interaction with two main goals in mind: First, we would like to obtain a volume formula for translation invariant bead configurations. Second, we would like to study in detail the various correlations associated with the bead interaction. 
To achieve these goals, we introduce a novel continuous analogue of Kasteleyn theory, a discrete tool in integrable probability that has proven successful over recent decades in solving explicitly numerous statistical physics models such as domino and lozenge tilings \cite{kenyon}.

There are four classes of semi-discrete Abelian groups which provide natural state spaces for translation invariant bead configurations: $\mathbb{R} \times \mathbb{Z}$, $\mathbb{R} \times \mathbb{Z}_n$, $[0,1) \times \mathbb{Z}$ and $[0,1) \times \mathbb{Z}_n$. (Here $\mathbb{Z}_n := \mathbb{Z}/n\mathbb{Z}$ denotes the cyclic group with $n$ elements, and $[0,1)$ the one-dimensional continuous torus.) We retain the greatest flexibility by considering the latter class of these groups, i.e.\ the semi-discrete tori
\begin{align*}
\mathbb{T}_n := [0,1) \times \mathbb{Z}_n,
\end{align*}
which are the most fundamental in that random bead configurations on any of the other groups may be derived using scaling limits of those on $\mathbb{T}_n$. A further advantage of using the tori $\mathbb{T}_n$ is their compactness, and as we will see shortly, we may associate bead configurations on $\mathbb{T}_n$ with closed loops on $\mathbb{T}_n$. 

We think of $\mathbb{T}_n$ as $n$ unit length toric strings wrapped in parallel around a donut, and indexed by $h \in \mathbb{Z}_n$, so that string $h$ lies between $h-1$ and $h+1$ mod $n$. 

\subsection{Bead configurations and occupation processes on the semi-discrete torus} \label{sec:opening}
Reiterating somewhat, for any $k \geq 1$, a \textbf{bead configuration} on the semi-discrete torus $\mathbb{T}_n$ is a collection of $nk$ distinct points $(y_1,\ldots,y_{nk})$ on $\mathbb{T}_n$ (with $y_i = (t_i,h_i)$ for each $i$) such that there are $k$ points on each string $[0,1) \times \{h\}$, and the $k$ points on neighbouring strings interlace, though with certain inequalities taken strictly. More specifically, for each $h \in \mathbb{Z}_n$, if $t_1 < \cdots < t_k$ and $t_1' < \cdots < t_k'$ are the horizontal coordinates of the points on strings $h$ and $h+1$ (mod $n$) respectively, then we have either 
\begin{align} \label{eq:doubleinterlace}
t_1 \leq  t_1 ' < \cdots < t_k \leq  t_k' \qquad \text{or} \qquad t_1' < t_1 \leq  \cdots \leq  t_k' < t_k.
\end{align}
In other words, between every two beads on one string there is a bead on the neighbouring string. The choice of strict and non-strict inequalities in \eqref{eq:doubleinterlace} is motivated by connections with exclusion processes which we make below. See the left panel of Figure \ref{fig:test0} for an example of a bead configuration on $\mathbb{T}_n$.

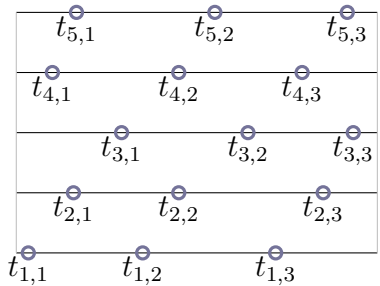
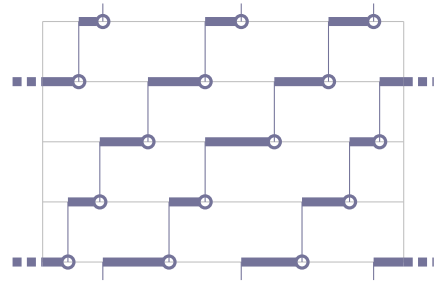
\begin{figure}[h!]
\centering
\begin{subfigure}{.5\textwidth}
\centering
\begin{tikzpicture}[scale=0.8]
\draw (0,0) -- (6,0);
\draw (0,1) -- (6,1);
\draw (0,2) -- (6,2);
\draw (0,3) -- (6,3);
\draw (0,4) -- (6,4);
\draw[lightgray] (0,0) -- (0,4);
\node at (-0.95,2)   (a) {$h \in \mathbb{Z}_n$};
\draw[lightgray, ->] (-0.95,2.5) -- (-0.95,4);
\draw[lightgray, ->] (-0.95,1.5) -- (-0.95,0);
\node at (3,-0.95) (a) {$t \in [0,1)$};
\draw[lightgray, ->] (4.1, -0.95) -- (6, -0.95);
\draw[lightgray, ->] (1.9, -0.95) -- (0, -0.95);
\draw[lightgray] (6,0) -- (6,4);

\draw [very thick, CadetBlue] (0.2,0) circle [radius=0.1];
\node at (0.2,-0.3)   (a) {$t_{1,1}$};
\draw [very thick, CadetBlue] (2.1,0) circle [radius=0.1];
\node at (2.1,-0.3)   (a) {$t_{1,2}$};
\draw [very thick, CadetBlue] (4.31,0) circle [radius=0.1];
\node at (4.31,-0.3)   (a) {$t_{1,3}$};

\draw [very thick, CadetBlue] (0.95,1) circle [radius=0.1];
\node at (0.95,0.7)   (a) {$t_{2,1}$};
\draw [very thick, CadetBlue] (2.7,1) circle [radius=0.1];
\node at (2.7,0.7)   (a) {$t_{2,2}$};
\draw [very thick, CadetBlue] (5.1,1) circle [radius=0.1];
\node at (5.1,0.7)   (a) {$t_{2,3}$};

\draw [very thick, CadetBlue] (1.75,2) circle [radius=0.1];
\node at (1.75,1.7)   (a) {$t_{3,1}$};
\draw [very thick, CadetBlue] (3.85,2) circle [radius=0.1];
\node at (3.85,1.7)   (a) {$t_{3,2}$};
\draw [very thick, CadetBlue] (5.6,2) circle [radius=0.1];
\node at (5.6,1.7)   (a) {$t_{3,3}$};

\draw [very thick, CadetBlue] (0.6,3) circle [radius=0.1];
\node at (0.6,2.7)   (a) {$t_{4,1}$};
\draw [very thick, CadetBlue] (2.7,3) circle [radius=0.1];
\node at (2.7,2.7)   (a) {$t_{4,2}$};
\draw [very thick, CadetBlue] (4.75,3) circle [radius=0.1];
\node at (4.75,2.7)   (a) {$t_{4,3}$};

\draw [very thick, CadetBlue] (1.0,4) circle [radius=0.1];
\node at (1.0,3.7)   (a) {$t_{5,1}$};
\draw [very thick, CadetBlue] (3.3,4) circle [radius=0.1];
\node at (3.3,3.7)   (a) {$t_{5,2}$};
\draw [very thick, CadetBlue] (5.5,4) circle [radius=0.1];
\node at (5.5,3.7)   (a) {$t_{5,3}$};

\draw [white] (2,-0.5) circle [radius=0.01];
\draw [white] (2,4.3) circle [radius=0.1];
\end{tikzpicture}
\caption{A bead configuration on $\mathbb{T}_n$}
  \label{fig:sub10}
\end{subfigure}%
\begin{subfigure}{.5\textwidth}
\centering
\begin{tikzpicture}[scale=0.8]

\draw (0,0) -- (6,0);
\draw (0,1) -- (6,1);
\draw (0,2) -- (6,2);
\draw (0,3) -- (6,3);
\draw (0,4) -- (6,4);
\draw[lightgray] (0,0) -- (0,4);
\node at (-0.95,2)   (a) {$h \in \mathbb{Z}_n$};
\draw[lightgray, ->] (-0.95,2.5) -- (-0.95,4);
\draw[lightgray, ->] (-0.95,1.5) -- (-0.95,0);
\node at (3,-0.95) (a) {$t \in [0,1)$};
\draw[lightgray, ->] (4.1, -0.95) -- (6, -0.95);
\draw[lightgray, ->] (1.9, -0.95) -- (0, -0.95);
\draw[lightgray] (6,0) -- (6,4);

\draw [very thick, CadetBlue] (0.42,0) circle [radius=0.1];
\draw [CadetBlue] (0.42,0) -- (0.42,1); 

\draw [very thick, CadetBlue] (2.1,0) circle [radius=0.1];
\draw [CadetBlue] (2.1,0) -- (2.1,1); 

\draw [very thick, CadetBlue] (4.31,0) circle [radius=0.1];
\draw [CadetBlue] (4.31,0) -- (4.31,1); 

\draw [line width=1.2mm, CadetBlue] (0.42,1) -- (0.85,1); 
\draw [line width=1.2mm, CadetBlue] (2.1,1) -- (2.6,1); 
\draw [line width=1.2mm, CadetBlue] (4.31,1) -- (5.0,1); 

\draw [very thick, CadetBlue] (0.95,1) circle [radius=0.1];
\draw [CadetBlue] (0.95,1) -- (0.95,2); 
\draw [very thick, CadetBlue] (2.7,1) circle [radius=0.1];
\draw [CadetBlue] (2.7,1) -- (2.7,2); 
\draw [very thick, CadetBlue] (5.1,1) circle [radius=0.1];
\draw [CadetBlue] (5.1,1) -- (5.1,2);

\draw [line width=1.2mm, CadetBlue] (0.95,2) -- (1.65,2); 
\draw [line width=1.2mm, CadetBlue] (2.7,2) -- (3.75,2); 
\draw [line width=1.2mm, CadetBlue] (5.1,2) -- (5.5,2); 

\draw [very thick, CadetBlue] (1.75,2) circle [radius=0.1];
\draw [CadetBlue] (1.75,2) -- (1.75,3); 
\draw [very thick, CadetBlue] (3.85,2) circle [radius=0.1];
\draw [CadetBlue] (3.85,2) -- (3.85,3); 
\draw [very thick, CadetBlue] (5.6,2) circle [radius=0.1];
\draw [CadetBlue] (5.6,2) -- (5.6,3); 

\draw [line width=1.2mm, CadetBlue] (1.75,3) -- (2.6,3); 
\draw [line width=1.2mm, CadetBlue] (3.85,3) -- (4.65,3); 
\draw [line width=1.2mm, CadetBlue] (5.6,3) -- (6,3); 
\draw [line width=1.2mm, CadetBlue] (0,3) -- (0.5,3); 

\draw [very thick, CadetBlue] (0.6,3) circle [radius=0.1];
\draw [CadetBlue] (0.6,3) -- (0.6,4);
\draw [very thick, CadetBlue] (2.7,3) circle [radius=0.1];
\draw [CadetBlue] (2.7,3) -- (2.7,4);
\draw [very thick, CadetBlue] (4.75,3) circle [radius=0.1];
\draw [CadetBlue] (4.75,3) -- (4.75,4);

\draw [line width=1.2mm, CadetBlue] (0.6,4) -- (0.9,4); 
\draw [line width=1.2mm, CadetBlue] (2.7,4) -- (3.2,4); 
\draw [line width=1.2mm, CadetBlue] (4.75,4) -- (5.4,4); 

\draw [very thick, CadetBlue] (1.0,4) circle [radius=0.1];
\draw [very thick, CadetBlue] (3.3,4) circle [radius=0.1];
\draw [very thick, CadetBlue] (5.5,4) circle [radius=0.1];

\draw [line width=1.2mm, CadetBlue] (1.0,0) -- (2.0,0); 
\draw [line width=1.2mm, CadetBlue] (3.3,0) -- (4.21,0);
\draw [line width=1.2mm, CadetBlue] (5.5,0) -- (6,0);
\draw [line width=1.2mm, CadetBlue] (0,0) -- (0.32,0);

\draw [CadetBlue] (1.0,-0.3) -- (1.0,0);
\draw [CadetBlue] (1.0,4.3) -- (1.0,4);
\draw [CadetBlue] (3.3,-0.3) -- (3.3,0);
\draw [CadetBlue] (3.3,4.3) -- (3.3,4);
\draw [CadetBlue] (5.5,-0.3) -- (5.5,0);
\draw [CadetBlue] (5.5,4.3) -- (5.5,4);

\draw [line width=1.2mm, CadetBlue, dotted] (-0.5,0) -- (0,0);
\draw [line width=1.2mm, CadetBlue, dotted] (-0.5,3) -- (0,3);
\draw [line width=1.2mm, CadetBlue, dotted] (6,0) -- (6.5,0);
\draw [line width=1.2mm, CadetBlue, dotted] (6,3) -- (6.5,3);

\draw [white] (2,-0.5) circle [radius=0.01];
\draw [white] (2,4.3) circle [radius=0.1];
\end{tikzpicture}
\caption{An occupation process on $\mathbb{T}_n$}
  \label{fig:sub20}
\end{subfigure}
\caption{On the left we have a bead configuration on $n = 5$ strings with $k = 3$ beads per string. On the right we have its associated occupation process, which has occupation number $\ell=2$.}
\label{fig:test0}
\end{figure}

To every bead configuration on $\mathbb{T}_n$ we can associate an occupation process $(X_t)_{t \in [0,1)}$, as in the right panel of Figure \ref{fig:test0}. An \textbf{occupation process} is a right-continuous function $t \mapsto X_t \in \mathcal{P}(\mathbb{Z}_n)$ (the collection of subsets of $\mathbb{Z}_n$) with the property that its discontinuities take the form
\begin{align} \label{eq:b}
y = (t,h) \in \mathbb{T}_n \text{ is a bead} \iff X_t = X_{t-} - \{h\} \cup \{h+1\}.
\end{align}
By right-continuous, we mean on the torus $[0,1)$. There is an integer $\ell \in \{1,\ldots,n-1\}$ such that for all $t \in [0,1)$ we have $\# X_t = \ell$; we call $\ell$ the \textbf{occupation number} of the bead configuration. We say that $y = (t,h) \in \mathbb{T}_n$ is \textbf{occupied} if $h \in X_t$, and that $y \in \mathbb{T}_n$ is \textbf{unoccupied} otherwise.

\begin{figure}
\centering
\begin{tikzpicture}[scale=0.8]
\draw (0,0) -- (6,0);
\draw (0,1) -- (6,1);
\draw (0,2) -- (6,2);
\draw (0,3) -- (6,3);
\draw (0,4) -- (6,4);
\draw[lightgray] (0,0) -- (0,4);
\draw[lightgray] (6,0) -- (6,4);

\draw [very thick, CadetBlue] (0.2,0) circle [radius=0.1];
\draw [very thick, CadetBlue] (2.1,0) circle [radius=0.1];
\draw [very thick, CadetBlue] (4.31,0) circle [radius=0.1];
\draw [very thick, CadetBlue] (0.95,1) circle [radius=0.1];
\draw [very thick, CadetBlue] (2.7,1) circle [radius=0.1];
\draw [very thick, CadetBlue] (5.1,1) circle [radius=0.1];
\draw [very thick, CadetBlue] (1.75,2) circle [radius=0.1];
\node at (1.75,1.6)   (a) {$y_i$};
\draw [very thick, CadetBlue] (3.85,2) circle [radius=0.1];
\draw [very thick, CadetBlue] (5.6,2) circle [radius=0.1];
\draw [<->, fill, CadetBlue] (1.75,2.2) -- (3.85,2.2);
\node at (2.8,2.4)   (a) {$p_i$};
\draw[thick, dotted, CadetBlue] (1.75, 2) -- (1.75, 3.5);
\draw [very thick, CadetBlue] (0.6,3) circle [radius=0.1];
\draw [<->, fill, CadetBlue] (1.75,3.2) -- (2.7,3.2);
\node at (2.25,3.4)   (a) {$q_i$};
\draw [very thick, CadetBlue] (2.7,3) circle [radius=0.1];
\draw [very thick, CadetBlue] (4.75,3) circle [radius=0.1];
\draw [very thick, CadetBlue] (1.0,4) circle [radius=0.1];
\draw [very thick, CadetBlue] (3.3,4) circle [radius=0.1];
\draw [very thick, CadetBlue] (5.5,4) circle [radius=0.1];
\draw [white] (2,-0.5) circle [radius=0.1];
\draw [white] (2,5) circle [radius=0.1];
\end{tikzpicture} 
\caption{The distance $p_i$ from a bead at $y_i = (t_i,h_i)$ to the next bead on string $h_i$, and the distance $q_i$ to the next bead on string $h_i+1$. The \textbf{tilt} $\tau \in (0,1)$ of a configuration is the average of the $q_i$ divided by the average of the $p_i$. If $\ell$ is the occupation number, then $\tau = \ell/n$.}
\label{fig:tiltshow}
\end{figure}
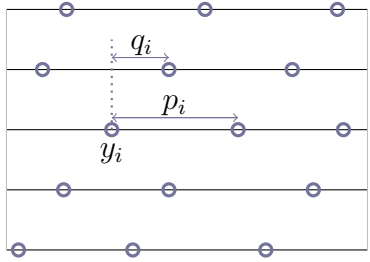

The occupation number is closely related to the \textbf{tilt} of a bead configuration. To define the tilt of a bead configuration $(y_1,\ldots,y_{nk})$, given a bead at $y_i$, let $p_i$ denote the horizontal distance to the next bead on the same string, and let $q_i$ denote the horizontal distance to the next bead on the string above; see Figure \ref{fig:tiltshow}. If $y_i$ is the rightmost bead on a string, take these distances torically. 
The \textbf{tilt} $\tau \in (0,1)$ of a bead configuration is the ratio of the averages of the $q_i$ and of the $p_i$:
\begin{align} \label{eq:tiltdef}
\tau := \frac{ \frac{1}{nk} \sum_{i=1}^{nk} q_i  }{\frac{1}{nk} \sum_{i=1}^{nk} p_i  }.
\end{align}
In fact, since there are $k$ beads on each string, the denominator $\frac{1}{nk} \sum_{i=1}^{nk} p_i$ in \eqref{eq:tiltdef} is equal to $1/k$. The fact that $0 < q_i < p_i$ for each $i$ guarantees $\tau \in (0,1)$.

We now claim that the tilt $\tau \in (0,1)$ and the occupation number $\ell \in \{1,\ldots,n-1\}$ are related via
\begin{align} \label{eq:uy}
\tau = \ell/n.
\end{align}
To see this, first note that since $ \# X_t = \ell$ for each $t$, the total Lebesgue measure of the occupied region of the torus is $\ell$. Alternatively, note that each bead at $y_i = (t_i,h_i)$ is associated with an occupied segment of string of length $q_i$ from $(t_i,h_i+1)$ to $(t_i+q_i,h_i+1)$, taken torically if necessary. The sum of all of these occupied lengths is then $\sum_{i=1}^{nk} q_i = \ell$. On the other hand, $\sum_{i =1}^{nk} p_i$ is simply the total Lebesgue measure of the torus $\mathbb{T}_n$, which is equal to $n$. Plugging these two facts into \eqref{eq:tiltdef}, we obtain \eqref{eq:uy}.

More generally, given an infinite bead configuration (such as Boutillier's \cite{boutillier} bead process on $\mathbb{R} \times \mathbb{Z}$ which we discuss below), one may define the tilt $\tau$ by taking the asymptotics of the ratio in \eqref{eq:tiltdef} as the sum is taken over an asymptotically large region of beads.

\subsection{Partition functions and volumes of bead configurations}

Tying together our definitions so far, an \textbf{$(n,k,\ell)$ configuration} is a bead configuration on $\mathbb{T}_n$ with $k$ beads per string, and such that its associated occupation process $(X_t)_{t \in [0,1)}$ has occupation number $\ell$, or equivalently, the tilt of the configuration is $\ell/n$.

The set of $(n,k,\ell)$ configurations may be associated with a subset
\[\mathcal{W}^{(n)}_{k,\ell} := \{(t_{h,j})_{1 \leq j \leq k, 0 \leq h \leq n-1} \in [0,1)^{nk} : \text{$(n,k,\ell)$ configuration} \}\]
of $[0,1)^{nk}$ by letting $t_{h,j}$ denote the position of the $j^{\text{th}}$ bead on string $h$; see the left panel of Figure \ref{fig:test0}.  Consequently, we can speak of the $nk$-dimensional Lebesgue measure of $\mathcal{W}_{k,\ell}^{(n)}$ and write 
\begin{align} \label{eq:voldef}
\mathrm{Vol}_{k,\ell}^{(n)} :=  \mathrm{Leb}_{nk}( \mathcal{W}_{k,\ell}^{(n)} )  \qquad k \geq 1, 0 \leq \ell \leq n,
\end{align}
for the volume of the set of $(n,k,\ell)$ configurations. When $k \geq 1$, the occupation number $\ell$ always lies in $\{1,\ldots,n-1\}$, and as such $\mathrm{Vol}_{k,0}^{(n)} := \mathrm{Vol}_{k,n}^{(n)} = 0$. 

The empty bead configurations are in correspondence with the constant occupation processes (i.e.\ $X_t = A$ for all $t \in [0,1)$), and since there are $\binom{n}{\ell}$ possible choices for a subset $A$ of $\mathbb{Z}_n$ of cardinality $\ell$, we also set 
\begin{align} \label{eq:voldef2}
\mathrm{Vol}_{0,\ell}^{(n)} := \binom{n}{\ell} \qquad 0 \leq \ell \leq n.
\end{align}
Thus $\mathrm{Vol}_{k,\ell}^{(n)} $ are defined for all $k \geq 0, 0 \leq \ell \leq n$. 

With these definitions at hand, we now state our main results. Define the partition function
\begin{align} \label{eq:pf000}
Z(\lambda,T) := \sum_{k \geq 0} \sum_{ 0 \leq \ell \leq n} T^{nk} e^{-\lambda \ell}\mathrm{Vol}^{(n)}_{k,\ell}.
\end{align}

We endow the torus $\mathbb{T}_n = [0,1) \times \mathbb{Z}_n$ with the natural Lebesgue measure, which gives a measure $b-a$ to each set of the form $(a,b] \times \{h\}$, and hence gives total measure $n$ to $\mathbb{T}_n$. We write $\mathrm{d}y$ for integration with respect to this measure.

The \textbf{Fredholm determinant} $\det (  I + TC  )$ of an operator $C:\mathbb{T}_n \times \mathbb{T}_n \to \mathbb{C}$ is the infinite sum
\begin{align} \label{eq:fredholm}
\det (  I + TC  ) := \sum_{ N =0}^\infty \frac{T^N}{N!} \int_{\mathbb{T}_n^N} \det_{i,j=1}^N C(y_i,y_j) \mathrm{d}y_1 \cdots \mathrm{d}y_N.
\end{align}
Our first main result, Theorem \ref{thm:main}, states that the partition function is a linear combination of Fredholm determinants.

\begin{thm} \label{thm:main}
We have 
\begin{align} \label{eq:prufer}
Z(\lambda,T)= \sum_{ \theta \in \{0,1\}^2 } c_\theta^{\lambda,n} \det( I + T C^{\lambda + \theta_1 \pi \iota, \theta_2} ),
\end{align}
where the sum is over $\theta = (\theta_1,\theta_2) \in \{0,1\}^2$, and for $\beta \in \mathbb{C}-\{0\}$ and $\theta_2 \in \{0,1\}$, $C^{\beta,\theta_2}:\mathbb{T}_n \times \mathbb{T}_n \to \mathbb{C}$ is the translation invariant operator on $\mathbb{T}_n$ given by 
\begin{align} \label{eq:aux00}
C^{\beta,\theta_2} \left( (t,h),(t',h') \right) = e^{ \theta_2 \pi \iota /n} \mathrm{1}_{\{h'=h+1\}} \frac{ e^{ - \beta [t'-t] }}{ 1 - e^{-\beta}},
\end{align}
where $[t'-t] := t' - t + \mathrm{1}_{\{t' < t\}}$ is the residue of $t'-t$ mod $1$. 

The constants $c_\theta^{\lambda,n}$ are given by
\begin{align} \label{eq:cdef}
c_\theta^{\lambda,n} := \frac{1}{2} (-1)^{(\theta_1+1)(\theta_2+n+1)} (1 - (-1)^{\theta_1}e^{-\lambda} )^n.
\end{align}
\end{thm}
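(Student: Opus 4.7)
My plan is to diagonalise each translation-invariant operator $C^{\beta,\theta_2}$, obtain a closed-form product expression for its Fredholm determinant, and then verify \eqref{eq:prufer} by matching coefficients of $T^N e^{-\lambda\ell}$ on both sides.

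First I would exploit that $C^{\beta,\theta_2}$ commutes with translations by $[0,1)\times\mathbb{Z}_n$, so its eigenfunctions are the Fourier modes $\phi_{j,r}(t,h)=e^{2\pi\iota jt}e^{2\pi\iota rh/n}$ for $j\in\mathbb{Z}$ and $r\in\{0,\ldots,n-1\}$. A direct computation, using the substitution $u=[t'-t]$ to evaluate $\int_0^1(1-e^{-\beta})^{-1}e^{-\beta[t'-t]}e^{2\pi\iota jt'}\,dt'$, yields the eigenvalues $\mu_{j,r}=e^{(\theta_2+2r)\pi\iota/n}/(\beta-2\pi\iota j)$. Applying the classical product identity
\begin{align*}
\prod_{j\in\mathbb{Z}}\Bigl(1+\frac{\alpha}{\beta-2\pi\iota j}\Bigr)=\frac{1-e^{-\beta-\alpha}}{1-e^{-\beta}},
\end{align*}
derivable from the partial-fraction expansion $\sum_j(\beta-2\pi\iota j)^{-1}=(e^\beta-1)^{-1}$ via Poisson summation, then gives the closed form
\begin{align*}
\det(I+TC^{\beta,\theta_2})=\prod_{r=0}^{n-1}\frac{1-e^{-\beta-Te^{(\theta_2+2r)\pi\iota/n}}}{1-e^{-\beta}}.
\end{align*}

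Substituting $\beta=\lambda+\theta_1\pi\iota$ makes $e^{-\beta}=(-1)^{\theta_1}e^{-\lambda}$, and the denominator $(1-e^{-\beta})^n$ cancels exactly against the factor $(1-(-1)^{\theta_1}e^{-\lambda})^n$ sitting inside $c_\theta^{\lambda,n}$. Each summand in \eqref{eq:prufer} thereby collapses to
\begin{align*}
c_\theta^{\lambda,n}\det(I+TC^{\lambda+\theta_1\pi\iota,\theta_2})=\tfrac{1}{2}(-1)^{(\theta_1+1)(\theta_2+n+1)}\prod_{r=0}^{n-1}\bigl(1-(-1)^{\theta_1}e^{-\lambda}e^{-Te^{(\theta_2+2r)\pi\iota/n}}\bigr).
\end{align*}
Expanding this product as a polynomial in $e^{-\lambda}$ and then Taylor-expanding the exponentials in $T$, the coefficient of $T^Ne^{-\lambda\ell}$ becomes a signed symmetric power-sum over $\ell$-element subsets of either the $n$-th roots of unity (when $\theta_2=0$) or the $2n$-th roots of unity that are not $n$-th roots (when $\theta_2=1$). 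Summing over $\theta_1\in\{0,1\}$ filters the parity of $\ell$, while summing over $\theta_2\in\{0,1\}$ combines the two root-of-unity families to annihilate all terms with $N\not\equiv 0\pmod{n}$.

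The main obstacle is the final identification of the surviving coefficient of $T^{nk}e^{-\lambda\ell}$ with $\mathrm{Vol}^{(n)}_{k,\ell}$. I would approach this via the continuous Kasteleyn machinery developed earlier in the paper: rewriting $\log\det(I+TC^{\beta,\theta_2})$ as a sum over oriented loops on $\mathbb{T}_n$ (each of length $an$ contributing a factor weighted by its horizontal winding $W$ and signed by $(-1)^{\theta_2 a}$), using the occupation-process/loop picture to match loop configurations on $\mathbb{T}_n$ with bead configurations of tilt $\ell/n$, and arguing that the four-twist linear combination with coefficients $c_\theta^{\lambda,n}$ is the continuous analogue of the torus Kasteleyn identity: the signs $(-1)^{\theta_1 W+\theta_2 a}$ induced by the twists, when summed against $c_\theta^{\lambda,n}$, cancel non-interlacing loop configurations in pairs while counting each genuine bead configuration once with weight $+1$. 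The most delicate part is this sign bookkeeping — reconciling the permutation signs, the horizontal twist $(-1)^{\theta_1 W}$, and the vertical twist $(-1)^{\theta_2 a}$ — and it is precisely where Kasteleyn theory enters.
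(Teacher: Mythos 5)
Your proposal runs the logic of the paper in reverse, and the step you defer to the end is precisely the content of the theorem. Diagonalising $C^{\beta,\theta_2}$ and evaluating $\det(I+TC^{\beta,\theta_2})$ as $\prod_{z^n=(-1)^{\theta_2}}(e^{Tz}-e^{-\beta})/(1-e^{-\beta})^n$ is correct (modulo a harmless factor $e^{Tz/2}$ missing from your single-mode product identity, which cancels over the full set of roots since $\sum_z z=0$), but this only reproduces the right-hand side of Theorem \ref{thm:arrau}. To prove Theorem \ref{thm:main} you must still show that the coefficient of $T^{nk}e^{-\lambda\ell}$ in that expression equals $\mathrm{Vol}^{(n)}_{k,\ell}$, i.e.\ the Lebesgue measure of the set of $(n,k,\ell)$ configurations. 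Your plan to do this by matching against the volume formula is circular: that formula (Corollary \ref{cor:arrau}) is derived in the paper \emph{from} Theorem \ref{thm:main}. Your fallback — expanding $\log\det(I+TC)$ over oriented loops and doing "Kasteleyn sign bookkeeping" — is a description of what must happen, not an argument: traces of powers of $C$ enumerate free closed walks on $\mathbb{T}_n$, and there is no direct bijection between these and interlacing bead configurations. The interlacing constraint emerges only at the level of the full $N\times N$ determinant, through cancellation of rows, not term by term in a loop expansion.

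The paper's route avoids all of this by proving a \emph{pointwise} identity first (Theorem \ref{thm:ckast}): for every $N$ and every $(y_1,\ldots,y_N)\in\mathbb{T}_n^N$,
\begin{align*}
g_N^{\lambda,\theta}(y_1,\ldots,y_N)=\tfrac{1}{2}(-1)^{(\theta_1+1)(\theta_2+n+1)}(1-e^{-(\lambda+\theta_1\pi\iota)})^n\det_{i,j=1}^N C^{\lambda+\theta_1\pi\iota,\theta_2}(y_j-y_i),
\end{align*}
where $g_N^{\lambda,\theta}$ is a signed, $\theta$-twisted version of the weight $e^{-\lambda\ell}\mathrm{1}\{\text{bead config.}\}$. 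The engine is Lemma \ref{lem:succession}: the $k\times k$ determinant $\det(\zeta^{\mathrm{1}\{t_j'<t_i\}})$ vanishes unless the two point sets interlace, and otherwise equals $(1-\zeta)^{k-1}$ or $(-1)^{k-1}\zeta(1-\zeta)^{k-1}$ according to which string starts first. Assembling these as blocks of a cyclic block matrix, conjugating by a diagonal matrix, and applying the three-vs-one identity $\frac{1}{2}\sum_\theta(-1)^{(\theta_1+k+1)(\theta_2+n+\ell+1)}=1$ yields the decomposition $g_N^\lambda=\sum_\theta g_N^{\lambda,\theta}$. Theorem \ref{thm:main} then follows by integrating over $\mathbb{T}_n^N$ and summing over $N$, directly from the definition \eqref{eq:fredholm} of the Fredholm determinant; no spectral information about $C^{\beta,\theta_2}$ is needed at that stage. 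If you want to salvage your approach, you would need to supply an independent proof of the interlacing-detection identity of Lemma \ref{lem:succession} (or an equivalent), at which point you would in effect be reconstructing the paper's argument.
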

Theorem \ref{thm:main} is the bedrock of the article, and may be regarded as a continuous Kasteleyn theorem for the bead process on the torus, in that the partition function $Z(\lambda,T)$ associated with bead configurations on $\mathbb{T}_n$ may be expressed in terms of determinants. 

The benefits of the determinantal expression \eqref{eq:prufer} are twofold. First, we can diagonalise the operators $C^{\beta,\theta_2}$ in order to compute the determinants explicitly, thereby leading to an explicit formula for $Z(\lambda,T)$, and accordingly, the volumes $\mathrm{Vol}_{k,\ell}^{(n)}$. Second, we can sample a random bead configuration according to its contribution to the partition function $Z(\lambda,T)$, and leverage properties of Fredholm determinants to show that such configurations have an explicit determinantal structure. In the former direction, our next two results are explicit formulas for $Z(\lambda,T)$ and $\mathrm{Vol}^{(n)}_{k,\ell}$.

\begin{thm} \label{thm:arrau}
We have
\begin{align} \label{eq:arrau}
Z(\lambda,T) := \frac{1}{2} \sum_{ \theta \in \{0,1\}^2 } (-1)^{(\theta_1+1)(\theta_2+n+1)} \prod_{ z^n = (-1)^{\theta_2} } (e^{Tz} - (-1)^{\theta_1}e^{ - \lambda} ),
\end{align}
where again the sum is over $\theta = (\theta_1,\theta_2)\in\{0,1\}^2$ and the product is over the $n$ complex solutions $z$ to the equation $z^n = (-1)^{\theta_2}$.
\end{thm}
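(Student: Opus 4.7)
The plan is to use \eqref{eq:prufer} and compute each Fredholm determinant $\det(I + TC^{\lambda + \theta_1 \pi\iota,\, \theta_2})$ explicitly, then to combine with the prefactors $c_\theta^{\lambda,n}$ and simplify to \eqref{eq:arrau}.

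Because the kernel $C^{\beta,\theta_2}$ in \eqref{eq:aux00} depends only on $(t'-t, h'-h)$, it is translation-invariant on the group $\mathbb{T}_n = [0,1) \times \mathbb{Z}_n$, and so I expect to diagonalise it using the characters $\chi_{m,j}(t,h) := e^{2\pi \iota m t} e^{2\pi \iota j h/n}$, indexed by $(m,j) \in \mathbb{Z} \times \mathbb{Z}_n$. A direct Fourier computation, integrating $e^{(2\pi\iota m - \beta)t}$ over $[0,1)$ and using $e^{2\pi \iota m}=1$, yields the eigenvalues
\[
\mu_{m,j}^{\beta,\theta_2} \;=\; \frac{z_j}{\beta - 2\pi\iota m}, \qquad z_j := e^{\theta_2 \pi\iota/n} e^{2\pi\iota j/n}.
\]
As $j$ runs over $\mathbb{Z}_n$, the $z_j$ are precisely the $n$ solutions of $z^n = (-1)^{\theta_2}$ appearing in \eqref{eq:arrau}.

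Next I would factor the Fredholm determinant over eigenvalues and evaluate the inner product in $m$ using the Hadamard factorisation of $1-e^{-z}$ (equivalently the Weierstrass product for $\sinh$), whose zero set is precisely $\{2\pi\iota m : m \in \mathbb{Z}\}$, giving the symmetrically regularised identity
\[
\prod_{m\in\mathbb{Z}}\left(1 + \frac{a}{\alpha - 2\pi\iota m}\right) = \frac{1-e^{-\alpha - a}}{1-e^{-\alpha}}.
\]
Applying this with $\alpha = \beta,\, a = Tz_j$ and multiplying over $j$ yields
\[
\det(I + TC^{\beta,\theta_2}) \;=\; \frac{1}{(1-e^{-\beta})^n}\prod_{z^n = (-1)^{\theta_2}}\bigl(1 - e^{-\beta-Tz}\bigr).
\]
Now I would set $\beta = \lambda + \theta_1\pi\iota$, so that $e^{-\beta} = (-1)^{\theta_1} e^{-\lambda}$: the prefactor $c_\theta^{\lambda,n}$ from \eqref{eq:cdef} contains exactly the compensating factor $(1-(-1)^{\theta_1}e^{-\lambda})^n$, which cancels the denominator. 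Finally, rewriting $1 - e^{-\beta-Tz} = e^{-Tz}\bigl(e^{Tz} - (-1)^{\theta_1} e^{-\lambda}\bigr)$ and using that $\sum_{z^n = (-1)^{\theta_2}} z = 0$ for $n\geq 2$, the product $\prod_z e^{-Tz}$ collapses to $1$ and \eqref{eq:arrau} drops out.

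The main technical obstacle is justifying the factorisation of the Fredholm determinant over its eigenvalues: since $|\mu_{m,j}^{\beta,\theta_2}| \sim |m|^{-1}$, the operator $C^{\beta,\theta_2}$ is Hilbert--Schmidt but not trace class, so the naive product over $m$ is only conditionally convergent, and this is what forces the symmetric ordering in the Weierstrass identity. I would handle this either by ordering eigenvalues symmetrically in $m$ (natural from the Cauchy-kernel structure and matching the Hadamard regularisation used above), or more robustly by computing the traces $\mathrm{Tr}[(C^{\beta,\theta_2})^N]$ directly from \eqref{eq:fredholm}: the factor $\mathrm{1}_{\{h'=h+1\}}$ forces these to vanish unless $n \mid N$, and when $N = nk$ the trace reduces to an explicit iterated convolution on $[0,1)$ that matches term-by-term with the log-expansion of the right-hand side of \eqref{eq:arrau}.
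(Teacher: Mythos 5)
Your proposal is correct and follows essentially the same route as the paper: diagonalise $C^{\beta,\theta_2}$ in the characters of $[0,1)\times\mathbb{Z}_n$, factor the Fredholm determinant over eigenvalues with the $m$-product taken symmetrically (the paper handles the conditional convergence exactly as you do, by pairing $m$ with $-m$ on the two-dimensional invariant subspaces), evaluate it via the $\sinh$ product, and then absorb the prefactor $(1-(-1)^{\theta_1}e^{-\lambda})^n$ using $\sum_{z^n=(-1)^{\theta_2}}z=0$. One small correction: under the symmetric regularisation one actually has
\begin{align*}
\prod_{m\in\mathbb{Z}}\left(1+\frac{a}{\alpha-2\pi\iota m}\right)=\frac{\sinh\!\left(\frac{\alpha+a}{2}\right)}{\sinh\!\left(\frac{\alpha}{2}\right)}=e^{a/2}\,\frac{1-e^{-\alpha-a}}{1-e^{-\alpha}},
\end{align*}
so your stated identity is off by the factor $e^{a/2}=e^{Tz/2}$; this is harmless because $\prod_{z^n=(-1)^{\theta_2}}e^{Tz/2}=1$ by the same vanishing-sum fact you invoke at the end, and your displayed formula for $\det(I+TC^{\beta,\theta_2})$ and the final conclusion are unaffected.
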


By studying the coefficient of $T^{nk}e^{-\lambda \ell}$ in the expression for $Z(\lambda,T)$ on the right-hand-side of \eqref{eq:arrau}, we are ultimately able to prove the following formula for the volumes of $(n,k,\ell)$ configurations:

\begin{cor} \label{cor:arrau}
The $nk$-dimensional Lebesgue measure of the collection of bead configurations on $\mathbb{T}_n = [0,1)\times \mathbb{Z}_n$ with $k$ beads per string and occupation number $\ell$ is 
\begin{align} \label{eq:vol0}
\mathrm{Vol}^{(n)}_{k,\ell} = \frac{(-1)^{k(\ell+1)}}{(nk)!} \sum_{0 \leq j_1 < \cdots < j_\ell \leq n-1 } ( e^{2 \pi \iota j_1/n} + \cdots + e^{2 \pi \iota j_\ell/n} )^{nk}.
\end{align}
\end{cor}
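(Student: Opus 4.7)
The plan is to extract the coefficient of $T^{nk}e^{-\lambda\ell}$ from the closed-form expression for $Z(\lambda,T)$ in Theorem \ref{thm:arrau} and reduce it to the stated formula by exploiting the fact that the $n$-th roots of $\pm 1$ sum to zero.

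The first step is expansion. For each $\theta=(\theta_1,\theta_2)$, let $R_{\theta_2}:=\{z\in\mathbb{C}:z^n=(-1)^{\theta_2}\}$. Expanding the product in \eqref{eq:arrau} by distributing over subsets $S\subseteq R_{\theta_2}$ and writing $\ell=n-|S|$, I obtain
\begin{equation*}
\prod_{z\in R_{\theta_2}}\bigl(e^{Tz}-(-1)^{\theta_1}e^{-\lambda}\bigr)=\sum_{\ell=0}^n (-1)^{(\theta_1+1)\ell}e^{-\lambda\ell}\sum_{\substack{S\subseteq R_{\theta_2}\\|S|=n-\ell}}e^{T\sum_{z\in S}z}.
\end{equation*}
Expanding the inner exponential as a power series in $T$ then extracts the coefficient of $T^{nk}$ as $\frac{1}{(nk)!}(\sum_{z\in S}z)^{nk}$. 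Substituting into \eqref{eq:arrau} gives $\mathrm{Vol}_{k,\ell}^{(n)}$ as a finite sum over $\theta\in\{0,1\}^2$ and over subsets $S$.

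The second step is to perform the inner sum over $\theta_1\in\{0,1\}$. Since $\theta_1$ only enters through the sign $(-1)^{(\theta_1+1)(\theta_2+n+1+\ell)}$, the two terms collapse to $1+(-1)^{\theta_2+n+\ell+1}$, which vanishes unless $\theta_2$ has the opposite parity to $n+\ell+1$, and equals $2$ otherwise, cancelling the prefactor $1/2$. Denoting by $\theta_2^*$ the unique element of $\{0,1\}$ with $\theta_2^*\equiv n+\ell+1\pmod 2$, this leaves
\begin{equation*}
\mathrm{Vol}_{k,\ell}^{(n)}=\frac{1}{(nk)!}\sum_{\substack{S\subseteq R_{\theta_2^*}\\|S|=n-\ell}}\Bigl(\sum_{z\in S}z\Bigr)^{nk}.
\end{equation*}

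The third step is to reindex to subsets of size $\ell$ in $n$-th roots of unity. Since $\sum_{z\in R_{\theta_2^*}}z=0$, we have $\sum_{z\in S}z=-\sum_{z\notin S}z$, so raising to the $nk$-th power introduces a factor $(-1)^{nk}$ while replacing the sum over $|S|=n-\ell$ by a sum over $|T|=\ell$. In the case $\theta_2^*=1$, I additionally factor $\eta:=e^{\pi\iota/n}$ out of each element of $R_1=\eta\cdot R_0$; since $|T|=\ell$, this contributes $\eta^{\ell\cdot nk}=(-1)^{k\ell}$, and the inner sum becomes the desired one over $n$-th roots of unity.

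The remaining task, which I expect to be the most delicate bookkeeping, is to verify that the resulting sign is $(-1)^{k(\ell+1)}$ in both parity cases of $n+\ell$. In the case $n+\ell$ odd ($\theta_2^*=0$) the sign is $(-1)^{nk}$, and since $n-\ell-1$ is then even, $(-1)^{nk}=(-1)^{k(\ell+1)}$. In the case $n+\ell$ even ($\theta_2^*=1$) the sign is $(-1)^{nk+k\ell}=(-1)^{k(n+\ell)}\cdot(-1)^{k(\ell+1)-k(\ell+1)}$; since $n-\ell$ is then even, $(-1)^{k(n+\ell)}=(-1)^{2k\ell}=1$, and we again recover $(-1)^{k(\ell+1)}$ after noting $(-1)^{k(n+1)}=(-1)^{k(\ell+1)}$. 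This yields the corollary.
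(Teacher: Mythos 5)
Your strategy coincides with the paper's own proof: expand the product in Theorem \ref{thm:arrau} over subsets of $R_{\theta_2}$, extract the coefficient of $T^{nk}e^{-\lambda\ell}$, kill three of the four $\theta$-terms by a parity argument, and pass to complementary subsets using that the roots sum to zero. Steps one and two are correct, up to a wording slip: the $\theta_1$-sum gives $1+(-1)^{\theta_2+n+\ell+1}$, which survives precisely when $\theta_2$ has the \emph{same} parity as $n+\ell+1$; your displayed choice $\theta_2^*\equiv n+\ell+1\pmod 2$ is the right one, but your prose says the opposite.

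The genuine error is the rotation factor in step three. Writing your size-$\ell$ subset of $R_1$ as $\eta\mathcal{S}'$ with $\mathcal{S}'\subseteq R_0$ and $\eta=e^{\pi\iota/n}$, linearity gives $\sum_{z\in\eta\mathcal{S}'}z=\eta\sum_{w\in\mathcal{S}'}w$: the factor $\eta$ comes out of the sum \emph{once}, not once per element. Hence
\begin{equation*}
\Bigl(\sum_{z\in\eta\mathcal{S}'}z\Bigr)^{nk}=\eta^{nk}\Bigl(\sum_{w\in\mathcal{S}'}w\Bigr)^{nk}=(-1)^{k}\Bigl(\sum_{w\in\mathcal{S}'}w\Bigr)^{nk},
\end{equation*}
not $(-1)^{k\ell}(\cdots)^{nk}$. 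With your factor, the total sign in the case $n+\ell$ even comes out as $(-1)^{k(n+\ell)}=+1$, which is not $(-1)^{k(\ell+1)}$ in general (take $k$ odd, $\ell$ even), and your closing sentence --- recovering $(-1)^{k(\ell+1)}$ ``after noting $(-1)^{k(n+1)}=(-1)^{k(\ell+1)}$'' --- does not follow from anything you computed; $(-1)^{k(n+1)}$ is in fact the correct sign, which you never derived. With the correct factor $(-1)^k$ the total sign is $(-1)^{nk+k}=(-1)^{k(n+1)}$, and since $n\equiv\ell\pmod 2$ in this case, this equals $(-1)^{k(\ell+1)}$, closing the argument. This is exactly how the paper carries the factor, as $e^{\theta_2\pi\iota N/n}=(-1)^{\theta_2 k}$ at $N=nk$ in \eqref{eq:ancient}--\eqref{eq:ancient2}; the only other (harmless) difference is that the paper performs the full $\theta$-sum at the end via the three-vs-one identity \eqref{eq:kid}, whereas you collapse the $\theta_1$-sum first.
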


Our next result gives the fine asymptotics of $\mathrm{Vol}^{(n)}_{k,\ell}$ as $(n,k,\ell)$ tend to infinity jointly under the scaling limit $k =[pn],\ell=[\tau n]$. Since the average distance between consecutive beads in an $(n,k,\ell)$ configuration is $1/k$, we consider the rescaled volumes $k^{nk}\mathrm{Vol}^{(n)}_{k,\ell}$, which may be thought of as measuring volumes of configurations on $n$ strings of length $k$, so that the beads are at unit density. We find that $k^{nk}\mathrm{Vol}^{(n)}_{k,\ell}$ behaves exponentially in the number $nk$ of beads.
\begin{thm}
 \label{thm:cambridge springs}
For $p > 0$ and $\tau \in (0,1)$ let $k = [p n]$ and $\ell = [\tau n]$. Then we have  
\begin{align} \label{eq:ordered9}
\lim_{n \to \infty}  \left( \frac{ e \sin( \pi \ell/n) }{ \pi } \right)^{-nk}k^{nk} \mathrm{Vol}^{(n)}_{k,\ell} = \frac{ e^{ p \pi^2/6}}{ \sqrt{2 \pi p}} \mathcal{P}( e^{ - p q_\tau^+ }) \mathcal{P}( e^{ -p q_\tau^-} ),
\end{align}
where $q_\tau^{\pm} = 2 \pi^2 \pm 2 \pi^2 \iota \frac{\cos(\pi\tau)}{\sin(\pi\tau)}$, and $\mathcal{P}(s) := \sum_{n \geq 0}p(n)s^n$ is the generating function of the integer partitions. 
\end{thm}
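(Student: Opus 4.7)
The plan is to extract asymptotics directly from the explicit formula of Corollary \ref{cor:arrau}, which reads $\mathrm{Vol}^{(n)}_{k,\ell} = \tfrac{(-1)^{k(\ell+1)}}{(nk)!} S_{n,k,\ell}$ where $S_{n,k,\ell} := \sum_J \sigma(J)^{nk}$ is the inner sum over $\ell$-subsets $J \subset \mathbb{Z}_n$ and $\sigma(J) := \sum_{j \in J} \omega^j$ with $\omega := e^{2 \pi \iota /n}$. The first step is to identify the dominant contributions: the triangle inequality shows that $|\sigma(J)|$ is maximised, with value $\sin(\pi \ell/n)/\sin(\pi/n)$, precisely at the $n$ consecutive arcs $J_a := \{a,a+1,\ldots,a+\ell-1\} \pmod{n}$, for which $\sigma(J_a) = e^{2 \pi \iota a/n} e^{\pi \iota (\ell-1)/n} \sin(\pi \ell/n)/\sin(\pi/n)$. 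Because $\sigma(J+a)^{nk} = e^{2 \pi \iota ak} \sigma(J)^{nk} = \sigma(J)^{nk}$ (as $ak \in \mathbb{Z}$), one partitions the sum according to the nearest arc $a^\star(J)$ and obtains $S_{n,k,\ell} = n \sum_{J :\, a^\star(J) = 0} \sigma(J)^{nk}$. A uniform gap bound of the form $|\sigma(J)| \leq (1-\epsilon)\sin(\pi\ell/n)/\sin(\pi/n)$ for $J$ macroscopically far from every arc then confirms that such configurations contribute a vanishing fraction.

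Configurations $J$ near $J_0 := \{0,1,\ldots,\ell-1\}$ are parameterised bijectively by a pair of integer partitions $(\lambda,\mu)$ via a Maya-diagram construction: the right-edge partition $\lambda$ places the $i$th rightmost particle at $\ell - i + \lambda_i$, and the left-edge partition $\mu$ places the $i$th leftmost particle at $i - 1 - \mu_i \pmod{n}$. Writing $\sigma(J) = \sigma(J_0) + \Delta$ and Taylor-expanding $\omega^{\pm m} - 1 = \pm 2 \pi \iota m/n + O(m^2/n^2)$ yields
\[
\Delta \;=\; \tfrac{2 \pi \iota}{n} \bigl( e^{2 \pi \iota \tau}|\lambda| - |\mu| \bigr) + o(1/n).
\]
Combining with the asymptotic $\sigma(J_0) \sim e^{\pi \iota \tau} n \sin(\pi\tau)/\pi$ and the expansion $\log(1+x) = x + O(x^2)$ gives
\[
nk \log \frac{\sigma(J)}{\sigma(J_0)} \;\longrightarrow\; -p q_\tau^-|\lambda| - p q_\tau^+|\mu|,
\]
with higher-order Taylor corrections scaling as $O(|\lambda|^2/n) \to 0$. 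Since $\operatorname{Re}(pq_\tau^\pm) = 2\pi^2 p > 0$ ensures $|e^{-pq_\tau^\pm}| < 1$, dominated convergence yields
\[
\sum_{J :\, a^\star(J) = 0} \frac{\sigma(J)^{nk}}{\sigma(J_0)^{nk}} \;\longrightarrow\; \mathcal{P}(e^{-pq_\tau^-}) \, \mathcal{P}(e^{-pq_\tau^+}).
\]

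The stated limit then assembles from three ingredients: (i) Stirling's formula $(nk)! \sim \sqrt{2\pi nk}\,(nk/e)^{nk}$; (ii) the refinement $\sin(\pi/n)^{-nk} = (n/\pi)^{nk} \exp(p\pi^2/6 + o(1))$ from $\sin(\pi/n) = \tfrac{\pi}{n}(1 - \tfrac{\pi^2}{6n^2} + O(n^{-4}))$; and (iii) the sign identity $(-1)^{k(\ell+1)} \sigma(J_0)^{nk} = (\sin(\pi\ell/n)/\sin(\pi/n))^{nk}$, coming from $(-1)^{k(\ell+1) + k(\ell-1)} = 1$. After rescaling by $k^{nk}(e\sin(\pi\ell/n)/\pi)^{-nk}$, the prefactor $n$ from the block count combines with $1/\sqrt{2\pi nk} = 1/(n\sqrt{2\pi p})$ from Stirling to produce $1/\sqrt{2\pi p}$, yielding the claimed limit. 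The principal obstacle is twofold: first, one must rigorously control the contribution of configurations not near any arc (handled via the uniform gap estimate, which produces a super-exponential penalty easily dominating the entropy $\log\binom{n}{\ell}$); and second, one must justify the interchange of limit and sum over $(\lambda,\mu)$, which requires truncating at $|\lambda|,|\mu| \leq M$ (since the Maya parameterisation only behaves linearly for small excitations) and controlling the tail using the convergence of $\mathcal{P}$ on the open unit disk.
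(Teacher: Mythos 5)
Your proposal is correct and follows essentially the same route as the paper's proof: translation-invariance to reduce to one representative per orbit (the paper's ``centered'' sets, your nearest-arc classes, both yielding the factor $n$), the two-partition Maya-type parameterisation (the paper's Lemma \ref{lem:corred}), the Taylor expansion producing $e^{-pq_\tau^-|\lambda^+| - pq_\tau^+|\lambda^-|}$ (Lemma \ref{lem:shalom}), the uniform gap bound for far-from-arc sets (Lemma \ref{lem:kontrol}), truncation of the partition sum, and the final Stirling plus $(\sin(\pi/n)/(\pi/n))^{nk}\to e^{-p\pi^2/6}$ assembly. The paper implements your ``dominated convergence'' step concretely by truncating at $|\lambda^\pm|\le\log n$ and using the $e^{O(\sqrt{m})}$ bound on the number of partitions of mass $m$, but this is a detail of execution, not of approach.
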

The exact form of the constant on the right-hand-side in \eqref{eq:ordered9} appears to be merely a curiosity, with the more important aspect of the formula being the characterisation of the rate of exponential growth of the rescaled volume $k^{nk}\mathrm{Vol}_{k,\ell}^{(n)}$ in the number $nk$ of beads. Indeed, taking logarithmic asymptotics through Theorem \ref{thm:cambridge springs}, we immediately obtain the following corollary characterising the free energy of the bead model:

\begin{cor} \label{cor:free}
For $p > 0$ and $\tau \in (0,1)$ let $k = [p n]$ and $\ell = [\tau n]$. Then we have 
\begin{align} \label{eq:ordered100}
\lim_{n \to \infty} \frac{1}{nk} \log \left\{ k^{nk} \mathrm{Vol}^{(n)}_{k,\ell} \right\} =  1 + \log \sin( \pi \tau) - \log \pi.
\end{align}
That is, the per-bead free energy of the unit-density bead model at tilt $\tau$ is $1 + \log \sin( \pi \tau) - \log \pi$.
\end{cor}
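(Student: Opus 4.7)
The plan is to extract Corollary \ref{cor:free} directly from Theorem \ref{thm:cambridge springs} as a logarithmic asymptotic. I would begin by rearranging the conclusion of that theorem as
\[ k^{nk} \mathrm{Vol}^{(n)}_{k,\ell} = \Bigl( \frac{e \sin(\pi \ell/n)}{\pi} \Bigr)^{nk} R_n, \qquad R_n \longrightarrow R_\infty := \frac{e^{p\pi^2/6}}{\sqrt{2\pi p}}\, \mathcal{P}(e^{-pq_\tau^+})\,\mathcal{P}(e^{-pq_\tau^-}). \]
Taking a logarithm and dividing by $nk$ then gives
\[ \frac{1}{nk} \log\bigl\{ k^{nk} \mathrm{Vol}^{(n)}_{k,\ell} \bigr\} \;=\; \log\Bigl( \frac{e \sin(\pi \ell/n)}{\pi} \Bigr) + \frac{\log R_n}{nk}. \]
As $n \to \infty$, the first term converges to $\log(e \sin(\pi\tau)/\pi) = 1 + \log\sin(\pi\tau) - \log\pi$ by continuity (using $\ell/n \to \tau$ and $\pi\tau \in (0,\pi)$), which is precisely the target expression. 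The remainder of the plan therefore reduces to showing the second term vanishes, and this in turn reduces to verifying $0 < R_\infty < \infty$ so that $\log R_n$ is bounded.

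Finiteness is quick: for $\tau \in (0,1)$ I would observe $\mathrm{Re}(q_\tau^\pm) = 2\pi^2 > 0$, so $|e^{-pq_\tau^\pm}| = e^{-2p\pi^2} < 1$ for $p > 0$; this places the argument strictly inside the disc of convergence of $\mathcal{P}$. For strict positivity, since $q_\tau^- = \overline{q_\tau^+}$, the two factors $\mathcal{P}(e^{-pq_\tau^\pm})$ are complex conjugates and their product equals $|\mathcal{P}(e^{-pq_\tau^+})|^2$. The cleanest way to show this modulus is nonzero is to invoke Euler's product $\mathcal{P}(s) = \prod_{m \geq 1}(1-s^m)^{-1}$, valid for $|s| < 1$, in which every factor is nonvanishing. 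Together with positivity of the prefactor $e^{p\pi^2/6}/\sqrt{2\pi p}$, this yields $R_\infty > 0$.

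The main, and essentially only, obstacle is this positivity check. Once it is secured, the rest of the argument is simply continuity of $\log\sin$ at the interior point $\tau \in (0,1)$ and the routine observation that $\log R_n / (nk) \to 0$ when $R_n$ tends to a positive finite limit. Positivity of $\mathrm{Vol}^{(n)}_{k,\ell}$ for all large $n$ (so that the logarithm in the corollary is defined) follows automatically from $R_\infty > 0$.
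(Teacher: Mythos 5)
Your proposal is correct and follows exactly the route the paper takes: the paper derives Corollary \ref{cor:free} by "taking logarithmic asymptotics through Theorem \ref{thm:cambridge springs}" with no further argument, and your verification that the limiting constant is finite and strictly positive (via $\mathrm{Re}(q_\tau^\pm)=2\pi^2>0$, the conjugate-pair observation, and Euler's product for $\mathcal{P}$) correctly supplies the only detail the paper leaves implicit.
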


Corollary \ref{cor:free} agrees with a formula recently conjectured by Shlyakhtenko and Tao \cite{ST} in the free probability literature, and provides a statistical physics explanation for Theorem 1.7 of \cite{ST}. We discuss this connection further in Section \ref{sec:free}.

\subsection{Random bead configurations on $\mathbb{T}_n$}
In this section we study random bead configurations and the associated random occupation processes. One natural way to obtain a random bead configuration may be to fix integers $(k,\ell)$, and sample a bead configuration uniformly from the set of bead configurations on $\mathbb{T}_n$ with $k$ beads per string and occupation number $\ell$.  While a simple choice, unfortunately such random configurations do not have a simple correlation structure. 

Instead, it turns out to be more fruitful to instead sample random bead configurations according to their contribution to the partition function $Z(\lambda,T)$. Indeed, we will see shortly that such configurations form determinantal processes. This phenomenon --- i.e.\ wherein a model gains a more tractable correlation structure after the randomisation of a parameter --- is pervasive in integrable probability. Perhaps the most famous example is in Baik, Deift and Johansson \cite{BDJ}, where it is shown that the cycle lengths of a uniformly chosen permutation of $\mathcal{S}_N$, where $N$ is Poisson distributed, have a tractable determinantal structure.\\

As mentioned above, we will sample bead configurations according to their contribution to the partition function $Z(\lambda,T)$. Let us clarify a small point:
\begin{itemize}
\item There is a one-to-one correspondence between non-constant occupation processes and non-empty bead configurations: the beads $(t,h)$ correspond to the discontinuities of $(X_t)_{t \in [0,1)}$ via \eqref{eq:b}.
\item There are $2^n$ occupation processes in correspondence with the empty bead configuration, these are the constant occupation processes, i.e.\ when, for some $A \subseteq \mathbb{Z}_n$, $X_t = A$ for all $t \in [0,1)$. 
\end{itemize}
Recall \eqref{eq:pf000}. 
We now define a probability measure $\mathbf{P}_n^{\lambda,T}$ on occupation processes on $\mathbb{T}_n$ as follows:
\begin{itemize}
\item Sample a pair of random integers $(K,L)$ according to the probability mass function 
\begin{align*}
\mathbf{P}_n^{\lambda,T} ( K=k, L = \ell) := T^{nk} e^{ - \lambda \ell} \mathrm{Vol}_{k,\ell}^{(n)}/Z(\lambda,T).
\end{align*}
\item On the event we obtain $(K,L)=(k,\ell)$ with $k \geq 1$, choose a bead configuration uniformly from the set $\mathcal{W}_{k,\ell}^{(n)}$ of $(n,k,\ell)$ configurations (which as we mentioned in the introduction may be regarded as a subset of $[0,1)^{nk})$, and let $(X_t)_{t \in [0,1)}$ be the associated occupation process.
\item On the event we obtain $(K,L)=(k,\ell)$ with $k =0$, choose a random subset $A \subseteq \mathbb{Z}_n$ with $\# A = \ell$ uniformly from the $\binom{n}{\ell}$ possible, and now set $X_t = A$ for all $t \in [0,1)$.
\end{itemize}
It turns out the probabilities of mixed events such as 
\[\{ \text{$(t_1,h_1)$ is occupied, $(\mathrm{d}t_2,h_2)$ contains a bead} \}\] have a highly tractable structure under the probability measure $\mathbf{P}_n^{\lambda,T}$. This correlation structure is best defined in terms of the \textbf{mixed correlation functions}. Before defining these, we recall that a \textbf{signed (resp.\ complex) probability measure} is a countably additive real-valued (resp.\ complex-valued) function on a sigma algebra whose total mass is one. 

\begin{df}[Mixed correlation functions] \label{df:mixed}
Suppose $\mathbf{P}$ is a complex measure on occupation processes on one of the semi-discrete Abelian groups of the form $\mathbb{T} = R \times Z$, where $R = [0,1)$ or $R = \mathbb{R}$, and $Z = \mathbb{Z}_n$ or $Z = \mathbb{Z}$. Let $(w_1,\ldots,w_N)$ be points of the form $w_i = (\alpha_i,y_i) = (\alpha_i,t_i,h_i)$, where $\alpha_i \in \{b,o,u\}$ and $y_i = (t_i,h_i) \in \mathbb{T}$. Write $i \in \mathcal{B},\mathcal{O},\mathcal{U}$ according to whether $\alpha_i = b/o/u$ respectively. The \textbf{mixed correlation functions} $p_N:( \{b,o,u\} \times \mathbb{T})^N \to \mathbb{C}$ associated with $\mathbf{P}$ are defined via
\begin{align*}
&p_N(w_1,\ldots,w_N) \prod_{i \in \mathcal{B}} \mathrm{d}y_i\\
& := \mathbf{P}( \text{Each $\mathrm{d}y_i$ with $i \in \mathcal{B}$ contains bead, each $y_i$ with $i \in \mathcal{O}$  occ., each $y_i$ with $i \in \mathcal{U}$ unocc.}).
\end{align*} 
\end{df}

The following result says that the probability measure $\mathbf{P}_n^{\lambda,T}$ for occupation processes on $\mathbb{T}_n$ may be realised as a linear combination of four signed probability measures on bead configurations with a rich determinantal structure. 

\begin{thm}\label{thm:detnew}
For $\theta \in \{0,1\}^2$, write $Z^\theta(\lambda,T) :=  c_\theta^{\lambda,n} \det( I + T C^{\lambda+\theta_1 \pi \iota,\theta_2} )$ for the contribution of the $\theta$ term to the partition function in \eqref{eq:prufer}. Then we may write $\mathbf{P}_n^{\lambda,T}$ as an affine combination
\begin{align} \label{eq:Psum0}
\mathbf{P}_n^{\lambda,T} = \sum_{ \theta \in \{0,1\}^2 } \frac{Z^\theta(\lambda,T)}{Z(\lambda,T)} \mathbf{P}_n^{\lambda+\theta_1\pi \iota, \theta_2 ,T},
\end{align}
of signed probability measures. For $\beta$ and $T$ such that $\beta + Tz \notin 2 \pi \iota \mathbb{Z}$ for all $z$ in $\{ z^n = (-1)^{\theta_2} \}$, $\mathbf{P}_n^{\beta, \theta_2 ,T}$ is a complex probability measure on bead configurations with mixed correlation functions
given by 
\begin{align} \label{eq:detnew}
p_N^{\beta,\theta_2,T}( w_1,\ldots,w_N) = \det_{i,j=1}^N H_{\alpha_i}^{\beta,\theta_2,T}(y_j - y_i),
\end{align}
where for $\alpha \in \{b,o,u\}$, $t \in (-1,1)$ and $h \in \mathbb{Z}$ we have
\begin{align} \label{eq:detnewkernel}
H^{\beta,\theta_2,T}_\alpha(t,h) = (-1)^{ \mathrm{1}_{\{\alpha=o\}} } \frac{ T^{\mathrm{1}_{\{\alpha = b\}}} }{n} \sum_{ z^n = (-1)^{\theta_2} } z^{\mathrm{1}_{\{\alpha = b\}}-h} \frac{ e^{ - (\beta+Tz)[t]_\alpha }}{ 1 - e^{ - (\beta+Tz)} } .
\end{align}
Here $[t]_\alpha := t + \mathrm{1}_{\{t<0\}} + \mathrm{1}_{\{\alpha = o\}}\mathrm{1}_{\{t=0\}}$.

When $\beta = \lambda+ \theta_1 \pi \iota$ for $\lambda \in \mathbb{R}$ and $\theta_1 \in \{0,1\}$, $\mathbf{P}_n^{\beta, \theta_2 ,T}$ is a signed probability measure.
\end{thm}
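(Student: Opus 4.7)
The theorem asserts three things: (i) the affine decomposition \eqref{eq:Psum0} of $\mathbf{P}_n^{\lambda,T}$ into four components indexed by $\theta \in \{0,1\}^2$, (ii) the determinantal form \eqref{eq:detnew}--\eqref{eq:detnewkernel} of the mixed correlation functions of each $\mathbf{P}_n^{\beta,\theta_2,T}$, and (iii) the real-valuedness of $\mathbf{P}_n^{\beta,\theta_2,T}$ when $\beta = \lambda+\theta_1\pi\iota$. The plan is to handle these in order, with the bulk of the work devoted to extending the determinantal structure from pure bead observations to the full $\{b,o,u\}$ alphabet.

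For (i), I would interpret each summand $c_\theta^{\lambda,n}\det(I+TC^{\lambda+\theta_1\pi\iota,\theta_2})$ in Theorem \ref{thm:main} as the partition function of a complex $L$-ensemble on bead configurations, with $L = TC^{\beta,\theta_2}$. Expanding the Fredholm determinant via \eqref{eq:fredholm}, the indicator $\mathrm{1}_{h'=h+1}$ inside $C^{\beta,\theta_2}$ forces the permutations contributing to $\det_{i,j}C^{\beta,\theta_2}(y_i,y_j)$ to be bijections between the bead set on string $h$ and that on string $h+1$, which in turn forces the support to lie in $\bigsqcup_{k,\ell}\mathcal{W}_{k,\ell}^{(n)}$. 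Comparing the resulting density on $\mathcal{W}_{k,\ell}^{(n)}$ configuration-by-configuration with the defining density $T^{nk}e^{-\lambda\ell}/Z(\lambda,T)$ of $\mathbf{P}_n^{\lambda,T}$, and invoking Theorem \ref{thm:main} to ensure that the $\theta$-weighted sum of such densities reconstructs $T^{nk}e^{-\lambda\ell}$, gives \eqref{eq:Psum0}.

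For the pure bead correlations ($\alpha_i = b$ for all $i$) in (ii), the standard $L$-ensemble formula $K = L(I+L)^{-1}$ delivers the correlation kernel once $L$ is diagonalized. I would use the Fourier basis $\phi_{j,m}(t,h) = e^{2\pi\iota jh/n}e^{2\pi\iota m t}$: integrating $e^{-\beta[t'-t]}$ against $e^{2\pi\iota m t'}$ over $[0,1)$ yields $C^{\beta,\theta_2}\phi_{j,m} = \frac{\tilde z_j}{\beta - 2\pi\iota m}\phi_{j,m}$, where $\tilde z_j := e^{(2j+\theta_2)\pi\iota/n}$ runs over the $n$-th roots of $(-1)^{\theta_2}$ (the twist $e^{\theta_2\pi\iota/n}$ in $C^{\beta,\theta_2}$ is precisely what produces these shifted roots). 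Hence $K$ has eigenvalue $\frac{T\tilde z_j}{(\beta+T\tilde z_j)-2\pi\iota m}$ on $\phi_{j,m}$, and re-summing via the Mittag-Leffler/Poisson identity $\sum_{m\in\mathbb{Z}}\frac{e^{2\pi\iota m u}}{a-2\pi\iota m}=\frac{e^{-a[u]}}{1-e^{-a}}$ ($u\in[0,1)$) reproduces exactly $H_b^{\beta,\theta_2,T}$ from \eqref{eq:detnewkernel}. The hypothesis $\beta+Tz\notin 2\pi\iota\mathbb{Z}$ is precisely the non-vanishing of the eigenvalues of $I+L$, which ensures invertibility.

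The main obstacle is accommodating the $o$- and $u$-observations, since occupation at $(t,h)$ is not a local event of the bead point process. My approach is to exploit the deterministic relation between beads and occupation on the torus: along each string $h$, the indicator $\mathrm{1}\{h\in X_t\}$ is piecewise constant, jumps down by one at each bead on string $h$, and jumps up by one at each bead on string $h-1$. Consequently, $(t,h)$ is occupied iff, reading backwards from $t$ along string $h$, the most recent bead on strings $h$ or $h-1$ is on string $h-1$; this can be written as a (signed, geometric) sum over beads in a neighbourhood to the left of $(t,h)$. Substituting these identities into the Fredholm expansion of $\det(I+L)$ and integrating the row/column of the determinant corresponding to an $o$- or $u$-observation converts it into an integrated bead row/column; the integration produces the modified offset $[t]_\alpha = t + \mathrm{1}\{t<0\} + \mathrm{1}\{\alpha=o\}\mathrm{1}\{t=0\}$ and the sign $(-1)^{\mathrm{1}\{\alpha=o\}}$ in \eqref{eq:detnewkernel}. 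A useful consistency check is that $H_o^{\beta,\theta_2,T}(0,0) + H_u^{\beta,\theta_2,T}(0,0) = \frac{1}{n}\sum_{\tilde z^n=(-1)^{\theta_2}}1 = 1$, which reflects the tautology that every point is either occupied or not. Finally, for (iii), when $\beta=\lambda+\theta_1\pi\iota$ with $\lambda\in\mathbb{R}$, the index set $\{\tilde z^n=(-1)^{\theta_2}\}$ is closed under complex conjugation and $e^{-\beta} = (-1)^{\theta_1}e^{-\lambda}\in\mathbb{R}$; hence both the configuration-by-configuration density of $\mathbf{P}_n^{\beta,\theta_2,T}$ and the kernels $H_\alpha^{\beta,\theta_2,T}$ are invariant under complex conjugation, so $\mathbf{P}_n^{\beta,\theta_2,T}$ is real-valued, i.e.\ signed.
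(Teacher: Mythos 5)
Your decomposition of the proof into three pieces is the right organization, and parts (i) and (iii) are essentially sound (part (i) needs the pointwise identity, not just the integrated one: you invoke Theorem \ref{thm:main}, but that only gives equality of total masses; to compare densities ``configuration-by-configuration'' you actually need the paper's pointwise Theorem \ref{thm:ckast}, i.e.\ that $\det_{i,j}C^{\beta,\theta_2}(y_j-y_i)$ evaluates to $(\text{explicit constant})\cdot e^{-\lambda\ell}$ on a given bead configuration --- a nontrivial computation via Lemma \ref{lem:succession}). Your derivation of $H_b$ via the $L$-ensemble formula $K=L(I+L)^{-1}$ and the Fourier diagonalization is also correct in outline and matches the paper's Lemma \ref{lem:eigen} (though note your Mittag-Leffler identity as written has a sign that will reproduce $e^{-a(1-u)}$ rather than $e^{-au}$; this washes out once you track conventions carefully but signals the computation hasn't actually been run).

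The genuine gap is in the mixed $\{b,o,u\}$ correlations. You propose to write the occupation indicator $\mathrm{1}\{h\in X_t\}$ as a ``(signed, geometric) sum over beads in a neighbourhood to the left'' and then ``substitute into the Fredholm expansion... integrating the row/column.'' This does not work as stated. The occupation status at $(t,h)$ is determined by which of strings $h$ or $h-1$ carries the \emph{nearest} bead, and ``nearest'' is a nonlocal condition: it requires that there be \emph{no} bead on either string in an intervening arc, whose length is itself random and can be of order $1$. This is not a sum over beads; it is a gap event, and gap events for a determinantal process are given by Fredholm determinants of restricted kernels, not by inserting a row into an existing determinant. To make your plan rigorous you would still need to prove the combinatorial core of the paper's argument: Lemma \ref{lem:new det2}, which extends the Kasteleyn-type determinant $\det_{i,j}A^\zeta(w_i,w_j)$ to the replicated torus $\mathbb{T}_{n,*}=\{b,o,u\}\times\mathbb{T}_n$ and shows (by a careful interlacing-and-sign analysis, à la Lemma \ref{lem:succession} applied block by block with the block sizes now augmented by the $o$- and $u$-queries) that the extended determinant detects not only beads but also occupation and vacancy simultaneously; and then Lemma \ref{lem:inversion}, which computes the resolvent kernel $J$ satisfying $J(I+TC)=C$ directly on $\mathbb{T}_{n,*}$ --- the proof of which hinges on an essential cancellation between the $\alpha''=o$ and $\alpha''=u$ contributions in the composition $JC$ that your proposal does not anticipate. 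Your consistency check $H_o(0,0)+H_u(0,0)=1$ is a good sanity test and is indeed correct, but it is far from establishing \eqref{eq:detnew}; the determinantal structure of the mixed correlations is precisely the nonobvious content of the theorem and cannot be deduced just from the pure-bead $L$-ensemble plus the tautology that occupation is determined by the beads.
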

We note that \eqref{eq:Psum0} and \eqref{eq:detnew} entail that the mixed correlation functions $p_N^{\lambda,T}$ of the probability measure $\mathbf{P}_n^{\lambda,T}$ take the form
\begin{align} \label{eq:detnew2}
p_N^{\lambda,T}(w_1,\ldots,w_N) :=\sum_{ \theta \in \{0,1\}^2 } \frac{Z^\theta(\lambda,T)}{Z(\lambda,T)} \det_{i,j=1}^N H^{\lambda+\theta_1 \pi \iota, \theta_2, T}_{\alpha_i} (y_j - y_i).
\end{align}


Theorem \ref{thm:detnew} may be regarded as a continuous generalisation of the complementation principle for discrete determinantal processes \cite{BOO, romik}, which states that if we have a random subset $X$ of a countable set $A$ for which $\mathbb{P}( x_1,\ldots,x_N \in X ) = \det_{i,j=1}^N (K(x_i,x_j))$ for some kernel $K:A \times A \to \mathbb{C}$, then $\mathbb{P}( x_1,\ldots,x_N \notin X ) = \det_{i,j=1}^N ( \mathrm{1}_{x_i =x_j} - K(x_i,x_j) )$. It is possible to show using $\frac{1}{n}\sum_{z^n = (-1)^{\theta_2}} z^h = \mathrm{1}_{h=0}$ for $h \in \mathbb{Z}_n$ that analogously we have
\begin{align*}
H_u^{\beta,\theta_2,T}(t,h) = \mathrm{1}_{(t,h) =(0,0)} - H_o^{\beta,\theta_2,T}(t,h).
\end{align*}
The kernel $H_b^{\beta,\theta_2,T}$ then somehow characterises the differential between the occupied and unoccupied regions.

\subsection{Random bead configurations on $\mathbb{R} \times \mathbb{Z}_n$} 
The remainder of the article is concerned with studying the complex probability measures $\mathbf{P}_n^{\beta,\theta_2,T}$ under a scaling limit where the parameter $T$ controlling the density of beads per string is sent to infinity. After a suitable rescaling, the limiting measures $\mathbf{P}^{n,\ell}$ that arise govern bead configurations on $\mathbb{R} \times \mathbb{Z}_n$, and were recently discovered by Gordenko \cite{gordenko} through studying scaling limits of Young tableaux. Gordenko proved the following:

\begin{thm}[Gordenko \cite{gordenko}] \label{thm:gordenko}
For each $\ell,n$ with $1 \leq \ell \leq n-1$ and a given density of beads per string, there is a unique Gibbs probability measure on bead configurations on $\mathbb{R} \times \mathbb{Z}_n$ with occupation number $\ell$. Under this probability measure, the associated occupation process $(X_t)_{t \in \mathbb{R}}$ is a continuous-time Markov chain, and has the law of $\ell$ independent Poisson walkers on the ring conditioned never to collide.
\end{thm}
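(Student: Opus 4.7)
The plan is to derive Gordenko's result by taking a scaling limit of the complex measures $\mathbf{P}_n^{\beta, \theta_2, T}$ constructed in Theorem~\ref{thm:detnew}. I would rescale time by $t \mapsto t/T$, so that $\mathbb{T}_n = [0,1) \times \mathbb{Z}_n$ is mapped to $[0,T) \times \mathbb{Z}_n$, which exhausts $\mathbb{R} \times \mathbb{Z}_n$ as $T \to \infty$. Condition on the occupation number being equal to $\ell$ and tune $\beta = \beta(T)$ so that the expected bead density per unit length in the rescaled coordinates converges to the prescribed density $\rho > 0$.

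With this scaling in place, I would analyse the kernels $H_\alpha^{\beta, \theta_2, T}$ of \eqref{eq:detnewkernel}. The inner sum runs over the $n$-th roots of $(-1)^{\theta_2}$, and after the substitution $t \mapsto t/T$ these roots enter through the exponentials $e^{-(\beta + Tz) t/T}$. A residue/saddle-point analysis then shows that for each fixed spatial displacement only the $\ell$ roots of largest real part survive in the limit, and moreover that the four contributions over $\theta \in \{0,1\}^2$ in \eqref{eq:detnew2} collapse to a single limiting determinantal kernel $\widetilde{H}_\alpha^{n,\ell}$ on $\mathbb{R} \times \mathbb{Z}_n$. This would simultaneously establish the existence of the limiting measure $\mathbf{P}^{n,\ell}$, exhibit its determinantal structure, and confirm that it is a genuine (not merely signed) probability measure.

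To identify the associated occupation process $(X_t)$ as $\ell$ Poisson walkers on $\mathbb{Z}_n$ conditioned never to collide, I would extract the Markov property and infinitesimal jump rates directly from $\widetilde{H}^{n,\ell}$. The fact that the $t$-dependence of the kernel factorises through the exponentials $e^{-(\beta+Tz)t}$ should yield, via the standard extended-kernel formalism for determinantal point processes, a semigroup identity $\widetilde{H}(s+t) = \widetilde{H}(s)\widetilde{H}(t)$, from which Markovianity follows. Comparing the one-step transition kernel with the Doob $h$-transform of $\ell$ independent Poisson walkers on $\mathbb{Z}_n$ by the Vandermonde-type harmonic function supported on non-colliding configurations — whose Karlin-McGregor/Lindström-Gessel-Viennot representation is also determinantal — should give the identification of the two processes.

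Existence of the Gibbs measure then falls out of this construction, and for uniqueness I would invoke a DLR-type argument: any translation invariant Gibbs measure on $\mathbb{R} \times \mathbb{Z}_n$ with occupation number $\ell$ and the prescribed density must, by the local interlacing constraints and the translation invariance, have finite-dimensional conditional distributions matching those of $\mathbf{P}^{n,\ell}$, so the two measures coincide. I expect the main obstacle to be the first step — proving that the sum over $\theta \in \{0,1\}^2$ of signed/complex measures genuinely collapses to a positive limit. The cancellations responsible for positivity on the torus are delicate, and one needs sufficiently uniform residue estimates to pass to the limit at the level of correlation functions rather than merely pointwise.
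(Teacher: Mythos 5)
This result is attributed to Gordenko~\cite{gordenko} and is \emph{cited}, not proved, in the paper. The paper's Section~\ref{sec:gordenko} does carry out a scaling-limit analysis close in spirit to yours, but its goal is to establish the correlation structure of Theorems~\ref{thm:gordnew} and~\ref{thm:gordnew2}, and it leans on Gordenko for the existence/uniqueness of the Gibbs measure and the identification with conditioned Poisson walkers. So your proposal is aimed at the wrong target: you are re-deriving a result the paper treats as input.

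That said, a few points in your outline diverge from how the paper actually executes its scaling limit, and some of them would cause genuine trouble if you pursued them. First, you propose to \emph{condition} the complex measure $\mathbf{P}_n^{\beta,\theta_2,T}$ on the occupation number being $\ell$. The paper does no such thing, and for good reason: conditioning a signed or complex measure on an event of possibly vanishing or complex mass is not a well-posed operation. Instead, the paper sets $\beta = pT$ with $p$ chosen so that exactly $\ell$ of the roots $z^n = (-1)^{\theta_2}$ satisfy $\mathrm{Re}(p+z)<0$, and then shows (Lemma~\ref{lem:only}) that the resulting limit measure is \emph{automatically} supported on occupation number $\ell$. Second, your claim that only ``the $\ell$ roots of largest real part survive in the limit'' is inaccurate. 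In the scaling limit both $\mathcal{L}_{n,\ell}$ and $\mathcal{R}_{n,\ell}$ survive, partitioned by the sign of the horizontal displacement $s$ in the kernel $H^{n,\ell}_\alpha(s,h)$ of~\eqref{eq:kergord}; the two subsets contribute on $\{s<0\}$ and $\{s>0\}$ respectively. Third, you describe the four $\theta \in \{0,1\}^2$ contributions as ``collapsing'' to one; in fact the paper does not sum over all four. It fixes a single $\theta_2$ (chosen by parity so that $\mathcal{L}_{n,\ell}$ is symmetric about the real axis, per Definition~\ref{df:rootsets}) and takes $\theta_1 = 0$ with $\beta = pT$ real, and shows that this single complex measure already converges to a genuine probability measure. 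Finally, the identification of $(X_t)$ with non-colliding Poisson walkers via a Doob $h$-transform is \emph{not} derived in the paper from a semigroup factorisation of the kernel; the paper computes the transition rates directly from the mixed correlation functions (Lemma~\ref{lem:rates}), and the Poisson-walker description and the uniqueness of the Gibbs measure are both taken from~\cite{gordenko}. Your DLR-uniqueness sketch is reasonable as a heuristic but is not carried out here and would require a separate argument.
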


With a view to describing the correlations under $\mathbf{P}^{n,\ell}$, consider the following subsets of the roots of unity:

\begin{df} \label{df:rootsets}
Given integers $\ell,n$ with $1 \leq \ell \leq n-1$, let $\theta_2 = n+\ell+1$ mod $2$. We define 
\begin{align*}
\mathcal{L}_{n,\ell} &:= \text{The $\ell$ elements of $\{z^n = (-1)^{\theta_2}\}$ with least real part} \\
\mathcal{R}_{n,\ell} &:= \{z^n = (-1)^{\theta_2}\}-\mathcal{L}_{n,\ell}.
\end{align*}
\end{df}
The parity $\theta_2$ in Definition \ref{df:rootsets} ensures that the two sets are well defined and symmetric with respect to the horizontal axis. See Figure \ref{fig:roots0} for a depiction of the sets $\mathcal{L}_{n,\ell}$ and $\mathcal{R}_{n,\ell}$. 
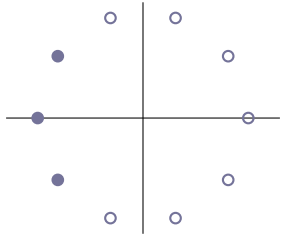
\begin{figure}[h!]
\centering
\begin{tikzpicture}[scale=0.7]
\draw (-2.6,0) -- (2.6,0);
\draw (0,-2.2) -- (0,2.2);
\draw [thick, CadetBlue] (36:2) circle [radius=0.1];
\draw [thick, CadetBlue] (-36:2) circle [radius=0.1];
\draw [thick, CadetBlue] (72:2) circle [radius=0.1];
\draw [thick, CadetBlue] (-72:2) circle [radius=0.1];
\draw [thick, CadetBlue] (108:2) circle [radius=0.1];
\draw [thick, CadetBlue] (-108:2) circle [radius=0.1];
\draw [thick, CadetBlue] (0:2) circle [radius=0.1];
\draw [thick, fill, CadetBlue] (180:2) circle [radius=0.1];
\draw [thick, fill, CadetBlue] (144:2) circle [radius=0.1];
\draw [thick, fill, CadetBlue] (-144:2) circle [radius=0.1];
\end{tikzpicture}
\caption{The sets $\mathcal{L}_{n,\ell}$ (solid circles) and $\mathcal{R}_{n,\ell}$ (hollow circles) depicted for $n = 10$ and $\ell = 3$.}
\label{fig:roots0}
\end{figure}
With Definition \ref{df:rootsets} at hand, for $\alpha \in \{b,o,u\}$ we now define the kernel $H^{n,\ell}_\alpha:\mathbb{R} \times \{-n+1,\ldots,n-1\} \to \mathbb{C}$ by 
\begin{align} \label{eq:kergord}
H^{n,\ell}_\alpha(s,h) = \begin{cases}
(-1)^{\mathrm{1}_{\{\alpha =o\}}} \frac{1}{n} \sum_{ z \in \mathcal{R}_{n,\ell} } z^{\mathrm{1}_{\{\alpha = b\}} -h} e^{-zs} \qquad &\text{if $\{ s > 0\}$ or $\{ \alpha \neq o, s=0\}$},\\
- (-1)^{\mathrm{1}_{\{\alpha =o\}}}  \frac{1}{n} \sum_{ z \in \mathcal{L}_{n,\ell} } z^{\mathrm{1}_{\{\alpha = b\}} -h} e^{-zs} \qquad &\text{if $\{ s<0\}$ or $\{\alpha=o,s=0\}$}.
\end{cases}
\end{align}

The following theorem is an extension and consolidation of results in Gordenko \cite{gordenko} describing the mixed correlation functions of $\mathbf{P}^{n,\ell}$. 

\begin{thm}[Generalisation of Theorems 4 and 5 in \cite{gordenko}] \label{thm:gordnew}
Write $\mathbf{P}^{n,\ell}$ for the law of the unique Gibbs probability measure on bead configurations on $\mathbb{R} \times \mathbb{Z}_n$ with occupation number $\ell$ and with density $\frac{1}{n} \frac{ \sin(\pi \ell/n)}{ \sin( \pi /n)}$ of beads per string. Then the mixed correlation functions of $\mathbf{P}^{n,\ell}$ are given by 
\begin{align} \label{eq:detnewgord}
p_N^{n,\ell}( w_1,\ldots,w_N) = \det_{i,j=1}^N H_{\alpha_i}^{n,\ell}(y_j - y_i).
\end{align}
\end{thm}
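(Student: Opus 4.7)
The strategy is to obtain Theorem \ref{thm:gordnew} as the $T \to \infty$ scaling limit of Theorem \ref{thm:detnew} under the spatial substitution $s = Tt$, which stretches the torus $[0,1) \times \mathbb{Z}_n$ into a copy of $[0,T) \times \mathbb{Z}_n$ and converges to $\mathbb{R} \times \mathbb{Z}_n$ in the limit. The parity choice $\theta_2 = n+\ell+1 \pmod{2}$ from Definition \ref{df:rootsets} is precisely the one for which the roots of $z^n = (-1)^{\theta_2}$ partition as $|\mathcal{L}_{n,\ell}| = \ell$ and $|\mathcal{R}_{n,\ell}| = n-\ell$ with both sets symmetric about the real axis, so that the limiting occupation number matches $\ell$.

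The first step is pointwise convergence of the kernel. After absorbing the Jacobian $T^{-\mathrm{1}_{\{\alpha=b\}}}$ from $\mathrm{d}t = T^{-1}\mathrm{d}s$ (which cancels the prefactor $T^{\mathrm{1}_{\{\alpha=b\}}}$ in \eqref{eq:detnewkernel}), the task reduces to analysing, for each root $z$ of $z^n = (-1)^{\theta_2}$, the behaviour of
\[
\frac{e^{-(\beta+Tz)[s/T]_\alpha}}{1-e^{-(\beta+Tz)}}
\]
as $T \to \infty$, with $\beta = \lambda + \theta_1 \pi \iota$ held fixed. For $s > 0$, the bracket $[s/T]_\alpha$ equals $s/T$ for large $T$, and the quotient tends to $e^{-zs}$ when $\mathrm{Re}(z) > 0$ and to $0$ when $\mathrm{Re}(z) < 0$. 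For $s < 0$, the bracket equals $1 + s/T$, and the quotient tends to $-e^{-zs}$ when $\mathrm{Re}(z) < 0$ (the sign flip arises from $1 - e^{-(\beta+Tz)} \sim -e^{-(\beta+Tz)}$ in this regime) and to $0$ when $\mathrm{Re}(z) > 0$. Thus the sum over roots localises to $\mathcal{R}_{n,\ell}$ for $s > 0$ and to $\mathcal{L}_{n,\ell}$ (with a sign flip) for $s < 0$, reproducing \eqref{eq:kergord} exactly.

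The second step is to identify the scaling limit with the Gibbs measure $\mathbf{P}^{n,\ell}$. Since \eqref{eq:Psum0} writes $\mathbf{P}_n^{\lambda,T}$ as an affine combination of four signed measures, I would first project onto the parity $L \equiv \ell \pmod{2}$ by an appropriate signed sum over $\theta_1 \in \{0,1\}$, and then pick $\lambda = \lambda(T)$ so that the expected occupation number tends to $\ell$ (using the explicit partition function from Theorem \ref{thm:arrau} to compute $-\partial_\lambda \log Z$). Kernel convergence then yields pointwise convergence of the rescaled mixed correlation functions to $\det_{i,j} H^{n,\ell}_{\alpha_i}(y_j - y_i)$. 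The resulting limit is automatically translation invariant, has occupation number $\ell$, and satisfies the Gibbs property on $\mathbb{R} \times \mathbb{Z}_n$; uniqueness in Gordenko's Theorem \ref{thm:gordenko} forces this limit to equal $\mathbf{P}^{n,\ell}$. The advertised density matches because the one-point bead correlation is $H^{n,\ell}_b(0,0) = \frac{1}{n}\sum_{z \in \mathcal{R}_{n,\ell}} z$, which by symmetry and a telescoping geometric sum over contiguous arcs of roots of unity equals $\frac{1}{n}\frac{\sin(\pi \ell/n)}{\sin(\pi/n)}$.

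The main obstacle is the signed-measure scaling limit. The individual measures $\mathbf{P}_n^{\lambda+\theta_1\pi\iota,\theta_2,T}$ are only signed or complex, so kernel convergence alone merely produces convergence of formal correlation functions; one must argue that the combination in \eqref{eq:Psum0}, once restricted to the correct parity of $\ell$, collapses in the limit to a single nonnegative component, with the other $\theta$-contributions vanishing in relative weight. The chief technical input is therefore a careful asymptotic analysis of the ratios $Z^\theta(\lambda,T)/Z(\lambda,T)$ from Theorem \ref{thm:arrau} as $T \to \infty$, together with an \emph{a posteriori} verification of nonnegativity of the limiting kernel minors. Once these are in hand, Gordenko's uniqueness theorem closes the argument.
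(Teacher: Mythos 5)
Your overall strategy --- take the horizontal $T\to\infty$ scaling limit of Theorem~\ref{thm:detnew} and identify the limit with $\mathbf{P}^{n,\ell}$ via Gordenko's uniqueness --- is the paper's route. However, there is a concrete gap in your kernel analysis, and it is not a cosmetic one.

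You analyse $\frac{e^{-(\beta+Tz)[s/T]_\alpha}}{1-e^{-(\beta+Tz)}}$ with $\beta=\lambda+\theta_1\pi\iota$ \emph{held fixed} as $T\to\infty$. With $\beta$ fixed, the sign of $\mathrm{Re}(\beta+Tz)$ is dominated by $T\,\mathrm{Re}(z)$, so the root-by-root limits you wrote are fine; the quotient localises on $\{\mathrm{Re}(z)>0\}$ for $s>0$ and $\{\mathrm{Re}(z)<0\}$ for $s<0$. But you then assert that this reproduces $\mathcal{R}_{n,\ell}$ and $\mathcal{L}_{n,\ell}$. That is false in general: by Definition~\ref{df:rootsets}, $\mathcal{L}_{n,\ell}$ is the $\ell$ roots of $z^n=(-1)^{\theta_2}$ with \emph{least} real part, and unless $\ell$ happens to equal the number of roots in the open left half-plane, the split $\{\mathrm{Re}(z)<0\}$ vs.\ $\{\mathrm{Re}(z)>0\}$ does not coincide with $\mathcal{L}_{n,\ell}$ vs.\ $\mathcal{R}_{n,\ell}$. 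In the example of Figure~\ref{fig:roots0} ($n=10$, $\ell=3$) several roots with negative real part lie in $\mathcal{R}_{n,\ell}$. So with fixed $\beta$ you only recover the single value of $\ell$ for which $\mathcal{L}_{n,\ell}$ happens to be the left half-plane roots, not arbitrary $\ell\in\{1,\ldots,n-1\}$.

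The fix --- and what the paper actually does --- is to take $\beta=pT$ growing \emph{linearly} in $T$ (not fixed, and not along some mild $\lambda(T)$). Then $\mathrm{Re}(\beta+Tz)=T\,\mathrm{Re}(p+z)$, the localisation criterion becomes $\mathrm{Re}(p+z)\gtrless 0$, and $p$ is the dial that selects exactly which $\ell$ roots fall to the left. You do notice in your second step that you must ``pick $\lambda=\lambda(T)$ so that the expected occupation number tends to $\ell$,'' which is the right instinct, but you never feed this back into your first step: the kernel limit has to be recomputed under the $\beta=pT$ scaling, since that is precisely what shifts the dividing line to $-p$ and produces $\mathcal{L}_{n,\ell}$ and $\mathcal{R}_{n,\ell}$ as in \eqref{eq:kergord}. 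Once you take $\beta=pT$, the extra factor $e^{-ps}$ appears in the limiting kernel, and one must observe (as the paper does) that it disappears from the mixed correlation determinants by diagonal conjugation. Your discussion of parity projection, vanishing relative weights of the other $\theta$-contributions, and \emph{a posteriori} nonnegativity is a more elaborate bookkeeping than the paper's treatment (which directly takes the scaling limit of a single $\mathbf{P}_n^{pT,\theta_2,T}$ with $\theta_2$ fixed by Definition~\ref{df:rootsets}), but none of it can be completed until the kernel localisation is corrected to the $\mathrm{Re}(p+z)$ criterion.
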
 
Theorem 4 (resp.\ Theorem 5) of Gordenko \cite{gordenko} corresponds to the special case of Theorem \ref{thm:gordnew} where $\mathcal{B} = \{1,\ldots,N\}$ (resp.\ $\mathcal{O} = \{1,\ldots,N\}$).

In Section \ref{sec:gordenko} we use Theorem \ref{thm:gordnew} to compute the stationary distribution and transition rates of the continuous-time Markov chain $(X_t)_{t \in \mathbb{R}}$ under $\mathbf{P}^{n,\ell}$. These results take shape in the form of the Vandermonde-type function 
\begin{align*}
\Delta(E) := \prod_{1 \leq j < k \leq \ell } | e^{2 \pi \iota h_k/n} - e^{2 \pi \iota h_j/n } | \qquad E = \{h_1,\ldots,h_\ell\},
\end{align*}
on subsets $E$ of $\mathbb{Z}_n$ of cardinality $\ell$. 
\begin{thm} \label{thm:gordnew2}
The transition rates of $(X_t)_{t \in \mathbb{R}}$ under $\mathbf{P}^{n,\ell}$ are given by 
\begin{align} \label{eq:rates0}
\lim_{t \downarrow 0} \frac{1}{t} \mathbf{P}^{n,\ell} ( X_t = E' | X_0 = E ) = \frac{ \Delta(E') }{ \Delta(E)} \qquad E' = E \cup \{h+1\} - \{h\} \text{ for } h+1 \notin E.
\end{align} 
The stationary distribution is given by 
\begin{align*}
\mathbf{P}^{n,\ell}(X_0 = E ) = \Delta(E)^2/n^\ell.
\end{align*}
\end{thm}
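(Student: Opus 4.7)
Both statements will follow from Theorem \ref{thm:gordnew}. For the stationary distribution, one observes that $\mathbf{P}^{n,\ell}(X_0 = E)$ is precisely the $N = n$ mixed correlation function at the points $w_i = (\alpha_i, 0, h_i)$, $i = 1, \ldots, n$, where $h_i = i - 1$ and $\alpha_i = o$ if $h_i \in E$, else $\alpha_i = u$. For the transition rate in \eqref{eq:rates0}, note that for small $t > 0$ the event $\{X_t = E'\}$ conditional on $\{X_0 = E\}$ (with $E' = E - \{h\} \cup \{h+1\}$ for $h \in E$, $h+1 \notin E$) requires exactly one bead in $(0,t]\times\{h\}$, so
\begin{equation*}
\lim_{t\downarrow 0} \tfrac{1}{t}\mathbf{P}^{n,\ell}(X_t = E' \mid X_0 = E) = \frac{\rho(E,h)}{\mathbf{P}^{n,\ell}(X_0 = E)},
\end{equation*}
where $\rho(E,h)$ is the intensity given by the $N = n + 1$ mixed correlation with the extra bead point $(b, 0^+, h)$.

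For the stationary distribution, the plan is to apply the complementation identity $H_u^{n,\ell}(0,h) = \mathrm{1}_{\{h=0\}} - H_o^{n,\ell}(0,h)$, which follows from $\tfrac{1}{n}\sum_{z^n = (-1)^{\theta_2}} z^h = \mathrm{1}_{\{h \equiv 0 \bmod n\}}$ (the analogue of the identity recorded in the discussion after Theorem \ref{thm:detnew}). This expresses the $n \times n$ matrix $M_{ij} = H_{\alpha_i}^{n,\ell}(0, h_j - h_i)$ in a form differing from $H_o^{n,\ell}$ only by the identity on the rows with $i \notin I_E$. After permuting indices so the $I_E$ indices come first, a Schur-complement calculation using the rank-$\ell$ factorisation $H_o^{n,\ell}(0, h_j - h_i) = \tfrac{1}{n} \sum_{z \in \mathcal{L}_{n,\ell}} z^{h_i - h_j}$ reduces $\det M$ to the $E \times E$ principal minor of $H_o^{n,\ell}(0,\cdot)$ itself. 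Since this principal minor is a principal minor of the rank-$\ell$ orthogonal projection onto $\mathrm{span}\{h \mapsto z^h : z \in \mathcal{L}_{n,\ell}\} \subset \mathbb{C}^{\mathbb{Z}_n}$, it equals $|\det W|^2/n^\ell$ for $W = (z^h)_{z \in \mathcal{L}_{n,\ell},\, h \in E}$. The claim for the stationary distribution then reduces to the generalised Vandermonde identity $|\det W|^2 = \Delta(E)^2$.

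For the transition rates, the $(n+1) \times (n+1)$ determinant defining $\rho(E,h)$ is expanded in the same way. Careful use of the piecewise definition of $H_\alpha^{n,\ell}$ at $s = 0^{\pm}$ produces entries in the extra column of the form $\tfrac{1}{n}\sum_{z \in \mathcal{R}_{n,\ell}}(\cdots)$ and entries in the extra row of the form $\tfrac{1}{n}\sum_{z \in \mathcal{L}_{n,\ell}}(\cdots)$. The same Schur-complement reduction then applies, the key observation being that the extra bead row and column replace $E$ by $E' = E - \{h\} \cup \{h+1\}$ in one of the two Vandermonde factors. One therefore expects $\rho(E,h) = (\det W_{E'})\overline{(\det W_E)}/n^\ell$ up to an explicit sign, and dividing by $\mathbf{P}^{n,\ell}(X_0 = E) = |\det W_E|^2/n^\ell$ yields $\Delta(E')/\Delta(E)$.

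The main obstacle is verifying the generalised Vandermonde identity $|\det W|^2 = \Delta(E)^2$: the magnitude of $\det W$ depends on the specific choice of $\mathcal{L}_{n,\ell}$ from Definition \ref{df:rootsets}, and the identity relies essentially on its conjugation-symmetric structure, enforced by the parity condition $\theta_2 = n + \ell + 1 \bmod 2$. Secondary technical points are the bookkeeping of signs coming from the row permutations in the Schur-complement step, and the careful evaluation of $H_\alpha^{n,\ell}$ at $s = 0$ when assembling the bead intensity $\rho(E,h)$.
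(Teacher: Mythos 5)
Your plan is workable and arrives at the right identities, but it takes a genuinely different (and heavier) route than the paper. You compute $\mathbf{P}^{n,\ell}(X_0=E)$ from the full $n\times n$ mixed determinant over all of $\mathbb{Z}_n$ with $o/u$ labels and then strip away the unoccupied rows by complementation plus a Schur-complement/rank reduction; similarly for the rate you use an $(n+1)\times(n+1)$ determinant. The paper instead first proves a support lemma (Lemma \ref{lem:only}: under $\mathbf{P}^{n,\ell}$ one has $\#X_t=\ell$ almost surely, shown by exactly the rank factorisation $K=BC$ you would need for your Schur step), after which $\{X_0=E\}=\{E\subseteq X_0\}$ a.s.\ and only an $\ell\times\ell$ determinant with all labels equal to $o$ is ever needed; the rate computation then uses an $(\ell+1)\times(\ell+1)$ determinant with $\ell$ occupied points and one bead. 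So the two approaches package the same rank-$\ell$ projection argument differently: yours inside a determinant reduction, the paper's once and for all as a statement about the support of the measure. Your route buys nothing extra here and costs more bookkeeping, but it is not wrong.

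Two calibration points. First, what you call the ``main obstacle'' --- the identity $|\det W|^2=\Delta(E)^2$ --- is not an obstacle: writing $z_k=\omega_0 e^{2\pi\iota(k-1)/n}$ for a fixed root $\omega_0$, one factors $\omega_0^{h_j}$ out of each row and is left with a genuine Vandermonde in the unimodular variables $e^{2\pi\iota h_j/n}$, whose modulus is $\Delta(E)$ by \eqref{eq:XX}; this is precisely the paper's Lemma \ref{lem:rootdet}, and the conjugation symmetry of $\mathcal{L}_{n,\ell}$ plays no role in the modulus computation. Second, the genuinely delicate step is the one you only gesture at: in the rate computation the kernel $H^{n,\ell}_\alpha(s,\cdot)$ is discontinuous at $s=0$, so the bead row and column must be evaluated as one-sided limits (the paper records the jump $\lim_{s\downarrow0}H_\alpha-\lim_{s\uparrow0}H_\alpha=(-1)^{\mathrm{1}_{\{\alpha=o\}}}\mathrm{1}_{\{h=\mathrm{1}_{\{\alpha=b\}}\}}$ and performs a column subtraction before letting $t\downarrow0$), and one must also check that the permutation signs for $\mathbf{h}$ and $\mathbf{h}'=\mathbf{h}+\mathbf{e}_\ell$ agree so that $\det(A^{\mathbf{h}})\overline{\det(A^{\mathbf{h}'})}$ really produces $+\Delta(E)\Delta(E')$ and not its negative. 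Those verifications are where the work of the proof actually lives; your outline should be completed there before it counts as a proof.
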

The Markov chain $(X_t)_{t \in \mathbb{R}}$ under $\mathbf{P}^{n,\ell}$ is similar to the well-studied totally asymmetric exclusion process (TASEP) on the ring, which is governed by the following dynamics:
under $\mathbf{P}^{n,\ell,\mathrm{TASEP}}$ we have $\ell$ particles at distinct locations of $\mathbb{Z}_n$, and a particle at $h \in \mathbb{Z}_n$ jumps to $h+1$ at rate $1$ if $h+1$ is unoccupied, and otherwise stays put. In other words, in parallel to \eqref{eq:rates0} we have 
\begin{align} \label{eq:ratesT0}
\lim_{t \downarrow 0} \frac{1}{t} \mathbf{P}^{n,\ell,\mathrm{TASEP}}( X_t = E' | X_0 = E ) = 1 \qquad E' = E \cup \{h+1\} - \{h\} \text{ for } h+1 \notin E.
\end{align} 
Despite this innocuous description of the transition rates of TASEP on the ring, it is notoriously difficult to provide formulas for the transition probabilities $\mathbf{P}^{n,\ell,\mathrm{TASEP}}( X_t = E' | X_0 = E )$ for fixed times $t>0$. A recent breakthrough on this front was made by Baik and Liu \cite{BL}, where they provide a complicated but explicit description of these transition probabilities in terms of contour integrals.

With this picture in mind, our last result provides an alternative description of $\mathbf{P}^{n,\ell,\mathrm{TASEP}}$ through $\mathbf{P}^{n,\ell}$ and an exponential martingale. To this end, we define the \textbf{traffic} of a subset $E$ of $\mathbb{Z}_n$ to be the number of elements of $E$ waiting behind a neighbour, i.e.:
\begin{align*}
\mathrm{Traffic}(E) := \# \{ h \in E: h+1 \text{ is also in $E$} \}.
\end{align*}
We now state our final result:

\begin{thm} \label{thm:tasep}
The probability measures $\mathbf{P}^{n,\ell,\mathrm{TASEP}}$ governing TASEP with $\ell$ particles on the ring $\mathbb{Z}_n$ may be recovered from $\mathbf{P}^{n,\ell}$ via the exponential martingale change of measure
\begin{align*}
\frac{\mathrm{d}\mathbf{P}^{n,\ell,\mathrm{TASEP}}}{\mathrm{d}\mathbf{P}^{n,\ell}} \Big|_{\mathcal{F}_t} = \frac{ \Delta(X_0)}{ \Delta(X_t)}  \exp \left\{ \int_0^t \left( \mathrm{Traffic}(X_s) - c_{n,\ell} \right) \mathrm{d}s \right\},
\end{align*}
where $c_{n,\ell} = \ell - \frac{1}{n} \frac{ \sin(\pi \ell/n)}{ \sin( \pi /n) } $ and $\mathcal{F}_t := \sigma ( X_s : 0 \leq s \leq t )$ is the natural filtration of the chain.
\end{thm}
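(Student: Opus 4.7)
The plan is to view both $\mathbf{P}^{n,\ell,\mathrm{TASEP}}$ and $\mathbf{P}^{n,\ell}$ as laws of continuous-time Markov chains on the finite set of $\ell$-subsets of $\mathbb{Z}_n$, sharing the same allowed transitions $E\to E':=E\cup\{h+1\}-\{h\}$ (for $h\in E$ with $h+1\notin E$) but differing only in rates: $q_{\mathrm{TASEP}}(E,E')=1$ and $q_{n,\ell}(E,E')=\Delta(E')/\Delta(E)$, the latter read off from Theorem \ref{thm:gordnew2}. Both chains being irreducible, the two laws are mutually absolutely continuous on each $\mathcal{F}_t$, and it suffices to show that the candidate density
\[
M_t := \frac{\Delta(X_0)}{\Delta(X_t)}\exp\left\{\int_0^t\bigl(\mathrm{Traffic}(X_s)-c_{n,\ell}\bigr)\,\mathrm{d}s\right\}
\]
is a positive $\mathbf{P}^{n,\ell}$-martingale with $M_0=1$ implementing the Girsanov change of measure $\mathbf{P}^{n,\ell}\to\mathbf{P}^{n,\ell,\mathrm{TASEP}}$.

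The prefactor is exactly what Girsanov demands: at each jump $X_{s-}=E\to X_s=E'$, the multiplicative increment of $\Delta(X_0)/\Delta(X_t)$ is $\Delta(E)/\Delta(E')=q_{\mathrm{TASEP}}(E,E')/q_{n,\ell}(E,E')$, so the product of these jump ratios over the finitely many jumps in $[0,t]$ telescopes to $\Delta(X_0)/\Delta(X_t)$. The exponential factor supplies the compensating drift that makes $M_t$ a martingale. To verify martingality I would apply It\^o's formula for pure jump processes: the compensator of the jumps in $\Delta(X_0)/\Delta(X_t)$ under $\mathbf{P}^{n,\ell}$ contributes to $M_t$ an absolutely continuous drift $M_s\bigl((\ell-\mathrm{Traffic}(X_s))-\lambda_{n,\ell}(X_s)\bigr)\,\mathrm{d}s$, where
\[
\lambda_{n,\ell}(E) := \sum_{h\in E,\,h+1\notin E}\frac{\Delta(E\cup\{h+1\}-\{h\})}{\Delta(E)}
\]
is the total Gordenko exit rate from $E$ and $\ell-\mathrm{Traffic}(E)$ is the TASEP exit rate. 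Combined with the drift $M_s(\mathrm{Traffic}(X_s)-c_{n,\ell})\,\mathrm{d}s$ from the exponential factor, the martingale condition collapses to the identity $\lambda_{n,\ell}(E)=\ell-c_{n,\ell}$ for every $E$, i.e.\ the Gordenko exit rate is independent of configuration. Once this is established, the Girsanov theorem for Markov chains gives that under $\mathbf{Q}=M_t\cdot\mathbf{P}^{n,\ell}|_{\mathcal{F}_t}$ the jump rates become $q_{n,\ell}(E,E')\cdot\Delta(E)/\Delta(E')=1$, matching TASEP.

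The principal obstacle is therefore the algebraic identity asserting constancy of $\lambda_{n,\ell}(E)$, and the identification of that constant. I would establish it via the complex Slater determinant
\[
\widetilde\Delta(x_1,\ldots,x_\ell) := \det\bigl(\zeta^{(i-1)x_j}\bigr)_{i,j=1}^\ell = \prod_{j<k}(\zeta^{x_k}-\zeta^{x_j}),\qquad \zeta = e^{2\pi\iota/n},
\]
regarded as a function on ordered tuples in $\mathbb{Z}^\ell$ (lifted through wrap-around), so that $|\widetilde\Delta|=\Delta$. A direct computation shows $\widetilde\Delta$ is a joint eigenfunction of the $\ell$-independent-Poisson-walker generator with eigenvalue $\sum_{i=0}^{\ell-1}\zeta^i$, yielding the complex identity $\sum_{\text{legal }E'}\widetilde\Delta(E')/\widetilde\Delta(E)=\sum_{i=0}^{\ell-1}\zeta^i$. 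The key extra input is the explicit argument formula
\[
\arg\widetilde\Delta(x) = \tbinom{\ell}{2}\tfrac{\pi}{2} + \tfrac{\pi(\ell-1)}{n}\sum_{j=1}^\ell x_j\pmod{2\pi},
\]
derivable from $\zeta^b-\zeta^a=2\iota\sin(\pi(b-a)/n)\,e^{\iota\pi(a+b)/n}$ together with $\sum_{j<k}(x_j+x_k)=(\ell-1)\sum_j x_j$. Under any allowed move $x_i\mapsto x_i+1$ the sum $\sum_j x_j$ increases by exactly one, so every ratio $\widetilde\Delta(E')/\widetilde\Delta(E)$ acquires the same argument $\pi(\ell-1)/n$. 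Consequently the triangle inequality in the eigenvalue identity is saturated, and $\lambda_{n,\ell}(E)=\sum_{E'}\Delta(E')/\Delta(E)=\bigl|\sum_{i=0}^{\ell-1}\zeta^i\bigr|=\sin(\pi\ell/n)/\sin(\pi/n)$, independent of $E$. Substituting into $\lambda_{n,\ell}=\ell-c_{n,\ell}$ recovers $c_{n,\ell}$ and completes the proof.
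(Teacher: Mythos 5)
Your proposal is correct and follows essentially the same route as the paper: both rest on the eigenfunction identity $\sum_{j}\Delta(\mathbf{h}+\mathbf{e}_j)=\mu_{n,\ell}\,\Delta(\mathbf{h})$ with $\mu_{n,\ell}=\sin(\pi\ell/n)/\sin(\pi/n)$ (obtained from column-multilinearity of the complex Vandermonde together with phase bookkeeping — your explicit $\arg\widetilde\Delta$ formula and triangle-inequality saturation play exactly the role of Lemmas \ref{lem:rootdet} and \ref{lem:eigengen}), the vanishing of $\Delta$ on illegal moves, the count $\ell-\mathrm{Traffic}$ of legal moves, and the standard exponential-martingale change of measure for finite-state chains, the only cosmetic difference being that you transform $\mathbf{P}^{n,\ell}$ into TASEP directly whereas the paper goes the other way and then reciprocates. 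Note that your computation, like the paper's own proof, yields $c_{n,\ell}=\ell-\sin(\pi\ell/n)/\sin(\pi/n)$, without the factor $\tfrac{1}{n}$ appearing in the theorem as stated, which appears to be a typo in the statement rather than a gap in your argument.
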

In essence, Theorem \ref{thm:tasep} states that TASEP behaves like a biased version of $(X_t)_{t \in \mathbb{R}}$ under $\mathbf{P}^{n,\ell}$ in which particles are encouraged to spend more time in traffic. Thus TASEP may be recovered via a simple transformation from a process with an explicit integrable structure afforded by Theorem \ref{thm:gordnew}.

\subsection{Further discussion of related work} \label{sec:related}
Bead configurations in their various incarnations have been studied by numerous authors, including Johansson and Nordenstam \cite{JN}, Boutillier \cite{boutillier}, Metcalfe, O'Connell and Warren \cite{MOW}, Sun \cite{sun}, Gordenko \cite{gordenko}, Fleming, Forrester, and Nordenstam \cite{ffn}, and Najnudel and Virag \cite{NV}, not to mention countless other works such as \cite{ANV, ton1, CF, BMRT, FN, mkrtchyan, pete}.

Most notably perhaps, Boutillier \cite{boutillier} showed that at a given density of beads per string, there is a one-parameter family $\{\mathbb{P}^\gamma : \gamma \in (-1,1) \}$ of Gibbs measures for random bead configurations on $\mathbb{R} \times \mathbb{Z}$. The parameter $\gamma$ determines the tilt $\tau \in (0,1)$ of the configuration via the relation $\gamma = \cos \pi \tau$. By sending $n \to \infty$ in Theorem \ref{thm:gordnew} and \eqref{eq:kergord} under the scaling limit $\tau = \ell/n$, and using Cauchy's integral formula, we find that the probability measures $\mathbb{P}^{\gamma}$ have mixed correlation functions with kernel $H_\alpha^\tau :\mathbb{R} \times \mathbb{Z} \to \mathbb{C}$ given by
\begin{align} \label{eq:bouker}
H^{\tau}_\alpha(s,h) = (\mathrm{1}_{s > 0} + \mathrm{1}_{\alpha \neq o, s=0} ) \frac{ \mathrm{1}_{h \geq \mathrm{1}_{\{\alpha = b\}} } (-s)^{ h - \mathrm{1}_{\{\alpha = b\}} } }{(h - \mathrm{1}_{\{\alpha = b\}})!}  + (-1)^{\mathrm{1}_{\{\alpha =o\}}} \int_{\mathcal{C}}  z^{\mathrm{1}_{\{\alpha = b\}} -h} e^{-zs} \frac{\mathrm{d}z}{2 \pi \iota z},
\end{align}
where $\mathcal{C}$ is a contour travelling from $\cos \pi \tau + i \sin \pi \tau$ to $\cos \pi \tau - i \sin \pi \tau$ and staying to the left of the origin.  In other words, with $\gamma = \cos \pi \tau$ and $H^\tau_\alpha$ as in \eqref{eq:bouker}, Boutillier's measure $\mathbb{P}^\gamma$ has mixed correlation functions $p^\tau_N:\{b,o,u\} \times \mathbb{R} \times \mathbb{Z} \to \mathbb{R}$ (Definition \ref{df:mixed}) 
given by
\begin{align} \label{eq:boumixed}
p^\tau_N(w_1,\ldots,w_N) := \det_{i,j=1}^N H_{\alpha_i}^\tau(y_j-y_i).
\end{align}
After reparameterisation, the mixed correlation structure offered here by \eqref{eq:bouker} and \eqref{eq:boumixed} represents an extension of Theorem 2 of \cite{boutillier}, which corresponds to the $\mathcal{O}=\mathcal{U}=\varnothing$ case in the definition of the mixed correlation functions. 

Boutillier \cite{boutillier} noted that the case $\alpha = b$ and $h = 0$ recovers the celebrated sine kernel $H_b(s,0) = \sin( \pi s) /\pi s$. In fact, this observation of Boutillier --- namely that, irrespective of $\tau$, the process along any given string of the doubly infinite bead process behaves like a sine process --- accounts for the ubiquity of the sine process across random matrix theory and mathematical physics: an appearance of a sine process is often underlied by a doubly infinite bead process, be it an exclusion process or Gelfand-Tsetlin pattern. Boutillier mentions \cite[Page 9]{boutillier} that his methods could easily be adapted to obtain the correlation kernels of the bead model on $\mathbb{R} \times \mathbb{Z}_n$ in place of $\mathbb{R} \times \mathbb{Z}$. 

The probability measures $\mathbf{P}^{n,\ell}$ govern bead configurations on $\mathbb{R} \times \mathbb{Z}_n$, and are obtained from a scaling limit of $\mathbf{P}_n^{\beta,\theta_2,T}$ as the density of beads is sent to infinity. It is possible to study an alternative scaling limit, where the number $n$ of strings (rather than the density of beads per string) is sent to infinity, with the other parameters $\beta$ and $T$ fixed. The model that appears in the limit was studied by Metcalfe, Warren and O'Connell \cite{MOW} (see also \cite{metcalfethesis}). Taking $n \to \infty$ in the setting of Theorem \ref{thm:detnew} we obtain kernels of the form 
\begin{equation} \label{eq:MOWEQ}
J_\alpha^{\beta,T}(t,h) = (-1)^{\alpha = o} T^{\mathrm{1}_{\{\alpha = b\}}-h} \int_{ |w| = T} \frac{ \mathrm{d}w}{ 2 \pi \iota w} w^{ \mathrm{1}_{\{\alpha = b\}} - h } e^{ - (\beta+w)[t]_\alpha} (  1-  e^{- (\beta+w)})^{-1}.
\end{equation}
One can solve this contour integral explicitly by observing that there are poles at $w =0$ and at all $w = - \beta + 2 \pi \iota j, j \in \mathbb{Z}$ of modulus at most $T$.  Ultimately, the parameter $T$ only affects the determinantal structure through the number $k$ of poles $|w| < T$ satisfying $\beta+w \in 2 \pi \iota \mathbb{Z}$. This number~ $k$ in turn coincides with the number of beads per string. Thus there is a two-parameter family $\mathbf{P}^{\beta,k}$ of translation-invariant probability measures governing bead configurations on $[0,1) \times \mathbb{Z}$; $k$ is the number of beads per string and $\beta$ controls the tilt. The mixed correlation functions are given by determinants involving the kernels in \eqref{eq:MOWEQ}; c.f.\ \cite[Corollary 5.3]{MOW}.
\vspace{5mm}

We turn to discussing Kasteleyn theory and our continuous approach. Speaking informally, Kasteleyn theory is the art of writing partition functions as determinants. It was pioneered in the 1960s by Dutch physicist Pieter Kasteleyn to provide explicit formulas for partitions functions associated with various models in statistical physics \cite{kast1}, including for instance the number of ways of covering an $n \times n$ chessboard with $2 \times 1$ dominos. We also mention contemporaneous work of Temperley and Fisher \cite{TF}. The tools of (discrete) Kasteleyn theory have been immensely fruitful in recent decades \cite{CKP, KOS}, and continue to be used actively to this day. Introductions to the theory may be found in notes by Kenyon \cite{kenyon,kenyon2} and Toninelli \cite{toninelli}, as well as the recent book of Gorin \cite{gorin2021lectures}.

Finally let us highlight that in the vast majority of prior work on bead configurations, the partition functions and correlation functions are not studied directly, but instead through a scaling limit of a discrete model. Thus one of the innovations of the present article is the direct continuous approach to studying bead configurations using Fredholm determinants. We mention that it is possible to circumvent the continuous Kasteleyn theorem in Theorem \ref{thm:main} and prove the key results of this article, Theorems \ref{thm:arrau} and \ref{thm:detnew}, using a scaling limit of lozenge tilings on the torus. This bespoke discrete approach is recorded in a companion paper, \cite{bead2}, and is substantially less efficient. Take for example the proof of Theorem \ref{thm:arrau}: the discrete approach involves various scaling limits and contour integrals, and takes over ten pages; with the continuous approach, it takes only five.
We hope the reader will regard this as a testament to the efficiency of continuous Kasteleyn theory. 
Finally, as mentioned above, the free energy formula Corollary \ref{cor:free} meets a conjecture in the free probability and statistical physics literature. We postpone discussion of the literature related to this connection to Section \ref{sec:free}.

\subsection{Overview}
The remainder of the paper is structured as follows.

\begin{itemize}
\item In Section \ref{sec:free} we discuss the free energy of the bead model in greater detail, making connections with the asymptotic behaviour of Gelfand-Tsetlin patterns as well as with free probability.
\item In Section \ref{sec:ckastproof} we get started with our proofs of our main results. We begin in this section by developing our continuous analogue of Kasteleyn theory, proving Theorem \ref{thm:main}. Thereafter we diagonalise the continuous Kasteleyn operators to prove Theorem \ref{thm:arrau}.
\item In Section \ref{sec:volproof}, we prove Corollary \ref{cor:arrau}, which gives an explicit formula for the volumes $\mathrm{Vol}_{k,\ell}^{(n)}$ of bead configurations, and afterwards prove Theorem \ref{thm:cambridge springs} on the fine asymptotics of these volumes.
\item In Section \ref{sec:corr} we build further on the continuous Kasteleyn theory of Section \ref{sec:ckastproof}, showing that determinants may be used to detect not just bead configurations, but also the associated occupation processes. This ultimately leads to a proof of Theorem \ref{thm:detnew}.
\item Finally, in Section \ref{sec:gordenko} we study scaling limits of our bead configurations on the torus, establishing connections between random bead configurations on $\mathbb{R} \times \mathbb{Z}_n$ and TASEP on the ring. In particular, in this section we prove Theorems \ref{thm:gordnew}, \ref{thm:gordnew2} and \ref{thm:tasep}.
\end{itemize}

\section{The free energy of the bead model and free probability} \label{sec:free}

We now explain in more detail the connection of the formula \eqref{eq:ordered100} with free probability. This section is independent of the remainder of the paper.

As we mentioned in the introduction, the formula \eqref{eq:ordered100} for the free energy of the bead model agrees with several predictions made in the free probability and statistical physics literature: Based on formal calculations in free probability, Shlyakhtenko and Tao \cite{ST} conjectured the $1 + \log \sin ( \pi \tau ) - \log \pi$ formula explicitly. A less explicit prediction of the form $\log \sin ( \pi \tau) + \mathrm{Const.}~$was made in earlier work by the author and Neil O'Connell \cite{JO} using an argument involving the semicircle law in random matrix theory; this conjectured formula also appears in Gordenko \cite{gordenko}. Sun \cite{sun} gave a sketch argument for an analogous result in a different coordinate system using a scaling limit of the dimer model and an appeal to Legrende duality. In any case, Corollary \ref{cor:free} is the first explicit derivation of the free energy of the bead model based on direct volume computations.

\subsection{Gelfand-Tsetlin patterns}
The relation between bead configurations and free probability begins with Gelfand-Tsetlin patterns, which are fundamental objects in representation theory and algebraic combinatorics.
Gelfand-Tsetlin patterns are essentially bead configurations on $\mathbb{R} \times \{1,\ldots,n\}$ (i.e.\ $n$ infinitely long strings) with one bead on the top string, two beads on the next string down, and so on, so that there are $n$ beads on the bottom string. More precisely, a Gelfand-Tsetlin pattern is a collection of real numbers $(t_{k,j})_{1 \leq j \leq k \leq n}$ satisfying the inequalities $t_{k+1,j} \leq t_{k,j} \leq t_{k+1,j+1}$ for all valid $k,j$. See the left panel of Figure \ref{fig:test} for a depiction of a Gelfand-Tsetlin pattern. 

Consider the set $\mathrm{GT}(s_1,\ldots,s_n) := \{ (t_{k,j})_{1 \leq j \leq k \leq n} \in \mathbb{R}^{n(n+1)/2} : t_{n,j} = s_j \}$ of Gelfand-Tsetlin patterns with a fixed bottom row. By considering just $t_{k,j}$ with $k<n$, $\mathrm{GT}(s_1,\ldots,s_n)$ may be associated with a subset of $\mathbb{R}^{n(n-1)/2}$. Since each $t_{k,j}$ lies in $[s_1,s_n]$, this subset is compact. In fact, it is a consequence of the Weyl dimension formula that the $\frac{n(n-1)}{2}$-dimensional Lebesgue measure of this set is given by
\begin{equation} \label{eq:wdf}
\mathrm{Leb}_{n(n-1)/2} \mathrm{GT}(s_1,\ldots,s_n) = G(n)^{-1} \prod_{ 1 \leq j < k \leq n } (s_k - s_j),
\end{equation}
where $G(n) := \prod_{j =1}^{n-1} j!$ is the Barnes G-function; see e.g.\ \cite{baryshnikov}.

One way in which Gelfand-Tsetlin patterns arise is in the eigenvalue processes of Hermitian matrices. Indeed, as mentioned in the introduction, if $A$ is an $n \times n$ Hermitian matrix with eigenvalues $s_1 \leq \cdots \leq s_n$, then the eigenvalues of its $(n-1)\times (n-1)$ principal minor interlace those of $A$ \cite[Section 1.3]{taoRMT}. Consequently, taking $n$ strings and plotting the $k$ eigenvalues of each $k \times k$ principal minor on the $k^{\text{th}}$ string from the top, we obtain a Gelfand-Tsetlin pattern $(t_{k,j})_{1 \leq j \leq k \leq n}$ from the matrix $A$, which we call \textbf{the eigenvalue process of $A$}. In summary, $t_{k,j}$ is the $j^{\text{th}}$ largest eigenvalue of the $k \times k$ minor.

We now draw on an observation often attributed to Baryshnikov \cite{baryshnikov}, but most likely part of the folklore dating back to Weyl. Suppose we take an $n \times n$ Hermitian random matrix with eigenvalues given by $s_1 \leq \cdots \leq s_n$. 
More specifically, let $U$ be a Haar distributed unitary matrix of dimension $n$, and consider the random matrix $A := U\Lambda U^*$, where $\Lambda$ is the diagonal matrix with entries $(s_1,\ldots,s_n)$. Baryshnikov's observation states that the random Gelfand-Tsetlin pattern associated with the eigenvalue process of $A$ is \emph{uniformly distributed} on the set $\mathrm{GT}(s_1,\ldots,s_n)$. 

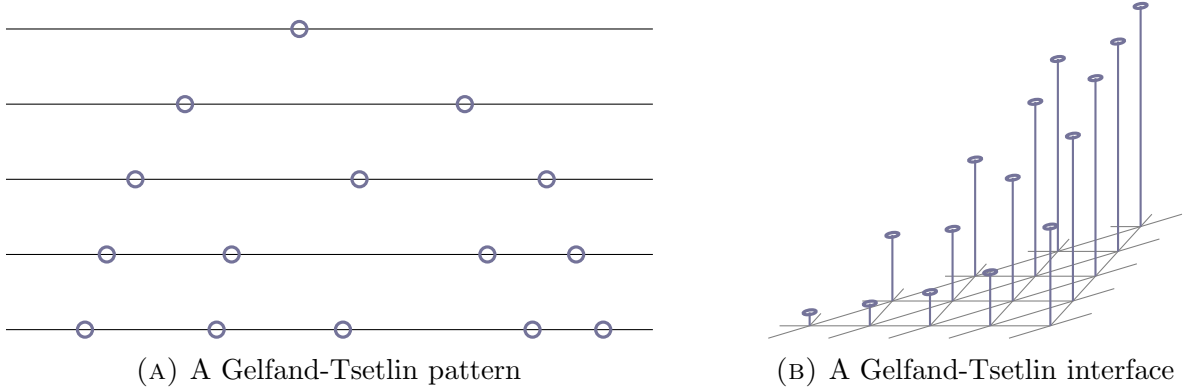
\begin{figure}[h!]
\centering
\begin{subfigure}{.5\textwidth}
  \centering
 \begin{tikzpicture}
\draw (-0.6,0) -- (8,0);
\draw (-0.6,1) -- (8,1);
\draw (-0.6,2) -- (8,2);
\draw (-0.6,3) -- (8,3);
\draw (-0.6,4) -- (8,4);
\draw [very thick, CadetBlue] (6.4,0) circle [radius=0.1];
\draw [very thick, CadetBlue] (2.2,0) circle [radius=0.1];
\draw [very thick, CadetBlue] (7.34,0) circle [radius=0.1];
\draw [very thick, CadetBlue] (0.45,0) circle [radius=0.1];
\draw [very thick, CadetBlue] (3.88,0) circle [radius=0.1];

\draw [very thick, CadetBlue] (2.4,1) circle [radius=0.1];
\draw [very thick, CadetBlue] (5.8,1) circle [radius=0.1];
\draw [very thick, CadetBlue] (0.74,1) circle [radius=0.1];
\draw [very thick, CadetBlue] (6.98,1) circle [radius=0.1];

\draw [very thick, CadetBlue] (1.12,2) circle [radius=0.1];
\draw [very thick, CadetBlue] (4.1,2) circle [radius=0.1];
\draw [very thick, CadetBlue] (6.588,2) circle [radius=0.1];

\draw [very thick, CadetBlue] (1.78,3) circle [radius=0.1];
\draw [very thick, CadetBlue] (5.5,3) circle [radius=0.1];

\draw [very thick, CadetBlue] (3.3,4) circle [radius=0.1];
\end{tikzpicture}
  \caption{A Gelfand-Tsetlin pattern}
  \label{fig:sub1}
\end{subfigure}%
\begin{subfigure}{.5\textwidth}
  \centering
\begin{tikzpicture}[x={(-0.8cm,0cm)}, z={(0cm,0.4cm)}, y={(0.3cm,0.33cm)}]
\draw[gray] (0,0,0) -- (4.5,0,0);
\draw[gray] (0,0,0) -- (0,4.5,0);

\draw [gray] (1,0,0) -- (1,3.5,0);
\draw [gray] (2,0,0) -- (2,2.5,0);
\draw [gray] (3,0,0) -- (3,1.5,0);
\draw [gray] (4,0,0) -- (4,0.5,0);

\draw [gray] (0,1,0,0) -- (3.5,1,0);
\draw [gray] (0,2,0) -- (2.5,2,0);
\draw [gray] (0,3,0) -- (1.5,3,0);
\draw [gray] (0,4,0) -- (0.5,4,0);

\draw [gray] (4.5,-0.5,0) -- (-0.5,4.5,0);
\draw [very thick, CadetBlue] (0,4,7.34) circle [radius=0.1];
\draw [thick, CadetBlue] (0,4,0) -- (0,4,7.34);
\draw [very thick, CadetBlue] (1,3,6.4) circle [radius=0.1];
\draw [thick, CadetBlue] (1,3,0) -- (1,3,6.4);
\draw [very thick, CadetBlue] (2,2,3.88) circle [radius=0.1];
\draw [thick, CadetBlue] (2,2,0) -- (2,2,3.88);
\draw [very thick, CadetBlue] (3,1,2.2) circle [radius=0.1];
\draw [thick, CadetBlue] (3,1,0) -- (3,1,2.2);
\draw [very thick, CadetBlue] (4,0,0.45) circle [radius=0.1];
\draw [thick, CadetBlue] (4,0,0) -- (4,0,0.45);

\draw [gray] (3.5,-0.5,0) -- (-0.5,3.5,0);
\draw [very thick, CadetBlue] (0,3,6.98) circle [radius=0.1];
\draw [thick, CadetBlue] (0,3,0) -- (0,3,6.98);

\draw [very thick, CadetBlue] (1,2,5.8) circle [radius=0.1];
\draw [thick, CadetBlue] (1,2,0) -- (1,2,5.8);

\draw [very thick, CadetBlue] (2,1,2.4) circle [radius=0.1];
\draw [thick, CadetBlue] (2,1,0) -- (2,1,2.4);

\draw [very thick, CadetBlue] (3,0,0.74) circle [radius=0.1];
\draw [thick, CadetBlue] (3,0,0) -- (3,0,0.74);

\draw [gray] (2.5,-0.5,0) -- (-0.5,2.5,0);
\draw [very thick, CadetBlue] (0,2,6.588) circle [radius=0.1];
\draw [thick, CadetBlue] (0,2,0) -- (0,2,6.588);

\draw [very thick, CadetBlue] (1,1,4.1) circle [radius=0.1];
\draw [thick, CadetBlue] (1,1,0) -- (1,1,4.1);

\draw [very thick, CadetBlue] (2,0,1.12) circle [radius=0.1];
\draw [thick, CadetBlue] (2,0,0) -- (2,0,1.12);

\draw [gray] (1.5,-0.5,0) -- (-0.5,1.5,0);
\draw [very thick, CadetBlue] (0,1,5.5) circle [radius=0.1];
\draw [thick, CadetBlue] (0,1,0) -- (0,1,5.5);

\draw [very thick, CadetBlue] (1,0,1.78) circle [radius=0.1];
\draw [thick, CadetBlue] (1,0,0) -- (1,0,1.78);

\draw [gray] (0.5,-0.5,0) -- (-0.5,0.5,0);
\draw [very thick, CadetBlue] (0,0,3.3) circle [radius=0.1];
\draw [thick, CadetBlue] (0,0,0) -- (0,0,3.3);
\end{tikzpicture}
  \caption{A Gelfand-Tsetlin interface}
  \label{fig:sub2}
\end{subfigure}
\caption{On the left we have a Gelfand-Tsetlin pattern with $n=5$. On the right we have the associated Gelfand-Tsetlin interface: the heights of the spikes along the $k^{\text{th}}$ diagonal correspond to the positions of beads on the $k^{\text{th}}$ string.}
\label{fig:test}
\end{figure}

\subsection{Free probability and a variational description of free compression}
Speaking broadly from the perspective of the probabilist,
the field of free probability \cite{voiculescu1998strengthened} is concerned with the asymptotic behaviour of eigenvalues of large random matrices under operations of addition, multiplication and taking minors. 
The operation of free compression characterises the asymptotics of the latter of these operations, taking minors, and will be our focus for the remainder of this section.
We recall from above that the eigenvalue distributions of minors of random matrices are characterised by the positions of beads on higher rows of uniform Gelfand-Tsetlin patterns, and thus free compression is intimately connected with the macroscopic behaviour of such objects.

Free probability gives a (slightly obscure) description of the macroscopic shape of a uniformly chosen Gelfand-Tsetlin pattern whose bottom row $(s_1,\ldots,s_n)$ approximates a probability measure $\mu$ in the sense that $\frac{1}{n} \sum_{i=1}^n \delta_{s_i} \approx \mu$. 
Namely, the empirical distribution of the beads on the $[\crelaar n]^{\text{th}}$ row from the bottom of the pattern approximate a measure $\mu_\crelaar$ of total mass $1-\crelaar$ satisfying the free functional equation
\begin{align} \label{eq:ff}
\crelaar \delta_0 + \mu_\crelaar = (\crelaar\delta_0 + (1- \crelaar) \delta_1) \boxtimes \mu,
\end{align}
where $\boxtimes$ is free multiplicative convolution \cite{MS}. See Metcalfe \cite[Section 1.3]{metcalfe} for further discussion on this front.

In recent work on the \textbf{free entropy} of a probability measure (which we discuss further in Section \ref{sec:FE}), Shlyakhtenko and Tao \cite{ST} obtain an alternative description of the measures $(\mu_\crelaar)_{a \in [0,1]}$ in terms of a variational problem. To describe this here, let $\Delta := \{ 0 \leq t \leq s \leq 1 \}$ and define using $(\mu_\crelaar)_{a \in [0,1]}$ a function $\phi:\Delta \to \mathbb{R}$ by setting, for $u \in [0,1-a]$,
\begin{align} \label{eq:quantile}
\phi(\crelaar+u,u) := \text{$u^{\text{th}}$ quantile of $\mu_a$}.
\end{align}
More explicitly, if $F_\crelaar:\mathbb{R} \to [0,1-a]$ is defined by $F_\crelaar(y) := \int_{-\infty}^y \mu_a(\mathrm{d}r)$, then $\phi(\crelaar+u,u) := \rho_{\mu_a}(u) := F_\crelaar^{-1}(u)$, taken right-continuously.
In particular the diagonal $\phi(u,u) := \rho_\mu(u)$ is the inverse of the distribution function of $\mu$. Shlyakhtenko and Tao \cite{ST} showed in formal calculations that the function $\{\phi(s,t) : 0 \leq t \leq s \leq 1 \}$ satisfies a certain partial differential equation, which in turn characterises $\phi$ as the minimiser of a certain variational problem subject to a boundary condition along the diagonal. 

To describe this variational formulation, define the surface tension
\begin{align} \label{eq:surface}
\sigma(g,\tau) := - \log g - \log \sin ( \pi \tau) - 1 + \log \pi.
\end{align}
Given a function $\phi:\Delta \to \mathbb{R}$, define functions $g^\phi,\tau^\phi:\Delta \to \mathbb{R}$ depending on the gradient of $\phi$ by 
\begin{align} \label{eq:oranga}
g^\phi := \frac{ \partial_s \phi + \partial_t \phi }{ \sqrt{2}} \qquad \text{and} \qquad \tau^\phi := \frac{ \partial_s \phi}{ \partial_s \phi + \partial_t \phi }.
\end{align}
Now set
\begin{align*}
\tilde{\sigma}(\nabla \phi) = \sigma(g^\phi, \tau^\phi).
\end{align*}
We note that $\sigma(g,\tau) = \sigma(g,1-\tau)$, and hence $\tilde{\sigma}(\nabla \phi)$ is symmetric in $\partial_s \phi$ and $\partial_t \phi$.

Theorem 1.7 of \cite{ST} is a variational formulation of the measures $(\mu_a)_{a \in [0,1]}$, stating that the function $\phi:\Delta \to \mathbb{R}$ associated with $(\mu_a)_{a \in [0,1]}$ via \eqref{eq:quantile} minimises the energy integral
\begin{align} \label{eq:costfunction}
\mathcal{I}[\phi] := \int_\Delta \tilde{\sigma}(\nabla \phi) \mathrm{d}s \mathrm{d}t
\end{align}
subject to the boundary condition $\phi(s,s) = \rho_\mu(s)$, where as above, $\rho_\mu(s)$ is the inverse of the distribution function of $\mu$.

\subsection{The statistical physics of Gelfand-Tsetlin patterns}
We now discuss the asymptotic behaviour of Gelfand-Tsetlin patterns from an alternative perspective, providing a statistical physics interpretation of Theorem 1.7 of Shlyakhtenko and Tao \cite{ST}.

We begin by noting that one may reformulate random 
Gelfand-Tsetlin patterns in terms of stochastic interfaces \cite{funaki,sheffield}, as was done in \cite{JO}. A two-dimensional \textbf{stochastic interface} is simply a random function $\phi:\Gamma \to \mathbb{R}$ defined on a subset $\Gamma$ of the two dimensional lattice $\mathbb{Z}^2$ and whose distribution is proportional to $\exp \{  - \sum_{ \langle x, y \rangle } V(\phi(y)-\phi(x)) \}$, where $V:\mathbb{R} \to \mathbb{R}$ is a potential, and the sum is taken over all pairs of directed neighbouring edges in $\Gamma$. 

Given a Gelfand-Tsetlin pattern $(t_{k,j})_{1 \leq j \leq k \leq n}$, we can associate with this pattern a function $\phi_n:\{0 \leq t \leq s \leq 1\} \to \mathbb{R}$ which we call the \textbf{Gelfand-Tsetlin interface} by setting $\phi_n(\frac{k}{n},\frac{j}{n}) := t_{k,j}$, and linearly interpolating. See the right panel of Figure \ref{fig:test}. A uniform Gelfand-Tsetlin pattern with a fixed bottom row is equivalent to a stochastic interface defined on a triangle in the lattice with a fixed diagonal and hard-core interaction potential $V(x) = \infty \mathrm{1}_{x < 0}$. 

Based on Corollary \ref{cor:free}, the volume of a large patch of $N$ beads with average gap $g$ between consecutive beads and tilt $\tau$ has the order $e^{ o(N)} g^Ne^{N(1 + \log \sin \pi \tau - \log \pi )} = e^{ - (1 + o(1))N \sigma(g,\tau)}$, where $\sigma(g,\tau)$ is as in \eqref{eq:surface}. The gap $g$ and tilt $\tau$ of a local patch of beads in the Gelfand-Tsetlin pattern may be recovered from the Gelfand-Tsetlin interface by means of \eqref{eq:oranga}. 

This explains at a heuristic level Shlyakhtenko and Tao's \cite[Theorem 1.7]{ST} variational formulation of the compression measures $(\mu_a)_{a \in [0,1]}$. These measures describe the typical asymptotics of the uniformly chosen Gelfand-Tsetlin pattern whose beads on the bottom row are distributed according to $\mu$. Macroscopically speaking, the beads arrange themselves in such a way as to maximise the volume of the patches of beads across the pattern. Equivalently, the random Gelfand-Tsetlin interface takes on a shape minimising \eqref{eq:costfunction} subject to the boundary condition $\phi(s,s) = \rho_\mu(s)$. 

\subsection{Free entropy} \label{sec:FE}
We now tie our statistical physics discussion back to free entropy, which was introduced and studied by Voiculescu in a series of papers commencing with \cite{V1} and \cite{V2}.  Voiculescu sought to establish parallel notions in free probability to those of entropy and Fisher information in classical probability.

Voiculescu defined the free energy of a probability measure $\mu$ by
\begin{align*}
\tilde{\mathcal{F}}[\mu] := \int_{\mathbb{R} \times \mathbb{R}} \log |t - s | \mu (\mathrm{d}t) \mu(\mathrm{d}s) .
\end{align*}
Shlyakhtenko and Tao \cite{ST} show that after rescaling for variance, the free energy is monotone in the measures $(\mu_a)_{a \in (0,1]}$. Namely, the probability measure $\mu_a$ occuring in \eqref{eq:ff} has variance $a$ times the variance of $\mu$. Accordingly, if $a^{-1/2}_*\mu_a$ denotes the pushforward of this measure under multiplication by $a^{-1/2}$, then $a^{-1/2}_*\mu_a$ has constant variance for $a \in (0,1]$. Shlakhtenko and Tao establish that $\tilde{\mathcal{F}}[a^{-1/2}_*\mu_a]$ is monotone decreasing in $a$.

It turns out that, up to constants, this free energy has an interpretation in terms of the hydrodynamics of Gelfand-Tsetlin patterns. Indeed, as above, let $\rho_\mu:[0,1] \to \mathbb{R}$ be the inverse of the distribution function of $\mu$. By taking logarithmic asymptotics of the Weyl dimension formula \eqref{eq:wdf}, we may define
\begin{align} \label{eq:energi}
\mathcal{F}[\mu] &:= \lim_{n \to \infty} \frac{1}{n^2} \log \mathrm{Leb}_{n(n-1)/2}\mathrm{GT} \left( n \rho_\mu(1/n),n\rho_\mu(2/n),\ldots,n \rho_\mu(n/n) \right) \nonumber \\
&= \frac{1}{2} \int_{\mathbb{R} \times \mathbb{R}} \log |t - s | \mu (\mathrm{d}t) \mu(\mathrm{d}s) + \frac{3}{4},
\end{align}
where we used the fact that $\frac{1}{n^2} \log G(n) = \frac{1}{2} \log n - \frac{3}{4} + o(1)$. 

According to our heuristics so far in this section, should it exist, the minimiser $\phi^*$ of the functional $\mathcal{I}[\phi]$ in \eqref{eq:costfunction} given the fixed boundary condition $\rho_\mu$ should satisfy
\begin{align} \label{eq:minimi}
\mathcal{I}[\phi^*] = - \mathcal{F}[\mu].
\end{align}
It transpires that this is indeed the case. Indeed, we now state a theorem that was proved in more recent work by the author and Joscha Prochno using the explicit formulas of the present article. This theorem links the macroscopic behaviour of Gelfand-Tsetlin patterns with free probability.

\begin{thm}[\cite{JP}] \label{thm:JP}
Let $\mu$ be a probability measure, and let $\rho_\mu:[0,1] \to \mathbb{R}$ denote the right-continuous inverse function of $F_\mu(s) := \int_{-\infty}^s \mu(\mathrm{d}x)$. Let $\phi_n:\Delta \to \mathbb{R}$ be (the linear interpolation of) the stochastic interface associated with a uniformly chosen Gelfand-Tsetlin pattern with bottom row $\rho_\mu(1/n),\ldots,\rho_\mu(n/n)$. Then under a suitable topology the random function $\phi_n$ satisfies a large deviation principle with speed $n^2$ and rate function
\begin{align*}
\mathcal{R}_\mu[\phi] =
\begin{cases}
\mathcal{I}[\phi] + \mathcal{F}[\mu] \qquad &\text{$\phi \in \mathcal{C}^1(\mathring{\Delta})$ with $\phi(s,s) = \rho_\mu(s)$},\\
+\infty \qquad &\text{otherwise}.
\end{cases}
\end{align*}
Here $\mathring{\Delta}$ denotes the interior of $\Delta$, and $\mathcal{I}$ and $\mathcal{F}$ are defined in \eqref{eq:costfunction} and \eqref{eq:energi} respectively. 

In particular, the equation \eqref{eq:minimi} is true, so that $R_\mu[\phi^*] = 0$. 
\end{thm}

Theorem \ref{thm:JP} is an example of a large deviation principle for a stochastic interface. There is a large literature on stochastic interfaces, including on their large deviations. This body of work however usually imposes a strong uniform twice-differentiability conditions on the interaction potential $V$ \cite{DGI, sheffield,funaki,FS,FS1}. The bead interaction potential $V(x) =+\infty \mathrm{1}_{x < 0 }$ is not continuous, let alone twice differentiable, and accordingly, the proof of Theorem \ref{thm:JP} in \cite{JP} requires different tools. Instead, concentration principles from convex geometry are used (as opposed to smoothness argments) to help cement the hard estimates from the integrable probability formulas. (Convex geometry methods for random surfaces have appeared in recent works of Narayanan, Sheffield and Tao \cite{NS, NST}.) In any case, we emphasise that the proof of Theorem \ref{thm:JP} is heavily reliant on the exact formulas we prove in the present article, namely the volume formula given in Theorem \ref{thm:cambridge springs} and the determinantal correlation formula given in Theorem \ref{thm:detnew}. 

\subsection{Further related literature}
We close this section by touching on some related literature.

In the pair of papers \cite{DM1} and \cite{DM2}, Metcalfe and Duse study the asymptotic shapes of discrete Gelfand-Tsetlin patterns, identifying the local correlations in different regions of the pattern.

Perhaps surprisingly, a connected subject of investigation is the asymptotic behaviour of zeros of polynomials under repeated differentiation. Here we recall from the introduction the following observation: if one takes a polynomial with $n$ real roots, then the $n-1$ roots of its derivative interlace those of the original polynomial \cite{sodin}. Thus the zeros of successive derivatives of a polynomial with real roots give rise to a Gelfand-Tsetlin pattern. If one studies for large $n$ the asymptotic distribution of the zeros of the $[\crelaar n]^{\text{th}}$ derivative of a polynomial with $n$ real roots approximating a certain measure $\mu$, then the roots of this $[\crelaar n]^{\text{th}}$ derivative are asymptotically distributed according to the measure $\mu_\crelaar$ appearing in the previous section. The flow of polynomial roots under repeated differentiation is an extremely active area of research \cite{HK, kabluchko, OS, steinerberger}, as is the related emerging field of finite free probability \cite{AGP, GM, MSS}, which Marcus, Spielman and Srivastava have exploited to resolve longstanding open problems in other areas of mathematics \cite{MSS2, MSS3}.

\section{Continuous Kasteleyn theory: proofs}  \label{sec:ckastproof}

Through this section, for $h \in \mathbb{Z}_n$, any occurences of $h-1$ and $h+1$ are to be taken mod $n$. Moreover, for $t,t' \in [0,1)$, we denote by $[t'-t] = t'-t+\mathrm{1}_{\{t' < t\}}$ the residue of $t'-t$ mod $1$.

\subsection{Bead configurations and occupation processes}

We recall from the introduction that a (non-empty) bead configuration $(y_1,\ldots,y_{nk})$ is a collection of $nk$ points on $\mathbb{T}_n$ such that there are $k$ points on each string, and, for each $h \in \mathbb{Z}_n$, if $t_1 < \cdots < t_k$ are the points on string $h$, and $t_1'  < \cdots < t_k'$ are the points on string $h+1$ (mod $n$), then we have either
\begin{align*} 
t_1 \leq  t_1 ' < \cdots < t_k \leq  t_k' \qquad \text{or} \qquad t_1' < t_1 \leq \cdots \leq  t_k' < t_k.
\end{align*}
Our definition of a bead configuration does not depend on the ordering of the points $(y_1,\ldots,y_{nk})$, so that if $(y_1,\ldots,y_{nk})$ is a bead configuration, so is $(y_{\sigma(1)},\ldots,y_{\sigma(nk)})$ for any permutation $\sigma$.

In the introduction, we depicted in Figure \ref{fig:test0} how to every bead configuration we can associate an occupation process $(X_t)_{t \in [0,1)}$ taking values in the collection of subsets of $\mathbb{Z}_n$. We now give a precise definition of this process. Given a bead configuration $(y_1,\ldots,y_{nk})$ and a point $y$ distinct from all $y_i=(t_i,h_i)$ and $y_i+(0,1)=(t_i,h_i+1)$, we 
declare $y = (t,h)$ to be \textbf{occupied}, and write $h \in X_t$, if the first bead on string $h$ after $t$ occurs before the first bead on string $h-1$ after $t$. That is, 
\begin{align} \label{eq:occdef}
h \in X_t \iff  \inf_{i : h_i = h}[t_i-t] <  \inf_{i : h_i = h-1}[t_i-t].
\end{align}
If $(t,h)$ is the location of a bead itself, then $(t,h+1)$ is occupied but $(t,h)$ is unoccupied, unless $(t,h-1)$ is also the location of a bead, in which case $(t,h)$ is occupied. We write $X_t$ for the set of $h$ in $\mathbb{Z}_n$ such that $(t,h)$ is occupied. The cardinality of $X_t$ is constant, and equal to some integer $\ell \in \{1,\ldots,n-1\}$, which we call the occupation number. By considering the cardinality at $t=0$, the occupation number may be written explicitly as 
\begin{align} \label{eq:new}
\ell = \ell(y_1,\ldots,y_{nk}) & =\# \{ h \in \mathbb{Z}_n :  \inf_{i : h_i = h} t_i < \inf_{i : h_i = h-1} t_i \}.
\end{align}
We recall from \eqref{eq:uy} that the occupation number determines the tilt of the configuration via $\tau = \ell/n$.

\subsection{Partition functions} \label{sec:pf}
We recall that an $(n,k,\ell)$ configuration is a bead configuration on $\mathbb{T}_n$ with $k$ beads per string and occupation number $\ell$. 

For $\lambda \in \mathbb{C}$ and $N \geq 1$, we define the function $g_N^\lambda:\mathbb{T}_n^N \to \mathbb{C}$ by 
\begin{align} \label{eq:0gdef}
g_N^{\lambda}(y_1,\ldots,y_N) := 
\begin{cases}
e^{- \lambda \ell} \qquad &\text{$\exists k,\ell: N = nk$, $y_1,\ldots,y_{nk}$ is $(n,k,\ell)$ config.},\\
0 \qquad &\text{otherwise}.
\end{cases}
\end{align}
We emphasise that if $(y_1,\ldots,y_{nk})$ is a bead configuration, so is any reordering, so that $g_N^\lambda$ is invariant under permutations of its entries. Also, $g_N^{\lambda}$ is only nonzero when $N$ is a multiple of $n$. 

We will also define the constant 
\begin{align} \label{eq:crelle}
g_0^\lambda := (1 + e^{-\lambda})^n.
\end{align}
Recall that we write $\mathrm{d}y$ to denote integration with respect to the natural Lebesgue measure on $\mathbb{T}_n$ giving mass $b-a$ to a set of the form $(a,b] \times \{h\}$. Also recall the definition of $\mathrm{Vol}_{k,\ell}^{(n)}$ given in \eqref{eq:voldef}. Note that for $k \geq 1$ we have 
\begin{align} \label{eq:singer1}
\frac{1}{(nk)!} \int_{\mathbb{T}_n^{nk}} \mathrm{1} \{ (y_1,\ldots,y_{nk}) \text{ is an $(n,k,\ell)$ config.}\} \mathrm{d}y_1 \cdots \mathrm{d}y_{nk} = \mathrm{Vol}_{k,\ell}^{(n)},
\end{align}
where the factor $(nk)!$ in front is due to the fact that if $(y_1,\ldots,y_{nk})$ is a bead configuration, so is $(y_{\sigma(1)},\ldots,y_{\sigma(nk)})$ for permutations $\sigma$. As such, for each $k \geq 0$,
\begin{align} \label{eq:singer2}
\frac{1}{(nk)!} \int_{\mathbb{T}_n^{nk}} g_{nk}^\lambda(y_1,\ldots,y_{nk}) \mathrm{d}y_1 \cdots \mathrm{d}y_{nk} = \sum_{ 0 \leq \ell \leq n} e^{ - \lambda \ell} \mathrm{Vol}_{k,\ell}^{(n)},
\end{align}
with the consistency in the $k=0$ case following from \eqref{eq:crelle} and \eqref{eq:voldef2}.

Using \eqref{eq:singer2}, the partition function defined in \eqref{eq:pf000} may then alternatively be written 
\begin{align} \label{eq:umbrella}
Z(\lambda,T) := \sum_{k \geq 0} \sum_{ 0 \leq \ell \leq n} T^{nk} e^{-\lambda \ell}\mathrm{Vol}^{(n)}_{k,\ell} = \sum_{N \geq 0} \frac{T^N}{N!} \int_{\mathbb{T}_n^N} g_N^\lambda(y_1,\ldots,y_N) \mathrm{d}y_1 \cdots \mathrm{d}y_N,
\end{align}
where we recall that $g_N^\lambda = 0$ for all $N$ not divisible by $n$.

We now introduce the foundational idea of this section: $g_N^\lambda$ may be decomposed as a linear combination of functions which have a compliant determinantal structure. For $\theta =(\theta_1,\theta_2) \in \{0,1\}^2$ we define the variant $g_N^{\lambda,\theta}$ of $g_N^\lambda$ by 
\begin{align} \label{eq:0gthetadef}
&g_N^{\lambda,\theta}(y_1,\ldots,y_N) \nonumber \\
&:= 
\begin{cases}
 \frac{1}{2} (-1)^{(\theta_1+k+1)(\theta_2+n+\ell+1)} e^{- \lambda \ell} \qquad &\text{$\exists k,\ell: N = nk$, $y_1,\ldots,y_{nk}$ is $(n,k,\ell)$ config.}\\
0 \qquad &\text{otherwise}.
\end{cases}
\end{align}
Like $g_N^\lambda$, the functions $g_N^{\lambda,\theta}$ are symmetric in their variables, and are zero whenever $N$ is not divisible by $n$. We will also define the constant 
\begin{align} \label{eq:crelle2}
g_0^{\lambda,\theta} := \frac{1}{2} (-1)^{(\theta_1+1)(\theta_2+n+1)} (1 - (-1)^{\theta_1} e^{ - \lambda})^n.
\end{align}
\begin{lemma}
For all $\lambda \in \mathbb{C}$ and $N \geq 0$ we have
\begin{align} \label{eq:gsum}
g^{\lambda}_N = \sum_{ \theta \in \{0,1\}^2} g^{\lambda,\theta}_N.
\end{align}
\end{lemma}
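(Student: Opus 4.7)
The plan is to verify the identity pointwise, splitting into the cases $N \geq 1$ and $N = 0$ and reducing each to an elementary sign computation.

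For $N \geq 1$, both sides vanish unless $N = nk$ and $(y_1,\ldots,y_N)$ is an $(n,k,\ell)$-configuration, so the identity reduces to showing
\begin{align*}
1 = \sum_{\theta \in \{0,1\}^2} \tfrac{1}{2} (-1)^{(\theta_1 + k + 1)(\theta_2 + n + \ell + 1)}.
\end{align*}
Writing $a = k+1$ and $b = n + \ell + 1$, I would first perform the inner sum over $\theta_1$: since exactly one of $a, a+1$ is even, the sum $(-1)^{a(\theta_2+b)} + (-1)^{(a+1)(\theta_2+b)}$ collapses to $1 + (-1)^{\theta_2 + b}$ regardless of the parity of $a$. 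Summing the latter over $\theta_2 \in \{0,1\}$ yields $2$, and the $\tfrac{1}{2}$ prefactor then gives the required $1$.

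For $N = 0$, I would unpack the sum on the right-hand side of \eqref{eq:gsum} directly. When $\theta_1 = 1$, the exponent $(\theta_1+1)(\theta_2+n+1)$ is even for both values of $\theta_2$, so these two contributions combine to $(1 + e^{-\lambda})^n$. When $\theta_1 = 0$, the exponent equals $\theta_2 + n + 1$, and the two terms $\tfrac{1}{2}(-1)^{n+1}(1-e^{-\lambda})^n$ and $\tfrac{1}{2}(-1)^{n+2}(1-e^{-\lambda})^n$ cancel. The surviving contribution is $(1+e^{-\lambda})^n = g_0^\lambda$, as required.

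There is no real obstacle: the lemma is a bookkeeping check of sign cancellations. The one point worth double-checking is that the $N \geq 1$ argument works uniformly in the parities of $k$ and $n + \ell$, which is guaranteed by the observation that of any two consecutive integers one is even.
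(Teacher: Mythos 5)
Your proof is correct and follows essentially the same route as the paper: both reduce the $N\geq 1$ case to the sign identity $\tfrac{1}{2}\sum_{\theta}(-1)^{(\theta_1+k+1)(\theta_2+n+\ell+1)}=1$ (the paper counts three $+1$'s and one $-1$, while you sum over $\theta_1$ then $\theta_2$ — an immaterial difference) and both handle $N=0$ by observing that the $\theta_1=0$ contributions cancel while the $\theta_1=1$ contributions combine to $(1+e^{-\lambda})^n$.
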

\begin{proof}Inspecting the definitions \eqref{eq:0gdef} and \eqref{eq:0gthetadef}, one can see that when $N \geq 1$ \eqref{eq:gsum} boils down to the \textbf{three-vs-one identity}, which states that 
\begin{align} \label{eq:kid}
\frac{1}{2} \sum_{\theta \in \{0,1\}^2}  (-1)^{(\theta_1+k+1)(\theta_2+n+\ell+1)}  = 1 \qquad k,\ell \in \mathbb{Z}.
\end{align}
To see that \eqref{eq:kid} holds, note that $(\theta_1+k+1)(\theta_2+n+\ell+1)$ is an odd number precisely when $\theta_1 = k $ mod $2$, and $\theta_2 = n+ \ell$ mod $2$, and is an even number for the other three elements $\theta = (\theta_1,\theta_2)$ of $\{0,1\}^2$. It then follows that three of the summands in \eqref{eq:kid} are equal to $+1$, and one is equal to $-1$, proving \eqref{eq:kid} and in turn \eqref{eq:gsum} for $N \geq 1$.

The $N=0$ case of \eqref{eq:gsum} follows immediately from \eqref{eq:crelle} and \eqref{eq:crelle2} (the $\theta_1 =0$ terms cancel out one another). 
\end{proof}

\subsection{Determinantal representations}
The following theorem is the main result of this section, and represents the central step in establishing Theorem \ref{thm:main}.

\begin{thm} \label{thm:ckast}
Let $e^{-\lambda}\neq 1$. For $N \geq 0$ and points $(y_1,\ldots,y_N)$ of $\mathbb{T}_n$ we have 
\begin{align} \label{eq:ckast}
g^{\lambda,\theta}_N(y_1,\ldots,y_N) = \frac{1}{2} (-1)^{(\theta_1+1)(\theta_2+n+1)} (1 - e^{- (\lambda + \theta_1 \pi \iota)} )^n \det\limits_{i,j=1}^{N} C^{\lambda+\theta_1 \pi \iota,\theta_2} (y_j-y_i) ,
\end{align}
where for $\beta \in \mathbb{C}-\{0\}$ and $\theta_2 \in \{0,1\}$, $C^{\beta,\theta_2}:\mathbb{T}_n \to \mathbb{C}$ is given by 
\begin{align} \label{eq:aux0}
C^{\beta,\theta_2} \left( t,h \right) = e^{ \theta_2 \pi \iota /n} \mathrm{1}_{\{h=1\}} \frac{ e^{ - \beta t }}{ 1 - e^{-\beta}}.
\end{align}
\end{thm}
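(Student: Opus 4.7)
Plan: I would prove \eqref{eq:ckast} by a careful permutation expansion of the Kasteleyn determinant. First I expand
\begin{align*}
\det_{i,j=1}^N C^{\beta,\theta_2}(y_j-y_i) = \sum_{\sigma \in S_N} \sgn(\sigma) \prod_{i=1}^N C^{\beta,\theta_2}(y_{\sigma(i)}-y_i),
\end{align*}
and note that $C^{\beta,\theta_2}((t,h))$ is supported on $\{h=1\}$, so only those $\sigma$ with $h_{\sigma(i)} = h_i+1 \pmod n$ for every $i$ contribute. Such a $\sigma$ can exist only when each of the $n$ strings carries the same number $k$ of points, so the determinant vanishes whenever $N$ is not a multiple of $n$ or the string counts are uneven, matching $g_N^{\lambda,\theta} = 0$ in those cases. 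The $N=0$ case reduces to checking $g_0^{\lambda,\theta}$ against the empty determinant via \eqref{eq:crelle2}.

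Given $N=nk$ with $k$ points per string, I order the beads on string $h$ as $t_{h,1}<\cdots<t_{h,k}$, and every contributing $\sigma$ corresponds to an $n$-tuple $(\pi_0,\ldots,\pi_{n-1}) \in S_k^n$ with $\sigma$ sending the $i$-th bead on string $h$ to the $\pi_h(i)$-th bead on string $h+1$. A cycle-counting argument puts the cycles of $\sigma$ in bijection with those of $\pi := \pi_{n-1}\circ\cdots\circ\pi_0 \in S_k$, with all lengths multiplied by $n$; this yields the toric Kasteleyn sign identity $\sgn(\sigma) = (-1)^{(n-1)k}\prod_h \sgn(\pi_h)$, and is the only place where the topology of $\mathbb{T}_n$ enters. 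The permutation sum then factorises across strings, giving
\begin{align*}
\det_{i,j=1}^N C^{\beta,\theta_2}(y_j-y_i) = \left(\frac{e^{\theta_2 \pi \iota/n}}{1-e^{-\beta}}\right)^{nk}(-1)^{(n-1)k}\prod_{h \in \mathbb{Z}_n}\det N_h,
\end{align*}
where $N_h$ is the $k\times k$ matrix with entries $N_h(i,j) := e^{-\beta[t_{h+1,j}-t_{h,i}]}$.

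The heart of the proof is then the evaluation of each $\det N_h$. Pulling out the row factor $e^{\beta t_{h,i}}$ and column factor $e^{-\beta t_{h+1,j}}$ (whose products telescope to $1$ on taking $\prod_h$) reduces $\det N_h$ to $\det M_h$, where $M_h(i,j) = 1$ if $t_{h+1,j} \geq t_{h,i}$ and $e^{-\beta}$ otherwise. Subtracting row $i+1$ from row $i$ for each $i<k$ turns the top $k-1$ rows of $M_h$ into $(1-e^{-\beta})\mathrm{1}_{\{t_{h+1,j} \in [t_{h,i},t_{h,i+1})\}}$ and leaves the last row as $1$'s and $e^{-\beta}$'s. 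A short case analysis shows that $\det M_h = 0$ unless the beads on strings $h$ and $h+1$ interlace in the sense of \eqref{eq:doubleinterlace}: if two beads of string $h+1$ share a gap of string $h$ the corresponding columns coincide, while an empty gap gives a zero row. In the two interlacing cases exactly one permutation contributes: the identity in the type-$1$ case yields $\det M_h = (1-e^{-\beta})^{k-1}$, while the long cycle $(1\,2\,\cdots\,k)$ in the type-$2$ case yields $\det M_h = (-1)^{k-1}e^{-\beta}(1-e^{-\beta})^{k-1}$.

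To conclude, I observe that the number $\ell$ of type-$2$ pairs among $\{(h,h+1) : h \in \mathbb{Z}_n\}$ is exactly the occupation number, since $t_{h+1,1} < t_{h,1}$ is precisely the condition \eqref{eq:occdef} for string $h+1$ to be occupied at $t=0$. Thus $\prod_h \det N_h = (-1)^{(k-1)\ell}e^{-\beta\ell}(1-e^{-\beta})^{n(k-1)}$, and substituting this back, using $e^{-\beta\ell} = (-1)^{\theta_1 \ell}e^{-\lambda\ell}$ with $\beta = \lambda+\theta_1\pi\iota$, produces a formula whose prefactor matches $\tfrac{1}{2}(-1)^{(\theta_1+1)(\theta_2+n+1)}(1-e^{-\beta})^n$ times $g_N^{\lambda,\theta}$ after the routine mod-$2$ identity $(\theta_1+k+1)(\theta_2+n+\ell+1) - (\theta_1+1)(\theta_2+n+1) \equiv k(\theta_2+n+1) + \ell(\theta_1+k+1)$. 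I expect the main obstacle to be this local evaluation of $\det M_h$: the extraction of the $(-1)^{k-1}$ sign and the factor $e^{-\beta}$ in the type-$2$ case, together with the two-sided vanishing argument for non-interlacing configurations, must simultaneously produce exactly the occupation weight $e^{-\lambda\ell}$ with the correct signature.
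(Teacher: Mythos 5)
Your proof is correct and follows essentially the same route as the paper, just packaged in the opposite direction. The paper builds up from the auxiliary kernel $A^\zeta(y,y') = \mathrm{1}_{\{h'=h+1\}}\zeta^{\mathrm{1}\{t'<t\}}$: it evaluates $\det A^\zeta$ via a block anti-diagonal matrix and column swaps (Lemma \ref{lem:new det}), with the local $k\times k$ determinant given by Lemma \ref{lem:succession}, then passes to $C^{\beta,\theta_2}$ by a diagonal conjugation. You instead start by expanding $\det C^{\beta,\theta_2}$ over permutations, restrict to those respecting the string structure (which is the same constraint that makes the matrix block anti-diagonal), and then strip off the row/column factors $e^{\beta t_{h,i}}$, $e^{-\beta t_{h+1,j}}$ — that stripping is exactly the paper's diagonal conjugation, run in reverse, and your telescoping argument correctly shows the conjugation is harmless. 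Your reduction to $\det M_h$ with entries $1$ or $e^{-\beta}$ and the subsequent row-subtraction evaluation coincide with Lemma \ref{lem:succession} (the paper subtracts the last row from each of the others, you subtract consecutive rows, but the computation is the same). Your toric sign $(-1)^{(n-1)k}\prod_h\sgn(\pi_h)$ is the same $(-1)^{(n-1)k}$ the paper extracts by counting column swaps, and your identification of the type-$2$ count with the occupation number $\ell$ matches \eqref{eq:new}. The mod-$2$ bookkeeping at the end matches \eqref{eq:modness}. So the content is identical; the only genuine difference is presentational — you argue top-down from $\det C$ in permutation-expansion language, whereas the paper works bottom-up through the intermediate operators $A^\zeta$, $B^{\zeta,\theta}$, which happens to set up the generalisation to $\mathbb{T}_{n,*}$ needed later (Theorem \ref{thm:replicacast}) more cleanly.
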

Theorem \ref{thm:ckast} demystifies our perhaps arbitrary seeming choice for the constants $g_0^{\lambda,\theta}$ in \eqref{eq:crelle2}: these constants make Theorem \ref{thm:ckast} consistent in the case $N=0$, with the understanding that a $0 \times 0$ determinant is equal to $1$. Of course, by \eqref{eq:gsum}, the constants $g_0^{\lambda,\theta}$ determine $g_0^{\lambda}$ as it appears in \eqref{eq:crelle}.

In our proof of Theorem \ref{thm:ckast}, it will occasionally lighten notation to write
\begin{align*}
\zeta := e^{ - \lambda}.
\end{align*}

Our work towards proving Theorem \ref{thm:ckast} begins with the following adaptation of an idea from Warren \cite{warren2007dyson}:

\begin{lemma} \label{lem:succession}
Let $\zeta \in \mathbb{C}$, and let $t_1 \leq \cdots \leq t_k$ and $t'_1 \leq \cdots \leq t'_k$ be $2k$ points in $[0,1)$. Then
\begin{align*}
\det \limits_{i,j = 1}^k \left( \zeta^{\mathrm{1}\{ t'_j < t_i\}} \right) = 
\begin{cases}
( 1- \zeta)^{k-1} \qquad &\text{if $t_1 \leq t'_1 < \cdots < t_k \leq t'_k$}\\
(-1)^{k-1} \zeta( 1- \zeta)^{k-1} \qquad &\text{if $ t'_1 < t_1 \leq t'_2 < \cdots \leq  t'_k < t_k$}\\
0 \qquad &\text{otherwise}.
\end{cases}
\end{align*}
\end{lemma}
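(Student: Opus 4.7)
The matrix $M$ with entries $M_{ij} = \zeta^{\mathrm{1}\{t'_j < t_i\}}$ takes values in $\{1,\zeta\}$ and is completely determined by the pairwise comparisons between the $t_i$'s and $t'_j$'s. I would proceed in three steps.

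\emph{Step 1 (Vanishing criterion).} I would begin by recording when adjacent columns or rows of $M$ coincide. Since $t'_j \leq t'_{j+1}$, columns $j$ and $j+1$ are equal precisely when no $t_i$ lies in the half-open interval $(t'_j,t'_{j+1}]$; similarly rows $i$ and $i+1$ are equal precisely when no $t'_j$ lies in $[t_i,t_{i+1})$. In either event $\det M = 0$.

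\emph{Step 2 (Ruling out the generic case).} I would show that outside of the two listed interlacing patterns, some such coincidence is forced, so that $\det M = 0$. I split on whether $t_1 \leq t'_1$ or $t'_1 < t_1$. In the first subcase, if $(t'_1,t'_2]$ contains no $t_i$, adjacent columns coincide; otherwise the smallest such $t_i$ must be $t_2$, forcing $t_1 \leq t'_1 < t_2$. Next, either $[t_1,t_2)$ contains no $t'_j$ (giving equal rows), or the smallest such $t'_j$ must be $t'_1$, forcing $t'_1 < t_2$ (already known); pushing to $[t_2,t_3)$ then forces $t_2 \leq t'_2 < t_3$, and continuing this alternating argument inductively yields Case 1. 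The subcase $t'_1 < t_1$ is completely symmetric and yields Case 2. Thus the determinant can be nonzero only in these two configurations.

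\emph{Step 3 (Explicit evaluation in each surviving case).} In Case 1, the inequality structure gives $\mathrm{1}\{t'_j < t_i\} = \mathrm{1}\{i>j\}$, so $M$ has $1$'s on and above the diagonal and $\zeta$'s strictly below. Performing the elementary row operations $R_{i+1}\leftarrow R_{i+1}-R_i$ for $i=1,\ldots,k-1$ leaves the first row equal to $(1,\ldots,1)$ and reduces each subsequent row to a single nonzero entry $\zeta-1$ on its subdiagonal; expanding e.g.\ along the last column gives
\begin{align*}
\det M = (-1)^{k+1}(\zeta-1)^{k-1} = (1-\zeta)^{k-1}.
\end{align*}
In Case 2, the inequality structure gives $\mathrm{1}\{t'_j<t_i\} = \mathrm{1}\{i\geq j\}$, so $M$ picks up an extra $\zeta$ on the diagonal, and the same row operations produce an upper triangular matrix with diagonal $(\zeta,\zeta-1,\ldots,\zeta-1)$, whose determinant is $\zeta(\zeta-1)^{k-1} = (-1)^{k-1}\zeta(1-\zeta)^{k-1}$.

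The elementary row reductions in Step 3 are straightforward; the slight obstacle is Step 2, the combinatorial verification that the two prescribed interlacing patterns genuinely exhaust the configurations for which the determinant can be nonzero. Once that is in place, the three cases in the statement correspond exactly to Case 1, Case 2, and everything else.
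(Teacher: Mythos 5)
Your proposal is correct and follows essentially the same route as the paper: outside the two interlacing patterns the determinant vanishes because two rows or columns coincide, and in the two surviving cases the matrix is the explicit $0$-$1$ pattern $\mathrm{1}\{i>j\}$ (resp.\ $\mathrm{1}\{i\geq j\}$) in the exponent, evaluated by elementary row operations (you subtract adjacent rows where the paper subtracts the bottom row, which is immaterial). If anything you are slightly more complete than the paper, which omits the second case and merely asserts the row/column-coincidence step that your Step 2 sketches.
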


\begin{proof}
Consider the $k \times k$ matrix $(\zeta^{\mathrm{1}\{t_i<t'_j\}})_{1 \leq i,j \leq k}$. If neither $t_1 \leq t'_1 < \cdots < t_k \leq t'_k$ nor $t'_1 < t_1 \leq t'_2 < \cdots \leq  t'_k < t_k$, then two rows or columns of the matrix are identical, in which case the determinant of this matrix is zero.

Suppose first $t_1 \leq t'_1 < \cdots < t_k \leq t'_k$. Then by subtracting the bottom row of the matrix from each of the other rows to obtain the second equality below, we have 
\begin{align*}
\det \limits_{i,j = 1}^k \left( \zeta^{\mathrm{1}\{ t'_j < t_i\}} \right)   = \det \begin{bmatrix} 
    1 & 1 & \dots  & & 1 \\
    \zeta & 1 & \dots & & \vdots \\
    \vdots & \ddots &  \\
   \zeta &  \zeta & \dots    & 1  & 1 \\
    \zeta &  \zeta & \dots   & \zeta  & 1 
    \end{bmatrix}
=
\det \begin{bmatrix} 
    1-\zeta & 1-\zeta & \dots  & 1 - \zeta & 0 \\
    0 & 1-\zeta & \dots &  1 - \zeta & 0  \\
    \vdots & \ddots &  \\
 0 & \dots  & 0 & 1 -\zeta & 0 \\
    \zeta &  \zeta & \dots   & \zeta  & 1 
    \end{bmatrix}
= (1 - \zeta)^{k-1}.
\end{align*}
We omit the proof of the case $ t'_1 < t_1 \leq t'_2 < \cdots \leq  t'_k < t_k$, which is similar.
\end{proof}

Given a complex parameter $\zeta$, define the auxillary operator $A^\zeta:\mathbb{T}_n \times \mathbb{T}_n \to \mathbb{C}$ by
\begin{align} \label{eq:aux1}
A^\zeta(y,y') = A^\zeta((t,h),(t',h')) := \mathrm{1}_{\{ h' = h+1 \}} \zeta^{\mathrm{1}\{t' <t\}}.
\end{align}
Our next step in proving Theorem \ref{thm:ckast} is the following lemma, which states that determinants involving $A^\zeta$ may be used to detect bead configurations.
\begin{lemma} \label{lem:new det}
Let $\zeta = e^{ - \lambda}$, and for $N \geq 1$ let $(y_1,\ldots,y_N)$ be points of $\mathbb{T}_n$. Then for $k := N/n$ we have
\begin{align} \label{eq:new det}
\det \limits_{i,j =1}^{N} A^\zeta(y_i,y_j) 
= (-1)^{(n-1)k + (k-1) \ell} (1-\zeta)^{n(k-1)} \zeta^\ell \mathrm{1} \{ (y_1,\ldots,y_N) \text{ is bead config.}\}
\end{align}
where $\ell = \ell(y_1,\ldots,y_N)$ is the occupation number defined in \eqref{eq:new}.
\end{lemma}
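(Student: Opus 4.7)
The plan is to exploit the sparsity of the matrix $(A^\zeta(y_i,y_j))_{i,j=1}^N$ and reduce the computation of its determinant to $n$ applications of Lemma \ref{lem:succession}, one per pair of neighbouring strings.

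First I would conjugate the matrix by a permutation matrix, which leaves the determinant unchanged: reorder the points so that they are grouped by string and, within each string, sorted by time. That is, list the points on strings $h=0,1,\ldots,n-1$ in turn, writing $t_{h,1}<\ldots<t_{h,k_h}$ for the sorted times on string $h$. Since the only nonzero entries in any row $(h,j)$ of $A^\zeta$ sit in columns of the form $(h+1,\cdot)$, in order for the matrix to have nonzero determinant every row and column must contain a nonzero entry, which forces $k_h = k_{h+1}$ for every $h$; in particular $N$ must be divisible by $n$ and each string must carry the same number $k=N/n$ of points (else both sides of \eqref{eq:new det} vanish).

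After this reordering, the matrix takes block form: it is block-$n\times n$ with $k\times k$ blocks, the $(h,h+1)$ block equals $B_h := (\zeta^{\mathrm{1}\{t_{h+1,j'}<t_{h,j}\}})_{j,j'=1}^k$, and all other blocks vanish. I would then apply a single cyclic shift of the block columns to convert this into a block diagonal matrix with diagonal blocks $B_0,\ldots,B_{n-1}$. At the level of individual columns, this shift is a product of $k$ disjoint $n$-cycles and so has sign $(-1)^{k(n-1)}$, giving
\begin{align*}
\det_{i,j=1}^N A^\zeta(y_i,y_j) = (-1)^{k(n-1)} \prod_{h=0}^{n-1} \det B_h.
\end{align*}

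Next I would feed each $B_h$ into Lemma \ref{lem:succession} (with the $t_i$ being the row times $t_{h,j}$ and the $t_j'$ the column times $t_{h+1,j'}$). The lemma tells us that $\det B_h$ vanishes unless strings $h$ and $h+1$ interlace in one of the two admissible patterns of \eqref{eq:doubleinterlace}; hence the full product is zero unless all consecutive pairs of strings interlace, which is precisely the bead configuration condition. On the bead configuration event, $\det B_h$ equals $(1-\zeta)^{k-1}$ when $t_{h,1}\leq t_{h+1,1}$ and $(-1)^{k-1}\zeta(1-\zeta)^{k-1}$ when $t_{h+1,1}<t_{h,1}$, so
\begin{align*}
\prod_h \det B_h = (1-\zeta)^{n(k-1)}\,(-1)^{(k-1)m}\,\zeta^m,
\end{align*}
where $m$ is the number of $h\in\mathbb{Z}_n$ for which the reversed pattern $t_{h+1,1}<t_{h,1}$ holds.

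The last step is to identify $m$ with the occupation number $\ell$ of \eqref{eq:new}. Reading \eqref{eq:occdef} at $t=0$, $h+1\in X_0$ if and only if $t_{h+1,1}<t_{h,1}$, so summing over $h$ yields $m=\#X_0=\ell$. Combining signs delivers the prefactor $(-1)^{k(n-1)+(k-1)\ell}$, as claimed. The only mildly delicate points --- and the closest thing to an obstacle --- are the sign of the block cyclic shift and the matching of $m$ with $\ell$; both are straightforward bookkeeping.
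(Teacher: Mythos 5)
Your proof is correct and follows essentially the same route as the paper's: reorder the points by string to exhibit the block-cyclic structure, undo the cyclic shift at a cost of $(-1)^{k(n-1)}$, evaluate each block $B_h$ via Lemma \ref{lem:succession}, and identify the number of reversed interlacings with the occupation number $\ell$ through \eqref{eq:new}. Your explicit handling of the case where the strings carry unequal numbers of points (forcing $k_h=k_{h+1}$ cyclically for a nonzero determinant) is a minor point the paper leaves implicit, but otherwise the two arguments coincide.
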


\begin{proof}

Before delving into the proof, let us recognise immediately that the determinant on the left-hand-side of \eqref{eq:new det} may only be nonzero if $(y_1,\ldots,y_N)$ form a bead configuration: if $h_i = h_j$, and $t_i < t_j$ then the $i^{\text{th}}$ and $j^{\text{th}}$ row of $( A^\zeta(y_i,y_j))_{1 \leq i,j \leq N}$ are identical unless there is some $k$ such that $h_k = h_i+1$ and $t_i \leq t_k < t_j$. Thus \eqref{eq:new det} is plainly true whenever $(y_1,\ldots,y_N)$ is not a bead configuration (since both sides are zero).

We therefore assume without loss of generality for the remainder of the proof that $N=nk$ and $(y_1,\ldots,y_{nk})$ is a bead configuration, ordered so that the vertical coordinates are nondecreasing, and so that points with the same vertical coordinate are ordered so that their horizontal coordinates are increasing. Under this assumption, and using the definition \eqref{eq:aux1}, $( A^\zeta(y_i,y_j))_{1 \leq i,j \leq N}$ is a block matrix with the form 
\begin{align} \label{eq:mrep}
( A^\zeta(y_i,y_j))_{1 \leq i,j \leq N} =
\begin{bmatrix} 
    \mathbf{0} & A_0 & \mathbf{0} & \dots   & \mathbf{0} \\
   \mathbf{0} & \mathbf{0} & A_1 & \dots &  \vdots \\
    \vdots & \ddots &  \\
   \mathbf{0} &  \mathbf{0} & \dots    & \mathbf{0}  & A_{n-2} \\
    A_{n-1} &  \mathbf{0} & \dots   &   & \mathbf{0}
    \end{bmatrix},
\end{align}
where each $A_h$ denotes the $k \times k$ submatrix $A_h := (A^\zeta(y_i,y_j))_{i:h_i=h,j:h_j=h+1}$, and each $\mathbf{0}$ denotes a $k \times k$ matrix of zeros.

We now note that swapping two columns of a matrix changes to parity of the determinant by $-1$. It takes $(n-1)k$ column swaps in order to rearrange the matrix in \eqref{eq:mrep} so that it is block diagonal, i.e.\ $A_0$ occupies the first $k$ rows and $k$ columns of the matrix, $A_1$ occupies the next $k$ rows and next $k$ columns, etc. It then follows from \eqref{eq:mrep} that 
\begin{equation} \label{eq:izzer}
\det_{i,j=1}^N A^\zeta(y_i,y_j) = (-1)^{(n-1)k} \prod_{h \in \mathbb{Z}_n} \det (A_h).
\end{equation} 
Now each matrix $A_h$ is a matrix of the form occurring in Lemma \ref{lem:succession}, where $t_1 \leq \cdots \leq t_k$ are the entries of $(y_1,\ldots,y_N)$ lying on string $h$, and $t'_1 \leq \cdots \leq t'_k$ are the entries lying on string $h+1$. Since $(y_1,\ldots,y_N)$ is a bead configuration, the $t_1 \leq \cdots \leq t_k$ interlace $t'_1 \leq \cdots \leq t'_k$. It follows from Lemma \ref{lem:succession} that for each $h \in \mathbb{Z}_n$,
\begin{align} \label{eq:izzay}
\det(A_h) 
= \begin{cases}
(1-\zeta)^{k-1} \qquad &\text{if } \inf_{i:h_i=h} t_i \leq \inf_{i:h_i=h+1}t_i , \\
(-1)^{k-1}\zeta (1-\zeta)^{k-1} \qquad &\text{if } \inf_{i:h_i=h} t_i > \inf_{i:h_i=h+1}t_i.
\end{cases}
\end{align} 
Using the definition \eqref{eq:new} of $\ell = \ell(y_1,\ldots,y_{nk})$, we immediately see from plugging \eqref{eq:izzay} into \eqref{eq:izzer} that if $(y_1,\ldots,y_{nk})$ is a bead configuration then  
\begin{equation} \label{eq:izzer}
\det_{i,j=1}^N A^\zeta(y_i,y_j) = (-1)^{(n-1)k} ((-1)^{k-1}\zeta)^{\ell} (1-\zeta)^{n(k-1)},
\end{equation} 
completing the proof.
\end{proof}

For $\theta = (\theta_1,\theta_2) \in \{0,1\}^2$, define the operator $B^{\zeta,\theta}:\mathbb{T}_n \times \mathbb{T}_n \to \mathbb{C}$ by 
\begin{align} 
B^{\zeta,\theta}(y,y') = \frac{(-1)^{\theta_2 \mathrm{1}_{\{h'=0\}} } }{1 - (-1)^{\theta_1} \zeta} A^{(-1)^{\theta_1}\zeta} (y,y'), \label{eq:equinox}
\end{align}
where $y = (t,h), y'=(t',h')$, and $A^\zeta$ is defined in \eqref{eq:aux1}. 
We now state and prove our final lemma before proving Theorem \ref{thm:ckast}. 

\begin{lemma} \label{lem:premelan}
With $\zeta = e^{-\lambda}$ we have 
\begin{align} \label{eq:shooman}
 \frac{1}{2} (-1)^{(\theta_1+1)(\theta_2+n+1)} (1 - (-1)^{\theta_1} \zeta)^{n} \det \limits_{i,j=1}^{N} B^{\zeta,\theta} (y_i,y_j)  = g_N^{\lambda,\theta}(y_1,\ldots,y_N).
\end{align}
\end{lemma}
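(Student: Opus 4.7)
The plan is to substitute the definition \eqref{eq:equinox} of $B^{\zeta,\theta}$ into the determinant on the left-hand side of \eqref{eq:shooman}, extract the scalar and column-dependent factors, and then invoke Lemma \ref{lem:new det}. Since $B^{\zeta,\theta}(y_i,y_j)$ equals $\frac{(-1)^{\theta_2 \mathrm{1}_{\{h_j=0\}}}}{1-(-1)^{\theta_1}\zeta}\, A^{(-1)^{\theta_1}\zeta}(y_i,y_j)$, pulling the constant $1/(1-(-1)^{\theta_1}\zeta)$ out of each of the $N$ columns produces a prefactor of $(1-(-1)^{\theta_1}\zeta)^{-N}$. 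The remaining column-dependent sign $(-1)^{\theta_2 \mathrm{1}_{\{h_j=0\}}}$, extracted across all columns, contributes the factor $(-1)^{\theta_2 \cdot \#\{j:h_j=0\}}$. We may assume the configuration has exactly $k$ beads on each string (otherwise the determinant vanishes by Lemma \ref{lem:new det}), so this equals $(-1)^{\theta_2 k}$.

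Next, applying Lemma \ref{lem:new det} with $\zeta$ replaced by $(-1)^{\theta_1}\zeta$ evaluates the remaining determinant: on bead configurations, it equals
\begin{align*}
(-1)^{(n-1)k+(k-1)\ell} (1-(-1)^{\theta_1}\zeta)^{n(k-1)} ((-1)^{\theta_1}\zeta)^{\ell}.
\end{align*}
Using $N = nk$, the powers of $(1-(-1)^{\theta_1}\zeta)$ combine with the prefactor $\frac{1}{2}(-1)^{(\theta_1+1)(\theta_2+n+1)}(1-(-1)^{\theta_1}\zeta)^n$ from \eqref{eq:shooman} to cancel exactly, leaving a clean $\frac{1}{2}\zeta^\ell$ multiplied by an accumulated sign.

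The main bookkeeping obstacle is then to check that this accumulated sign matches the one in the definition \eqref{eq:0gthetadef} of $g_N^{\lambda,\theta}$. Concretely, we must verify the congruence
\begin{align*}
(\theta_1+1)(\theta_2+n+1) + \theta_2 k + (n-1)k + (k-1)\ell + \theta_1\ell \equiv (\theta_1+k+1)(\theta_2+n+\ell+1) \pmod{2}.
\end{align*}
Expanding the right-hand side as $(\theta_1+1)(\theta_2+n+1) + (\theta_1+1)\ell + k(\theta_2+n+\ell+1)$ and cancelling the common terms $(\theta_1+1)(\theta_2+n+1)$ and $\theta_2 k$ from both sides reduces the check to $(n-1)k+(k-1)\ell+\theta_1\ell \equiv (\theta_1+1)\ell+kn+k\ell+k \pmod 2$, whose difference is $-2k-2\ell \equiv 0$, so the identity holds.

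Finally, the $N=0$ case is not covered by Lemma \ref{lem:new det}, but it is immediate: the empty determinant equals $1$, so the left-hand side of \eqref{eq:shooman} reads $\frac{1}{2}(-1)^{(\theta_1+1)(\theta_2+n+1)}(1-(-1)^{\theta_1}\zeta)^n$, which coincides with $g_0^{\lambda,\theta}$ as defined in \eqref{eq:crelle2}. This completes the proof.
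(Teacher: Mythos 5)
Your proof is correct and follows essentially the same route as the paper: both substitute the definition \eqref{eq:equinox} of $B^{\zeta,\theta}$, extract the scalar factor $(1-(-1)^{\theta_1}\zeta)^{-N}$ and the column signs $(-1)^{\theta_2\mathrm{1}_{\{h_j=0\}}}$, invoke Lemma \ref{lem:new det} with $\zeta \mapsto (-1)^{\theta_1}\zeta$, and close by verifying the mod-$2$ sign congruence (the paper's \eqref{eq:modness}, which you check by expanding the target product rather than factoring the source sum — the same identity either way). Your handling of the $N=0$ case via \eqref{eq:crelle2} is also the intended one.
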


\begin{proof}
For $1 \leq i \leq N$ let $y_i = (t_i,h_i)$. It follows from the definition of $B^{\zeta,\theta}$ and \eqref{eq:new det} that  
\begin{align} \label{eq:new det3}
(1 - (-1)^{\theta_1} \zeta)^{n} \det \limits_{i,j =1}^{N} B^{\zeta,\theta}(y_i,y_j) = (-1)^{(n-1)k+(k-1)\ell + \theta_1 \ell + \theta_2 k } \zeta^\ell \mathrm{1} \{ (y_1,\ldots,y_N) \text{ is bead config.}\},
\end{align}
where $k = N/n$ and $\ell = \ell(y_1,\ldots,y_N)$ is the occupation number.

Now note that modulo $2$ we have
\begin{align} \label{eq:modness}
&(\theta_1+1)(\theta_2+n+1) +(n-1)k + (k-1) \ell + \theta_1 \ell + \theta_2 k \nonumber \\
&= (\theta_1+1)(\theta_2+n+1) + k(\theta_2+n+1)+\ell (\theta_1+1) + k\ell
 \qquad \mathrm{mod}~~ 2 \nonumber \\&
= ( \theta_1 + 1 + k)( \theta_2 + n + 1 + \ell) \qquad \mathrm{mod}~~ 2.
\end{align}
Using \eqref{eq:new det3}, \eqref{eq:modness}, and the definition \eqref{eq:0gthetadef} of $g_N^{\lambda,\theta}$, we obtain the result.
\end{proof}

We now complete the proof of Theorem \ref{thm:ckast}.

\begin{proof}[Proof of Theorem \ref{thm:ckast}]
Since conjugation by a diagonal matrix leaves a determinant unchanged, we make the simple observation that 
\begin{align}
&\det \limits_{i,j =1}^{N} \left(  B^{e^{-\lambda},\theta} (y_i,y_j)   \right) =  \det \limits_{i,j=1}^{N} \left( e^{ \frac{\theta_2 \pi \iota }{n }(h_j-h_i) - ( \lambda+\theta_1 \pi \iota)(t_j - t_i)}  B^{e^{-\lambda},\theta} (y_i,y_j)  \right) \label{eq:rembrandt},
\end{align}
where $(h_j - h_i)$ is a (possibly negative) integer \emph{not} to be taken modulo $n$. 

We now note from \eqref{eq:aux0}, \eqref{eq:aux1} and \eqref{eq:equinox} that
\begin{align*}
e^{ \frac{\theta_2 \pi \iota }{n }(h'-h) - ( \lambda+\theta_1\pi \iota)(t' - t)}  B^{e^{-\lambda},\theta} (y,y') = \frac{ \mathrm{1}_{\{h'= h+1\}} e^{ \frac{ \theta_2 \pi \iota }{n} (h'-h+n\mathrm{1}_{\{h'=0\}}) - ( \lambda + \theta_1  \pi \iota )(t'-t+\mathrm{1}_{\{t' < t\}} ) } }{ 1 - e^{ - (\lambda+\theta_1 \pi \iota ) } } . 
\end{align*}
Now whenever $h'= h+1$ mod $n$ we have $h'-h+n\mathrm{1}_{\{h'=0\}} = 1$. Moreover, $t'-t+\mathrm{1}_{\{t' < t\}}  = [t'-t]$. Therefore 
\begin{align}\label{eq:oranj}
e^{ \frac{\theta_2 \pi \iota }{n }(h'-h) - ( \lambda+\theta_1\pi \iota)(t' - t)}  B^{e^{-\lambda},\theta} (y,y') = \frac{ \mathrm{1}_{\{h'= h+1\}} e^{ \frac{ \theta_2 \pi \iota }{n} - ( \lambda + \theta_1  \pi \iota )[t'-t] }}{ 1 - e^{ - (\lambda+\theta_1 \pi \iota ) } } = C^{\beta,\theta_2}( t'-t,h'-h). 
\end{align}
Plugging \eqref{eq:oranj} and \eqref{eq:rembrandt} into \eqref{eq:shooman}, we obtain the statement of Theorem \ref{thm:ckast}.
\end{proof}

\subsection{Fredholm determinants and diagonalisation}
Recall the definition \eqref{eq:fredholm} of the Fredholm determinant of an operator. Theorem \ref{thm:main} now follows quickly from Theorem \ref{thm:ckast}:
\begin{proof}[Proof of Theorem \ref{thm:main}]
In analogy with \eqref{eq:umbrella} define
\begin{align} \label{eq:umbrellatheta}
Z^\theta(\lambda,T) &:=  \sum_{N \geq 0} \frac{T^N}{N!} \int_{\mathbb{T}_n^N} g_N^{\lambda,\theta}(y_1,\ldots,y_N) \mathrm{d}y_1 \cdots \mathrm{d}y_N,
\end{align}
so that by \eqref{eq:gsum} we have 
\begin{align} \label{eq:Zsum}
Z(\lambda,T) =\sum_{\theta \in \{0,1\}^2} Z^\theta(\lambda,T).
\end{align}
Now by \eqref{eq:ckast} we have
\begin{align} \label{eq:copperone}
Z^\theta(\lambda,T) &:=  \frac{1}{2} (-1)^{(\theta_1+1)(\theta_2+n+1)} (1 - e^{- (\lambda + \theta_1 \pi \iota)} )^n  \sum_{N \geq 0} \frac{T^N}{N!} \int_{\mathbb{T}_n^N}  \det\limits_{i,j=1}^{N} C^{\lambda+\theta_1 \pi \iota,\theta_2} (y_i,y_j) \mathrm{d}y_1 \cdots \mathrm{d}y_N \nonumber \\ 
&=  \frac{1}{2} (-1)^{(\theta_1+1)(\theta_2+n+1)} (1 - e^{- (\lambda + \theta_1 \pi \iota)} )^n \det ( 1 + T C^{\lambda+\theta_1 \pi \iota,\theta_2} ),
\end{align}
where we are abusing notation slightly, and writing $C^{\lambda+\theta_1 \pi \iota,\theta_2} (y_i,y_j) := C^{\lambda+\theta_1 \pi \iota,\theta_2} (y_j-y_i)$ to construct a (translation invariant) operator $C^{\lambda+\theta_1 \pi \iota,\theta_2} : \mathbb{T}_n \times \mathbb{T}_n \to \mathbb{C}$ from $C^{\lambda+\theta_1 \pi \iota,\theta_2} : \mathbb{T}_n \to \mathbb{C}$. 

Using \eqref{eq:Zsum} and \eqref{eq:copperone} together completes the proof of Theorem \ref{thm:main}.
\end{proof}

We now diagonalise the operator $C^{\beta,\theta_2}$ to compute these Fredholm determinants explicitly, thus leading to a proof of Theorem \ref{thm:arrau}.

Define the functions $\phi_{z,m}:\mathbb{T}_n \to \mathbb{C}$ by 
\begin{align} \label{eq:head}
\phi_{z,m}(t,h) := \frac{1}{\sqrt{n}} z^h e^{ - 2 \pi \iota m t }  \qquad z^n = 1, m \in \mathbb{Z}.
\end{align}
The countable collection of functions $\Gamma:= \left\{ \phi_{z,m} : z^n = 1 , m \in \mathbb{Z}\right\}$ form an orthonormal basis of the Hilbert space $\mathcal{L}^2(\mathbb{T}_n)$ of square-integrable functions on $\mathbb{T}_n$ endowed with inner product $\langle \phi, \psi \rangle :=\int_{\mathbb{T}_n} \overline{\phi(y)} \psi(y) \mathrm{d}y $.
They are also eigenfunctions of the operators $C^{\beta,\theta_2}$:

\begin{lemma} \label{lem:eigen}
Each $\phi_{z,m}$ is an eigenfunction of $C^{\beta,\theta_2}$ with eigenvalue $e^{ \theta_2 \pi \iota /n} z(\beta+2\pi \iota m)^{-1}$. 
\end{lemma}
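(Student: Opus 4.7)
The plan is to verify the eigenfunction equation by direct computation, exploiting the translation-invariant structure of the kernel $C^{\beta,\theta_2}$.

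First I would unpack the action of the operator. Using the definition \eqref{eq:aux00} and writing $y=(t,h)$ and $y'=(t',h')$, the quantity $(C^{\beta,\theta_2}\phi_{z,m})(t,h)$ is
\begin{equation*}
e^{\theta_2\pi\iota/n}\sum_{h'\in\mathbb{Z}_n}\mathbf{1}_{\{h'=h+1\}}\int_0^1 \frac{e^{-\beta[t'-t]}}{1-e^{-\beta}}\,\phi_{z,m}(t',h')\,\mathrm{d}t'.
\end{equation*}
The sum over $h'$ collapses to the single term $h'=h+1$, producing the factor $z^{h+1}/\sqrt{n}=z\cdot z^h/\sqrt{n}$. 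So the right factor $z$ in the claimed eigenvalue is already visible; it only remains to show that the integral against $e^{-2\pi\iota m t'}$ produces $(\beta+2\pi\iota m)^{-1}(1-e^{-\beta})e^{-2\pi\iota m t}$.

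Next I would compute that integral by splitting the domain according to the definition $[t'-t]=t'-t+\mathbf{1}_{\{t'<t\}}$. Setting $\gamma:=\beta+2\pi\iota m$, this gives
\begin{equation*}
\int_0^1 e^{-\beta[t'-t]}e^{-2\pi\iota m t'}\,\mathrm{d}t'
= e^{\beta t}\int_t^1 e^{-\gamma t'}\,\mathrm{d}t' + e^{\beta t-\beta}\int_0^t e^{-\gamma t'}\,\mathrm{d}t'.
\end{equation*}
Each piece is elementary. Evaluating and using the key arithmetic identity $e^{-\gamma}=e^{-\beta}$ (which holds since $m\in\mathbb{Z}$ forces $e^{-2\pi\iota m}=1$), the two boundary terms proportional to $e^{\beta t-\beta}$ cancel, leaving
\begin{equation*}
\frac{1}{\gamma}\bigl(e^{-2\pi\iota m t}-e^{-2\pi\iota m t-\beta}\bigr)=\frac{(1-e^{-\beta})\,e^{-2\pi\iota m t}}{\beta+2\pi\iota m}.
\end{equation*}
Plugging this back and recombining the factors of $z^h/\sqrt{n}$ and $e^{-2\pi\iota m t}$ into $\phi_{z,m}(t,h)$ yields precisely $e^{\theta_2\pi\iota/n}z(\beta+2\pi\iota m)^{-1}\phi_{z,m}(t,h)$, which is the assertion.

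There is no real obstacle here: the only subtle point is noticing the identity $e^{-\gamma}=e^{-\beta}$ that forces the cancellation of the middle terms and produces the clean factor $1-e^{-\beta}$ which neatly kills the denominator of $C^{\beta,\theta_2}$. The computation is essentially a one-line Fourier-series style calculation once the residue $[t'-t]$ is dealt with by case splitting.
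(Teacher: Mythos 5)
Your proof is correct and follows essentially the same route as the paper: collapse the sum over $h'$ to extract the factor $z$, then evaluate the integral against $e^{-2\pi\iota m t'}$ to obtain $(\beta+2\pi\iota m)^{-1}e^{-2\pi\iota m t}$ after cancelling $1-e^{-\beta}$. The only difference is that the paper dismisses the integral as a "direct computation," whereas you carry out the case split on $[t'-t]$ and the cancellation via $e^{-\gamma}=e^{-\beta}$ explicitly, which is a welcome addition rather than a divergence.
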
 

\begin{proof}
Using the definition \eqref{eq:aux0} of $C^{\beta,\theta_2}$, and writing $[t'-t] = t'-t+\mathrm{1}_{\{t'<t\}}$ for the residue of $t'-t$ mod $1$, we have
\begin{align} \label{eq:stability}
C^{\beta,\theta_2} \phi_{z,m} (t,h) &:= \sum_{ h' \in \mathbb{Z}_n} \int_0^1 C^{\beta,\theta_2} ( (t,h),(t',h')) \phi_{z,m}(t',h') \mathrm{d}t' \nonumber \\
&=e^{ \theta_2 \pi \iota /n}   \sum_{ h' \in \mathbb{Z}_n} \int_0^1 \mathrm{1}_{\{h'=h+1\}} \frac{ e^{ -  \beta  [t'-t] }}{ 1 - e^{- \beta }} z^{h'} e^{ -2 \pi \iota m t'} \mathrm{d}t' \nonumber \\
&=e^{ \theta_2 \pi \iota /n}  \left(\sum_{ h' \in \mathbb{Z}_n} \mathrm{1}_{\{h'=h+1\}} z^{h'} \right) \left(  \int_0^1 \frac{ e^{ -  \beta  [t'-t] }}{ 1 - e^{- \beta }} e^{ -2 \pi \iota m t'} \mathrm{d}t' \right). 
\end{align}
Direct computation tells us that 
\begin{align} \label{eq:350}
\sum_{ h' \in \mathbb{Z}_n} \mathrm{1}_{\{h'=h+1\}} z^{h'} = z^{h+1} \qquad \text{and} \qquad   \int_0^1 \frac{ e^{ -  \beta  [t'-t] }}{ 1 - e^{- \beta }} e^{ -2 \pi \iota m t'} \mathrm{d}t'  = \frac{ e^{ - 2 \pi \iota m t } }{ \beta + 2 m \pi \iota }.
\end{align}
Plugging \eqref{eq:350} into \eqref{eq:stability}, we see that $$C^{\beta,\theta_2} \phi_{z,m}(t,h) = e^{ \theta_2 \pi \iota /n}   z^{h+1}  (\beta+2\pi \iota m)^{-1} e^{ - 2 \pi \iota m t } = e^{ \theta_2 \pi \iota /n}     z(\beta+2\pi \iota m)^{-1} \phi_{z,m}(t,h),$$ sealing the result.
\end{proof}
We now prove Theorem \ref{thm:arrau}.

\begin{proof}[Proof of Theorem \ref{thm:arrau}]
From Lemma \ref{lem:eigen} it follows that each $\phi_{z,m}$ is an eigenfunction of $I+TC^{\beta,\theta_2}$ with eigenvalue 
\begin{align*}
\mu_{z,m} := 1 + \frac{ e^{\theta_2 \pi \iota /n} Tz }{ \beta+ 2 \pi \iota m }.
\end{align*}
Since the operator $I+TC^{\beta,\theta_2}$ has an orthonormal basis of eigenfunctions, it follows that the Fredholm determinant is alternatively characterised through the infinite product
\begin{align} \label{eq:dprod}
\det ( I + T C^{\beta,\theta_2} ) &= \prod_{ z^n =1  } \prod_{m \in \mathbb{Z}} \left( 1 + \frac{ e^{\theta_2 \pi \iota /n} T z }{ \beta+ 2 \pi \iota m }\right) =  \prod_{ z^n = (-1)^{\theta_2} } \prod_{m \in \mathbb{Z}} \left( 1 + \frac{ T z }{ \beta+ 2 \pi \iota m }\right).
\end{align}
For each $z$ with $z^n = (-1)^{\theta_2}$, consider the product over $m \in \mathbb{Z}$ in \eqref{eq:dprod}. By pairing each $m \geq 0$ with $-m$, and accounting for the double counting of the $m=0$ term, we have
\begin{align} \label{eq:dprod2}
\prod_{m \in \mathbb{Z}} \left( 1 + \frac{ T z }{ \beta+ 2 \pi \iota m }\right) =  (1 + Tz/\beta)^{-1} \prod_{ m=0}^\infty \left( 1 + \frac{ 2 \beta T z + T^2 z^2 }{ \beta^2 + 4 \pi^2 m^2} \right).
\end{align}
(The reader concerned about the convergence of the improper infinite product in \eqref{eq:dprod} may resolve to write the determinant as a proper infinite product of determinants $1 + \frac{ 2 \beta T z + T^2 z^2 }{ \beta^2 + 4 \pi^2 m^2}$ over the two-dimensional invariant space spanned by $\phi_{z,m}$ and $\phi_{z,-m}$.)

Appealing to the infinite product identity
\begin{align} \label{eq:dprod3} 
\prod_{m = 0}^\infty \left( 1 + \frac{ \gamma^2 - b^2}{ a^2m^2 + b^2 } \right) = \frac{ \gamma}{a} \frac{ \sinh(\pi \gamma/a)}{\sinh(\pi b/a)} ,
\end{align}
(see e.g.\ \cite{WinNT}) with $a = 2\pi, b=\beta$ and $\gamma=\beta+Tz$, it then follows from \eqref{eq:dprod}, \eqref{eq:dprod2} and \eqref{eq:dprod3} that
\begin{align} \label{eq:det product}
\det ( I + T C^{\beta,\theta_2} ) = \prod_{ z^n = (-1)^{\theta_2} } \frac{ \sinh(\frac{\beta+Tz}{2})}{ \sinh(\frac{\beta}{2})}.
\end{align}
Using the definition of $\sinh$ and exploiting the fact that $\sum_{z^n = (-1)^{\theta_2}}w = 0$, we have 
\begin{align} \label{eq:det product 2}
(1 - e^{ - \beta})^n \det ( I + T C^{\beta,\theta_2} ) = \prod_{ z^n = (-1)^{\theta_2} } (e^{Tz} - e^{ - \beta}).
\end{align}
Using \eqref{eq:det product 2} and \eqref{eq:copperone}, and setting $\beta = \lambda + \theta_1 \pi \iota $, we have
\begin{align} \label{eq:thetaprod}
Z^\theta(\lambda,T) = \frac{1}{2} (-1)^{(\theta_1+1)(\theta_2+n+1)}\prod_{ z^n = (-1)^{\theta_2} } (e^{Tz} - e^{ - (\lambda+\theta_1 \pi \iota )}).
\end{align}
Theorem \ref{thm:arrau} now follows from \eqref{eq:Zsum} and \eqref{eq:thetaprod}.
\end{proof}

\section{Volumes of bead configurations: Proofs of Corollary \ref{cor:arrau} and Theorem \ref{thm:cambridge springs}} \label{sec:volproof}

In this section we prove Corollary \ref{cor:arrau}, giving an explicit formula for the volume $\mathrm{Vol}^{(n)}_{k,\ell}$ of bead configurations on $\mathbb{T}_n$ with $k$ beads per string and occupation number $\ell$. Thereafter we prove Theorem \ref{thm:cambridge springs}, which gives fine asymptotics of the volumes $\mathrm{Vol}^{(n)}_{k,\ell}$. 

\subsection{Proof of Corollary \ref{cor:arrau}}
With a view to proving Corollary \ref{cor:arrau}, let us note that by using the definition \eqref{eq:pf000} of $Z(\lambda,T)$, and setting $\zeta = e^{-\lambda}$, we obtain from Theorem \ref{thm:arrau} the relation
\begin{align} \label{eq:ckast4}
\sum_{k \geq 0} \sum_{ 0 \leq \ell \leq n} T^{nk} \zeta^\ell \mathrm{Vol}^{(n)}_{k,\ell} = \frac{1}{2} \sum_{ \theta \in \{0,1\}^2 } (-1)^{(\theta_1+1)(\theta_2+n+1)} \prod_{ z^n = (-1)^{\theta_2} } (e^{Tz} + (-1)^{\theta_1+1} \zeta). 
\end{align}
We are now ready to prove Corollary \ref{cor:arrau}.

\begin{proof}[Proof of Corollary \ref{cor:arrau}]
By \eqref{eq:ckast4}, $\mathrm{Vol}^{(n)}_{k,\ell}$ is the coefficient of $T^{nk}\zeta^\ell$ in the right-hand-side of \eqref{eq:ckast4}. Expanding the product in the right-hand-side of \eqref{eq:ckast4}, we have 
\begin{align} \label{eq:rog1}
\sum_{k \geq 0} \sum_{ 0 \leq \ell \leq n} T^{nk} \zeta^\ell \mathrm{Vol}^{(n)}_{k,\ell} &= \frac{1}{2} \sum_{ \theta \in \{0,1\}^2 } (-1)^{(\theta_1+1)(\theta_2+n+1)} \sum_{ \ell = 0}^n (-1)^{(\theta_1+1)\ell} \zeta^\ell \sum_{\mathcal{S} \subseteq \{z^n=(-1)^{\theta_2}\}, \# \mathcal{S} = n-\ell } e^{ T \sum_{z \in \mathcal{S}} z},
\end{align}
where the final sum is over subsets $\mathcal{S}$ of the $n^{\text{th}}$ roots of $(-1)^{\theta_2}$ of cardinality $n-\ell$. 

Comparing coefficients of $\zeta^\ell$, for each $0 \leq \ell \leq n$ we have 
\begin{align} \label{eq:chu}
\sum_{k \geq 0}  T^{nk} \mathrm{Vol}^{(n)}_{k,\ell} &= \frac{1}{2} \sum_{ \theta \in \{0,1\}^2 } (-1)^{(\theta_1+1)(\theta_2+n+1+\ell)} \sum_{\mathcal{S} \subseteq \{z^n=(-1)^{\theta_2}\}, \# \mathcal{S} = n-\ell } e^{ T \sum_{z \in \mathcal{S}} z}.
\end{align}
We now expand the right-hand-side of \eqref{eq:chu} as a power series in $T$. To this end, note that we have the equality of sets $\{ z : z^n =(-1)^{\theta_2} \} = \{ ze^{ \theta_2 \pi \iota /n} : z^n = 1 \}$. Using this fact to obtain the first equality below, then expanding as a power series in $T$ to obtain the second, we have 
\begin{align} \label{eq:ancient}
\sum_{\mathcal{S}  \subseteq \{z^n=(-1)^{\theta_2}\}, \# \mathcal{S} = n-\ell }   e^{ T \sum_{z \in \mathcal{S}} z } &= \sum_{\mathcal{S} \subseteq \{z^n=1\}, \# \mathcal{S}=n-\ell } e^{ e^{\theta_2 \pi \iota /n} T \sum_{z \in \mathcal{S}} z } \nonumber \\
&= \sum_{ N = 0}^\infty \frac{T^N}{N!} e^{ \theta_2 \pi \iota N/n} \sum_{\mathcal{S} \subseteq \{z^n=1\}, \# \mathcal{S}=n-\ell  } \left( \sum_{z \in \mathcal{S}} z\right)^N.
\end{align}
By rotational symmetry, $\sum_{\mathcal{S} \subseteq \{z^n=1\}, \# \mathcal{S}=n-\ell } \left( \sum_{z \in \mathcal{S}} z \right)^N$ is zero unless $N$ is a multiple of $n$.
In particular, instead summing over $N = nk$ in \eqref{eq:ancient} we have 
\begin{align} \label{eq:ancient2}
\sum_{\mathcal{S} \subseteq \{z^n=(-1)^{\theta_2}\},\#\mathcal{S}=n-\ell } e^{ T \sum_{z \in \mathcal{S}} z } &= \sum_{ k = 0}^\infty \frac{T^{nk}}{(nk)!} (-1)^{\theta_2 k} \sum_{\mathcal{S} \subseteq \{z^n=1\},\#\mathcal{S}=n-\ell} \left( \sum_{z \in \mathcal{S}} z \right)^{nk}.
\end{align}
Plugging \eqref{eq:ancient2} into \eqref{eq:chu}, and isolating the power of $T^{nk}$, we obtain
\begin{align} \label{eq:japan99}
\mathrm{Vol}^{(n)}_{k,\ell}  = \frac{1}{(nk)!} \left(  \sum_{\mathcal{S} \subseteq \{z^n=1\},\#\mathcal{S}=n-\ell} \left( \sum_{z \in \mathcal{S}} z \right)^{nk} \right) \left( \frac{1}{2} \sum_{ \theta \in \{0,1\}^2 } (-1)^{(\theta_1+1)(\theta_2+n+1+\ell) + \theta_2k } \right).
\end{align}
We now account for the sum over $\theta$ in \eqref{eq:japan99}. Using the fact that $(\theta_1+1)(\theta_2+n+1+\ell) + \theta_2k  = (\theta_1+1+k)(\theta_2+n+1+\ell) -k(n+\ell+1)$ we have
\begin{align} \label{eq:japan1000}
 \frac{1}{2} \sum_{ \theta \in \{0,1\}^2 } (-1)^{(\theta_1+1)(\theta_2+n+1+\ell) + \theta_2k } = \frac{1}{2} (-1)^{k(n+\ell+1)} \sum_{ \theta \in \{0,1\}^2 } (-1)^{(\theta_1+1+k)(\theta_2+n+1+\ell)} = (-1)^{k(n+\ell+1)},
\end{align}
where the latter equality follows from the three-vs-one identity \eqref{eq:kid}. Plugging \eqref{eq:japan1000} into \eqref{eq:japan99} we have 
\begin{align} \label{eq:japan2}
\mathrm{Vol}^{(n)}_{k,\ell} = \frac{(-1)^{k(n+\ell+1)}}{(nk)!} \sum_{\mathcal{S} \subseteq \{z^n=1\},\#\mathcal{S}=n-\ell} \left( \sum_{z \in \mathcal{S}} z \right)^{nk}.
\end{align}
We now make a final adjustment. Note that since the roots of unity sum to zero, for any subset $\mathcal{S}$ of $\{z^n=1\}$ we have 
\begin{align} \label{eq:minus}
\sum_{z \in \mathcal{S}} z = - \sum_{ z \in \{z^n=1\} - \mathcal{S}} z.
\end{align}
Consequently, we can take the internal sum in \eqref{eq:japan2} over subsets $\mathcal{S}$ of $\{z^n=1\}$ with cardinality $\ell$ instead of $n-\ell$, and account for this change by multiplying by a factor of $(-1)^{nk}$. Using this adjustment, and instead writing the roots of unity in terms of integer powers of $e^{2 \pi \iota/n}$, we complete the proof of Corollary \ref{cor:arrau}.
\end{proof}
We remark by \eqref{eq:minus} and the following remark, we have the following symmetry in the volume:
\begin{align} \label{eq:symmetry}
\mathrm{Vol}^{(n)}_{k,\ell} = \mathrm{Vol}^{(n)}_{k,n-\ell}
\end{align}

In fact, \eqref{eq:symmetry} follows directly from the following observation. Let $(y_1,\ldots,y_{nk})$ (here $y_i = (t_i,h_i)$) be a bead configuration with occupation number $\ell$ and $(t_1,\ldots,t_{nk})$ distinct, and set $y'_i := (1-t_i,h_i)$. Then $(y_1',\ldots,y_{nk}')$ is a bead configuration with occupation number $n-\ell$.

\subsection{Proof of Theorem \ref{thm:cambridge springs}}

In this section we give a proof of Theorem \ref{thm:cambridge springs}, concerning the asymptotics of the volumes $\mathrm{Vol}_{k,\ell}^{(n)}$. In fact, we will prove the following stronger version of the result, of which the statement given in Theorem \ref{thm:cambridge springs} is a consequence. 

\begin{thm} \label{thm:cs2}
Let $p=k/n$ and $\tau = \ell/n$. Then whenever $n^{-1/4} \leq \tau \leq 1 - n^{-1/4}$ and $p \geq n^{-1/4}$ we have 
\begin{align} \label{eq:ordered9b}
 \left( \frac{ e \sin( \pi \ell/n) }{ \pi } \right)^{-nk}k^{nk} \mathrm{Vol}^{(n)}_{k,\ell} = \frac{ e^{ p \pi^2/6}}{ \sqrt{2 \pi p}}\left( \mathcal{P}( e^{-pq^-_\tau}) \mathcal{P}( e^{-pq_\tau^+} ) + O\left(e^{ - cp\tau \log(n) }\right) \right).
\end{align}
\end{thm}

We start preparing the proof of Theorem \ref{thm:cs2}. Throughout this section, the symbols $\mathcal{S},\mathcal{S}',\mathcal{C}$ will always refer to a subset of $\mathbb{Z}_n$ of cardinality $\ell$. We note that by \eqref{eq:symmetry}, we may assume without loss of generality that $\tau \in [0,1/2]$. We thus make this assuption in several of our preparatory lemmas. We will use the notation $c,C>0$ for positive universal constants that may change from line to line.

Let us begin by noting that if $\mathcal{S}'$ is a translation of $\mathcal{S}$ in $\mathbb{Z}_n$, i.e. $\mathcal{S}' = \{ j + j_0 : j \in \mathcal{S}\}$ for some $j_0$ in $\mathbb{Z}_n$, then we have $\sum_{j \in \mathcal{S}'} e^{2 \pi \iota j/n} = e^{ 2 \pi \iota j_0/n} \sum_{w \in \mathcal{S} } e^{2 \pi \iota j/n}$, and consequently
\begin{align} \label{eq:sle}
\left( \sum_{j \in \mathcal{S}'} e^{2 \pi \iota j/n} \right)^{nk} = \left( \sum_{j \in \mathcal{S}} e^{2 \pi \iota j/n} \right)^{nk} \qquad \text{whenever $\mathcal{S}'$ is a translation of $\mathcal{S}$ in $\mathbb{Z}_n$}.
\end{align}
We say that $\mathcal{S}$ is \textbf{centered} if 
\begin{align*}
\mathrm{arg} \left( \sum_{j \in \mathcal{S}} e^{ 2 \pi \iota j/n} \right) \in \left[-\frac{\pi}{2n},\frac{3 \pi}{2n}\right).
\end{align*}
Every set $\mathcal{S}$ has a unique centered translation $\mathcal{S}'$, so that using \eqref{eq:sle} in \eqref{eq:vol0} we have
\begin{align} \label{eq:volclass}
\mathrm{Vol}^{(n)}_{k,\ell} =n \frac{(-1)^{k(\ell-1)}}{(nk)!} \sum_{ \mathcal{S} \text{ centered}}\left( \sum_{j \in \mathcal{S}} e^{ 2 \pi \iota j/n} \right)^{nk},
\end{align}
where the sum is over all centered subsets $\mathcal{S}$ of $\mathbb{Z}_n$ of cardinality $\ell$. 

Consider now that to maximise the modulus $|  \sum_{j \in \mathcal{S}} e^{2 \pi \iota j /n}|$ over a set $\mathcal{S}$ of cardinality $\ell$, we should take $\mathcal{S}$ as a set of consecutive elements around the circle. We call such sets \textbf{consecutive}. In this case, given $\mathcal{C}_{j_0} = \{ j_0, j_0+1,\ldots,j_0+\ell-1\}$ we may compute
\begin{align} \label{eq:concsum}
\sum_{j \in \mathcal{C}_{j_0} }e^{2 \pi \iota j/n} = \sum_{ p  = 0}^{\ell-1} e^{ 2 \pi \iota (j_0 + p)/n } = e^{ 2 \pi \iota j_0/n} \frac{ e^{ 2 \pi \iota \ell /n } - 1}{ e^{ 2 \pi \iota /n} - 1 } = e^{ \frac{2 \pi \iota }{n } ( \frac{\ell - 1}{2} + j_0 ) } \frac{ \sin( \pi \ell/n  )}{ \sin ( \pi /n )  }.
\end{align}
Note then that
\begin{align*}
\left(\sum_{j \in \mathcal{C}_{j_0} }e^{2 \pi \iota j/n} \right)^{nk} = (-1)^{k(\ell-1)} \left( \frac{ \sin( \pi \ell/n  )}{ \sin ( \pi /n )  } \right)^{nk},
\end{align*}
which, of course by \eqref{eq:sle}, is independent of the choice of $j_0$.

\begin{figure}[h!]
\centering
\begin{tikzpicture}[scale=0.7]
\draw (-2.6,0) -- (2.6,0);
\draw (0,-2.2) -- (0,2.2);

\draw [thick, fill, CadetBlue] (0:2) circle [radius=0.1];
\draw [thick, fill, CadetBlue] (18:2) circle [radius=0.1];
\draw [thick, fill, CadetBlue] (36:2) circle [radius=0.1];
\draw [thick, CadetBlue] (54:2) circle [radius=0.1];
\draw [thick, fill, CadetBlue] (72:2) circle [radius=0.1];
\draw [thick, CadetBlue] (90:2) circle [radius=0.1];
\draw [thick, fill, CadetBlue] (108:2) circle [radius=0.1];
\draw [thick, CadetBlue] (124:2) circle [radius=0.1];
\draw [thick, CadetBlue] (144:2) circle [radius=0.1];
\draw [thick, CadetBlue] (162:2) circle [radius=0.1];

\draw [thick, fill, CadetBlue] (-18:2) circle [radius=0.1];

\draw [thick, fill, CadetBlue] (-36:2) circle [radius=0.1];

\draw [thick, fill, CadetBlue] (-54:2) circle [radius=0.1];

\draw [thick, CadetBlue] (-72:2) circle [radius=0.1];

\draw [thick, CadetBlue] (-90:2) circle [radius=0.1];

\draw [thick, fill, CadetBlue] (-108:2) circle [radius=0.1];

\draw [thick, CadetBlue] (-124:2) circle [radius=0.1];

\draw [thick, CadetBlue] (-144:2) circle [radius=0.1];

\draw [thick, CadetBlue] (-162:2) circle [radius=0.1];

\draw [thick, CadetBlue] (180:2) circle [radius=0.1];

\end{tikzpicture}
\caption{The centered set $\mathcal{S}$ associated with partitions $\lambda^+ = (2,1)$ and $\lambda^- = (2)$.}
\label{fig:roots66}
\end{figure}
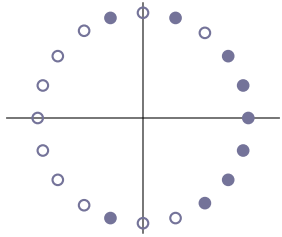

We now consider a useful way of indexing centered sets in terms of integer partitions. An \textbf{integer partition} $\lambda =(\lambda_1,\lambda_2,\ldots)$ is a nonincreasing sequence of nonnegative integers, only finitely many of which are nonzero. The \textbf{length} of $\lambda$ is the number of nonzero elements in the sequence.
The \textbf{mass} of $\lambda$ is  $|\lambda| := \sum_{ j \geq 1} \lambda_j$. 
Let $\mathcal{P}$ denote the set of all integer partitions.

Through the remainder of this section we make the canonical association 
\begin{align*}
\mathbb{Z}_n = \mathbb{Z} \cap (-n/2,n/2].
\end{align*}

One can construct centered sets by first taking a centered consecutive set, and then use a pair of integer partitions to translate the leftmost and rightmost elements of the set. To obtain a rough idea of how this is done, see Figure \ref{fig:roots66}. The following lemma sets up this correspondence explicitly:

\begin{lemma} \label{lem:corred}
The centered sets $\mathcal{S} \subseteq \mathbb{Z} \cap (-n/2,n/2]$ of cardinality $\ell$ are in bijection with a subset of $\mathcal{P}^2$ via the relation $\mathcal{S} \mapsto (\lambda^+,\lambda^-)$ defined by writing
\begin{align*}
\mathcal{S}^+ := \mathcal{S} \cap (0,n/2] = \{ \ell^+ - j +1 + \lambda_j^+ : j = 1,\ldots,\ell^+ \} \qquad \ell^+ = \# \mathcal{S}^+,
\end{align*}
and
\begin{align*}
\mathcal{S}^- := \mathcal{S} \cap (-n/2,0] = \{ - ( \ell^- - j + \lambda_j^-) : j = 1,\ldots,\ell^- \} \qquad \ell^- = \# \mathcal{S}^-.
\end{align*} 
\end{lemma}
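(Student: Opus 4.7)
The plan is to verify that the map $\mathcal{S} \mapsto (\lambda^+, \lambda^-)$ is well-defined (i.e., produces a pair of integer partitions) and injective on the set of centered subsets of cardinality $\ell$.

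First, I would list the elements of $\mathcal{S}^+$ in strictly decreasing order $s_1^+ > s_2^+ > \ldots > s_{\ell^+}^+$, all of which lie in $\{1, \ldots, \lfloor n/2 \rfloor\}$, so that the prescription reads $\lambda_j^+ = s_j^+ - (\ell^+ - j + 1)$. I would then verify that $(\lambda_j^+)_{j=1}^{\ell^+}$ is a nonincreasing sequence of nonnegative integers. Nonnegativity follows from $s_j^+ \geq \ell^+ - j + 1$, which holds because $s_j^+, s_{j+1}^+, \ldots, s_{\ell^+}^+$ are $\ell^+ - j + 1$ distinct positive integers, so the smallest is at least $1$ and each successive one is at least one larger. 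The nonincreasing property follows from the integer gap
$$\lambda_j^+ - \lambda_{j+1}^+ = (s_j^+ - s_{j+1}^+) - 1 \geq 0.$$
An analogous argument applied to $\mathcal{S}^-$, listed in strictly increasing order $s_1^- < s_2^- < \ldots < s_{\ell^-}^-$ with $\lambda_j^- = -s_j^- - (\ell^- - j)$, shows that $\lambda^-$ is also a partition.

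For injectivity, given $(\lambda^+, \lambda^-)$ along with the lengths $\ell^\pm$, each $s_j^\pm$ is recovered by inverting the defining formulas, so $\mathcal{S}^+$ and $\mathcal{S}^-$, and thus $\mathcal{S} = \mathcal{S}^+ \cup \mathcal{S}^-$, are determined uniquely. The centering hypothesis plays no role in verifying well-definedness or injectivity; it only restricts which pairs actually appear in the image, which is why the lemma records a bijection with only a subset of $\mathcal{P}^2$.

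The sole subtlety worth flagging is notational rather than mathematical: in the convention of Section \ref{sec:volproof} a partition has length equal to its number of nonzero entries, whereas the proof indexes $\lambda^\pm$ up to $\ell^\pm$ and may include trailing zeros (for instance, $\lambda_{\ell^+}^+ = 0$ precisely when $s_{\ell^+}^+ = 1$). One should therefore read the pair $(\lambda^+, \lambda^-)$ as implicitly carrying the lengths $\ell^\pm$; equivalently, the centered set $\mathcal{S}$ is determined by the triple $(\lambda^+, \lambda^-, \ell^+)$ together with the constraint $\ell^- = \ell - \ell^+$.
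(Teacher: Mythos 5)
Your verification that $(\lambda^+,\lambda^-)$ is indeed a pair of integer partitions is correct, and is in fact more detailed than the paper, which dismisses this direction as holding ``by construction''. The problem is the injectivity step. What you actually prove is that the \emph{quadruple} $(\ell^+,\ell^-,\lambda^+,\lambda^-)$ determines $\mathcal{S}$, and you then assert that ``the centering hypothesis plays no role in \dots\ injectivity'', proposing to repair the mismatch by letting the pair ``implicitly carry the lengths $\ell^\pm$''. That is not what the lemma claims: it asserts a bijection with a subset of $\mathcal{P}^2$, so the pair of partitions \emph{alone} must determine the centered set, and this is exactly the form in which the lemma is used later (in the proof of Theorem \ref{thm:cambridge springs} the good centered sets are enumerated by pairs $(\lambda^+,\lambda^-)$ with $|\lambda^\pm|\le \log n$, with no auxiliary length data). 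Without the centering restriction the pair genuinely fails to be injective: for example $\{1,2\}$ and $\{0,1\}$ both map to $(\varnothing,\varnothing)$, with $(\ell^+,\ell^-)=(2,0)$ and $(1,1)$ respectively.

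The missing step, which is the actual content of the paper's proof, is the following. If two centered sets $\mathcal{S}$ and $\mathcal{S}'$ produce the same pair $(\lambda^+,\lambda^-)$ but different splittings $(\ell^+,\ell^-)\neq(\tilde{\ell}^+,\tilde{\ell}^-)$, then $\mathcal{S}'$ is a translation of $\mathcal{S}$: the partitions record the displacements of the elements from a consecutive block of $\ell$ integers, and changing $\ell^+$ while keeping $\ell^++\ell^-=\ell$ merely translates that reference block. Since every set has a unique centered translate, it follows that $\mathcal{S}=\mathcal{S}'$, and hence also $(\ell^+,\ell^-)=(\tilde{\ell}^+,\tilde{\ell}^-)$. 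So centering is precisely what upgrades your quadruple-injectivity to the pair-injectivity the lemma asserts; you need to add this translation argument rather than redefine the codomain.
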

\begin{proof}
By construction, to each centered set $\mathcal{S}$ we may associate a quadruplet $(\ell^+,\ell^-,\lambda^+,\lambda^-)$. Different centered sets determine different quadruplets, and in particular, different pairs $(\lambda^+,\lambda^-)$.

Conversely, suppose $(\ell^+,\ell^-,\lambda^+,\lambda^-)$ and $(\tilde{\ell}^+,\tilde{\ell}^-,\lambda^+,\lambda^-)$ are quadruplets associated with two centered sets $\mathcal{S}$ and $\mathcal{S}'$ with the same pair of integer partitions $(\lambda^+,\lambda^-)$. Then by construction, $\mathcal{S}'$ is a translation of $\mathcal{S}$. Since they are both centered, we must have $\mathcal{S} = \mathcal{S}'$, and hence $(\ell^+,\ell^-) = (\tilde{\ell}^+,\tilde{\ell}^-)$ also. It follows that each pair $(\lambda^+,\lambda^-)$ is associated to at most one centered set.

Thus the correspondence between centered sets and pairs of partitions is bijective.
\end{proof}
For suitable partitions $(\lambda^+,\lambda^-)$, we will write $\mathcal{S}(\lambda^+,\lambda^-)$ for the unique centered set associated with $(\lambda^+,\lambda^-)$. There are pairs of partitions individually of mass up to $O(n^2)$ associated to centered sets. When $n$ is large, any pair of partitions each of mass $o_\tau(n)$ is guaranteed to be associated to a centered set. 


Given a centered set $\mathcal{S}$ of cardinality $\ell$, our next result grants us control over the possible ratios $\ell^+/n$ and $\ell^-/n$, where $\ell^+$ and $\ell^-$ are as in the statement of Lemma \ref{lem:corred}.

\begin{lemma} \label{lem:balance}
Let $\mathcal{S}$ be a centered set of cardinality $\ell = \tau n$, where $\tau \in [0,1/2]$. Then the ratios $\tau^+ := \ell^+/n$ and $\tau^- = \ell^-/n$ both satisfy the inequality
\begin{align*}
\tau^\pm \geq c \tau^2 - C/n
\end{align*}
for some universal constants $c,C > 0$ not depending on $\ell,n$ or the centered set $\mathcal{S}$. 
\end{lemma}
\begin{proof}
We will prove that $\tau^+ \geq c \tau^2 - C/n$. The proof of the lower bound for $\tau^-$ is almost identical.

Consider the problem of finding a subset $\mathcal{S}^+$ of $\mathbb{Z} \cap (0,n/2]$ of cardinality $\ell^+$ that maximises the imaginary part of $\sum_{j \in \mathcal{S}^+} e^{2 \pi \iota j / n }$. One would try to select all the $\ell^+$ elements of $\mathcal{S}^+$ in such a way that the roots $e^{2 \pi \iota j/n}$ lies as close to the imaginary unit $\iota$ as possible. Thus, with the supremum below taken over all subsets $\mathcal{S}^+$ of $\mathbb{Z} \cap (0,n/2]$ of cardinality $\ell^+$, we have 
\begin{align*}
\sup_{ \mathcal{S}^+ = \ell^+ } \mathrm{Im} \left( \sum_{j \in \mathcal{S}^+} e^{2 \pi \iota j / n } \right) &= \mathrm{Im} \left(  \sum_{|j| \leq \lfloor \ell^+/2 \rfloor }  \iota e^{ 2 \pi \iota j/n} \right ) + O(1) \\
&=  \frac{n}{2\pi}  \int_{-\pi \tau^+}^{ \pi \tau^+} \cos \phi ~\mathrm{d} \phi + O(1) =  \frac{n}{ \pi} \sin( \pi \tau^+ ) + O(1),
\end{align*}
where the $O(1)$ terms are universal.

Likewise, now consider choosing a subset $\mathcal{S}^-$ of $\mathbb{Z} \cap (-n/2,0]$ of cardinality $\ell^-$ that maximises the imaginary part of $\sum_{j \in \mathcal{S}^-} e^{2 \pi \iota j / n }$.
Since the roots of unity $e^{2 \pi \iota j/n}$ with $j \in \mathcal{S}^-$ must all have negative (or zero) imaginary part, we would try to choose the roots as close to the real axis as possible. It follows that in this case, with the supremum below taken over all subsets $\mathcal{S}^-$ of $\mathbb{Z} \cap (-n/2,0]$ of cardinality $\ell^-$, we have 
\begin{align*}
\sup_{ \mathcal{S}^- = \ell^- } \mathrm{Im} \left( \sum_{j \in \mathcal{S}^-} e^{2 \pi \iota j / n } \right) &= - 2 \mathrm{Im} \left(  \sum_{j = 0}^{\lfloor \ell^-/2 \rfloor } e^{2 \pi \iota j /n} \right) + O(1) \\
&=-   \frac{n}{\pi}  \int_0^{ \pi \tau^-} \sin \phi~\mathrm{d} \phi + O(1) = -  \frac{n}{ \pi} (1 - \cos( \pi \tau^-))  + O(1).
\end{align*}
In particular, for any subset $\mathcal{S}$ of $\mathbb{Z} \cap (-n/2,n/2]$ with $\# \mathcal{S}^\pm = \ell^\pm$ we have
\begin{align*}
\mathrm{Im} \left( \sum_{j \in \mathcal{S}} e^{2 \pi \iota j / n } \right) \leq \frac{n}{\pi} \left( \sin(\pi \tau^+) - (1 - \cos(\pi \tau^-)\right) + O(1),
\end{align*}
where $\tau^\pm = \ell^\pm/n$. 

Since a centered set has $ \mathrm{Im} \left( \sum_{j \in \mathcal{S}} e^{2 \pi \iota j / n } \right) \geq -\pi/2n$, it follows that the ratios $\tau^\pm$ associated with a centered set must satisfy
\begin{align*}
 \frac{n}{\pi} \left( \sin(\pi \tau^+) - (1 - \cos(\pi \tau^-)\right) + O(1) \geq - \pi/2n. 
\end{align*}
If $\tau^+ + \tau^- = \tau$, writing $\tau^+ = \phi$ and $\tau^- = \tau - \phi$ we have
\begin{align*}
\sin( \pi \phi) - (1 - \cos( \pi( \tau - \phi)) \geq O(1/n).
\end{align*} 
Since $x \geq \sin(x)$ and, for some universal $c > 0$, $1 - \cos(x) \geq cx^2$ for $x \in [0,\pi/2]$, we have 
\begin{align*}
\pi \phi \geq c (\tau - \phi)^2 + O(1/n).
\end{align*}  
This ultimately implies
\begin{align*}
\phi \geq c \tau^2 + O(1/n)
\end{align*}
for some, possibly different, universal $c>0$. Setting $\tau^+ = \phi$ completes the proof. 
\end{proof}



With the correspondence set out in Lemma \ref{lem:corred}, we say that a centered set $\mathcal{S}$ associated with a pair $(\lambda^+,\lambda^-)$ of partitions is \textbf{good} if 
\begin{align*}
|\lambda^+| \leq \log n \qquad \text{and} \qquad  |\lambda^-| \leq \log n.
\end{align*}
Otherwise a centered set is \textbf{bad}.
Our next lemma tells us how good sets contribute to the formula \eqref{eq:vol0} for $\mathrm{Vol}^{(n)}_{k,\ell}$. 
\begin{lemma} \label{lem:shalom}
Let $\mathcal{S} = \mathcal{S}(\lambda^+,\lambda^-)$ be a good centered set. Then with $q_\tau^{\pm} = 2 \pi^2 \pm 2 \pi^2 \iota \frac{\cos(\pi\tau)}{\sin(\pi\tau)}$ as in the statement of Theorem \ref{thm:cambridge springs}, and $k=pn$, $\ell = \tau n$, we have 
\begin{align} \label{eq:olem}
\left( \sum_{j \in \mathcal{S} }e^{2 \pi \iota j/n} \right)^{nk} = (-1)^{k(\ell-1)} \left( \frac{ \sin( \pi \ell/n)}{ \sin(\pi/n) }\right)^{nk} \left\{ e^{ - pq_\tau^+ |\lambda^-| - pq_{\tau}^- |\lambda^+|}  +O \left(\frac{p \log^2 n}{ \tau n } \right) \right\}.
\end{align}
\end{lemma}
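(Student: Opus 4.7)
\textbf{Proof strategy for Lemma \ref{lem:shalom}.}

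The plan is to expand $\sum_{j\in\mathcal{S}}e^{2\pi\iota j/n}$ around the baseline obtained by taking $\lambda^+=\lambda^-=0$ (the consecutive centered set), and then to raise to the $nk$-th power by exponentiating the logarithm.

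I would first use the centeredness of $\mathcal{S}$ to pin down $\ell^+$ and $\ell^-$. Applying formula \eqref{eq:concsum} to the unperturbed consecutive centered set $\{-(\ell^--1),\ldots,\ell^+\}$ produces the baseline
\[
B := e^{\pi\iota(\ell^+-\ell^-+1)/n}\cdot\frac{\sin(\pi\ell/n)}{\sin(\pi/n)}.
\]
Because goodness ensures that the perturbation $P:=\sum_{j\in\mathcal{S}}e^{2\pi\iota j/n}-B$ satisfies $|P/B|=O(\log n/n^2)$, the argument of the full sum differs from $\arg(B)=\pi(\ell^+-\ell^-+1)/n$ by $O(\log n/n^2)$. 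The centering constraint $\arg\in[-\pi/(2n),3\pi/(2n))$ then forces $\ell^+-\ell^-\in\{-1,0\}$ for $n$ large, and together with $\ell^++\ell^-=\ell$ this pins $\ell^+-\ell^-$ to $0$ (resp.\ $-1$) for $\ell$ even (resp.\ odd). A case check in both parities yields $B^{nk}=(-1)^{k(\ell-1)}\bigl(\sin(\pi\ell/n)/\sin(\pi/n)\bigr)^{nk}$, producing exactly the prefactor stated in the lemma.

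Next, I would parameterise $\mathcal{S}^\pm$ via Lemma \ref{lem:corred}, set $r:=e^{2\pi\iota/n}$, and Taylor expand $r^{\pm\lambda^\pm_j}=1\pm\frac{2\pi\iota\lambda^\pm_j}{n}+O((\lambda^\pm_j)^2/n^2)$. Using goodness (namely $\sum_j(\lambda^\pm_j)^2\leq|\lambda^\pm|^2\leq(\log n)^2$ and partitions of length at most $\log n$) together with the crude bound $|r^{-j}-1|=O(\log n/n)$ on the active indices, one finds after re-indexing
\[
P = \frac{2\pi\iota}{n}\bigl[r^{\ell^++1}|\lambda^+|-r^{-\ell^-}|\lambda^-|\bigr]+O((\log n)^2/n^2).
\]
The algebraic identities $e^{-\pi\iota(\ell^+-\ell^-+1)/n}r^{\ell^++1}=e^{\pi\iota(\ell+1)/n}$ and $e^{-\pi\iota(\ell^+-\ell^-+1)/n}r^{-\ell^-}=e^{-\pi\iota(\ell+1)/n}$ then simplify $nkP/B$ to
\[
\frac{2\pi\iota k\sin(\pi/n)}{\sin(\pi\ell/n)}\bigl[e^{\pi\iota(\ell+1)/n}|\lambda^+|-e^{-\pi\iota(\ell+1)/n}|\lambda^-|\bigr]+O((\log n)^2/n).
\]
Invoking $k\sin(\pi/n)/\sin(\pi\ell/n)=p\pi/\sin(\pi\tau)+O(1/n)$ and $e^{\pm\pi\iota(\ell+1)/n}=e^{\pm\pi\iota\tau}+O(1/n)$, together with the trigonometric rewritings $q_\tau^+=2\pi^2\iota e^{-\iota\pi\tau}/\sin(\pi\tau)$ and $q_\tau^-=-2\pi^2\iota e^{\iota\pi\tau}/\sin(\pi\tau)$, identifies $nkP/B$ with $-pq_\tau^+|\lambda^-|-pq_\tau^-|\lambda^+|+O((\log n)^2/n)$.

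Finally, the quadratic correction $nk(P/B)^2=O(p(\log n)^2/n^2)$ in the expansion $nk\log(1+P/B)=nkP/B-nk(P/B)^2/2+\cdots$ is subleading, so $(1+P/B)^{nk}=\exp(nkP/B)\cdot(1+O((\log n)^2/n^2))$; since $\Re q_\tau^\pm=2\pi^2>0$ the limiting factor has modulus at most $1$, which lets the multiplicative error be absorbed into an additive $O((\log n)^2/n)$ error. Multiplication by $B^{nk}$ then concludes the proof. The main technical obstacle is disciplined error bookkeeping through two simultaneous Taylor expansions (one in $\lambda_j^\pm/n$, the other in $j/n$) and the exponentiation: the $O((\log n)^2/n)$ estimate must hold uniformly over all good pairs $(\lambda^+,\lambda^-)$ of mass up to $\log n$, not merely for fixed partitions.
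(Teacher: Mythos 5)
Your proposal is correct and follows essentially the same route as the paper: Taylor-expand the perturbation of a consecutive set induced by $(\lambda^+,\lambda^-)$, use goodness to control the quadratic and cross errors, and exponentiate. The only difference is cosmetic: the paper invokes \eqref{eq:sle} to replace $\mathcal{S}$ by the translation $\mathcal{C}(\lambda^+,\lambda^-)$ anchored at $0$, which removes the need for your bookkeeping of $\ell^+-\ell^-$ and the centering constraint (and indeed your prefactor computation only needs the parity $\ell^+-\ell^-\equiv\ell\pmod 2$, which is automatic).
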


\begin{proof}
By \eqref{eq:sle} we may instead compute $\left( \sum_{j \in \mathcal{S}' }e^{2 \pi \iota j/n} \right)^{nk} $, where $\mathcal{S}'$ is any translation of $\mathcal{S}$. Thus, to simplify calculations, we consider $\mathcal{S}' := \mathcal{C}(\lambda^+,\lambda^-)$ defined by
\begin{align*}
\mathcal{C}(\lambda^+,\lambda^-):= \{ R_j + \lambda^+_j :1 \leq j \leq \ell^+ \} \cup \{ L_j - \lambda^-_j : 1 \leq j \leq \ell^- \},
\end{align*}
where $L_j := j-1$ and $R_j := \ell - j$. Note that $\mathcal{C}(0,0) := \{0,\ldots,\ell-1\}$ is consecutive.
By definition we have 
\begin{align*}
 \sum_{j \in \mathcal{C}(\lambda^+,\lambda^-)}e^{2 \pi \iota j/n} = \sum_{j \in \mathcal{C}(0,0)}e^{2 \pi \iota j/n}+ \sum_{j \geq 1} \left( e^{ \frac{2 \pi \iota }{n} (R_j + \lambda^+_j)} - e^{ \frac{2 \pi \iota }{n} R_j} \right) + \sum_{j \geq 1} \left( e^{ \frac{2 \pi \iota }{n} (L_j - \lambda^-_j)} - e^{ \frac{2 \pi \iota }{n} L_j} \right) .
\end{align*}
Using \eqref{eq:concsum} with $j_0=0$ to perform the sum over $\mathcal{C}(0,0)$, the fact that $L_j=j-1$ and $R_j=\ell-j$, and the approximation $e^{ix}-1=ix+O(x^2)$, we have
\begin{align*}
 \sum_{j \in \mathcal{C}(\lambda^+,\lambda^-)}e^{2 \pi \iota j/n} = e^{ \frac{2\pi \iota}{n} \frac{\ell-1}{2} } \frac{ \sin( \pi \ell/n)}{ \sin(\pi/n) } + \frac{2 \pi \iota }{n} \left( |\lambda^+| e^{ \frac{2 \pi \iota \ell}{n} } - |\lambda^-| \right) + O\left( \frac{1}{n^2} \sum_{j \geq 1} ((\lambda^+_j)^2 + (\lambda^-_j)^2 ) \right).
\end{align*}
Factoring the leading term, using $\sin(\pi/n) = \pi/n + O(1/n^3)$, and the fact that $\sum_{j \geq 1} (\lambda^\pm)_j^2 \leq |\lambda^\pm|^2 \leq \log^2 n$, we have
\begin{align*}
 \sum_{j \in \mathcal{C}(\lambda^+,\lambda^-)}e^{2 \pi \iota j/n}  = e^{ \frac{2\pi \iota}{n} \frac{\ell-1}{2} } \frac{ \sin( \pi \ell/n)}{ \sin(\pi/n) } \left\{ 1 + \frac{1}{n^2} \frac{2 \pi^2 \iota }{\sin(\pi\ell/n)} \left( e^{ \frac{2 \pi \iota }{n}\frac{\ell}{2} }  |\lambda^+| - e^{ - \frac{2 \pi \iota }{n}\frac{\ell}{2} } |\lambda^-| \right) + O \left(\frac{\log^2 n}{ \tau n^3 } \right) \right\}.
\end{align*}
Taking the $(nk)^{\text{th}}$ power with $k=pn$ and $\ell=\tau n$ we obtain 
\begin{align*} 
&\left(  \sum_{j \in \mathcal{C}(\lambda^+,\lambda^-)}e^{2 \pi \iota j/n}  \right)^{nk}\\
&= (-1)^{k(\ell-1)} \left( \frac{ \sin( \pi \ell/n)}{ \sin(\pi/n) }\right)^{nk}  \exp \left\{ p \frac{2 \pi^2 \iota }{ \sin(\pi \tau) }\left( e^{\iota \pi \tau }  |\lambda^+| - e^{ - \iota \pi \tau } |\lambda^-| \right)    + O \left(\frac{p \log^2 n}{ \tau n } \right)  \right\}.
\end{align*}
The result follows now from observing that
\begin{align*}
\frac{2 \pi^2 \iota  }{ \sin(\pi \tau) } e^{\iota \pi \tau }  = - q_\tau^- \qquad \text{and} \qquad  - \frac{2 \pi^2 \iota  }{ \sin(\pi \tau) }e^{- \iota \pi \tau }   = - q_\tau^+.
\end{align*}
\end{proof}

Where the previous lemma offered a fine estimate of $\sum_{j \in \mathcal{S}} e^{2 \pi \iota j/n}$ for good centered sets, our next lemma offers an upper bound on the modulus of this quantity for \emph{any} centered set.  
\begin{lemma} \label{lem:kontrol}
Let $n^{-1/4} \leq \tau \leq 1/2$. 
Then whenever $\mathcal{S} = \mathcal{S}(\lambda^+,\lambda^-)$ we have 

\begin{align*}
\left| \sum_{j \in \mathcal{S}} e^{2 \pi \iota j/n} \right| \leq \frac{ \sin( \pi \ell/n  )}{ \sin ( \pi /n )  } \exp \left\{  - \frac{c \tau}{n^2} ( |\lambda^+| + |\lambda^-|)  \right\}
\end{align*}
for some universal constants $c$.

\end{lemma}
\begin{proof}
Throughout the proof $c,C>0$ denote universal constants that may change from appearance to appearance. 
We begin by studying the real part of $\sum_{j \in \mathcal{S}} e^{2 \pi \iota j/n} $. Considering first the $\mathcal{S}^+$ part, we have
\begin{align} \label{eq:gould}
\mathrm{Re}  \sum_{j \in \mathcal{S}^+} e^{2 \pi \iota j/n} &= \mathrm{Re} \sum_{j=1}^{\ell^+} e^{2 \pi \iota ( \ell^+ - j +1) /n} - \mathrm{Re} \sum_{j=1}^{\ell^+} \left\{  e^{2 \pi \iota ( \ell^+ - j +1) /n} -  e^{2 \pi \iota ( \ell^+ - j +1 + \lambda_j^+) /n}\right\} \nonumber \\
&\leq \mathrm{Re} \sum_{j=1}^{\ell^+} e^{2 \pi \iota ( \ell^+ - j +1) /n} - \mathrm{Re} \sum_{j=1}^{\lfloor \ell^+/2 \rfloor} \left\{  e^{2 \pi \iota ( \ell^+ - j +1) /n} -  e^{2 \pi \iota ( \ell^+ - j +1 + \lambda_j^+) /n}\right\}.
\end{align}
We now seek to obtain a lower bound on the quantities
\begin{align*}
\mathrm{Re} \left\{  e^{2 \pi \iota ( \ell^+ - j +1) /n} -  e^{2 \pi \iota ( \ell^+ - j +1 + \lambda_j^+) /n}\right\} = \cos(\theta_j) - \cos(\theta_j+s_j), \qquad 1 \leq j \leq \lfloor \ell^+/2 \rfloor,
\end{align*}
where $\theta_j := 2 \pi ( \ell^+ - j +1) /n$ and $s_j = 2 \pi \lambda_j^+/n$. 

There is a universal $c>0$ such that for every $\theta_0 \in (0,\pi/2)$ we have
$\cos \theta - \cos (\theta+s) \geq c \theta_0 s$ whenever $0\leq \theta_0 \leq \theta \leq \theta + s \leq \pi/2$. Set $\theta_0 = 2 \pi (\ell^+ - \lfloor \ell^+/2 \rfloor + 1 )/n$. Note $\pi/2 \geq \theta_j \geq \theta_0$ for all $1 \leq j \leq \lfloor \ell^+/2 \rfloor$. Also note by Lemma \ref{lem:balance} we have $ \theta_0 \geq \ell^+/n \geq c\tau^2 - C/n$. Thus
\begin{align} \label{eq:gould2}
\mathrm{Re} \left\{  e^{2 \pi \iota ( \ell^+ - j +1) /n} -  e^{2 \pi \iota ( \ell^+ - j +1 + \lambda_j^+) /n}\right\} \geq (c \tau^2 - C/n) \lambda_j^+/n, \qquad 1 \leq j \leq \lfloor \ell^+/2 \rfloor.
\end{align}
Using \eqref{eq:gould2} in \eqref{eq:gould}, together with the rough bound $\sum_{j=1}^{\lfloor \ell^+/2 \rfloor} \lambda^+_j \geq \frac{1}{3} |\lambda^+|$, we obtain 
\begin{align} \label{eq:nara1}
\mathrm{Re}  \sum_{j \in \mathcal{S}^+} e^{2 \pi \iota j/n} \leq \mathrm{Re} \sum_{j=1}^{\ell^+} e^{2 \pi \iota ( \ell^+ - j +1) /n} - (c\tau^2 - C/n) |\lambda^+|/n.
\end{align}
A similar calculation for the roots indexed by $\mathcal{S}^-$ tells us that 
\begin{align} \label{eq:nara2}
\mathrm{Re}  \sum_{j \in \mathcal{S}^-} e^{2 \pi \iota j/n} &\leq \mathrm{Re} \sum_{j=1}^{\ell^-} e^{ - 2 \pi \iota ( \ell^- - j) /n} - (c\tau^2 - C/n) |\lambda^-|/n.
\end{align}
Combining \eqref{eq:nara1} and \eqref{eq:nara2} we have
\begin{align} \label{eq:nara5}
\mathrm{Re}  \sum_{j \in \mathcal{S}} e^{2 \pi \iota j/n} &\leq \mathrm{Re} \sum_{j=1}^{\ell^+} e^{2 \pi \iota ( \ell^+ - j +1) /n} + \mathrm{Re} \sum_{j=1}^{\ell^-} e^{ - 2 \pi \iota ( \ell^- - j) /n} - (c\tau^2 - C/n) ( |\lambda^+| + |\lambda^-|)/n.
\end{align}
Note that with the consecutive set $\mathcal{C}_{j_0}$ as in \eqref{eq:concsum}, with $j_0 = -(\ell^--1)$ we have 
\begin{align} \label{eq:campanella}
\mathrm{Re} \sum_{j=1}^{\ell^+} e^{2 \pi \iota ( \ell^+ - j +1) /n} + \mathrm{Re} \sum_{j=1}^{\ell^-} e^{ - 2 \pi \iota ( \ell^- - j) /n} = \mathrm{Re} \sum_{ j \in \mathcal{C}_{j_0}} e^{ 2 \pi \iota j/n} \leq \frac{ \sin(\pi\ell/n)}{\sin(\pi/n)},
\end{align}
where in the final equality we have simply used \eqref{eq:concsum} and $\mathrm{Re}(z) \leq |z|$. 

Using \eqref{eq:campanella} in \eqref{eq:nara5},  we obtain 
\begin{align} \label{eq:nara6}
\mathrm{Re}  \sum_{j \in \mathcal{S}} e^{2 \pi \iota j/n} &\leq \frac{ \sin(\pi\ell/n)}{\sin(\pi/n)} - (c\tau^2 - C/n)( |\lambda^+| + |\lambda^-|)/n.
\end{align}
Finally, since $\mathcal{S}$ is centered, by definition the argument of $z := \sum_{j \in \mathcal{S}} e^{2 \pi \iota j/n}$ lies in $[-\pi/2n,3\pi/2n)$. Now using $|a+bi| \leq |a|+|b|$ together with the fact that $|\lambda^+|+|\lambda^-| \geq 1$ it follows that 
\begin{align} \label{eq:napa0}
\left| \sum_{j \in \mathcal{S}} e^{2 \pi \iota j/n} \right| &\leq \frac{ \sin(\pi\ell/n)}{\sin(\pi/n)} - (c\tau^2 - C_1/n)( |\lambda^+| + |\lambda^-|)/n,
\end{align}
with a possibly different constant $C_1$ replacing $C$ in \eqref{eq:nara6}. To obtain the result as written, using $0 \leq \tau := \ell/n \leq 1/2$, a brief calculation tells us that $\sin(\pi\ell/n)/\sin(\pi/n) \geq c_1 n \tau$ for some universal $c_1 > 0$. In particular, we obtain from \eqref{eq:napa0} 
\begin{align} \label{eq:napa}
\mathrm{Re}  \sum_{j \in \mathcal{S}} e^{2 \pi \iota j/n} &\leq \frac{ \sin(\pi\ell/n)}{\sin(\pi/n)}\left\{ 1  - \left(\frac{c_2\tau}{n^2} - \frac{C_2}{n^3 \tau}\right)( |\lambda^+| + |\lambda^-|  ) \right\},
\end{align}
for possibly different universal constants $c_2,C_2>0$. Using $1-x \leq e^{-x}$, together with the fact that $\tau \geq n^{-1/4}$ implies $\frac{c_2\tau}{n^2} - \frac{C_2}{n^3 \tau} \geq c \tau/n^2$, we obtain the result. 
\end{proof}

We are now ready to prove Theorem \ref{thm:cambridge springs}.

\begin{proof}[Proof of Theorem \ref{thm:cambridge springs}]
By \eqref{eq:volclass} we have
\begin{align} \label{eq:vol008}
\mathrm{Vol}^{(n)}_{k,\ell} = n \frac{(-1)^{k(\ell+1)}}{(nk)!} \left(  \sum_{\mathcal{S} \text{ good, cent.}} \left(  \sum_{j \in \mathcal{S}}e^{2 \pi \iota j/n}  \right)^{nk} + \sum_{\mathcal{S} \text{ bad, cent.} } \left(  \sum_{j \in \mathcal{S}}e^{2 \pi \iota j/n}   \right)^{nk} \right),
\end{align}
where the respective outer sums in \eqref{eq:vol008} are over the good and bad centered subsets of $\mathbb{Z}_n$ of cardinality $\ell$.

First we consider the contribution from good centered sets. For sufficiently large $n$, the good centered sets are in bijection with all pairs of partitions $(\lambda^+,\lambda^-)$ satisfying $|\lambda^+|,|\lambda^-|\leq \log n$. Using Lemma \ref{lem:shalom}, it then follows that 
\begin{align*}
\sum_{\mathcal{S} \text{ good, cent.}} \left( \sum_{j \in \mathcal{S}} e^{2 \pi \iota j/n} \right)^{nk} &= (-1)^{k(\ell-1)}  \left( \frac{ \sin( \pi \ell/n)}{ \sin(\pi/n) }\right)^{nk} \sum_{\lambda^+,\lambda^-} \left\{ e^{ - pq_\tau^+ |\lambda^-| - pq_{\tau}^- |\lambda^+|}  +O \left(\frac{p \log^2 n}{ \tau n } \right) \right\},
\end{align*}
where $\sum_{\lambda^+,\lambda^-}$ denotes a sum taken over all pairs $(\lambda^+,\lambda^-)$ of integer partitions with $|\lambda^+|,|\lambda^-| \leq \log n$. The number of integer partitions $|\lambda| \leq m$ is $e^{O(\sqrt{m})}$ \cite[Theorem 14.7]{apostol}. In particular, there are at most $e^{ O(\sqrt{\log n})}$ pairs $(\lambda^+,\lambda^-)$ of integer partitions satisfying $|\lambda^+|,|\lambda^-| \leq \log n$. Thus
\begin{align*}
\sum_{\mathcal{S} \text{ good, cent.}} \left(  \sum_{j \in \mathcal{S}}e^{2 \pi \iota j/n}  \right)^{nk} &=  (-1)^{k(\ell-1)}  \left( \frac{ \sin( \pi \ell/n)}{ \sin(\pi/n) }\right)^{nk} \left( \mathcal{P}_n( e^{-pq_\tau^+}) \mathcal{P}_n( e^{-pq_\tau^-} ) +R(n) \right),
\end{align*}
where $\mathcal{P}_n( e^{-q} ) := \sum_{ \lambda : |\lambda| \leq \log n} e^{-q |\lambda| } $, and $R(n) = O \left(\frac{p \log^2 n e^{O(\sqrt{\log n } ) } }{ \tau n } \right)$. Using $p,\tau \geq n^{-1/4}$ and taking a generous but simple bound, we have $R(n) =O \left(n^{-1/2} \right)$.  Letting $\mathcal{P}(e^{-q}) := \sum_{ \lambda } e^{-q |\lambda| }$ denote the associated sum without the $|\lambda| \leq \log n$ restriction, we have $\mathcal{P}_n(e^{-q}) = \mathcal{P}(e^{-q}) + O( e^{ - \frac{q}{2} n})$. Putting everything together, we conclude that
\begin{align} \label{eq:ora}
\sum_{\mathcal{S} \text{ good, cent.}} \left(  \sum_{j \in \mathcal{S}}e^{2 \pi \iota j/n}  \right)^{nk} &=  (-1)^{k(\ell-1)}  \left( \frac{ \sin( \pi \ell/n)}{ \sin(\pi/n) }\right)^{nk} \left( \mathcal{P}( e^{-pq_\tau^+} ) \mathcal{P}( e^{-pq_\tau^-} )  +O \left(pn^{-1/2} \right) \right).
\end{align}

We now control the contribution from bad centered sets. Any bad centered set $\mathcal{S}(\lambda^+,\lambda^-)$ has $\log n \leq |\lambda^+|+|\lambda^-| \leq n^2$. Using Lemma \ref{lem:kontrol} to obtain the first equality below, and then using the fact that there are most $e^{ O(\sqrt{m})}$ pairs of partitions $(\lambda^+,\lambda^-)$ with mass $|\lambda^+|+|\lambda^-| = m$ to obtain the second, we see that
\begin{align} \label{eq:ora2}
 \sum_{\mathcal{S} \text{ bad, cent.}} \left| \sum_{j \in \mathcal{S}} e^{2 \pi \iota j/n} \right|^{nk} &\leq \left( \frac{\sin (\pi \ell/n)}{ \sin(\pi /n)} \right)^{nk} \sum_{\log n \leq |\lambda^+|+|\lambda^-| \leq 2n^2} \exp \left\{ - c \tau p ( |\lambda^+| + |\lambda^-|)  \right\} \nonumber \\
&\leq \left( \frac{\sin (\pi \ell/n)}{ \sin(\pi /n)} \right)^{nk} \exp \left\{ - c \tau p \log (n) \right\} 
\end{align}
for some constant $c > 0$. 
By \eqref{eq:ora}, \eqref{eq:ora2}, and \eqref{eq:vol008}, we obtain
\begin{align} \label{eq:vol009}
\left( \frac{ e \sin( \pi \ell/n) }{ \pi } \right)^{-nk}k^{nk} \mathrm{Vol}^{(n)}_{k,\ell} = n \frac{(\frac{k}{e} \frac{\pi}{ \sin(\pi/n)} )^{nk}}{(nk)!} \left( \mathcal{P}( e^{-pc_\tau}) \mathcal{P}( e^{-pq_\tau^+} ) + O\left(e^{ - cp\tau \log(n) }\right) \right).
\end{align}

The result now follows from using Stirling's formula $(nk)! =(1+o(1)) \sqrt{2 \pi nk} (nk/e)^{nk}$, $k=pn$, and the fact that $\left( \frac{ \sin (\pi/n) }{ \pi /n} \right)^{nk} = (1+o(1))e^{ - p \pi^2/6}$.
\end{proof}

\section{Probability measures on bead configurations}  \label{sec:corr}

\subsection{Complex measures on bead processes} \label{sec:random bead}

Recall from the introduction that an occupation process on $\mathbb{T}_n$ is a right-continuous function $t \mapsto X_t \in \mathcal{P}(\mathbb{Z}_n)$ with finitely many discontinuities and the property that every discontinuity takes the form $X_t = X_{t-} \cup \{h+1\} - \{h\}$. In this case we say $(t,h)$ is a bead of $(X_t)_{t \in [0,1)}$. Let $\mathcal{A}_n$ denote the set of occupation processes on $\mathbb{T}_n$.  In this section we study measures on $\mathcal{A}_n$. 

Recall the functions $g_N^\lambda$ and $g_N^{\lambda,\theta}$ defined in Section \ref{sec:pf}. For $\Lambda = \lambda$ or $\Lambda = (\lambda,\theta)$ we define complex measures $\mathbf{Q}_n^{\Lambda,T}$ on occupation processes as follows. Since nonconstant occupation processes are determined by nonempty bead configurations, for $k \geq 1$, and any disjoint subsets $A_1,\ldots,A_{nk}$ of $\mathbb{T}_n$, let
\begin{align} \label{eq:po}
\mathbf{Q}_n^{\Lambda,T} ( nk \text{ beads}, A_1,\ldots,A_{nk} \text{ contain beads}  ) 
:= T^{nk} \int_{A_1 \times \cdots \times A_{nk}} g_{nk}^{\Lambda} ( y_1,\ldots,y_{nk}) \mathrm{d}y_1 \cdots \mathrm{d}y_{nk}.
\end{align}
As for constant occupation processes, we set
\begin{align} \label{eq:po2}
\mathbf{Q}_n^{\Lambda,T}(X_t = A ~\forall t \in[0,1) ) =
\begin{cases}
 e^{ - \lambda \# A} \qquad &\text{if $\Lambda = \lambda$},\\
 \frac{1}{2}(-1)^{(\theta_1+1)(\theta_2+n+\#A+1)}  e^{ - \lambda \# A}\qquad &\text{if $\Lambda = (\lambda,\theta)$}.
\end{cases}
\end{align}
Note that by summing \eqref{eq:po2} over all subsets $A$ of $\mathbb{Z}_n$, \eqref{eq:po2} implies 
\begin{align} \label{eq:po1}
\mathbf{Q}_n^{\Lambda,T}(X_t \text{ is constant}) = g_0^{\Lambda},
\end{align}
where $g_0^{\Lambda}$ is defined in \eqref{eq:crelle} and \eqref{eq:crelle2}.

In summary, $\mathbf{Q}_n^{\lambda,T}$ defines a complex measure on $(\mathcal{A}_n,\mathcal{F})$: the measure on occupation processes with $nk>0$ discontinuities is determined by \eqref{eq:po}, and the measure on constant occupation processes is determined by \eqref{eq:po2}. When $\lambda \in \mathbb{R}$, $\mathbf{Q}_n^{\lambda,T}$ takes nonnegative values, and $\mathbf{Q}_n^{\lambda,\theta,T}$ takes real values.

One can check using \eqref{eq:gsum} in the case $nk > 0$ (and using \eqref{eq:po2} in the no beads case --- the $\theta_1 =0$ terms cancel one another out) that we have the decomposition of measures
\begin{align} \label{eq:Qsum}
\mathbf{Q}_n^{\lambda,T} = \sum_{ \theta \in \{0,1\}^2 } \mathbf{Q}_n^{\lambda,\theta,T}.
\end{align}
Recall from Definition \ref{df:mixed} the definition of the mixed correlation functions. In particular, recall that we use the letters $b$, $o$, and $u$, as shorthand for `bead', `occupied' or `unoccupied'. With this in mind, to  discuss the probability and correlation functions of $\mathbf{Q}_n^{\Lambda,T}$ we define the \textbf{replicated torus} by 
\begin{align*}
\mathbb{T}_{n,*} := \{ b,o,u\} \times \mathbb{T}_n. 
\end{align*}
Given points $(w_1,\ldots,w_N)$ in $\mathbb{T}_{n,*}$ with $w_i = (\alpha_i,t_i,h_i)$, write $i \in \mathcal{B},\mathcal{O},\mathcal{U}$ according to whether $\alpha_i=b,o,u$. We will write $B := \#\mathcal{B}, O := \#\mathcal{O}, U := \# \mathcal{U}$. We now define the mixed indicator function $I:\mathbb{T}_{n,*}^N \to \mathbb{C}$ by setting, in the $\mathcal{B} \neq \varnothing$ case,
\begin{align} \label{eq:gext0}
I(w_1,\ldots,w_N) &:= \mathrm{1} \{ \text{$(y_i : i \in \mathcal{B})$ bead config., $y_i$ is occ./unocc.\ whenever $\alpha_i = o/u$} \}.
\end{align}
By definition, $I(w_1,\ldots,w_N)$ is nonzero only when $B = nk$ for some $k$.
For reasons that will become clear below, in the $\mathcal{B} = \varnothing$ case, we define
\begin{align} \label{eq:gext00}
I(w_1,\ldots,w_N) &:= \mathrm{1} \{ \{ h_i : i \in \mathcal{O} \} \cap \{ h_i : i \in \mathcal{U} \} = \varnothing \}.
\end{align}
We will also define $\ell_o=\ell_o(w_1,\ldots,w_N)$ and $\ell_u = \ell_u(w_1,\ldots,w_N)$ as follows. In the $\mathcal{B} \neq \varnothing$ case 
\begin{align} \label{eq:gext1}
\ell_o = \ell \qquad \text{and} \qquad \ell_u = n-\ell \qquad \text{if $\mathcal{B} \neq \varnothing$},
\end{align}
where $\ell = \ell(y_i : i \in \mathcal{B})$ is the occupation number of the underlying nonempty bead configuration, and otherwise
\begin{align} \label{eq:gext11}
\ell_o = \# \{ h_i : i \in \mathcal{O}\} \qquad \text{and} \qquad \ell_u = \# \{ h_i : i \in \mathcal{U}\} \qquad \text{if $\mathcal{B} = \varnothing$}.
\end{align}
With a view to constructing the mixed correlation functions, we would like to extend the definition of $g_N^\Lambda$ from $\mathbb{T}_n^N$ to $\mathbb{T}_{n,*}^N$ (i.e.\ $g_N^\Lambda:\mathbb{T}_{n,*}^N \to \mathbb{R}$) to encompass occupation, so that we may write
\begin{align*}
& T^B g_N^\Lambda(w_1,\ldots,w_N) \prod_{i \in \mathcal{B}} \mathrm{d}y_i\\
&:= \mathbf{Q}_n^{\Lambda,T} ( \text{$B$ beads}, \text{Beads in $\mathrm{d}y_i : i \in \mathcal{B}$}, \text{$y_i$ is occ./unocc.\ whenever $\alpha_i = o/u$} ).
\end{align*}
More explicitly, given disjoint subsets $A_1,\ldots,A_{nk}$ of $\mathbb{T}_n$, and points $(y_i : i \in \mathcal{O} \sqcup \mathcal{U})$ not lying in the $A_i$, $g_N^\Lambda(w_1,\ldots,w_N)$ are the unique symmetric functions satisfying
\begin{align} \label{eq:lift}
&\mathbf{Q}_n^{\Lambda,T} ( nk \text{ beads}, A_1,\ldots,A_{nk} \text{ contain beads} , \text{$y_i$ is occ./unocc.\ whenever $\alpha_i = o/u$}   ) \nonumber\\
&=  T^{nk}  \int_{A_1 \times \cdots \times A_{nk}} g_{N}^{\Lambda} ((b,y_1),\ldots,(b,y_{nk}),\{(o,y_i):i \in \mathcal{O}\}, \{(u,y_i):i \in \mathcal{U}\} ) \mathrm{d}y_1 \cdots \mathrm{d}y_{nk}.
\end{align}
By symmetric, we mean $g_N^\Lambda(w_{\sigma(1)},\ldots,w_{\sigma(N)}) = g_N^\Lambda(w_1,\ldots,w_N)$ for any permutation $\sigma$. 
Again, $g_N^\Lambda(w_1,\ldots,w_N)$ may only be nonzero when $B = \# \mathcal{B} = \# \{ i:\alpha_i=b \}$ is a multiple of $n$. 

The following lemma provides a direct characterisation of $g_N^\Lambda$:

\begin{lemma} 
With definitions as in \eqref{eq:gext0}, \eqref{eq:gext00}, \eqref{eq:gext1} and \eqref{eq:gext11} we have 
\begin{align} \label{eq:1gdef}
g_N^\lambda(w_1,\ldots,w_N) := e^{ - \lambda \ell_o} (1 + e^{-\lambda})^{n-\ell_o-\ell_u} I(w_1,\ldots,w_N),
\end{align}
and
\begin{align} \label{eq:gext}
g_N^{\lambda,\theta}(w_1,\ldots,w_N) &:=  \frac{1}{2} (-1)^{(\theta_1+k+1)(\theta_2+n+\ell_o+1)} e^{ - \lambda \ell_o} (1 - (-1)^{\theta_1} e^{-\lambda} )^{n- (\ell_o + \ell_u)}  I(w_1,\ldots,w_N) ,
\end{align}
where $k=B/n$ is the number of beads.
\end{lemma}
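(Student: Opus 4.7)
The plan is to verify \eqref{eq:1gdef} and \eqref{eq:gext} directly from the definitions \eqref{eq:po}, \eqref{eq:po2} and \eqref{eq:lift}, splitting into the two cases $\mathcal{B}\neq\varnothing$ and $\mathcal{B}=\varnothing$. In each case, the task is simply to identify the measure of the corresponding event, and to exhibit the unique symmetric function of $(w_1,\ldots,w_N)$ which represents it via \eqref{eq:lift}.

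Case $\mathcal{B}\neq\varnothing$. Here $B=nk$ for some $k\geq 1$, else $g_{nk}^\Lambda$ and hence $g_N^\Lambda$ vanish (as does $I$). Given a bead configuration $(y_i:i\in\mathcal{B})$, the associated occupation process is deterministically determined, so the events $\{y_i \text{ is occupied}\}$ for $i\in\mathcal{O}$ and $\{y_i \text{ is unoccupied}\}$ for $i\in\mathcal{U}$ are deterministic given the configuration. Hence the integrand in \eqref{eq:lift} equals $g_{nk}^\Lambda(y_i:i\in\mathcal{B}) \cdot I(w_1,\ldots,w_N)$. Substituting \eqref{eq:0gdef} (resp.\ \eqref{eq:0gthetadef}) and noting that in this case $\ell_o=\ell$ and $\ell_u=n-\ell$, so that $n-\ell_o-\ell_u=0$ and $(1-(-1)^{\theta_1}e^{-\lambda})^{n-\ell_o-\ell_u}=1$, matches \eqref{eq:1gdef} (resp.\ \eqref{eq:gext}) in the $k\geq 1$ case.

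Case $\mathcal{B}=\varnothing$. Here \eqref{eq:lift} reduces to saying $g_N^\Lambda(w_1,\ldots,w_N)$ equals the mass of the event that $X$ is a constant occupation process with $y_i$ occupied for $i\in\mathcal{O}$ and $y_i$ unoccupied for $i\in\mathcal{U}$. Writing $S_O=\{h_i:i\in\mathcal{O}\}$ and $S_U=\{h_i:i\in\mathcal{U}\}$, such an event is nonempty iff $S_O\cap S_U=\varnothing$, which is precisely the indicator $I(w_1,\ldots,w_N)$ in the $\mathcal{B}=\varnothing$ case \eqref{eq:gext00}. Summing \eqref{eq:po2} over constant processes $X_t\equiv A$ with $S_O\subseteq A\subseteq\mathbb{Z}_n\setminus S_U$, and parametrising $A=S_O\cup B$ for $B\subseteq\mathbb{Z}_n\setminus(S_O\cup S_U)$, gives
\[
\mathbf{Q}_n^{\lambda,T}(\cdot)=e^{-\lambda\ell_o}\!\!\sum_{B\subseteq\mathbb{Z}_n\setminus(S_O\cup S_U)}\!\!e^{-\lambda\#B}=e^{-\lambda\ell_o}(1+e^{-\lambda})^{n-\ell_o-\ell_u},
\]
matching \eqref{eq:1gdef}. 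For $\Lambda=(\lambda,\theta)$ the analogous computation picks up the sign prefactor $\tfrac{1}{2}(-1)^{(\theta_1+1)(\theta_2+n+\ell_o+1)}$ (which equals the prefactor in \eqref{eq:gext} since $k=0$) and replaces the geometric sum by
\[
\sum_{B}\bigl(-(-1)^{\theta_1}e^{-\lambda}\bigr)^{\#B}=\bigl(1-(-1)^{\theta_1}e^{-\lambda}\bigr)^{n-\ell_o-\ell_u},
\]
thereby confirming \eqref{eq:gext}.

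There is no real obstacle: the argument is a bookkeeping exercise that unpacks the four definitions \eqref{eq:po}, \eqref{eq:po2}, \eqref{eq:lift} and compares with \eqref{eq:0gdef}, \eqref{eq:0gthetadef}. The only step requiring any care is the sign in the $\mathcal{B}=\varnothing$, $\Lambda=(\lambda,\theta)$ case, where one must observe that $(-1)^{(\theta_1+1)(\theta_2+n+\#A+1)}$ factors as $(-1)^{(\theta_1+1)(\theta_2+n+\ell_o+1)}(-(-1)^{\theta_1})^{\#B}$ when $A=S_O\cup B$, so that the sum over $B$ collapses to a product in binomial form.
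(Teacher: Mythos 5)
Your proposal is correct and follows essentially the same route as the paper: both arguments unpack the definitions \eqref{eq:po}, \eqref{eq:po2}, \eqref{eq:lift}, treat $\mathcal{B}\neq\varnothing$ as an immediate consequence of \eqref{eq:0gdef}--\eqref{eq:0gthetadef} using $n-\ell_o-\ell_u=0$, and in the $\mathcal{B}=\varnothing$ case decompose $A=Z_O\sqcup B$ and collapse the signed sum over $B$ into the binomial factor $(1-(-1)^{\theta_1}e^{-\lambda})^{n-\ell_o-\ell_u}$. The only difference is organizational: the paper first checks that the claimed right-hand sides satisfy the sum identity \eqref{eq:Qsum} so that only \eqref{eq:gext} needs direct verification, whereas you verify \eqref{eq:1gdef} and \eqref{eq:gext} separately, which is equally valid.
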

\begin{proof}
By \eqref{eq:Qsum}, we have $\sum_{\theta \in \{0,1\}^2} g_N^{\lambda,\theta} = g_N^\lambda$. Conversely, if we let $h_N^{\lambda}$ and $h_N^{\lambda,\theta}$ denote respectively the right-hand-sides of \eqref{eq:1gdef} and \eqref{eq:gext} we claim that 
\begin{align} \label{eq:castlin}
\sum_{\theta \in \{0,1\}^2} h_N^{\lambda,\theta} = h_N^\lambda.
\end{align}
To see that \eqref{eq:castlin} holds, in the $\mathcal{B} \neq \varnothing$ (i.e.\ $k>0$) case by definition we have $n-\ell_o-\ell_u=0$, and \eqref{eq:castlin} follows from the three-vs-one identity \eqref{eq:kid}. In the $\mathcal{B}=\varnothing$ case, the terms with $\theta_1 = 0$ cancel out, thus proving \eqref{eq:castlin}. 

Thus it remains to establish \eqref{eq:gext}, since this would also imply \eqref{eq:1gdef}.

In the $\mathcal{B} \neq \varnothing$ case, we have $n- \ell_o - \ell_u = 0$, and \eqref{eq:gext} follows immediately from the definition of $\mathbf{Q}_n^{\lambda,\theta,T}$ in \eqref{eq:po}. 

As for the case where $\mathcal{B} = \varnothing$, by \eqref{eq:po2}, we have
\begin{align*}
 g_N^{\lambda,\theta}(w_1,\ldots,w_N) &:= \mathbf{Q}_n^{\lambda,\theta,T}( X_t \text{ constant},  \text{$y_i$ is occ./unocc.\ whenever $\alpha_i = o/u$} ) \\
&= \sum_{A \subseteq \mathbb{Z}_n} \frac{1}{2}(-1)^{(\theta_1+1)(\theta_2+n+\#A+1)}  e^{ - \lambda \# A} \mathrm{1} \{ Z_O \subseteq A, Z_U \subseteq \mathbb{Z}_n - A \},
\end{align*}
where $Z_O := \{ h_i : i \in\mathcal{O} \}$ and $Z_U := \{ h_i : i \in \mathcal{U} \}$. 
We note that the indicator $\mathrm{1} \{ Z_O \subseteq A, Z_U \subseteq \mathbb{Z}_n - A \}$ may only be nonzero for some $A$ if $Z_O$ and $Z_U$ are disjoint.
Note $\# Z_O = \ell_o$, $\# Z_U = \ell_u$. 
In the setting of the previous line, we may decompose $A$ as the disjoint union $A = Z_O \sqcup B$, where $B \subseteq \mathbb{Z}_n - Z_O - Z_U$. Performing the sum over such $B$, we have
\begin{align*}
 & g_N^{\lambda,\theta}(w_1,\ldots,w_N)  \nonumber \\
&= \frac{1}{2}(-1)^{(\theta_1+1)(\theta_2+n+1)}  (-1)^{(\theta_1+1)\ell_o}  e^{ - \lambda \ell_o} \mathrm{1}_{Z_O \cap Z_U = \varnothing} \sum_{B \subseteq \mathbb{Z}_n-Z_U-Z_O} (-1)^{(\theta_1+1) \# B } e^{ - \lambda \# B } \nonumber \\
&= \frac{1}{2} (-1)^{(\theta_1+1)(\theta_2+n+\ell_o+1)} (1 - (-1)^{\theta_1} e^{-\lambda} )^{n- (\ell_o + \ell_u)} e^{ - \lambda \ell_o} I(w_1,\ldots,w_N),
\end{align*}
proving \eqref{eq:gext} in the case $\mathcal{B} = \varnothing$. 
\end{proof}

Note that the functions $g_N^\lambda:\mathbb{T}_{n,*}^N \to \mathbb{C}$ and $g_N^{\lambda,\theta}:\mathbb{T}_{n,*}^N \to \mathbb{C}$ defined in \eqref{eq:1gdef} and \eqref{eq:gext} are extensions of $g_N^\lambda:\mathbb{T}_{n}^N \to \mathbb{C}$ and $g_N^{\lambda,\theta}:\mathbb{T}_{n}^N \to \mathbb{C}$ defined in \eqref{eq:0gdef} and \eqref{eq:0gthetadef} in the sense that
\begin{align*}
g_N^\Lambda((b,y_1),\ldots,(b,y_N)) = g_N^\Lambda(y_1,\ldots,y_N),
\end{align*}
for either $\Lambda = \lambda$ or $\Lambda = (\lambda,\theta)$.

\subsection{Determinantal representation for $g_N^{\lambda,\theta}$}

The main result of this section states that $g_N^{\lambda,\theta}$ may be represented in terms of determinants involving the operator $C^{\beta,\theta_2}:\mathbb{T}_{n,*} \times \mathbb{T}_{n,*} \to \mathbb{C}$ defined by
\begin{align} \label{eq:Creplica}
C^{\beta,\theta_2}(w,w') := (-1)^{ \theta_2 \mathrm{1}_{\{\alpha = b, h' = 0\}} + \mathrm{1}_{\{\alpha = o\}} } \mathrm{1}_{\{h'=h+\mathrm{1}_{\{\alpha = b\}}\}} \frac{ e^{ - \beta [ t'-t]_\alpha }}{ 1 - e^{ - \beta}} ,
\end{align}
where $w = (\alpha,t,h)$, $w' = (\alpha',t',h')$, and $[t'-t]_\alpha = t' - t + \mathrm{1}_{\{t' < t\}} + \mathrm{1}_{\{\alpha = o, t'=t\}}$. Note that $C^{\beta,\theta_2}(w,w')$ does not depend on $\alpha'$. 

\begin{thm} \label{thm:replicacast} We have 
\begin{align*}
(-1)^{(\theta_1+1)(\theta_2+n+1)} (1-e^{-(\lambda+\theta_1 \pi \iota)})^n \det_{i,j=1}^N C^{\lambda+\theta_1 \pi \iota ,\theta_2}(w_i,w_j) = g_N^{\lambda,\theta}(w_1,\ldots,w_N).
\end{align*}
\end{thm}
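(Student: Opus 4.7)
The proof extends the strategy of Theorem \ref{thm:ckast} from the all-bead case to the replicated torus, mirroring the three-step structure of Section \ref{sec:ckastproof}: an ``unconjugated'' kernel $\tilde{A}^\zeta$ extending $A^\zeta$ of \eqref{eq:aux1} to $\mathbb{T}_{n,*}$; a replicated analogue of the detection Lemma \ref{lem:new det}; and a conjugation-and-normalisation step analogous to Lemma \ref{lem:premelan}.

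Concretely, I would introduce
\begin{align*}
\tilde{A}^\zeta(w,w') := (-1)^{\mathrm{1}_{\{\alpha=o\}}} \mathrm{1}_{\{h'=h+\mathrm{1}_{\{\alpha=b\}}\}}\,\zeta^{\mathrm{1}_{\{t'<t\}}+\mathrm{1}_{\{\alpha=o,\,t'=t\}}},
\end{align*}
together with the dressed kernel $\tilde{B}^{\zeta,\theta}(w,w') := \tfrac{(-1)^{\theta_2\mathrm{1}_{\{\alpha=b,\,h'=0\}}}}{1-(-1)^{\theta_1}\zeta}\tilde{A}^{(-1)^{\theta_1}\zeta}(w,w')$. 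A direct check paralleling \eqref{eq:oranj} shows that conjugation by $e^{-\beta(t'-t)}$ with $\beta=\lambda+\theta_1\pi\iota$ and $\zeta=e^{-\lambda}$ transforms $\tilde{B}^{\zeta,\theta}$ into the replicated kernel $C^{\beta,\theta_2}$ of \eqref{eq:Creplica}; since diagonal conjugation preserves determinants, the theorem reduces to a determinantal identity for $\tilde{A}^{(-1)^{\theta_1}\zeta}$.

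The core step is the replicated detection identity: for $(w_1,\ldots,w_N)$ with $B$ bead-, $O$ occupied- and $U$ unoccupied-markers, and with $k:=B/n$,
\begin{align*}
\det_{i,j=1}^N \tilde{A}^\zeta(w_i,w_j) = \varepsilon\,(1-\zeta)^{n-\ell_o-\ell_u+n(k-1)\mathrm{1}_{\{k\geq 1\}}}\,\zeta^{\ell_o}\,I(w_1,\ldots,w_N),
\end{align*}
for some explicit sign $\varepsilon=\varepsilon(B,O,U,\ell_o,n)$. I would prove this by ordering rows and columns by string: in any nonzero permutation-term, each bead-row on string $h$ is paired with a column on string $h+1$ and each $o/u$-row on string $h$ is paired with a column on string $h$, so every string must carry the same number $k$ of beads and the matrix block-decomposes across strings. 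The per-string block is evaluated via an extension of Lemma \ref{lem:succession} handling $o/u$ markers interleaved with the interlacing beads: bead rows reproduce the $(1-\zeta)^{k-1}$ and boundary-$\zeta$ structure from the original Lemma \ref{lem:new det}, while each $o$-row (resp.\ $u$-row) contributes $-(1-\zeta)$ (resp.\ $(1-\zeta)$) times an indicator that the marker lies in an occupied (resp.\ unoccupied) arc on string $h$. Taking the product over $h\in\mathbb{Z}_n$ recovers the claimed formula, with the $B=0$ case (pure self-loops, no cross-string bead structure) giving the $(1-\zeta)^{n-\ell_o-\ell_u}$ factor directly.

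The hard part will be the per-string sub-determinant computation and the accompanying sign bookkeeping, particularly when $o/u$ markers coincide with bead times (where the $\mathrm{1}_{\{\alpha=o,\,t'=t\}}$ correction in $\tilde{A}^\zeta$ plays a role) and when several $o/u$ markers lie between consecutive beads on the same string. Once the detection identity is established, substituting $\zeta\mapsto(-1)^{\theta_1}e^{-\lambda}$ and combining $\varepsilon$ with the parity identity \eqref{eq:modness} exactly as in Lemma \ref{lem:premelan} recovers the prefactor $(-1)^{(\theta_1+1)(\theta_2+n+1)}(1-e^{-(\lambda+\theta_1\pi\iota)})^n$ and yields Theorem \ref{thm:replicacast}.
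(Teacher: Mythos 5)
Your proposal follows the paper's own proof essentially line for line: your $\tilde{A}^\zeta$ is exactly the paper's extension \eqref{eq:blackfall} of $A^\zeta$ to $\mathbb{T}_{n,*}$, your $\tilde{B}^{\zeta,\theta}$ is the paper's \eqref{eq:corolla}, and the three-step architecture — block decomposition by string, per-string determinant via an extension of Lemma~\ref{lem:succession}, then conjugation/normalisation as in Lemma~\ref{lem:premelan} — is precisely what the paper does in Lemmas~\ref{lem:new det2}, \ref{lem:appler}, and the short proof of Theorem~\ref{thm:replicacast}. You also correctly identify that the hard work lies in the per-string sub-determinant and the sign bookkeeping when $o/u$ markers interleave with beads; that is indeed where the paper expends its effort, splitting into cases $I_h \in \{0,1\}$ and tracking the permutations $\sigma,\sigma'$ that reorder each string.

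There is, however, a slip in the detection identity you write down. You claim the power of $(1-\zeta)$ is $n-\ell_o-\ell_u+n(k-1)\mathrm{1}_{\{k\geq 1\}}$, but the correct exponent (the paper's Lemma~\ref{lem:new det2}) is $N-(\ell_o+\ell_u)$, where $N$ is the total number of markers. These disagree whenever there are $o/u$ markers present: with $k\geq 1$ your formula gives $n(k-1)$ but the correct count is $n(k-1)+O+U$, and with $k=0$ yours gives $n-\ell_o-\ell_u$ rather than $O+U-\ell_o-\ell_u$. Your own informal derivation actually produces the right thing — you say beads give $(1-\zeta)^{k-1}$ per string and each $o/u$-row contributes an additional $(1-\zeta)$ factor, which sums across strings to $\sum_h(p_h-1)=N-n$ — so this is a bookkeeping slip rather than a conceptual gap, but the written formula must be corrected for the downstream cancellation against $(1-(-1)^{\theta_1}\zeta)^{-N}$ from $\tilde{B}^{\zeta,\theta}$ and the $(1-(-1)^{\theta_1}\zeta)^{n}$ prefactor to yield the $(1-(-1)^{\theta_1}e^{-\lambda})^{n-(\ell_o+\ell_u)}$ appearing in $g_N^{\lambda,\theta}$.
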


Our first step in the proof of Theorem \ref{thm:replicacast} is a generalisation of Lemma \ref{lem:new det} involving the operator $A^\zeta:\mathbb{T}_{n,*}\times \mathbb{T}_{n,*} \to \mathbb{C}$ given by 
\begin{align} \label{eq:blackfall}
A^\zeta(w,w') := (-1)^{ \mathrm{1}_{\{\alpha = o\}}} \mathrm{1}_{\{h'=h+\mathrm{1}_{\{\alpha = b\}}\}} \zeta^{ \mathrm{1}_{\{t'<t\}} + \mathrm{1}_{\{\alpha =o, t' = t\}} }. 
\end{align} 
Like $C^{\beta,\theta}(w,w')$, $A^\zeta(w,w')$ does not depend on $\alpha'$. Note that with $A^\zeta$ defined in \eqref{eq:aux1} as an operator on $\mathbb{T}_n$ rather than $\mathbb{T}_{n,*}$, we have $A^\zeta( (b,y),(b,y')) = A^\zeta(y,y')$. Thus \eqref{eq:blackfall} defines an extension of the operator $A^\zeta$ defined in \eqref{eq:aux1}. 

Our next lemma says that determinants involving $A^\zeta$ may be used to detect bead configurations as well as the associated occupancy and vacancy.
\begin{lemma} \label{lem:new det2}
Let $\zeta \neq 1$. 
Let $w_1,\ldots,w_N$ be elements of $\mathbb{T}_{n,*}$ such that if $w_i = (\alpha_i,t_i,h_i)$, we have $t_1,\ldots,t_N$ distinct. Again write $\mathcal{B}/\mathcal{O}/\mathcal{U} := \{ i : \alpha_i = b/o/u\}$ respectively. Then with the definitions \eqref{eq:gext0}, \eqref{eq:gext00}, \eqref{eq:gext1} and \eqref{eq:gext11} we have
\begin{align} \label{eq:xan}
\det_{i,j=1}^N A^\zeta(w_i,w_j) &= (-1)^{k(n-1)+(k-1)\ell_o}  ( 1 - \zeta)^{N-(\ell_o+\ell_u)}\zeta^{\ell_o} I(w_1,\ldots,w_N),
\end{align}
where $k = \# \mathcal{B}/n$. 
\end{lemma}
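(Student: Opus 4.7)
The proof extends the block-decomposition strategy of Lemma \ref{lem:new det} to handle the $o$- and $u$-markers. Define $S_h^\alpha := \{i : \alpha_i = \alpha,\ h_i = h\}$ for $\alpha \in \{b,o,u\}$ and $h \in \mathbb{Z}_n$. Since $A^\zeta(w_i, w_j) = 0$ unless $h_j = h_i + \mathrm{1}_{\{\alpha_i = b\}}$, grouping rows by $R_h := S_{h-1}^b \cup S_h^o \cup S_h^u$ and columns by $C_h := S_h^b \cup S_h^o \cup S_h^u$ decomposes the matrix into $n$ blocks $M_h := A^\zeta|_{R_h \times C_h}$. The determinant vanishes unless $|R_h| = |C_h|$ for every $h$, equivalently $|S_h^b| = k$ is independent of $h$; in the remaining case no valid bead configuration exists, so $I(w_1,\ldots,w_N) = 0$ and both sides of \eqref{eq:xan} vanish. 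In the consistent case, the block-diagonalization permutation has sign $(-1)^{k(n-1)}$, exactly as in Lemma \ref{lem:new det}: only the bead rows undergo a cyclic shift, while the $o$- and $u$-rows/columns already sit within their correct string-block.

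The central algebraic tool is the absorption identity: for any index $i$, the rows $v^o_i, v^u_i$ that would arise from setting $\alpha_i = o$ or $\alpha_i = u$ (with all other $w_j$ fixed) satisfy $v^o_i + v^u_i = (1-\zeta)\,e_i$, where $e_i$ is the standard basis row vector. This uses distinctness of the $t_j$'s, together with the $\mathrm{1}_{\{\alpha = o,\, t' = t\}}$ term in the definition of $A^\zeta$. Since the $i$-th column of the matrix is independent of $\alpha_i$, multilinearity of the determinant gives, for $\alpha_i \in \{o, u\}$,
\[
\det M_h + \det M_h[\alpha_i\text{ flipped}] = (1-\zeta)\,\det M_h^{-i},
\]
where $M_h^{-i}$ denotes the minor obtained by removing row and column $i$, and ``flipped'' means $o \leftrightarrow u$.

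Within each block $M_h$, I would prove by induction on $o_h + u_h := |S_h^o| + |S_h^u|$ that
\[
\det M_h = (-1)^{\ell^h (k-1)}\,(1-\zeta)^{k-1+o_h+u_h}\,\zeta^{\ell^h}\,I_h,
\]
where $\ell^h \in \{0,1\}$ is the occupancy of string $h$ (induced by the bead configuration when $\mathcal{B} \neq \varnothing$, and by the marker types on $h$ otherwise), and $I_h$ is the local consistency indicator. The base case $o_h + u_h = 0$ is Lemma \ref{lem:new det} (and is trivial when $k = 0$). For the inductive step, pick a marker $i \in R_h$ and apply the absorption recurrence: the deletion term $\det M_h^{-i}$ contributes the desired formula by induction, while the flipped-marker term vanishes by a duplicate-row argument. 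Specifically, flipping the type of a marker at $(t_i, h)$ creates a row proportional to a specific bead row (the bead on the ``other side'' of $t_i$, determined by the interlacing), forcing a zero determinant. Multiplying $\det M_h$ over $h$ and combining with the permutation sign from Step 1 then yields \eqref{eq:xan}, using $\sum_h(o_h + u_h) = |\mathcal{O}| + |\mathcal{U}|$ and $\sum_h \ell^h = \ell_o$.

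The main obstacle is the duplicate-row argument in the inductive step: pinning down precisely which bead row in $M_h$ is proportional to an inconsistent marker's row requires careful case analysis of the beads flanking $t_i$ on strings $h$ and $h - 1$, invoking the bead interlacing to produce the necessary linear dependence. A cleaner variant — explicitly absorbing each $o$- or $u$-row into a uniquely chosen neighbouring bead row via a direct row operation — would bypass the recurrence but introduces its own sign bookkeeping.
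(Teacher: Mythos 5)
Your first step coincides with the paper's: the same block decomposition by strings, the same squareness criterion forcing $\#\{i:\alpha_i=b,h_i=h\}=k$ for all $h$, and the same sign $(-1)^{k(n-1)}$ for block-diagonalisation. Where you diverge is inside each block. The paper perturbs the $o$-marked times by $\varepsilon$, recognises the whole marker-augmented block as a matrix of the form treated in Lemma \ref{lem:succession}, and then extracts the sign by analysing the sorting permutations $\sigma,\sigma'$ directly. You instead peel markers off one at a time via the absorption identity $v_i^o+v_i^u=(1-\zeta)e_i$, which is correct (the two exponents $\mathrm{1}_{\{t_j<t_i\}}$ and $\mathrm{1}_{\{t_j\le t_i\}}$ differ only at $j=i$ by distinctness of the times, and the $i^{\text{th}}$ column does not depend on $\alpha_i$), reducing to the marker-free case where Lemma \ref{lem:new det} has already done the sign bookkeeping. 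This is an attractive alternative, and the target block formula you state agrees with the paper's \eqref{eq:detAh2}.

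However, the inductive step has a genuine gap. The recurrence $\det M_h+\det M_h[\alpha_i\text{ flipped}]=(1-\zeta)\det M_h^{-i}$ only determines the \emph{sum} of the two determinants; to conclude you must independently show that exactly one of them vanishes, and your argument for this is both imprecise and incomplete. First, the vanishing row need not duplicate a \emph{bead} row: if another marker on string $h$ sits between the wrongly-typed marker at $t_i$ and the bead that certifies its occupancy status, the linear dependence is with that marker's row (up to the sign $(-1)^{\mathrm{1}_{\{\alpha=o\}}}$), not with a bead row; and in the $\mathcal{B}=\varnothing$ case there are no bead rows at all. Second, and more seriously, the claim "the flipped-marker term vanishes" is simply false when the marker $i$ you chose is itself the inconsistent one (so that the flipped configuration is the consistent one), and also in the $\mathcal{B}=\varnothing$ case with markers of both types on string $h$, where \emph{neither} term vanishes and the identity instead forces a cancellation $\det M_h[\text{flipped}]=(1-\zeta)\det M_h^{-i}$. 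Your induction therefore needs a case split on whether marker $i$ is consistent given the rest, together with a correct identification of the dependent row in each case — at which point you are essentially re-deriving the interlacing criterion of Lemma \ref{lem:succession}, which is what the paper invokes wholesale. The route is salvageable, but as written the key vanishing step is not established.
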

\begin{proof}
Note that $(A^\zeta(w_i,w_j))_{i,j =1}^N$ is a block matrix: $A^\zeta(w_i,w_j)$ may only be nonzero if, for some $h \in \mathbb{Z}_n$, we have both $h_i+\mathrm{1}_{\alpha_i=b} = h$ and $h_j = h$.
The determinant of $(A^\zeta(w_i,w_j))_{i,j =1}^N$ may then only be nonzero if each of these blocks is a square, i.e.\ for each $h \in \mathbb{Z}_n$ we have
\begin{align*}
p_h = \# \{ i: h_i+\mathrm{1}_{\alpha_i=b} = h \} = \# \{ j : h_j = h \}.
\end{align*}
This happens if and only if there exists some $k \geq 0$ such that for each $h \in \mathbb{Z}_n$ we have $\# \{ i : \alpha_i = b, h_i = h \} = k$, which we assume is the case for the remainder of the proof. Write
\begin{align*}
O_h := \# \{ i : \alpha_i = o, h_i = h\} \qquad \text{and} \qquad U_h := \# \{ i : \alpha_i = u, h_i = h\}.
\end{align*}
Then $p_h = k + O_h + U_h$.

Suppose now w.l.o.g.\ that the $w_i$ are ordered lexicographically in the $[h,\alpha, t]$ order: i.e.\ first by string $h_i$ in increasing order, then by $\alpha_i$ in $b,o,u$ order, then by $t_i$. By a similar argument to the one used in the proof of Lemma \ref{lem:new det} it takes $(-1)^{k(n-1)}$ row swaps to make the matrix block diagonal, and thus,
\begin{align} \label{eq:totaldet}
\det_{i,j=1}^N A^\zeta(w_i,w_j) = (-1)^{k(n-1)} \prod_{ h \in \mathbb{Z}_n} \det B_h,
\end{align}
where $B_h = ( A^\zeta(w_i,w_j) )_{i : h_i +\mathrm{1}_{\alpha_i =b} = h, j : h_j = h }$. Note that by the definition \eqref{eq:blackfall} of $A^\zeta$ we have
\begin{align*}
\det B_h = (-1)^{ O_h } \det_{i : h_i + \mathrm{1}_{\alpha_i = b } = h , j: h_j = h } \zeta^{ \mathrm{1}\{ t_j < t_i \} + \mathrm{1}\{ \alpha_i = o\} \mathrm{1}\{t_j =t_i\} } .
\end{align*}
where $O_h := \# \{ i \in \mathcal{O} : h_i = h \}$. 
With an eye on using Lemma \ref{lem:succession}, since the $(t_i)$ are distinct we may instead write
\begin{align*}
\det B_h = (-1)^{ O_h } \det_{i : h_i + \mathrm{1}_{\alpha_i = b } = h , j: h_j = h } \zeta^{ \mathrm{1}\{ t_j < t_i - \varepsilon  \mathrm{1}_{\alpha_j = o} \} }
 = (-1)^{ O_h } \det_{i,j=1}^{p_h} \zeta^{ \mathrm{1}\{ r'_j < r_i \} } 
\end{align*}
for some sufficiently small $\varepsilon>0$, where in the final expression above we have introduced the notation
\begin{align*}
\{ r_1,\ldots,r_{p_h} \} := \{ t_i : h_i + \mathrm{1}_{\alpha_i = b} = h \} \qquad \text{and} \qquad \{ r'_1,\ldots,r'_{p_h} \} := \{ t_j - \mathrm{1}_{\alpha_j = o} \varepsilon : h_j = h \}.
\end{align*}
Now according to Lemma \ref{lem:succession}, the determinant $\det_{i,j=1}^{p_h} \zeta^{ \mathrm{1}_{r'_j < r_i} } $ may only be nonzero if we have either 
\begin{align*}
s_1 \leq s_1' < \cdots < s_{p_h} \leq s_{p_h}' \qquad \text{or} \qquad s_1' < s_2 \leq s_2' < \cdots \leq s_{p_h}' < s_{p_h},
\end{align*}
where $s_i = r_{\sigma(i)}$ and $s_i' = r'_{\sigma'(i)}$ are $(r_i)$ and $(r_i')$ listed in increasing order. 
 
\begin{figure}[h!]
\centering
\begin{tikzpicture}[scale=1.2]

\draw[lightgray]  (-0.5,-0.4) -- (-0.5,1.4);
\draw[lightgray]   (9.5,-0.4) -- (9.5,1.4);

\draw[lightgray]  (-0.5,0) -- (9.5,0);
\draw[lightgray]  (-0.5,1) -- (9.5,1);

\draw [very thick, CadetBlue] (0.42,0) circle [radius=0.1];
\draw [CadetBlue] (0.42,0) -- (0.42,1); 

\draw [very thick, CadetBlue] (3.3,0) circle [radius=0.1];
\draw [CadetBlue] (3.3,0) -- (3.3,1); 

\draw [very thick, CadetBlue] (8.31,0) circle [radius=0.1];
\draw [CadetBlue] (8.31,0) -- (8.31,1); 

\draw [line width=1.2mm, CadetBlue] (0.42,1) -- (1.85,1); 
\draw [line width=1.2mm, CadetBlue] (3.3,1) -- (5.1,1); 
\draw [line width=1.2mm, CadetBlue] (8.31,1) -- (9.0,1); 

\draw [very thick, CadetBlue] (1.95,1) circle [radius=0.1];
\draw [CadetBlue] (1.95,1) -- (1.95,1.4); 
\draw [very thick, CadetBlue] (5.21,1) circle [radius=0.1];
\draw [CadetBlue] (5.21,1) -- (5.21,1.4); 
\draw [very thick, CadetBlue] (9.1,1) circle [radius=0.1];
\draw [CadetBlue] (9.1,1) -- (9.1,1.4); 

\node at (1.33,1) [very thick, rectangle,draw,red] (v100) {};
\node at (1.40,0) [very thick, rectangle,draw,red] (v100) {};

\node at (4.33,1) [very thick, rectangle,draw,red] (v100) {};
\node at (4.40,0) [very thick, rectangle,draw,red] (v100) {};

\node at (7.12,1) [very thick, rectangle,draw,blue] (v100) {};
\node at (7.12,0) [very thick, rectangle,draw,blue] (v100) {};

\node at (5.92,1) [very thick, rectangle,draw,blue] (v100) {};
\node at (5.92,0) [very thick, rectangle,draw,blue] (v100) {};

\node at (2.62,1) [very thick, rectangle,draw,blue] (v100) {};
\node at (2.62,0) [very thick, rectangle,draw,blue] (v100) {};

\node at (-2,1) [black] (v109) {String $h$};
\node at (-2,0) [black] (v108) {String $h-1$};

\end{tikzpicture}
\caption{For each $w_i$ of the form $(b,t_i,h_i)$ with $h_i = h$ or $h-1$, we have plotted a purple circle at $(t_i,h_i)$. For each $w_i$ of the form $(o,t_i,h)$, we have plotted points $(t_i-\varepsilon,h)$ and $(t_i,h-1)$; these are depicted with red squares. For each $w_i$ of the form $(t_i,h)$, we have plotted points at $(t_i,h-1)$ and $(t_i,h)$; these are depicted with blue squares. Taking the squares and circles together, the points on string $h$ interlace those on string $h-1$ if and only if the beads (purple circles) on string $h$ and $h-1$ interlace, the red squares on string $h$ lie in occupied regions, and the blue squares on string $h$ lie in unoccupied regions.}
\label{fig:interlacement}
\end{figure}
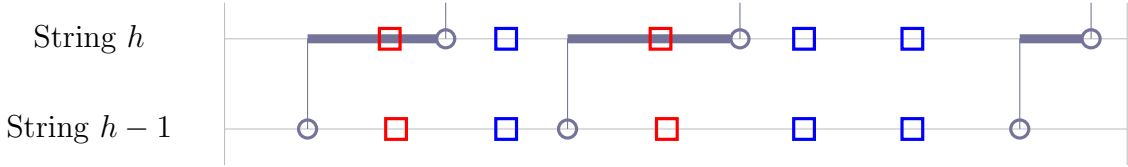

We now study the determinant $\det_{i,j=1}^{p_h} \zeta^{ \mathrm{1}\{r'_j < r_i \} }$ separately in the cases $\mathcal{B} \neq \varnothing$ and $\mathcal{B} = \varnothing$.
\vspace{3mm}

\textbf{Case 1: $\mathcal{B} \neq \varnothing$.}\\
If $\mathcal{B} \neq \varnothing$, then $\# \{ i : \alpha_i = b, h_i = h \} =k>0$. 
After some consideration (the reader may want to consult the example diagram in Figure \ref{fig:interlacement}), interlacing of $s_i$ and $s_i'$ occurs if and only if each of the following occurs:
\begin{itemize}
\item The beads $\{ t_i : \alpha_i = b, h_i = h-1 \}$ on string $h-1$ interlace the beads $\{ t_j : \alpha_j = b, h_j = h \}$ on string $h$.
\item Each $t_i$ with $\alpha_i = o$ and $h_i = h$ is preceded by a bead on string $h-1$ and succeeded by a bead on string $h$. Equivalently, $(t_i,h_i)$ is occupied.
\item Each $t_i$ with $\alpha_i = u$ and $h_i = h$ is succeeded by a bead on string $h-1$ and preceded by a bead on string $h$. Equivalently, $(t_i,h_i)$ is unoccupied.
\end{itemize}
Writing $\Gamma_h$ for the intersection of these three events, it follows that $I(w_1,\ldots,w_N) = \prod_{h \in \mathbb{Z}_n} \mathrm{1}_{\Gamma_h}$. 

It follows from Lemma \ref{lem:succession} that
\begin{align} \label{eq:detAh}
 \det B_h =  (-1)^{ O_h }  \det_{i,j=1}^{p_h} \zeta^{ \mathrm{1} \{r'_j < r_i \} } =  (-1)^{ O_h }  \mathrm{sgn}(\sigma)\mathrm{sgn}(\sigma') ((-1)^{p_h-1}\zeta)^{I_h} (1 - \zeta)^{p_h-1} \mathrm{1}_{\Gamma_h},
\end{align}
where $I_h$ is the indicator function of the event that the smallest $r'_j$ is strictly less than the smallest $r_i$, and $\sigma$ and $\sigma'$ are the permutations required to list $r_i$ and $r_i'$ in increasing order. 

We now take a look at $I_h$ and $\sigma, \sigma'$ on the event $\Gamma_h$.

First we claim that on $\Gamma_h$, $I_h$ is determined by the position of the first \emph{bead} on either string $h-1$ or $h$, i.e.\ $I_h$ is determined by $\{ t_i : i \in \mathcal{B} \text{ with } h_i \in \{h-1,h\} \}$. Indeed, if the first bead on string $h$ (at a point $(t,h)$ say) is strictly before the first bead on string $h-1$, then on $\Gamma_h$ any other $w_i = (\alpha_i,t_i,h)$ with $t_i < t$ must be occupied, and thus since $t'_i = t_i - \varepsilon < t_i$, we have $I_h = 1$. Otherwise, the first bead on string $h-1$ (at a point $(t,h-1)$ say) lies nonstrictly before any bead on string $h$, and consequently on $\Gamma_h$, any point of the form $w_i = (\alpha_i,t_i,h_i)$ with $t_i < t$ must be unoccupied, and hence we have $t_i = t_i'$, so that $I_h = 0$. 

In short, on $\Gamma_h$, $I_h$ is determined by the beads in that 
\begin{align*}
I_h = 1 \iff \inf_{ i : h_i = h, \alpha_i = b } t_i < \inf_{ i : h_i = h-1, \alpha_i = b}.
\end{align*}
In particular, using the definition \eqref{eq:new} of the occupation number $\ell(y_i : i \in \mathcal{B})$, and then \eqref{eq:gext0}, we have
\begin{align} \label{eq:modular}
\sum_{h \in \mathbb{Z}_n} I_h = \ell(y_i : i \in \mathcal{B}) = \ell_0.
\end{align}
We turn to studying $\mathrm{sgn}(\sigma)\mathrm{sgn}(\sigma')$, which is only defined on the event $\Gamma_h$, and which depends on $I_h$. Recall that we have assumed w.l.o.g.\ that the $w_i$ are ordered lexicographically, and with their horizontal coordinates in increasing order.

Considering first the case where $\{ I_h = 1 \}$, on this event, to the left of each $w_i$ of the form $(o,t_i,h)$, since $(t_i,h_i)$ is occupied, there lie the same number of beads on string $h$ and string $h-1$. On the other hand, on $\{I_h =1 \}$, to the left of each $w_i$ of the form $(u,t_i,h)$, there lies one more bead on string $h$ than on string $h-1$. Thus the signs of the reorderings $\sigma$ and $\sigma'$ satisfy
\begin{align*}
\mathrm{sgn}(\sigma)\mathrm{sgn}(\sigma') = (-1)^{U_h} := (-1)^{\# \{ i: \alpha_i = u, h_i = u \}} \qquad \text{when $I_h = 1$}.
\end{align*}
Conversely, on $\{I_h = 0 \}$, each $(o,t_i,h)$ is preceded by one more bead on string $h-1$ than on string $h$, where as each $(u,t_i,h)$ is preceded by the same number of beads on string $h-1$ as on string $h$, and thus
\begin{align*}
\mathrm{sgn}(\sigma)\mathrm{sgn}(\sigma') = (-1)^{O_h} := (-1)^{\# \{ i: \alpha_i = u, h_i = u \}} \qquad \text{when $I_h = 0$}.
\end{align*}
Thus, since $p_h = k + O_h + U_h$ is the number of $w_i$ with $h_i = h$, we may write
\begin{align*}
\mathrm{sgn}(\sigma)\mathrm{sgn}(\sigma') = (-1)^{ I_h (p_h-k-O_h) + (1-I_h) O_h}.
\end{align*}
Plugging this into \eqref{eq:detAh} we obtain
\begin{align} \label{eq:detAh2}
 \det B_h =  (-1)^{ O_h }  \det_{i,j=1}^{p_h} \zeta^{ \mathrm{1}_{t'_j < t_i} } &=  (-1)^{ O_h + I_h (p_h-k-O_h) + (1-I_h)O_h +I_h (p_h-1)  }\zeta^{I_h} (1 - \zeta)^{p_h-1} \mathrm{1}_{\Gamma_h} \nonumber \\
&= (-1)^{ I_h(k-1)} \zeta^{I_h} (1 - \zeta)^{p_h-1} \mathrm{1}_{\Gamma_h}.
\end{align}
Plugging \eqref{eq:detAh2} into \eqref{eq:totaldet}, using \eqref{eq:modular} and the fact that $N = \sum_{h \in \mathbb{Z}_n} p_h$, and recalling that $\ell_o+\ell_u = n$ in the case $\mathcal{B} \neq \varnothing$, we obtain the result in the case $\mathcal{B} \neq \varnothing$.\\
\vspace{3mm}

\textbf{Case 2: $\mathcal{B} = \varnothing$.}\\
In the case where $\mathcal{B} = \varnothing$, we may have $p_h = 0$ for some of the $h$ in $\mathbb{Z}_n$.

Suppose $p_h \neq 0$. Then the determinant $\det_{i,j=1}^{p_h} \zeta^{ \mathrm{1} \{r'_j < r_i \} }$ associated with $B_h$ is nonzero if and only if for those $i$ for which $h_i = h$, we have every $\alpha_i$ is $o$ or every $\alpha_i$ is $u$. 

It follows using Lemma \ref{lem:succession} that if $p_h \neq 0$ we have 
\begin{align*}
 \det B_h =  (-1)^{ O_h }  \det_{i,j=1}^{p_h}\zeta^{ \mathrm{1} \{r'_j < r_i \} }  = (-1)^{O_h + J_h (p_h-1) } \zeta^{J_h}( 1-\zeta)^{p_h-1} (J_h +J'_h)
\end{align*}
where $J_h = \mathrm{1}\{ \alpha_i = o~ \forall ~ i : h_i = h\}$ and $J'_h = \mathrm{1}\{ \alpha_i = u ~\forall ~ i : h_i = h\}$. Note that if $J_h = 1$, $p_h = O_h$ and $U_h = 0$, whereas if $J_h' =1$, $p_h = U_h$ and $O_h = 0$. Thus by considering the cases separately we see that
\begin{align*}
O_h + J_h ( p_h-1) = J_h \qquad \text{mod $2$}.
\end{align*}
Thus $\det B_h = (-\zeta)^{ J_h }( 1-\zeta)^{p_h-1} (J_h +J'_h)$.

Now note that by definition \eqref{eq:gext11}, $\ell_o = \sum_{h \in \mathbb{Z}_n} J_h$ and $\ell_u = \sum_{h \in \mathbb{Z}_n}J_h'$. Plugging this into \eqref{eq:totaldet}, in the case $\mathcal{B}= \varnothing$ we obtain
\begin{align*}
\det_{i,j=1}^N A^\zeta(w_i,w_j) &= \prod_{h \in \mathbb{Z}_n : p_h >0 } (-\zeta)^{J_h} (1 - \zeta)^{p_h-1}(J_h + J_h')\\
&= (-\zeta)^{\ell_o} (1 - \zeta)^{N - (\ell_o+\ell_u)} I(w_1,\ldots,w_N) ,
\end{align*}
where $\ell_o$ and $\ell_u$ are defined in \eqref{eq:gext11}. Since $\mathcal{B} = \varnothing$ is equivalent to $k=0$, this agrees with \eqref{eq:xan} in this case, thereby completing the proof.
\end{proof}

Our next result is now a generalisation of Lemma \ref{lem:premelan}. Define the operator $B^{\zeta,\theta}:\mathbb{T}_{n,*} \times \mathbb{T}_{n,*} \to \mathbb{C}$ by 
\begin{align} \label{eq:corolla}
B^{\zeta,\theta}(w,w') := \frac{(-1)^{\theta_2 \mathrm{1}_{\alpha=b,h'=0} }}{1 - (-1)^{\theta_1}\zeta} A^{(-1)^{\theta_1}\zeta}(w,w').
\end{align}

\begin{lemma} \label{lem:appler}
With $\zeta = e^{ - \lambda}\neq 1$ we have
\begin{align*}
(-1)^{(\theta_1+1)(\theta_2+n+1)} (1-(-1)^{\theta_1}\zeta)^n \det_{i,j=1}^N B^{\zeta,\theta}(w_i,w_j) = g_N^{\lambda,\theta}(w_1,\ldots,w_N).
\end{align*}
\end{lemma}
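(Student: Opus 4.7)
The plan is to mimic the proof of Lemma \ref{lem:premelan}, but with Lemma \ref{lem:new det2} in place of Lemma \ref{lem:new det}. The main task is to reduce $\det B^{\zeta,\theta}(w_i,w_j)$ to a scalar multiple of $\det A^{(-1)^{\theta_1}\zeta}(w_i,w_j)$ by exploiting the block/sparsity structure of $A^{\zeta'}$, then to match signs mod $2$.

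First I would observe that the constant prefactor $(1 - (-1)^{\theta_1}\zeta)^{-1}$ inside each entry of $B^{\zeta,\theta}$ pulls out of the $N \times N$ determinant as $(1 - (-1)^{\theta_1}\zeta)^{-N}$. Next, I would handle the sign $(-1)^{\theta_2 \mathbf{1}_{\alpha_i = b, h_j = 0}}$ by making the key observation that the nonzero entries of $A^{\zeta'}(w_i,w_j)$ force $h_j \equiv h_i + \mathbf{1}_{\alpha_i = b} \pmod n$. Hence on nonzero entries the condition $\{\alpha_i = b, h_j = 0\}$ is equivalent to $\{\alpha_i = b, h_i = n-1\}$, which depends only on $i$. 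Thus the sign factor may be viewed as a row scaling of $A^{\zeta'}$ (both matrices agree on all entries, zero or not, after this replacement), giving
\begin{align*}
\det_{i,j=1}^N B^{\zeta,\theta}(w_i,w_j) = \frac{(-1)^{\theta_2 \cdot \#\{i:\alpha_i=b,\,h_i=n-1\}}}{(1 - (-1)^{\theta_1}\zeta)^N}\,\det_{i,j=1}^N A^{(-1)^{\theta_1}\zeta}(w_i,w_j).
\end{align*}
When $I(w_1,\ldots,w_N) \neq 0$ and $\mathcal{B} \neq \varnothing$, a bead configuration has exactly $k$ beads on string $n-1$, so the row-sign product is $(-1)^{\theta_2 k}$; when $\mathcal{B} = \varnothing$ it is $1 = (-1)^{\theta_2 \cdot 0}$, consistent with $k=0$.

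I would then plug in the formula from Lemma \ref{lem:new det2} and multiply through by $(-1)^{(\theta_1+1)(\theta_2+n+1)}(1-(-1)^{\theta_1}\zeta)^n$, so the $(1-(-1)^{\theta_1}\zeta)$ powers collapse to the desired $(1-(-1)^{\theta_1}\zeta)^{n-(\ell_o+\ell_u)}$ appearing in the definition \eqref{eq:gext} of $g_N^{\lambda,\theta}$. The substitution $(\zeta')^{\ell_o} = (-1)^{\theta_1 \ell_o}\zeta^{\ell_o} = (-1)^{\theta_1 \ell_o}e^{-\lambda \ell_o}$ produces the correct $e^{-\lambda \ell_o}$ factor along with an extra sign.

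The remaining obstacle --- and the only nontrivial step --- is the parity identity modulo $2$:
\begin{align*}
(\theta_1+1)(\theta_2+n+1) + \theta_2 k + k(n-1) + (k-1)\ell_o + \theta_1 \ell_o \equiv (\theta_1+k+1)(\theta_2+n+\ell_o+1) \pmod 2,
\end{align*}
which I would verify by expanding the right-hand side as $(\theta_1+1)(\theta_2+n+1) + (\theta_1+1)\ell_o + k(\theta_2+n+1) + k\ell_o$ and checking term-by-term that the difference vanishes mod $2$; this is the exact analogue of \eqref{eq:modness}. This matches the sign of $g_N^{\lambda,\theta}$ from \eqref{eq:gext}, completing the identification. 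The case $I = 0$ needs no separate argument: row scaling guarantees $\det B^{\zeta,\theta} = 0$ whenever $\det A^{(-1)^{\theta_1}\zeta} = 0$, while $g_N^{\lambda,\theta} = 0$ by definition.
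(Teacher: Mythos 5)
Your proposal is correct and follows essentially the same route as the paper: apply the definition of $B^{\zeta,\theta}$ together with Lemma \ref{lem:new det2}, and then match signs via the parity identity \eqref{eq:modness} and the definition \eqref{eq:gext}. The only difference is that you spell out the step the paper leaves implicit --- converting the entrywise sign $(-1)^{\theta_2 \mathrm{1}_{\{\alpha_i=b,\,h_j=0\}}}$ into a row scaling contributing $(-1)^{\theta_2 k}$ by noting that nonzero entries force $h_j=h_i+\mathrm{1}_{\{\alpha_i=b\}}$ and that exactly $k$ beads lie on string $n-1$ --- which is a correct and welcome elaboration rather than a deviation.
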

\begin{proof}
Using the definition \eqref{eq:corolla} in conjunction with Lemma \ref{lem:new det2} we obtain
\begin{align*}
&(-1)^{(\theta_1+1)(\theta_2+n+1)} (1-(-1)^{\theta_1}\zeta)^n \det_{i,j=1}^N B^{\zeta,\theta}(w_i,w_j) \\
&= (-1)^{(\theta_1+1)(\theta_2+n+1) + k(n-1) + \ell_o(k-1) + \theta_1 \ell_o + \theta_2 k } (1-(-1)^{\theta_1}\zeta)^{n-\ell_o-\ell_u} \zeta^{\ell_o} I(w_1,\ldots,w_N).
\end{align*}
The result now follows from \eqref{eq:modness} and \eqref{eq:gext}.
\end{proof}
We are now ready to complete the proof of Theorem \ref{thm:replicacast}.

\begin{proof}[Proof of Theorem \ref{thm:replicacast}]
A calculation tells us that with $\zeta = e^{ - \lambda}$ we have 
\begin{align*}
C^{\lambda+\theta_1 \pi \iota ,\theta_2}(w,w') = e^{ - (\lambda+\theta_1\pi \iota) (t'-t)} B^{\zeta,\theta}(w,w').
\end{align*}
Now note that since $C^{\lambda+\theta_1 \pi \iota ,\theta_2}$ is a diagonal conjugation of $B^{\zeta,\theta}$, we have \[ \det_{i,j=1}^N B^{\zeta,\theta}(w_i,w_j)  =  \det_{i,j=1}^N C^{\lambda+\theta_1 \pi \iota ,\theta_2}(w_i,w_j).\] Now use Lemma \ref{lem:appler}.
\end{proof}

\subsection{Complex probability measures and correlation functions}
In light of \eqref{eq:umbrella}, \eqref{eq:umbrellatheta}, and definitions of $\mathbf{Q}_n^{\Lambda,T}$, the total mass of $\mathbf{Q}_n^{\Lambda,T}$ is $Z(\Lambda,T)$, where
\begin{align*}
Z(\Lambda,T) := 
\begin{cases}
Z(\lambda,T) \qquad &\text{if $\Lambda = \lambda$}\\
Z^\theta(\lambda,T) \qquad &\text{if $\Lambda = (\lambda,\theta)$}.
\end{cases}
\end{align*}
Thus, provided $\lambda,T \in \mathbb{C}$ are such that $Z(\Lambda,T)$ is nonzero, we may define complex measures $\mathbf{P}_n^{\Lambda,T}$ of unit total mass by setting $\mathbf{P}_n^{\Lambda,T} := Z(\Lambda,T)^{-1}\mathbf{Q}_n^{\lambda,T}$, or more explicitly
\begin{align} \label{eq:scalar}
\mathbf{P}_n^{\lambda,T} := \frac{1}{Z(\lambda,T)} \mathbf{Q}_n^{\lambda,T} \qquad \text{and} \qquad \mathbf{P}_n^{\lambda,\theta,T} := \frac{1}{Z^\theta(\lambda,T)} \mathbf{Q}_n^{\lambda,\theta,T}.
\end{align}
That is, $\mathbf{P}_n^{\lambda,T}$ (resp.\ $\mathbf{P}_n^{\lambda,\theta,T}$) is simply $\mathbf{Q}_n^{\lambda,T}$ (resp.\ $\mathbf{Q}_n^{\lambda,\theta,T}$) multiplied by a scalar. When $\lambda \in \mathbb{R}$, $g_N^\lambda$ takes non-negative values, and consequently, $\mathbf{P}_n^{\lambda,T}$ is a genuine probability measure in that it takes values in $[0,1]$. When $\lambda \in \mathbb{R}$, each $\mathbf{P}_n^{\lambda,\theta,T}$ is only a signed measure of unit total mass. 

By \eqref{eq:Qsum}, we can write $\mathbf{P}_n^{\lambda,T}$ as an affine combination of the measures $\mathbf{P}_n^{\lambda,\theta,T}$ by setting
\begin{equation} \label{eq:Psum}
\mathbf{P}_n^{\lambda,T} = \sum_{ \theta \in \{0,1\}^2 } \frac{Z^\theta(\lambda,T)}{Z(\lambda,T)} \mathbf{P}_n^{\lambda,\theta,T}.
\end{equation}
Recall the mixed correlation functions defined in Definition \ref{df:mixed}, and the functions $g_N^\Lambda$ defined in \eqref{eq:lift}. For either $\Lambda = \lambda$ or $\Lambda = ( \lambda,\theta)$, by summing over all possible configurations, these may be written explicitly in terms of the integral formula
\begin{align} 
&p_N^{\Lambda,T}(w_1,\ldots,w_N)  &:= \frac{1}{Z(\Lambda,T)} \sum_{ m \geq 0 } \frac{T^{B+m}}{m!} \int_{\mathbb{T}_n^{m}} g_{N+m}^{\lambda}(w_1,\ldots,w_N, (b,y_{1}),\ldots,(b,y_{m}))  \mathrm{d}y_1 \cdots \mathrm{d}y_{m},\label{eq:coalmine}
\end{align}
where the factor of $1/m!$ is due to the various ways of ordering the unspecified beads, and we note that the sum is supported on those $m$ for which $m+B $ is a multiple of $n$. (Here $B= \# \{i:\alpha_i=b\}$.)

We note the integral in \eqref{eq:coalmine} is taken over $\mathbb{T}_n$. Our next lemma states that the correlation functions may instead be written as an integral over $\mathbb{T}_{n,*}$.
Integration on $\mathbb{T}_{n,*} = \{b,o,u\} \times [0,1) \times \mathbb{Z}_n$ is taken with respect to the natural Lebesgue measure, so that for $0 \leq t < u \leq 1$, $\{\alpha\} \times [t,u) \times \{h\}$ has mass $u-t$.

\begin{lemma} \label{lem:overcharge}
We have
\begin{align*}
p_N^{\Lambda,T}(w_1,\ldots,w_N) :=\frac{e^{-Tn}}{Z(\Lambda,T)} \sum_{ m \geq 0 } \frac{T^{B+m}}{m!} \int_{\mathbb{T}_{n,*}^{m}} g_{N+m}^{\Lambda}(w_1,\ldots,w_N, \zeta_1,\ldots,\zeta_m )  \mathrm{d}\zeta_{1} \cdots \mathrm{d}\zeta_{m}.
\end{align*}
\end{lemma}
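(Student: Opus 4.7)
The plan is to reduce the identity to a single one-variable recursion for $g^\Lambda$, then iterate it and collapse the resulting double sum via an exponential generating function identity.

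The key step will be to establish the recursion
\begin{align*}
\int_{\mathbb{T}_{n,*}} g_{N+1}^\Lambda(w_1, \ldots, w_N, \zeta) \, \mathrm{d}\zeta = \int_{\mathbb{T}_n} g_{N+1}^\Lambda(w_1, \ldots, w_N, (b, y)) \, \mathrm{d}y + n \cdot g_N^\Lambda(w_1, \ldots, w_N),
\end{align*}
valid for any $w_1, \ldots, w_N \in \mathbb{T}_{n,*}$. The $\alpha = b$ piece of the $\zeta$ integration gives the first term on the right directly, so the content of the recursion is that the $\alpha \in \{o, u\}$ pieces together contribute exactly $n \cdot g_N^\Lambda$. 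I would split this claim in two. In the case $\mathcal{B} \neq \varnothing$, the prefactor in the representation \eqref{eq:gext} of $g^\Lambda$ depends only on the bead occupation number $\ell$, which is unchanged by the addition of an extra $o$ or $u$ label; so $\int g_{N+1}^\Lambda(w, (o, t, h))\, \mathrm{d}t\,\mathrm{d}h$ equals $g_N^\Lambda(w)$ times the Lebesgue measure of the occupied region of the underlying occupation process, namely $\ell_o = \ell$, and the $u$ integral contributes $\ell_u = n-\ell$. In the case $\mathcal{B} = \varnothing$, I would use the original representation of $g_N^\Lambda$ as the sum $\sum_A \mu_A\, \mathrm{1}\{Z_O \subseteq A,\, Z_U \cap A = \varnothing\}$ over $A \subseteq \mathbb{Z}_n$: integrating $\zeta = (o, t, h)$ over $\mathbb{T}_n$ forces $h \in A$ and contributes $\mu_A \cdot \#A$ term by term, while the $u$ integration gives $\mu_A \cdot (n - \#A)$, and the two sum to $n \mu_A$.

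Iterating this recursion by induction on $m$, using the symmetry of $g^\Lambda$ in its arguments to integrate the $\zeta_i$ one at a time and combining with Pascal's rule at each step, would give
\begin{align*}
\int_{\mathbb{T}_{n,*}^m} g_{N+m}^\Lambda(w, \zeta_1, \ldots, \zeta_m) \, \mathrm{d}\zeta_1 \cdots \mathrm{d}\zeta_m = \sum_{j=0}^m \binom{m}{j} n^{m-j} \int_{\mathbb{T}_n^j} g_{N+j}^\Lambda(w, (b, y_1), \ldots, (b, y_j))\, \mathrm{d}y_1 \cdots \mathrm{d}y_j.
\end{align*}
Substituting this into the right-hand side of the lemma, swapping the order of summation by setting $i = m - j$, and recognising
\begin{align*}
\sum_{i \geq 0} \frac{(Tn)^i}{i!} = e^{Tn},
\end{align*}
the prefactor $e^{-Tn}$ is cancelled exactly, recovering \eqref{eq:coalmine}.

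The main obstacle is the bookkeeping in the $\mathcal{B} = \varnothing$ case of the recursion, where both the indicator $I$ and the prefactor $(1 - (-1)^{\theta_1} e^{-\lambda})^{n - \ell_o - \ell_u}$ in \eqref{eq:gext} can change when an $o$ or $u$ label is added; reverting to the sum-over-subsets form bypasses this subtlety and reduces everything to the elementary identity $\#A + (n - \#A) = n$. No analytic difficulty is present, so the calculation above should go through essentially verbatim for both $\Lambda = \lambda$ and $\Lambda = (\lambda, \theta)$, with the $\Lambda = \lambda$ case also deducible from the $\theta$-case by summing over $\theta$ via \eqref{eq:gsum}.
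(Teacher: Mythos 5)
Your proof is correct, and it takes a genuinely different organizational route from the paper's. The paper expands $\int_{\mathbb{T}_{n,*}^m}$ via a multinomial split by type counts $(m_b,m_o,m_u)$, integrates the $o$-typed and $u$-typed variables simultaneously (each $o$ contributing a factor $\ell$ and each $u$ a factor $n-\ell$, since $g^\Lambda$ acts as an indicator in these slots), and then factors the resulting double sum into $e^{T\ell}\cdot e^{T(n-\ell)}=e^{Tn}$. You instead prove a single-step recursion showing that the $o$ and $u$ contributions from one additional $\zeta$ combine to exactly $n\,g^\Lambda_N$, iterate it by induction with Pascal's rule to get a binomial split by the number of $b$-typed extras, and collapse the remaining sum via $\sum_{i\geq 0}(Tn)^i/i!=e^{Tn}$. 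The two routes are connected by the binomial identity $\sum_{m_o+m_u=i}\binom{i}{m_o}\ell^{m_o}(n-\ell)^{m_u}=n^i$; yours precollapses the $o$ and $u$ contributions into the single factor $n$ before any exponentiation. This buys a small but real clarity gain in the $\mathcal{B}=\varnothing$ case: there the occupation number is not a single well-defined scalar but varies over the subsets $A\subseteq\mathbb{Z}_n$ in the sum defining $g^\Lambda_N$, so the paper's intermediate factorisation $g_{N+m_b}^\Lambda\cdot\ell^{m_o}(n-\ell)^{m_u}$ is, read literally, only meaningful when $\mathcal{B}\neq\varnothing$ (though the end result is still correct because $\#A+(n-\#A)=n$ is independent of $A$, which is precisely why the final exponential $e^{Tn}$ comes out right). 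Your explicit reversion to the sum-over-subsets representation in that case, where the identity $\#A+(n-\#A)=n$ is applied term by term, handles this wrinkle cleanly; the rest of your argument (the Pascal induction and the exponential collapse) is routine and goes through as you describe.
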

\begin{proof}
Expanding the integral on the right-hand-side we have 
\begin{align*}
&\int_{\mathbb{T}_{n,*}^{m}} g_{N+m}^{\Lambda}(w_1,\ldots,w_N, \zeta_1,\ldots,\zeta_m )  \mathrm{d}\zeta_{1} \cdots \mathrm{d}\zeta_{m}\\
&= \sum_{m_b+m_o+m_u = m} \frac{m!}{m_b!m_o!m_u!} \int_{(\{b\} \times \mathbb{T}_n)^{m_b}}\mathrm{d}\zeta_1 \cdots \mathrm{d}\zeta_{m_b}  \int_{(\{o\} \times \mathbb{T}_n)^{m_o}} \mathrm{d}\zeta'_1 \cdots \mathrm{d}\zeta'_{m_o}  \\
&\times\int_{(\{u\} \times \mathbb{T}_n)^{m_u}} \mathrm{d}\zeta''_1 \ldots\mathrm{d}\zeta''_{m_u}~  g_{N+m}^{\Lambda}(w_1,\ldots,w_N, \zeta_1,\ldots,\zeta_{m_b}, \zeta_1',\ldots,\zeta_{m_o}', \zeta_1'',\ldots,\zeta_{m_u}'' ) \\
&= \sum_{m_b+m_o+m_u = m} \frac{m!}{m_b!m_o!m_u!} \int_{(\{b\} \times \mathbb{T}_n)^{m_b}}\mathrm{d}\zeta_1 \cdots \mathrm{d}\zeta_{m_b} g_{N+m_b}^{\Lambda}(w_1,\ldots,w_N, \zeta_1,\ldots,\zeta_{m_b})  (T\ell)^{m_o}(T(n-\ell))^{m_u},
\end{align*}
where the final equality follows from the fact that $g_{N+m}^{\Lambda}$ behaves as an indicator function in $y_i$ with $i \in \mathcal{O}$ being occupied, $y_i$ with $i \in \mathcal{U}$ being unoccupied.

In particular, by summing over $m_o$ and $m_u$ we obtain
\begin{align} \label{eq:canary}
&\sum_{m \geq 0} \frac{T^{B+m}}{m!} \int_{\mathbb{T}_{n,*}^{m}} g_{N+m}^{\Lambda}(w_1,\ldots,w_N, \zeta_1,\ldots,\zeta_m )  \mathrm{d}\zeta_{1} \cdots \mathrm{d}\zeta_{m}\nonumber \\
&= e^{Tn}  \sum_{m_b \geq 0} \frac{T^{B+m_b}}{m_b!} \int_{(\{b\} \times \mathbb{T}_n)^{m_b}}  \mathrm{d}\zeta_{1} \cdots \mathrm{d}\zeta_{m_b} g_{N+m_b}^{\Lambda}(w_1,\ldots,w_N, \zeta_1,\ldots,\zeta_{m_b} ) .
\end{align}
Plugging \eqref{eq:canary} into \eqref{eq:coalmine} we obtain the result.
\end{proof}

\subsection{Inversion}

Recall that $[t'-t]_\alpha = t' - t + \mathrm{1}_{\{t' < t\}} + \mathrm{1}_{\{\alpha = o\}} \mathrm{1}_{t'=t}$. 
In our proof of the next lemma, we will make use of the following identity, which states that for all $\lambda \neq \lambda' \in \mathbb{C}$, and $t,t' \in [0,1)$, and any $\alpha_1,\alpha_2,\alpha_3 \in \{b,o,u\}$ we have
\begin{align} \label{eq:stream}
\int_0^1 \frac{e^{ - \lambda[s-t]_{\alpha_1} }}{1-e^{-\lambda}}  \frac{e^{  - \lambda'[t'-s]_{\alpha_2} }}{1 - e^{ -\lambda'}} \mathrm{d}s = \frac{1}{\lambda-\lambda'} \left[  \frac{ e^{ - \lambda' [t'-t]_{\alpha_3}} }{ 1 - e^{ - \lambda'} }- \frac{e^{ - \lambda [t'-t]_{\alpha_3}}}{1 - e^{-\lambda}}  \right].
\end{align}
To prove \eqref{eq:stream}, first note that the left hand side does not depend on $\alpha_1,\alpha_2$, since these only affect the integrand on a set of measure zero. Also note that the right-hand-side does not depend on $\alpha_3$, since it is easily verified that the limit of the right-hand-side as $t'-t=1$ is equal to that as $t'-t=0$. To actually prove the identity, one may assume without loss of generality using translation invariance that 
$t = 0$. From here it is easy to verify \eqref{eq:stream} by way of a simple calculation using the definition $[t'-s]_\alpha = t'-s+\mathrm{1}_{s > t'} + \mathrm{1}_{\alpha=o}\mathrm{1}_{s=t'}$.

\begin{lemma} \label{lem:inversion}
Define $J^{\beta,\theta_2,T}:\mathbb{T}_{n,*} \times \mathbb{T}_{n,*} \to \mathbb{C}$ by 
\begin{align*}
J^{\beta,\theta_2,T}(w,w') = (-1)^{\mathrm{1}_{\{\alpha=o\}} } \frac{ 1 }{n} \sum_{ z^n = (-1)^{\theta_2} } z^{\mathrm{1}_{\{\alpha = b\}}-h'+h} \frac{ e^{ - (\beta+Tz)[t'-t]_\alpha }}{ 1 - e^{ - (\beta+Tz)} } .
\end{align*}
Then using the shorthand $J := J^{\beta,\theta_2,T}$ and $C = C^{\beta,\theta_2}$ we have
\begin{align*}
J ( I +TC) = C.
\end{align*}
\end{lemma}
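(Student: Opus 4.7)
The plan is to observe that $J(I+TC)=C$ is equivalent to the kernel identity $TJC = C - J$, and to compute the kernel $(JC)(w,w') = \int_{\mathbb{T}_{n,*}} J(w,w'') C(w'',w')\,\mathrm{d}w''$ directly, collapsing the $t''$-integration by means of the identity \eqref{eq:stream}.

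The first step will be to handle the sum over $\alpha''\in\{b,o,u\}$. Since $J(w,w'')$ does not depend on $\alpha''$, one may first sum $\sum_{\alpha''} C((\alpha'',t'',h''),w')$. The $\alpha''=o$ and $\alpha''=u$ terms carry opposite signs, impose the same constraint $h''=h'$, and, because $[t'-t'']_o = [t'-t'']_u + \mathrm{1}_{\{t'=t''\}}$, their kernels agree except on the null set $\{t''=t'\}$; hence they cancel under the $t''$-integral. Only the $\alpha''=b$ term survives: it pins $h''=h'-1\pmod n$ and supplies a prefactor $(-1)^{\theta_2 \mathrm{1}_{\{h'=0\}}}$.

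Next I substitute the definition of $J$ into the surviving integral. Using $z^n = (-1)^{\theta_2}$, I rewrite the offending exponent as $z^{-(h'-1\bmod n)} = (-1)^{\theta_2\mathrm{1}_{\{h'=0\}}}\, z^{1-h'}$, which absorbs the modular wraparound in $h''$ into a sign that exactly cancels the outer $(-1)^{\theta_2\mathrm{1}_{\{h'=0\}}}$ produced in the previous step. Applying \eqref{eq:stream} with $\lambda=\beta+Tz$, $\lambda'=\beta$ and $\alpha_1=\alpha$, $\alpha_2=b$, $\alpha_3=\alpha$ turns the $t''$-integral into $(Tz)^{-1}$ times the difference of the $\beta$-kernel and the $(\beta+Tz)$-kernel on $[t'-t]_\alpha$. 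Multiplying through by $T$ and splitting the sum over $z$ produces
\[
T(JC)(w,w') \;=\; (-1)^{\mathrm{1}_{\{\alpha=o\}}} \frac{e^{-\beta[t'-t]_\alpha}}{1-e^{-\beta}} \cdot \frac{1}{n}\!\!\sum_{z^n=(-1)^{\theta_2}}\!\! z^{\mathrm{1}_{\{\alpha=b\}}-h'+h} \;-\; J(w,w'),
\]
so the task reduces to recognising the first term as $C(w,w')$.

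The final step is a character-sum computation: writing $z = e^{\theta_2 \pi\iota/n}\omega$ with $\omega^n=1$ gives $\frac{1}{n}\sum_{z^n=(-1)^{\theta_2}} z^k = e^{\theta_2\pi\iota k/n}\,\mathrm{1}_{\{k\equiv 0\,(\mathrm{mod}\,n)\}}$. For $k=\mathrm{1}_{\{\alpha=b\}}+h-h' \in \{-(n-1),\ldots,n\}$, only two cases are nonvanishing: $k=0$ (the ``plain'' constraint $h'=h+\mathrm{1}_{\{\alpha=b\}}$, contributing a factor $1$) and $k=n$ (the wraparound $\alpha=b,\,h=n-1,\,h'=0$, contributing a factor $(-1)^{\theta_2}$). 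Matching both cases against the definition \eqref{eq:Creplica} of $C^{\beta,\theta_2}$, whose sign $(-1)^{\theta_2\mathrm{1}_{\{\alpha=b,h'=0\}}+\mathrm{1}_{\{\alpha=o\}}}$ is precisely what is required, a short case check identifies the first term with $C(w,w')$. This yields $TJC = C - J$, which is the desired identity. The main bookkeeping hazard throughout is the wraparound sign: the two factors of $(-1)^{\theta_2\mathrm{1}_{\{h'=0\}}}$ entering from the $\alpha''=b$ constraint and from the exponent rewriting must be shown to cancel exactly, and the residual $(-1)^{\theta_2}$ from the $k=n$ term must align with the $(-1)^{\theta_2\mathrm{1}_{\{\alpha=b,h'=0\}}}$ in the definition of $C$.
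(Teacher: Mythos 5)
Your proposal is correct and follows essentially the same route as the paper's proof: cancellation of the $\alpha''=o$ and $\alpha''=u$ terms, reduction to $\alpha''=b$ with the wraparound sign $(-1)^{\theta_2\mathrm{1}_{\{h'=0\}}}$ absorbed via $z^{n}=(-1)^{\theta_2}$, application of \eqref{eq:stream}, and the final character-sum identification of the $\beta$-term with $C$. The sign bookkeeping you flag is handled exactly as in the paper.
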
 
\begin{proof}
First we examine $JC(w,w')$.
Using the definition $JC(w,w') := \int_{\mathbb{T}_{n,*}} \mathrm{d}w'' J(w,w'') C(w'',w')$ we have 
\begin{align} \label{eq:elk}
JC(w,w') &= \sum_{h'' \in \mathbb{Z}_n} \sum_{\alpha'' \in \{b,o,u\} } \int_0^1 \mathrm{d}t''   (-1)^{\mathrm{1}_{\{\alpha=o\}} } \frac{1 }{n} \sum_{ z^n =  (-1)^{\theta_2}} z^{\mathrm{1}_{\{\alpha = b\}}-h''+h} \frac{ e^{ - (\beta+Tz)[t''-t]_\alpha }}{ 1 - e^{ - (\beta+Tz)} } \nonumber \\ 
&\times (-1)^{\mathrm{1}_{\{\alpha''=o\}} +  \theta_2 \mathrm{1}_{\{\alpha'' = b, h' = 0\}} } \mathrm{1}_{\{h'=h''+\mathrm{1}_{\{\alpha''=b\}}\}}  \frac{e^{ - \beta[t'-t'']_{\alpha''} }}{1 - e^{-\beta}}  .
\end{align}
Comparing the terms involving $\alpha'' = o$ and $\alpha'' = u$, we see that thanks to the sign $(-1)^{\mathrm{1}_{\{\alpha''=o\}}}$, they are the negative of one another (but for a set of measure zero in $t''$). Thus they cancel one another out.

It follows that we may restrict our interest to the sum over $\alpha'' = b$, which in turn by virtue of the indicator $\mathrm{1}_{\{h'=h''+\mathrm{1}_{\{\alpha''=b\}}\}}$ guarantees $h'' = h'-1$ mod $n$. Equivalently, $h'' = h'-1+n\mathrm{1}_{\{h'=0\}}$. Plugging this value of $h''=h'-1+n\mathrm{1}_{\{h'=0\}}$ and $\alpha''=b$ into \eqref{eq:elk} and rearranging to obtain the first equality below, and then using \eqref{eq:stream} to obtain the second we have 
\begin{align} \label{eq:Crisp}
JC(w,w') &=   (-1)^{\mathrm{1}_{\{\alpha=o\}} + \theta_2 \mathrm{1}_{\{h' = 0\}} } \frac{1 }{n} \sum_{ z^n = (-1)^{\theta_2}} z^{\mathrm{1}_{\{\alpha = b\}}-h'+1-n\mathrm{1}_{\{h'=0\}}+h} \int_0^1 \frac{ e^{ - (\beta+Tz)[t''-t]_\alpha }}{ 1 - e^{ - (\beta+Tz)} }  \frac{e^{ - \beta[t'-t'']_{b} }}{1 - e^{-\beta}} \mathrm{d}t'' \nonumber \\
&= \frac{1}{T} (-1)^{\mathrm{1}_{\{\alpha=o\}} } \frac{1 }{n} \sum_{ z^n = (-1)^{\theta_2}} z^{\mathrm{1}_{\{\alpha = b\}}-h'+h} \left\{ \frac{ e^{ - \beta[t'-t]_\alpha }}{ 1 - e^{ - \beta} } -  \frac{ e^{ - (\beta+Tz)[t'-t]_\alpha }}{ 1 - e^{ - (\beta+Tz)} }  \right\},
\end{align}
where in the latter equality above we have also used the fact that whenever $z^n = (-1)^{\theta_2}$ we have $z^{n\mathrm{1}_{\{h'=0\}}} = (-1)^{\theta_2 \mathrm{1}_{\{h'=0\}} }$. 

Using the fact that $ \frac{1 }{n} \sum_{ z^n = (-1)^{\theta_2}} z^{\mathrm{1}_{\{\alpha = b\}}-h'+h} = \mathrm{1}_{\{h'=h+\mathrm{1}_{\{\alpha = b\}}\}} (-1)^{\theta_2 \mathrm{1}_{\{h'=0\}} }$, it follows from \eqref{eq:Crisp} that 
\begin{align*} 
JC(w,w') &= \frac{1}{T} ( C(w,w') - J(w,w')).
\end{align*}
Thus $J(I+TC) = C$, completing the proof.
\end{proof}

We are now ready to prove Theorem \ref{thm:detnew}. The proof is based on ideas for (unmixed) correlation functions of discrete determinantal processes (see e.g.\ \cite{romik}).

\begin{proof}[Proof of Theorem \ref{thm:detnew}]
Throughout the proof we lighten notation by writing $C = C^{\beta,\theta_2}$, etc..
The equation \eqref{eq:Psum0} was proved at the beginning of this section in \eqref{eq:Psum}. It remains to show that the correlation functions of each $\mathbf{P}^{\lambda,\theta,T}$ satisfy \eqref{eq:detnew}. To this end, let $\phi:\mathbb{T}_{n,*} \to \mathbb{C}$ be a bounded and measurable test function. We examine the quantity 
\begin{align*}
S_\phi &:= \sum_{N \geq 0} \frac{T^N}{N!} \int_{\mathbb{T}_{n,*}^N} \prod_{i=1}^N \phi(w_i) p_N^{\lambda,\theta,T}( w_1,\ldots,w_N) \mathrm{d}w_1 \cdots \mathrm{d}w_N,
\end{align*}
and show that it is unchanged if the mixed correlation function $p_N^{\lambda,\theta,T}( w_1,\ldots,w_N)$ is replaced by the determinant $\det_{i,j=1}^N H^{\lambda+\theta_1\pi \iota, \theta_2, T}(w_i,w_j)$.

Using Lemma \ref{lem:overcharge} to obtain the initial equality below, we have
\begin{align} \label{eq:open1}
S_\phi &= \sum_{N \geq 0} \frac{T^N}{N!} \int_{\mathbb{T}_{n,*}^N} \prod_{i=1}^N \phi(w_i) \frac{e^{-Tn}}{Z^\theta(\lambda,T)} \sum_{ m \geq 0 } \frac{T^{B+m}}{m!} \int_{\mathbb{T}_{n,*}^{m}} g_N^{\lambda,\theta}(w_1,\ldots,w_N, \zeta_1,\ldots,\zeta_m )  \mathrm{d}\zeta_{1} \cdots \mathrm{d}\zeta_{m} \mathrm{d}w_1 \cdots \mathrm{d}w_N \nonumber \\
&=\frac{e^{-Tn}}{Z^\theta(\lambda,T)}  \sum_{N \geq 0} \frac{T^N}{N!} \int_{\mathbb{T}_{n,*}^N} \prod_{i=1}^N \phi_T(w_i)  \sum_{ m \geq 0 } \frac{T^{m}}{m!} \int_{\mathbb{T}_{n,*}^{m}} g_N^{\lambda,\theta}(w_1,\ldots,w_N, \zeta_1,\ldots,\zeta_m )  \mathrm{d}\zeta_{1} \cdots \mathrm{d}\zeta_{m} \mathrm{d}w_1 \cdots \mathrm{d}w_N, 
\end{align}
where in the final equality above, we have used $B = \# \{ w_i : \alpha_i = b \}$ and set $\phi_T:\mathbb{T}_{n,*} \to \mathbb{C}$ to be
\begin{align*}
\phi_T(\alpha, t,h) := T^{ \mathrm{1}_{\{\alpha = b\}}} \phi(\alpha, t,h).
\end{align*}
Reindexing the sum over $M= N+m$ we obtain 
\begin{align*}
S_\phi = \frac{e^{-Tn}}{Z^\theta(\lambda,T)}  \sum_{M \geq 0} \frac{T^M}{M!} \int_{\mathbb{T}_{n,*}^M} \prod_{i=1}^M(1 +  \phi_T(w_i))  g_M^{\lambda,\theta}(w_1,\ldots,w_M)  \mathrm{d}w_{1} \cdots \mathrm{d}w_{M}.
\end{align*}
Now appealing to Theorem \ref{thm:replicacast}, with $C = C^{\lambda+\theta_1\pi \iota, \theta_2}$ we have 
\begin{align} \label{eq:nuclear}
S_\phi &=  \frac{(-1)^{(\theta_1+1)(\theta_2+n+1)} (1-e^{- (\lambda+\theta_1 \pi \iota)})^n e^{-Tn}}{Z^\theta(\lambda,T)}  \sum_{M \geq 0} \frac{T^M}{M!} \int_{\mathbb{T}_{n,*}^M}   \det_{i,j=1}^M (1 + \phi_T(w_i)) C(w_i,w_j) \mathrm{d}w_{1} \cdots \mathrm{d}w_{M} \nonumber \\
&=  \frac{(-1)^{(\theta_1+1)(\theta_2+n+1)} (1-e^{- (\lambda+\theta_1 \pi \iota)})^n e^{-Tn}}{Z^\theta(\lambda,T)} \det \limits_{\mathbb{T}_{n,*}} ( I + T (I + \phi_T)C ),
\end{align}
where $ \det \limits_{\mathbb{T}_{n,*}}(I+A)$ denotes the Fredholm determinant of an operator $A:\mathbb{T}_{n,*}\times \mathbb{T}_{n,*} \to \mathbb{C}$. 

Note that by setting $\phi = 0$, and using the fact that $S_0 = p_0^{\lambda+\theta_1 \pi \iota,\theta_2,T} = 1$ we have 
\begin{align} \label{eq:nuclear2}
1  =  \frac{(-1)^{(\theta_1+1)(\theta_2+n+1)} (1-(-1)^{\theta_1}\zeta)^n e^{-Tn}}{Z^\theta(\lambda,T)} \det \limits_{\mathbb{T}_{n,*}} ( I + T C ),
\end{align}
thus by \eqref{eq:nuclear} and \eqref{eq:nuclear2} we have 
\begin{align} \label{eq:plant}
S_\phi = \frac{ \det \limits_{\mathbb{T}_{n,*}} ( I + T (I + \phi_T)C )}{  \det \limits_{\mathbb{T}_{n,*}} ( I + T C )}.
\end{align}
Now with $J^{\lambda+\theta_1\pi \iota,\theta_2,T}$ as in Lemma \ref{lem:inversion}, define the operator $H = H^{\lambda+\theta_1\pi \iota,\theta_2,T}:\mathbb{T}_{n,*} \times \mathbb{T}_{n,*} \to \mathbb{C}$ by $H^{\lambda+\theta_1\pi \iota,\theta_2,T}(w,w') = T^{\mathrm{1}_{\{\alpha = b\}}} J^{\lambda+\theta_1\pi \iota,\theta_2,T}(w,w')$. Now set $H_\phi(w,w') := \phi(w)H(w,w')$. Using the definition of $H$ and Lemma \ref{lem:inversion} we have 
\begin{align*}
(I + T H_\phi)(I+TC) =I + T (I + \phi_T)C.
\end{align*}
By the multiplicativity of Fredholm determinants (see e.g.\ \cite[Section 3.4]{AGZ}) we have $\det \limits_{\mathbb{T}_{n,*}}  ( I + T (I + \phi_T)C ) ) =  \det \limits_{\mathbb{T}_{n,*}}  ( I + TH_\phi ) \det \limits_{\mathbb{T}_{n,*}} (I+TC)$, and in particular, by \eqref{eq:plant} we have 
\begin{align} \label{eq:open2}
S_\phi = \det \limits_{\mathbb{T}_{n,*}}  ( I + TH_\phi )= \sum_{N \geq 0} \frac{T^N}{N!} \int_{\mathbb{T}_{n,*}^N} \prod_{i=1}^N \phi(w_i) \det_{i,j=1}^N H(w_i,w_j) \mathrm{d}w_1 \cdots \mathrm{d}w_N.
\end{align}
Comparing \eqref{eq:open1} and \eqref{eq:open2}, and using the fact that $\phi$ is an arbitrary test function, it follows that for any $(w_1,\ldots,w_N)$ we have
\begin{align*}
p_N^{\lambda,\theta,T}(w_1,\ldots,w_N) = \det_{i,j=1}^N H^{\lambda+\theta_1 \pi \iota, \theta_2, T}(w_i,w_j),
\end{align*}
where $H^{\lambda+\theta_1\pi \iota,\theta_2,T}(w,w') = T^{\mathrm{1}_{\{\alpha = b\}}} J^{\lambda+\theta_1\pi \iota,\theta_2,T}(w,w')$ with $J^{\lambda+\theta_1\pi \iota,\theta_2,T}$ from Lemma \ref{lem:inversion}. 

We are almost done. To finish, we note that the slightly different expression given in \eqref{eq:detnew} follows from using the fact that for any $\alpha,\alpha' \in \{b,o,u\}$, $y,y' \in \mathbb{T}_n$, we have $H^{\lambda+\theta_1\pi \iota,\theta_2,T}( (\alpha, y) , (\alpha',y')) = H^{\lambda+\theta_1\pi \iota,\theta_2,T}_\alpha ( y,y')$, where $H^{\lambda+\theta_1\pi \iota,\theta_2,T}_\alpha:\mathbb{T}_n \times \mathbb{T}_n \to \mathbb{C}$ is as in the statement of Theorem \ref{thm:detnew}. (We note from the definition that $H^{\lambda+\theta_1\pi \iota,\theta_2,T}( (\alpha, y) , (\alpha',y'))$ does not depend on $\alpha'$.) 

That completes the proof of Theorem \ref{thm:detnew}. 
\end{proof}

We close this section with a remark on the structure of the probability measures $\mathbf{P}_n^{\lambda,\theta,T}$. We saw in the previous result that the mixed correlation functions of $\mathbf{P}_n^{\lambda,\theta,T}$ take the form 
\begin{align*}
p_N^{\lambda,\theta,T}(w_1,\ldots,w_N) = \det_{i,j=1}^N H^{\lambda+\theta_1 \pi \iota, \theta_2, T}_{\alpha_i}(y_i,y_j).
\end{align*}
From the structure of the correlation functions, we have the equality of complex measures
\begin{align} \label{eq:melta}
\mathbf{P}_n^{\lambda,(\theta_1,\theta_2),T} = \mathbf{P}_n^{\lambda+\theta_1 \pi \iota, (0,\theta_2),T}.
\end{align}
In any case, in the sequel we will write
\begin{align*}
\mathbf{P}_n^{\beta,\theta_2,T} := \mathbf{P}_n^{\beta,(0,\theta_2),T},
\end{align*}
with the understanding that we may plug in $\beta = \lambda +\theta_1 \pi \iota$ to recover the measure $\mathbf{P}_n^{\lambda,\theta,T}$.

\section{Scaling limits and exclusion processes on the ring} \label{sec:gordenko}

\subsection{The horizontal scaling limit}

In the previous section, we constructed probability measures $\mathbf{P}_n^{\beta,\theta_2,T}$ on the set of occupation processes $(X_t)_{t \in [0,1)}$. Theorem \ref{thm:detnew} characterises the mixed correlation structure of these processes. In this section we study the asymptotics of the complex probability measures $\mathbf{P}_n^{\beta,\theta_2,T}$ as the parameter $T$ controlling the density of the beads is sent to infinity, leading to a proof of Theorem \ref{thm:gordnew}. The resulting limit measures are genuine probability measures (as opposed to complex probability measures).

When $T$ becomes large, the density of beads on $\mathbb{T}_n$ under $\mathbf{P}_n^{\beta,\theta_2,T}$ takes the order $T$, and for this reason we consider correlations at a horizontal distance of order $1/T$. More specifically, we consider the scaling limit 
\begin{align} \label{eq:sca}
\beta = pT \qquad y_T = (t,h), y_T'=(t',h') \text{ with } t = \frac{1}{2}+s/T, t' = \frac{1}{2}+s'/T,
\end{align}
where $p$ is a parameter controlling the tilt of the configuration. Assume now that the parameter $p$ is chosen so that $\mathrm{Re}(p+z) \neq 0$ for all $z^n = (-1)^{\theta_2}$.\\

We rescale space using the affine map $\phi:[0,1] \to [-\frac{T}{2},\frac{T}{2}]$ given by $\phi(t) := (t-\frac{1}{2})T$. Now define $\tilde{\mathbf{P}}_n^{pT,\theta_2,T}$
to be the pushforward complex measure of the rescaled occupation process $(X^T_t)_{t \in \mathbb{R}}$ defined by 
\begin{align*}
X_t^T := X_{\phi^{-1}(t)} \qquad \text{ for }t \in [-T/2,T/2] \qquad \text{and} \qquad X_t^T := X_0 \qquad \text{ for } t \in \mathbb{R}-[-T/2,T/2].
\end{align*}
With a view to establishing the convergence in distribution of $(X^T_t)_{t \in \mathbb{R}}$ as $T \to \infty$, the following lemma characterises the convergence in distribution of the relevant kernels at the horizontal scale $1/T$. 

\begin{lemma}
Suppose in the setting of \eqref{eq:sca}, the parameter $p$ is such that the real numbers $\{ \mathrm{Re}(p+z) : z^n = (-1)^{\theta_2} \}$ are nonzero, and exactly $\ell$ of them are negative. For $\alpha \in \{b,o,u\}$, let $H^{\beta,\theta_2,T}_\alpha:\mathbb{T}_n \to \mathbb{C}$ be as in Theorem \ref{thm:detnew}, and let $H^{n,\ell}_\alpha:\mathbb{R} \times \mathbb{Z}_n \to \mathbb{C}$ be as in \eqref{eq:kergord}. Then
\begin{align*}
\lim_{T \to \infty} T^{- \mathrm{1}_{\alpha=b}} H^{pT,\theta_2,T}_\alpha( s/T,h) = e^{ -ps} H^{n,\ell}_\alpha(s,h).
\end{align*}
\end{lemma}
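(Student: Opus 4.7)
The strategy is a direct, termwise computation. With the scaling $\beta = pT$ and $t = s/T$, the exponent $\beta + Tz$ appearing in each summand of $H^{pT,\theta_2,T}_\alpha$ becomes $T(p+z)$, and since the sum over $\{z^n = (-1)^{\theta_2}\}$ has only $n$ terms, it suffices to control each summand individually and pass to the limit.

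The first step is to simplify $[s/T]_\alpha$. For all $T$ large enough, $s/T$ lies in a small neighbourhood of $0$. If $s>0$, or $s=0$ with $\alpha \neq o$, then $s/T > 0$ eventually (resp.\ $s/T = 0$ and $\mathbf{1}_{\{\alpha=o\}}\mathbf{1}_{\{s/T=0\}} = 0$), so $[s/T]_\alpha = s/T$. In the complementary case — $s<0$, or $s=0$ with $\alpha = o$ — we get $[s/T]_\alpha = s/T + 1$. Thus the two cases of Case 1/Case 2 correspond exactly to the two branches in the piecewise definition \eqref{eq:kergord} of $H^{n,\ell}_\alpha$.

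Next I analyse a generic summand $\frac{(-1)^{\mathbf{1}_{\{\alpha=o\}}}}{n} z^{\mathbf{1}_{\{\alpha=b\}}-h} \frac{e^{-T(p+z)[s/T]_\alpha}}{1-e^{-T(p+z)}}$, indexed by $z$ with $z^n = (-1)^{\theta_2}$. In Case 1, this is $\frac{(-1)^{\mathbf{1}_{\{\alpha=o\}}}}{n} z^{\mathbf{1}_{\{\alpha=b\}}-h} \frac{e^{-(p+z)s}}{1-e^{-T(p+z)}}$; as $T\to\infty$ the factor $\tfrac{1}{1-e^{-T(p+z)}}$ tends to $1$ when $\mathrm{Re}(p+z) > 0$ and to $0$ when $\mathrm{Re}(p+z) < 0$. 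In Case 2, factoring out the $s$-dependence gives
\begin{equation*}
\frac{(-1)^{\mathbf{1}_{\{\alpha=o\}}}}{n} z^{\mathbf{1}_{\{\alpha=b\}}-h}\, e^{-(p+z)s} \cdot \frac{e^{-T(p+z)}}{1-e^{-T(p+z)}},
\end{equation*}
and the elementary identity $u/(1-u) \to -1$ as $|u| \to \infty$ shows the summand tends to $-\frac{(-1)^{\mathbf{1}_{\{\alpha=o\}}}}{n} z^{\mathbf{1}_{\{\alpha=b\}}-h} e^{-(p+z)s}$ when $\mathrm{Re}(p+z) < 0$, and to $0$ when $\mathrm{Re}(p+z) > 0$.

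Finally, by the hypothesis that exactly $\ell$ of the real numbers $\mathrm{Re}(p+z)$ are negative, and by Definition \ref{df:rootsets}, the $z$ with $\mathrm{Re}(p+z)<0$ (i.e.\ those with least real part of $z$) constitute $\mathcal{L}_{n,\ell}$, while the remaining $n-\ell$ elements form $\mathcal{R}_{n,\ell}$. Pulling the common factor $e^{-ps}$ outside the finite sum and comparing with \eqref{eq:kergord} then yields $e^{-ps} H^{n,\ell}_\alpha(s,h)$ in each of the two cases. The main (and only mild) obstacle is the bookkeeping with signs and with the two branches of $[s/T]_\alpha$; there is no analytic difficulty because the sum is finite and all convergence is uniform on compacts in $s$.
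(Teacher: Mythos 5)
Your proposal is correct and follows essentially the same route as the paper's proof: you plug in the scaling so each summand acquires the factor $e^{-T(p+z)(\mathrm{1}_{s<0}+\mathrm{1}_{\alpha=o,s=0})}/(1-e^{-T(p+z)})$, take a termwise limit in the finite sum according to the sign of $\mathrm{Re}(p+z)$, and match the two sign-cases to the two branches of \eqref{eq:kergord} via Definition \ref{df:rootsets}. The only cosmetic difference is that you split into Case 1/Case 2 at the outset whereas the paper carries the indicator $\mathrm{1}_{s<0}+\mathrm{1}_{\alpha=o,s=0}$ through to the end; the computations are identical.
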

\begin{proof}
By definition, 
\begin{equation*}
T^{- \mathrm{1}_{\alpha=b}} H^{pT,\theta_2,T}_\alpha( s/T,h)  = (-1)^{ \mathrm{1}_{\{\alpha=o\}} } \frac{ 1 }{n} \sum_{ z^n = (-1)^{\theta_2} } z^{\mathrm{1}_{\{\alpha = b\}}-h} \frac{ e^{ - T(p+z)[s/T]_\alpha }}{ 1 - e^{ - T(p+z)} }.
\end{equation*}
Using the definition of $[t]_\alpha$ we have $[s/T]_\alpha := s/T + \mathrm{1}_{s<0} + \mathrm{1}_{\alpha=o, s=0}$, so that
\begin{align} \label{eq:lakes}
T^{- \mathrm{1}_{\alpha=b}} H^{pT,\theta_2,T}_\alpha( s/T,h)  = (-1)^{ \mathrm{1}_{\{\alpha=o\}} } \frac{ 1 }{n} \sum_{ z^n = (-1)^{\theta_2} } z^{\mathrm{1}_{\{\alpha = b\}}-h} e^{ - (p+z)s } 
\frac{ e^{ - T(p+z)( \mathrm{1}_{s < 0 } + \mathrm{1}_{ \alpha = o,s=0} )} }{ 1 - e^{ - T(p+z)} }.
\end{align}
The large $T$ asymptotics of the quotient $e^{ - T(p+z)( \mathrm{1}_{s < 0 } + \mathrm{1}_{ \alpha = o,s=0} )}/( 1 - e^{ - T(p+z)} )$ depend on whether the real part of $p+z$ is positive or negative. Indeed, if $\mathrm{Re}(p+z) > 0$, then 
\begin{align} \label{eq:windermere}
\lim_{T \to \infty} \frac{ e^{ - T(p+z)( \mathrm{1}_{s < 0 } + \mathrm{1}_{ \alpha = o,s=0} )} }{ 1 - e^{ - T(p+z)} } = 1 - (\mathrm{1}_{s < 0 } + \mathrm{1}_{ \alpha = o,s=0}),
\end{align}
whereas if $\mathrm{Re}(p+z)<0$ we have 
\begin{align} \label{eq:windermere2}
\lim_{T \to \infty} \frac{ e^{ - T(p+z)( \mathrm{1}_{s < 0 } + \mathrm{1}_{ \alpha = o,s=0} )} }{ 1 - e^{ - T(p+z)} } = -( \mathrm{1}_{s < 0 } + \mathrm{1}_{ \alpha = o,s=0}). 
\end{align}
By our assumptions on $p$ in the statement of the lemma, with $\mathcal{L}_{n,\ell}$ and $\mathcal{R}_{n,\ell}$ as in Definition \ref{df:rootsets} we have
\begin{align}
\mathcal{L}_{n,\ell}  = \{ z: \mathrm{Re}(p+z) < 0 \} \qquad \text{and} \qquad \mathcal{R}_{n,\ell} = \{ z : \mathrm{Re}(p+z) > 0 \}.
\end{align}
Consequently, using \eqref{eq:windermere} and \eqref{eq:windermere2} in \eqref{eq:lakes}, and then appealing to the definition \eqref{eq:kergord} of $H^{n,\ell}_\alpha$, we complete the proof.
\end{proof}

The following result implies Theorem \ref{thm:gordnew} from the introduction:

\begin{thm}\label{lem:longconv}
As $T \to \infty$, the complex measure $\tilde{\mathbf{P}}_n^{pT,\theta_2,T}$ governing an occupation process $(X_t^T)_{t \in \mathbb{R}}$ converges to $\mathbf{P}^{n,\ell}$. 

Moreover, the probability measures $\mathbf{P}^{n,\ell}$ have mixed correlation functions given in terms of the kernel $H_\alpha^{n,\ell}$.
\end{thm}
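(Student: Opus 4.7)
The plan is to combine Theorem \ref{thm:detnew} with the preceding lemma and then appeal to Gordenko's uniqueness result, Theorem \ref{thm:gordenko}.

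First, Theorem \ref{thm:detnew} expresses the $N$-point mixed correlation function of $\mathbf{P}_n^{pT,\theta_2,T}$ in original $[0,1)\times\mathbb{Z}_n$ coordinates as $\det_{i,j}H^{pT,\theta_2,T}_{\alpha_i}(y_j-y_i)$. Pushing this forward through the rescaling $\phi(t)=(t-1/2)T$ introduces a Jacobian factor $T^{-1}$ at each bead coordinate (since $\mathrm{d}s = T\,\mathrm{d}t$), and by multilinearity of the determinant these factors are absorbed row-by-row to give
\begin{equation*}
\tilde{p}^{pT,\theta_2,T}_N\bigl((\alpha_i,s_i,h_i)_{i=1}^N\bigr) = \det_{i,j=1}^{N} \left(T^{-\mathbf{1}_{\{\alpha_i=b\}}}\,H^{pT,\theta_2,T}_{\alpha_i}\!\left(\tfrac{s_j-s_i}{T},\,h_j-h_i\right)\right).
\end{equation*}

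Next, by the preceding lemma, each entry of this matrix converges pointwise as $T \to \infty$ to $e^{-p(s_j-s_i)}H^{n,\ell}_{\alpha_i}(s_j-s_i,h_j-h_i)$. Since $e^{-p(s_j-s_i)} = e^{-ps_j}\cdot e^{ps_i}$ constitutes a diagonal conjugation, it cancels in the determinant, and therefore
\begin{equation*}
\lim_{T\to\infty}\tilde{p}^{pT,\theta_2,T}_N\bigl((\alpha_i,s_i,h_i)_{i=1}^N\bigr) = \det_{i,j=1}^{N} H^{n,\ell}_{\alpha_i}(s_j-s_i,\,h_j-h_i).
\end{equation*}
Because mixed correlation functions determine the law of an occupation process, this pointwise convergence upgrades to weak convergence of $\tilde{\mathbf{P}}_n^{pT,\theta_2,T}$ to some limiting measure whose correlations have the claimed determinantal form.

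It remains to identify the limit with $\mathbf{P}^{n,\ell}$. Applying \eqref{eq:concsum} to the consecutive arc $\mathcal{R}_{n,\ell}$ of $n$th roots of $(-1)^{\theta_2}$ with largest real part, the limiting one-bead density per string is $H^{n,\ell}_b(0,0)=\tfrac{1}{n}\sum_{z\in\mathcal{R}_{n,\ell}}z=\tfrac{1}{n}\tfrac{\sin(\pi\ell/n)}{\sin(\pi/n)}$; horizontal translation invariance is inherited from $\tilde{\mathbf{P}}_n^{pT,\theta_2,T}$; and the Gibbs property is preserved under the limit as it is a local consistency condition on bounded regions. Gordenko's uniqueness assertion (Theorem \ref{thm:gordenko}) then forces the limit to be $\mathbf{P}^{n,\ell}$, yielding both conclusions of the theorem simultaneously.

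The principal obstacle is that each $\tilde{\mathbf{P}}_n^{pT,\theta_2,T}$ is only a signed (indeed complex) probability measure, so a priori the pointwise limit of its correlation functions need not correspond to a bona fide probability measure. This is bypassed by invoking the \emph{existence} half of Theorem \ref{thm:gordenko} to furnish a genuine probability measure $\mathbf{P}^{n,\ell}$ with the right density, translation invariance, and Gibbsianness, and then using its uniqueness to pin down the limit; the correlation computation above simultaneously extracts the determinantal formula for $\mathbf{P}^{n,\ell}$.
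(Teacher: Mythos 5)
Your argument coincides with the paper's proof: reduce to finite-dimensional distributions (the process is cadlag with finitely many states), express these as finite determinants via Theorem \ref{thm:detnew}, pass to the limit entrywise using the preceding kernel-convergence lemma, and observe that the Jacobian factor $T^{-\mathrm{1}_{\{\alpha=b\}}}$ accounts for the spatial rescaling while the factor $e^{-p(s_j-s_i)}$ vanishes under diagonal conjugation of the determinant. Your explicit identification of the limit via Gordenko's uniqueness is a welcome addition that the paper leaves implicit, though note that the uniqueness statement applies only to genuine Gibbs \emph{probability} measures, so to close the loop one must first know the limiting (a priori complex) measure is positive --- most directly by matching the limiting correlation functions against those computed for Gordenko's measure in his Theorems 4 and 5.
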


\begin{proof}
Since $(X^T_t)_{t \in \mathbb{R}}$ is cadlag and takes values in a finite space, it is sufficient to establish the convergence of the finite dimensional distributions $\tilde{\mathbf{P}}_n^{pT,\theta_2,T}( X(t_1) = E_1,\ldots,X(t_n) = E_n )$. By Theorem \ref{thm:detnew}, these quantities may be given explicitly in terms of determinants of the occupation kernel $H^{pT,\theta_2,T}_o$ associated with $\mathbf{P}_n^{pT,\theta_2,T}$ defined in the statement of Theorem \ref{thm:detnew}. By the previous lemma, at the horizontal scale $1/T$, the kernels $H^{pT,\theta_2,T}_o$ converge to $H^{n,\ell}_o$ as $T \to \infty$, thus establishing the convergence in distribution.

According to the previous lemma, the mixed correlation structure is inherited from the scaling limit; the factor $T^{-\mathrm{1}_{\alpha=b}}$ is to account for the rescaling of horizontal space. The additional factor $e^{-ps}$ in the kernel disappears after diagonal conjugation. Thus the mixed correlation functions of $\mathbf{P}^{n,\ell}$ are given by $H^{n,\ell}_\alpha$.
\end{proof}

We emphasise that the parameter $p$ affects the limiting behaviour of the measures $\mathbf{P}_n^{pT,\theta_2,T}$ \emph{only} through the number of roots $n^{\text{th}}$ roots of $(-1)^{\theta_2}$ whose real parts lie to the left of $-p$. 

\subsection{Occupation probabilities under $\mathbf{P}^{n,\ell}$}
In this section, we show that $\mathbf{P}^{n,\ell}$ is supported on the event $\{ \# X_t = \ell ~ \forall t \in \mathbb{R} \}$.

\begin{lemma} \label{lem:only}
Whenever $E = \{h_1,\ldots,h_p\}$ does not have cardinality $\ell$, for any $t \in \mathbb{R}$ we have $\mathbf{P}^{n,\ell}(X_t = E ) = 0$. 
\end{lemma}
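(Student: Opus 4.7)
The plan is to apply Theorem \ref{thm:gordnew} to express $\mathbf{P}^{n,\ell}(X_t=E)$ as a single $n\times n$ determinant, simplify this determinant using the explicit form of $H^{n,\ell}_\alpha$ at $s=0$, and then exhibit an obvious rank deficiency when $\#E\neq \ell$.

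The event $\{X_t=E\}$ is the intersection $\{h\in X_t\ \forall h\in E\}\cap \{h\notin X_t\ \forall h\notin E\}$, i.e.\ simultaneous occupancy on $E$ and vacancy on $E^c:=\mathbb{Z}_n-E$ at horizontal coordinate $t$. Taking the $n$ points $w_i=(\alpha_i,t,i)$ for $i\in\mathbb{Z}_n$, with $\alpha_i=o$ if $i\in E$ and $\alpha_i=u$ otherwise, Theorem \ref{thm:gordnew} and the fact that $y_j-y_i=(0,j-i)$ give
\[
\mathbf{P}^{n,\ell}(X_t=E)=\det_{i,j=0}^{n-1} H^{n,\ell}_{\alpha_i}(0,j-i).
\]
Evaluating \eqref{eq:kergord} at $s=0$ (using the second branch for $\alpha=o$ and the first for $\alpha=u$) I obtain the uniform expressions $H^{n,\ell}_o(0,h)=\tfrac{1}{n}\sum_{z\in\mathcal{L}_{n,\ell}}z^{-h}$ and $H^{n,\ell}_u(0,h)=\tfrac{1}{n}\sum_{z\in\mathcal{R}_{n,\ell}}z^{-h}$ (the $h=0$ case is consistent since $|\mathcal{L}_{n,\ell}|=\ell$).

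Now I factor. Let $V$ be the $n\times n$ matrix with entries $V_{iz}=z^i$ indexed by $i\in\mathbb{Z}_n$ and $z\in\mathcal{L}_{n,\ell}\sqcup\mathcal{R}_{n,\ell}=\{z^n=(-1)^{\theta_2}\}$, and let $W_{zj}=\tfrac{1}{n}z^{-j}$. Using $\sum_{z^n=(-1)^{\theta_2}}z^k = n\,\mathbf{1}_{k\equiv 0\,(n)}$, which holds because the sum factors as $e^{i\pi k\theta_2/n}\sum_{m=0}^{n-1}e^{2\pi i km/n}$, one checks $VW=I$, so $\det W=(\det V)^{-1}\neq 0$. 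Let $D_\mathcal{L}$, $D_\mathcal{R}$ be the diagonal projections onto $\mathcal{L}_{n,\ell}$ and $\mathcal{R}_{n,\ell}$ in the $z$-index, and $P_E$, $P_{E^c}$ the diagonal projections in the $i$-index onto $E$ and $E^c$. Then the $n\times n$ matrix $M_{ij}=H^{n,\ell}_{\alpha_i}(0,j-i)$ factors as $M=NW$ with
\[
N := P_E\,V D_\mathcal{L}+P_{E^c}\,V D_\mathcal{R}.
\]

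The final step is the rank count: the $\ell$ columns of $N$ indexed by $z\in\mathcal{L}_{n,\ell}$ are supported on the coordinate subspace $\mathrm{span}\{e_i:i\in E\}$ of dimension $p:=\#E$, while the $n-\ell$ columns indexed by $z\in\mathcal{R}_{n,\ell}$ are supported on the complementary subspace of dimension $n-p$. If $p<\ell$ the $\ell$ left-columns live in a $p$-dimensional space and are linearly dependent; if $p>\ell$ the $n-\ell$ right-columns live in an $(n-p)$-dimensional space and are linearly dependent. In either case $\det N=0$, hence $\det M=0$, completing the proof. The main obstacle is purely notational — carrying the factorisation $M=NW$ cleanly through the two kinds of row projectors $P_E,P_{E^c}$ — after which the dimension-counting argument is immediate.
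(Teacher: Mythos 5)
Your proposal is correct and follows essentially the same route as the paper's proof: express $\mathbf{P}^{n,\ell}(X_t=E)$ as an $n\times n$ determinant via the mixed correlation functions, factor the kernel matrix as a product of two $n\times n$ matrices (your $N$ and $W$ are the paper's $B$ and $C$, up to $\sqrt{n}$ normalisation and the use of projection operators in place of explicit indicators), and conclude by observing that the block-supported factor $N$ has rank at most $\min(p,\ell)+\min(n-p,n-\ell)<n$ unless $p=\ell$.
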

\begin{proof}
Take an enumeration of the roots $\mathcal{L}_{n,\ell} = \{z_1,\ldots,z_\ell\}$ and $\mathcal{R}_{n,\ell} = \{ z_{\ell+1},\ldots,z_n\}$, and suppose $\{h_1,\ldots,h_n\}$ is an enumeration of the set $\mathbb{Z}_n$. Then by Theorem \ref{thm:gordnew}
\begin{align} \label{eq:asd}
\mathbf{P}(X_t = \{h_1,\ldots,h_p\} ) &= \mathbf{P}( (t,h_1),\ldots,(t,h_p) \text{ occ.}, (t,h_{p+1}),\ldots,(t,h_n) \text{ unocc.} ) = \det_{j,k=1}^n K_{j,k},
\end{align}
where 
\begin{align*}
K_{j,k} = \mathrm{1}_{j \leq p} \frac{1}{n} \sum_{z \in \mathcal{L}_{n,\ell}}z^{-(h_k-h_j)} + \mathrm{1}_{j > p }\frac{1}{n} \sum_{z \in \mathcal{R}_{n,\ell}}z^{-(h_k-h_j)} 
= \frac{1}{n} \sum_{ i = 1}^n (\mathrm{1}_{j \leq p, i \leq \ell} + \mathrm{1}_{j > p, i > \ell } ) z_i^{- (h_k-h_j)} .
\end{align*}
Now define
\begin{align*}
B_{j,k} :=( \mathrm{1}_{j \leq p, k \leq \ell} + \mathrm{1}_{j \geq p+1, k \geq \ell+1} ) \frac{z_k^{h_j}}{ \sqrt{n}} \qquad \text{and} \qquad C_{j,k} = \frac{1}{\sqrt{n}} z_j^{-h_k}.
\end{align*}
Note that $K_{j,k} = \sum_{m=1}^n B_{j,m}C_{m,k}$, so that $ \det_{j,k=1}^n K_{j,k} =  \det_{j,k=1}^n B_{j,k} \det_{j,k=1}^n C_{j,k}$. 

The matrix $B_{j,k}$ may only have full rank when $p= \ell$. Thus the determinant of $B_{j,k}$, and accordingly, the determinant of $K_{j,k}$, is nonzero only when $p = \ell$. By \eqref{eq:asd}, it follows that $\mathbf{P}(X_t = \{h_1,\ldots,h_p\} )$ may only be nonzero when $p = \ell$. 
\end{proof}

We now use a well known identity for Vandermonde determinants, (see e.g.\ Macdonald \cite[Section I.3]{macdonald}) which states that for variables $x_1,\ldots,x_m$ we have
\begin{align} \label{eq:XX}
\det_{j,k=1}^m x_j^{k-1} =\prod_{1 \leq j < k \leq m }(x_k-x_j).
\end{align}
The identity \eqref{eq:XX} allows us to compute the stationary probabilities $\mathbf{P}^{n,\ell}(X_t=E)$ explicitly. In this direction, a brief calculation using Definition \ref{df:rootsets} tells us that $\mathcal{L}_{n,\ell} := \{z_1,\ldots,z_\ell\}$ where $z_k := e^{ \frac{2 \pi \iota }{n} \left( \frac{n-\ell+1}{2} + k - 1 \right)}$. Given an $\ell$-tuple $\mathbf{h} = (h_1,\ldots,h_\ell)$ of elements of $\mathbb{Z}_n$, define the $\ell \times \ell$ matrix
\begin{align} \label{eq:verdi}
A^\mathbf{h}_{j,k} := z_k^{h_j}, \qquad 1 \leq j,k \leq \ell.
\end{align}
Define the function $\Delta:\mathbb{Z}_n^\ell \to [0,\infty)$ by 
\begin{align*} 
\Delta(\mathbf{h}) :=  \prod_{1 \leq j < k \leq \ell }\left| e^{ \frac{2 \pi \iota}{n} h_k} - e^{ \frac{2 \pi \iota}{n} h_j} \right|.
\end{align*}
Since $\Delta$ is symmetric in its entries, if $E = \{h_1,\ldots,h_\ell\}$, we will unambiguously write $\Delta(E)$ for $\Delta(\mathbf{h})$. 

We write $\mathbb{Z}_n^{(\ell)}$ for the set of distinct tuples in $\mathbb{Z}_n^\ell$. Clearly $\det (A^\mathbf{h})$ and $\Delta(\mathbf{h})$ are nonzero only when $\mathbf{h} \in \mathbb{Z}_n^{(\ell)}$. 

The following lemma computes the determinant of $A^\mathbf{h}$.
\begin{lemma} \label{lem:rootdet}
For $\mathbf{h} \in \mathbb{Z}_n^{(\ell)}$ we have 
\begin{align*}
\det_{j,k=1}^\ell A^\mathbf{h}_{j,k} = \mathrm{sgn}(\sigma_h)(-1)^{\sum_{j=1}^\ell h_j} \iota^{\ell(\ell-1)/2)} \Delta(\mathbf{h}).
\end{align*}
where $\sigma_h:\{1,\ldots,\ell\} \to \{1,\ldots,\ell\}$ is the permutation ordering the entries of $h$, i.e.\ $h_{\sigma(1)}<\cdots<h_{\sigma(\ell)}$. 
\end{lemma}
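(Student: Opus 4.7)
The plan is to reduce the determinant of $A^{\mathbf{h}}$ to a standard Vandermonde determinant by factoring out the $j$-dependent prefactors in each row. Writing $\omega := e^{2 \pi \iota/n}$, the definition of $z_k$ gives
\begin{align*}
A^\mathbf{h}_{j,k} = \omega^{h_j (\frac{n-\ell+1}{2}+k-1)} = \omega^{h_j \frac{n-\ell+1}{2}} \cdot x_j^{k-1}, \qquad x_j := \omega^{h_j}.
\end{align*}
Pulling the $j$-dependent factor out of each row, and then applying \eqref{eq:XX} to the remaining Vandermonde matrix, I obtain
\begin{align*}
\det A^\mathbf{h} = \omega^{\frac{n-\ell+1}{2} \sum_j h_j} \prod_{1 \leq j < k \leq \ell} (x_k - x_j).
\end{align*}

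The second step is to unpack the Vandermonde product using the identity $e^{i\alpha} - e^{i\beta} = 2\iota \sin\bigl(\tfrac{\alpha - \beta}{2}\bigr) e^{\iota (\alpha+\beta)/2}$, which yields
\begin{align*}
x_k - x_j = 2\iota \sin\!\left(\tfrac{\pi (h_k - h_j)}{n}\right) e^{ \iota \pi (h_k+h_j)/n}.
\end{align*}
The exponential factors multiply to $e^{\iota \pi (\ell-1) \sum_j h_j /n}$ because each $h_i$ appears in $\ell-1$ pairs. The $(2\iota)$ factors contribute $(2\iota)^{\ell(\ell-1)/2}$, of which the $2$'s cancel against the $2$'s hidden in the moduli $|x_k-x_j| = 2|\sin(\pi(h_k-h_j)/n)|$, leaving $\iota^{\ell(\ell-1)/2} \Delta(\mathbf{h})$.

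The step that needs the most care is the sign arising from the sines. Since $h_j, h_k \in \{0,\ldots,n-1\}$, the argument $\pi(h_k - h_j)/n$ lies in $(-\pi,\pi)$, so $\sin(\pi(h_k-h_j)/n)$ is positive iff $h_k > h_j$. Hence the product of sines over $j<k$ equals its absolute value times $(-1)^{\mathrm{inv}(\mathbf{h})}$, where $\mathrm{inv}(\mathbf{h})$ is the number of inversions; by the definition of $\sigma_h$ this parity is exactly $\mathrm{sgn}(\sigma_h)$.

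Combining everything and simplifying the accumulated exponential, the two exponential contributions add to
\begin{align*}
\omega^{\frac{n-\ell+1}{2} \sum_j h_j} \cdot e^{\iota \pi (\ell-1) \sum_j h_j/n} = e^{\iota \pi \sum_j h_j} = (-1)^{\sum_j h_j},
\end{align*}
and I am left with $\det A^\mathbf{h} = \mathrm{sgn}(\sigma_h)(-1)^{\sum_j h_j} \iota^{\ell(\ell-1)/2} \Delta(\mathbf{h})$, as claimed. When $\mathbf{h} \notin \mathbb{Z}_n^{(\ell)}$ both sides vanish, so the formula is trivial there.
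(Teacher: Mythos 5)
Your proof is correct and follows essentially the same route as the paper's: factor the $j$-dependent prefactor out of each row, apply the Vandermonde identity \eqref{eq:XX}, and then decompose each difference $e^{2\pi\iota h_k/n}-e^{2\pi\iota h_j/n}$ into sign, a factor of $\iota$, a phase, and its modulus (the paper writes this factorisation directly rather than via $2\iota\sin$, but the computation is identical). The bookkeeping of the phases, the powers of $2$ and $\iota$, and the inversion count all check out.
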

\begin{proof}
Using the definition $z_k := e^{ \frac{2 \pi \iota }{n} \left( \frac{n-\ell+1}{2} + k - 1 \right)}$ we have 
\begin{align*}
\det_{j,k=1}^\ell A^\mathbf{h}_{j,k} = e^{ \frac{2 \pi \iota }{n}  \frac{n-\ell+1}{2} \sum_{j=1}^\ell h_j } \det_{j,k=1}^\ell x_j^{k-1},
\end{align*}
where $x_j := e^{ \frac{2 \pi \iota}{n} h_j}$. Applying \eqref{eq:XX} we obtain
\begin{align} \label{eq:apa}
\det_{j,k=1}^\ell A^\mathbf{h}_{j,k} = e^{ \frac{2 \pi \iota }{n}  \frac{n-\ell+1}{2} \sum_{j=1}^\ell h_j } \prod_{1 \leq j < k \leq \ell} \left( e^{ \frac{2 \pi \iota}{n} h_k} - e^{ \frac{2 \pi \iota}{n} h_j} \right).
\end{align}
Note that
\begin{align*}
e^{ \frac{2 \pi \iota}{n} h_k} - e^{ \frac{2 \pi \iota}{n} h_j} = (-1)^{\mathrm{1}_{h_j > h_k}} \iota e^{ \frac{ \pi \iota}{n} ( h_k + h_j ) } \left| e^{ \frac{2 \pi \iota}{n} h_k} - e^{ \frac{2 \pi \iota}{n} h_j} \right|,
\end{align*}
and also that $\prod_{1 \leq j < k \leq \ell} (-1)^{\mathrm{1}_{h_j > h_k}} = \mathrm{sgn}(\sigma_h)$. The result follows from using these observations in \eqref{eq:apa}, and simplifying.
\end{proof}

\begin{cor} \label{cor:ellstat}
For any $t \in \mathbb{R}$, and $h_1,\ldots,h_\ell$ distinct elements of $\mathbb{Z}_n$, we have 
\begin{align} \label{eq:ellconc}
\mathbf{P}^{n,\ell}(X_t=\{h_1,\ldots,h_\ell\}) = n^{-\ell}\Delta(\mathbf{h})^2.
\end{align}
\end{cor}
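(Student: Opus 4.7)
The plan is to apply Theorem \ref{thm:gordnew} directly to the event $\{X_t = E\}$, using the fact that only the ``all occupied'' condition on $E$ is needed once we know $\#X_t = \ell$ almost surely. The stationarity of $(X_t)_{t \in \mathbb{R}}$ under $\mathbf{P}^{n,\ell}$ (it is a continuous-time Markov chain, as in Theorem \ref{thm:gordenko}) lets us reduce to the case $t = 0$. By Lemma \ref{lem:only}, with $\mathbf{P}^{n,\ell}$-probability one we have $\#X_t = \ell$. Consequently, for $E = \{h_1,\ldots,h_\ell\}$,
\[
\mathbf{P}^{n,\ell}(X_0 = E) = \mathbf{P}^{n,\ell}((0,h_1),\ldots,(0,h_\ell) \text{ all occupied}) = p^{n,\ell}_\ell(w_1,\ldots,w_\ell),
\]
where $w_i = (o, 0, h_i)$. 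Theorem \ref{thm:gordnew} then gives
\[
\mathbf{P}^{n,\ell}(X_0 = E) = \det_{j,k=1}^\ell H^{n,\ell}_o(0, h_k - h_j).
\]

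Next I will unpack $H^{n,\ell}_o(0, \cdot)$. The definition \eqref{eq:kergord} puts $s=0$ with $\alpha = o$ into the ``$\alpha = o, s = 0$'' branch, yielding $H^{n,\ell}_o(0,h) = \frac{1}{n} \sum_{z \in \mathcal{L}_{n,\ell}} z^{-h}$. Enumerating $\mathcal{L}_{n,\ell} = \{z_1,\ldots,z_\ell\}$ (as in the definitions preceding Lemma \ref{lem:rootdet}), the matrix inside the determinant factors as
\[
\frac{1}{n} \sum_{m=1}^\ell z_m^{h_j} z_m^{-h_k} = \sum_{m=1}^\ell P_{j,m} Q_{m,k},
\]
with $P_{j,m} = n^{-1/2} z_m^{h_j}$ and $Q_{m,k} = n^{-1/2} z_m^{-h_k}$. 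Thus $\det = \det(P) \det(Q)$, and $P = n^{-1/2} A^\mathbf{h}$ in the notation of \eqref{eq:verdi}, while $Q$ is the transpose of the entrywise conjugate of $P$ (since $|z_m|=1$ gives $z_m^{-h} = \overline{z_m^h}$). Hence $\det(Q) = \overline{\det(P)}$ and
\[
\mathbf{P}^{n,\ell}(X_0 = E) = n^{-\ell} |\det A^\mathbf{h}|^2.
\]

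Finally, Lemma \ref{lem:rootdet} identifies $|\det A^\mathbf{h}| = \Delta(\mathbf{h})$ (the sign prefactors are unimodular and disappear in the modulus), so $\mathbf{P}^{n,\ell}(X_0 = E) = n^{-\ell} \Delta(\mathbf{h})^2$, as desired. The whole proof is essentially a determinantal factorisation; the only mildly delicate point is identifying the correct $s = 0$ branch of $H^{n,\ell}_o$ and tracking signs, but these present no real obstacle since we pass to the modulus at the end via Lemma \ref{lem:rootdet}. The use of stationarity and the fact that $\#X_t = \ell$ a.s. is important to reduce to a purely ``occupied'' correlation function rather than a mixed occupied/unoccupied calculation of the type used in the proof of Lemma \ref{lem:only}.
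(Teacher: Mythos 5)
Your proof is correct and follows essentially the same route as the paper: reduce $\{X_t=E\}$ to the all-occupied event via Lemma \ref{lem:only}, apply Theorem \ref{thm:gordnew} with all $\alpha_i=o$, factor the resulting matrix as $n^{-\ell}A^{\mathbf{h}}(A^{\mathbf{h}})^*$, and conclude with Lemma \ref{lem:rootdet}. The appeal to stationarity is harmless but unnecessary, since the kernel depends only on $y_j-y_i$ and so the determinant is the same for every $t$.
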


\begin{proof}
By Lemma \ref{lem:only}, $\mathbf{P}^{n,\ell}$ is supported on the event $\{ X_t \text{ has cardinality $\ell$} \}$, so that the events $\{ X_t = \{h_1,\ldots,h_\ell\} \}$ and $\{ \{h_1,\ldots,h_\ell\} \subseteq X_t \}$ are almost-surely equivalent. Using this fact to obtain the first equality below, and then Theorem \ref{thm:gordnew} to obtain the second, we have
\begin{align*}
\mathbf{P}^{n,\ell}( X_t = \{h_1,\ldots,h_\ell \} ) &= \mathbf{P}^{n,\ell}(  \{h_1,\ldots,h_\ell\} \subseteq X_t  ) \\
&= \det \limits_{j,k = 1}^\ell H^{n,\ell}_o(0,h_k-h_j)\\
&= \det \limits_{j,k=1}^\ell \left( \frac{1}{n} \sum_{z \in \mathcal{L}_{n,\ell}} z^{h_j-h_k} \right) \\
&= n^{-\ell} \det_{j,k=1}^\ell A^{\mathbf{h}}_{j,k} \det_{j,k=1}^\ell \overline{A^{\mathbf{h}}_{j,k}},
\end{align*}
where $A^{\mathbf{h}} := z_k^{h_j}$ is as in \eqref{eq:verdi}, and $\overline{A^{\mathbf{h}}_{j,k}}$ denotes complex conjugation. The result now follows from applying Lemma \ref{lem:rootdet}.
\end{proof}

We mention here that the stationary distribution found in Corollary \ref{cor:ellstat}, which takes the form $\mathbb{P}( X_t = \{ h_1,\ldots,h_\ell\} ) = C_{n,\ell} \Delta(\mathbf{h})^2$, may be regarded as a discrete analogue of the eigenvalue distribution of the circular unitary\ ensemble. Namely, the $\ell$ eigenvalues $e^{i\theta_1},\ldots,e^{i\theta_\ell}$ of a Haar distributed $\ell$-by-$\ell$ unitary matrix have a joint density function of the form $C_\ell \prod_{1 \leq i < j \leq \ell} |e^{i \theta_k} - e^{i \theta_j} |^2$ for $\theta_j$ and $\theta_k$ varying continuously in $[0,2\pi)$.

\subsection{Basic densities under $\mathbf{P}^{n,\ell}$} \label{sec:basicfacts}

We take a moment to perform some basic calculations for probabilities under $\mathbf{P}^{n,\ell}$ using Theorem \ref{thm:gordnew}. Setting $\mathcal{O} = \{1\}$ and $\mathcal{B} = \mathcal{U} = \varnothing$ in Theorem \ref{thm:gordnew}, we can compute the probability that $y_1$ is occupied as 
\begin{align} \label{eq:density1}
\mathbf{P}^{n,\ell}( y_1 = (t_1,h_1) \text{ is occupied} ) = H_o^{n,\ell}(0,0) = \frac{1}{n} \sum_{ z \in \mathcal{L}_{n,\ell}} z^0 = \frac{\ell}{n},
\end{align}
which reflects the fact that $\ell$ of the $n$ strings are occupied at any given $t$. A similar calculation tells us that $\mathbf{P}^{n,\ell}( y_1 = (t_1,h_1) \text{ is unoccupied} ) =  H_u^{n,\ell}(0,0)  = \frac{n - \ell}{n}$. 

Setting $\mathcal{B} = \{1\}$ and $\mathcal{O} = \mathcal{U} = \varnothing$ we can compute the probability $\mathrm{d}y_1$ contains a bead. Indeed, since $\mathcal{R}_{n,\ell}$ consists of $n-\ell$ consecutive roots around the origin, we have 
\begin{align*}
\mathbf{P}^{n,\ell}( \mathrm{d}y_1 \text{ contains a bead} ) = H_b^{n,\ell}(0,0)   \mathrm{d}y_1 = \left( \frac{1}{n} \sum_{ z \in \mathcal{R}_{n,\ell}} z\right) \mathrm{d}y_1  = \frac{1}{n} \frac{ \sin(\pi \ell/n)}{ \sin( \pi /n)} \mathrm{d}y_1 ,
\end{align*}
where the final equality above is most easily seen by noting that the sum $\sum_{z \in \mathcal{R}_{n,\ell}} z$ is a positive real number with the same modulus (\eqref{eq:concsum}) as the geometric sum $\sum_{ j =0}^{n-\ell-1} e^{2 \pi \iota j /n}$, i.e.\ $\sin(\pi \ell/n)/\sin(\pi/n)$. In other words, the density of beads per string under $\mathbf{P}^{n,\ell}$ is $\frac{1}{n} \frac{ \sin(\pi \ell/n)}{ \sin( \pi /n)}$.

\subsection{Transition probabilities under $\mathbf{P}^{n,\ell}$}

Let $E = \{h_1,\ldots,h_\ell\}$ and $E' = \{h_1',\ldots,h_\ell'\}$ be subsets of $\mathbb{Z}_n$. Then with the case $\mathcal{O} = \{1,\ldots,2\ell\}$, $\mathcal{B} = \mathcal{U} = \varnothing$ of Theorem \ref{thm:gordnew} we have 
\begin{align*}
\mathbf{P}^{n,\ell}( X_0 = E, X_t = E' ) = \det_{j,k=1}^{2\ell} H^{n,\ell}_o(y_j,y_k),
\end{align*}
where for $1 \leq j \leq \ell$, $y_j := (0,h_j)$, and for $\ell+1 \leq j \leq 2\ell$ we have $y_j := (t,h'_{j-\ell})$. 
In particular, using Corollary \ref{cor:ellstat} we have the transition probabilities
\begin{align} \label{eq:2l}
\mathbf{P}^{n,\ell}( X_t = E' | X_0 = E ) = \frac{n^\ell}{\Delta(E)^2} \det_{j,k=1}^{2\ell} H^{n,\ell}_o(y_j,y_k) \qquad t > 0.
\end{align}
However, the representation \eqref{eq:2l} is not ideal for computing the transition \emph{rates} $\lim_{t \downarrow 0} \frac{1}{t} \mathbf{P}^{n,\ell}( X_t = E' | X_0 = E )$. These can be computed however using the mixed correlation structure in Theorem~ \ref{thm:gordnew}, as in the proof of the following lemma.

\begin{lemma} \label{lem:rates}
Let $\mathbf{h} = (h_1,\ldots,h_\ell)$ and $\mathbf{h}' := (h'_1,\ldots,h'_\ell)$ be $\ell$-tuples of distinct elements of $\mathbb{Z}_n$, with 
\begin{align*}
h_j' = h_j \text{  for $j \leq \ell-1$} \qquad \text{and} \qquad h'_\ell := h_\ell+1,
\end{align*}
where $h_\ell+1$ is to be taken mod $n$. Then
\begin{align*}
\lim_{t \downarrow 0} \frac{1}{t} \mathbf{P}^{n,\ell}( X_t = \{h'_1,\ldots,h'_\ell\} | X_0 = \{h_1,\ldots,h_\ell\} ) = \frac{\Delta(\mathbf{h}')}{\Delta(\mathbf{h})}.
\end{align*}
\end{lemma}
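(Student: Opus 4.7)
The plan is to compute $\mathbf{P}^{n,\ell}(X_0 = E, X_t = E')$ for small $t$ via a mixed correlation function and then divide by $\mathbf{P}^{n,\ell}(X_0 = E) = \Delta(\mathbf{h})^2/n^\ell$ from Corollary~\ref{cor:ellstat}. Because $\mathbf{P}^{n,\ell}$ is supported on $\{|X_t|=\ell\}$ (Lemma~\ref{lem:only}), the inclusions $E\subseteq X_0$, $E'\subseteq X_t$ are equivalent to equalities; since the continuous-time Markov chain makes at most one jump in time $t$ up to $O(t^2)$, the only trajectory compatible with $X_0=E,\,X_t=E'$ contains a single bead at some $(s,h_\ell)$ with $s\in(0,t)$. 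Consequently,
\begin{align*}
\mathbf{P}^{n,\ell}(X_0 = E, X_t = E') = \int_0^t p_{\ell+2}^{n,\ell}(w_1,\ldots,w_{\ell+2})\,\mathrm{d}s + O(t^2),
\end{align*}
where $w_i = (o,0,h_i)$ for $i\leq\ell$, $w_{\ell+1} = (o,t,h_\ell+1)$, and $w_{\ell+2} = (b,s,h_\ell)$. By Theorem~\ref{thm:gordnew} the integrand equals $\det M_{t,s}$ for the $(\ell+2)\times(\ell+2)$ matrix $M_{t,s}$ with entries $H^{n,\ell}_{\alpha_i}(y_j - y_i)$, which is jointly continuous as $0 < s < t \to 0$. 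Hence the sought rate equals $(\Delta(\mathbf{h})^2/n^\ell)^{-1}\cdot\lim_{s,t\downarrow 0}\det M_{t,s}$.

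I would then evaluate this limit determinant by two elementary reductions exploiting the structure of the kernel. \emph{First}, adding the occupation row $\ell+1$ to the bead row $\ell+2$ annihilates the first $\ell$ entries, since the summands $\tfrac{1}{n}\sum_{\mathcal{L}_{n,\ell}} z^{h_\ell+1-h_j}$ appear with opposite signs in $H_o$ and $H_b$; turns the $(\ell+2,\ell+1)$-entry into $\tfrac{n-\ell}{n}+\tfrac{\ell}{n}=1$; and turns the $(\ell+2,\ell+2)$-entry into $\tfrac{1}{n}\sum_{z^n=(-1)^{\theta_2}} z = 0$. Expanding along this new row removes row $\ell+2$ and column $\ell+1$. \emph{Second}, in the residual $(\ell+1)\times(\ell+1)$ minor, the identity $\tfrac{1}{n}\sum_{\mathcal{R}_{n,\ell}} z^{h-h_\ell} = \mathrm{1}_{h=h_\ell} - \tfrac{1}{n}\sum_{\mathcal{L}_{n,\ell}} z^{h-h_\ell}$ (from $\tfrac{1}{n}\sum_{z^n=(-1)^{\theta_2}} z^{h-h_\ell} = \mathrm{1}_{h=h_\ell}$ for $|h-h_\ell|<n$) shows that the new rightmost column agrees with the $h_\ell$-column save for a $-1$ in the $h_\ell$-row. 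Subtracting and expanding reduces the computation to the $\ell\times\ell$ matrix $M^{(2)}_{ij} = \tfrac{1}{n}\sum_{z\in\mathcal{L}_{n,\ell}} z^{h'_i - h_j}$ with rows indexed by $\mathbf{h}'$ and columns by $\mathbf{h}$. Factoring $M^{(2)} = \tfrac{1}{n}A^{\mathbf{h}'}(A^{\mathbf{h}})^*$ with $A^{\mathbf{h}}$ as in \eqref{eq:verdi}, Lemma~\ref{lem:rootdet} at once gives $|\det M^{(2)}| = \Delta(\mathbf{h})\Delta(\mathbf{h}')/n^\ell$.

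The principal obstacle is tracking the overall sign so that the rate comes out positive. Combining the $(-1)^{(\ell+2)+(\ell+1)}=-1$ from the first expansion, the $(-1)^{\ell+(\ell+1)}\cdot(-1) = +1$ from the second (the second $-1$ being the column entry itself), and the $\mathrm{sgn}(\sigma)(-1)^{\sum h}\iota^{\ell(\ell-1)/2}$ factors of Lemma~\ref{lem:rootdet} applied to $A^{\mathbf{h}'}$ and $\overline{A^{\mathbf{h}}}$, one must verify that the signs conspire to give $\det M = +\Delta(\mathbf{h})\Delta(\mathbf{h}')/n^\ell$. In the generic case $h_\ell\leq n-2$, choosing $\mathbf{h}$ in increasing order automatically puts $\mathbf{h}'$ in increasing order, so both permutation signs are trivial and the lone factor $(-1)^{\sum h_j + \sum h'_j} = -1$ cancels the sign from the first expansion. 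In the wraparound case $h_\ell = n-1$ one must use $z^n = (-1)^{\theta_2}$ to reinterpret $z^{h_\ell+1}$ in terms of $z^0$, incurring an extra $(-1)^{\theta_2}$; an additional transposition signature $(-1)^{\ell-1}$ arises from pushing $h'_\ell = 0$ to the front of the sorted $\mathbf{h}'$; and the parities balance using $\theta_2\equiv n+\ell+1\pmod{2}$, restoring the same overall sign. Dividing the resulting $\det M = \Delta(\mathbf{h})\Delta(\mathbf{h}')/n^\ell$ by $\Delta(\mathbf{h})^2/n^\ell$ yields the claimed transition rate $\Delta(\mathbf{h}')/\Delta(\mathbf{h})$.
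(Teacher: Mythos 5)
Your proposal is correct and follows essentially the same route as the paper: express the two-time event via a mixed correlation determinant of the kernels $H^{n,\ell}_\alpha$, perform elementary row/column operations to collapse it to the $\ell\times\ell$ matrix $\frac{1}{n}A^{\mathbf{h}'}(A^{\mathbf{h}})^*$, and apply Lemma \ref{lem:rootdet} before dividing by the stationary probability from Corollary \ref{cor:ellstat}. The only (cosmetic) differences are that you work with an $(\ell+2)$-point correlation including the occupation of $(t,h_\ell+1)$ and integrate over the bead position, whereas the paper uses an $(\ell+1)$-point correlation with the bead fixed at $(t,h_\ell)$ and handles the near-degenerate columns via the one-sided continuity of the kernels; also, the paper sidesteps your wraparound case by assuming w.l.o.g.\ $h_\ell\leq n-2$ (justified by rotational invariance).
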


\begin{proof}
Write $E = \{h_1,\ldots,h_\ell\}$ and $E' = \{ h_1,\ldots,h_{\ell-1},h_{\ell}+1\}$. Since $\{X_0 = E , X_t = E' \}$ implies a bead lies on the segment $(h_\ell \mathbf{e}_2,h_\ell \mathbf{e}_2 + t \mathbf{e}_1]$ of $\mathbb{R} \times \mathbb{Z}_n$, using Theorem \ref{thm:gordnew} to obtain the second equality below we have 
\begin{align} \label{eq:glasses}
\mathbf{P}^{n,\ell}(X_0=E,X_t = E') &= \mathbf{P}^{n,\ell}((0,h_1),\ldots,(0,h_\ell) \text{ are occ.}, (h_\ell \mathbf{e}_2,h_\ell \mathbf{e}_2 + t \mathbf{e}_1] \text{ contains a bead}) + o(t) \nonumber \\
&= t \det_{i,j=1}^{\ell+1} H^{n,\ell}_{\alpha_i}( y_j - y_i) + o(t),
\end{align}
where 
\begin{align*}
y_i = (0,h_i), \alpha_i = o \text{ for all $1 \leq i \leq \ell$ and } y_{\ell+1} = (t,h_\ell), \alpha_{\ell+1}= b.
\end{align*}
(The choice $y_{\ell+1} = (t,h_\ell)$ is not important, we could just have easily chosen $y_{\ell+1}=(t/2,h_\ell)$, say.) Now with $K_{j,k} := H^{n,\ell}_o(y_k-y_j)$ and $J_{\ell+1,k} := H^{n,\ell}_b(y_k-y_{\ell+1})$, we may write 
\begin{align} \label{eq:cubitt}
\det_{j,k=1}^{\ell+1} H^{n,\ell}_{\alpha_i} ( y_j - y_i) =  \det 
\begin{pmatrix}
  K_{j,k} & K_{j,\ell} & K_{j,\ell+1}\\
 K_{\ell,j} & K_{\ell,\ell} & K_{\ell,\ell+1} \\
  J_{\ell+1,k}  &J_{\ell+1,\ell}  & J_{\ell+1,\ell+1}
  \end{pmatrix},
\end{align}
where the right-hand-side of \eqref{eq:cubitt} depicts the determinant of an $((\ell-1) +1 + 1)$ by $((\ell-1) +1 + 1)$ block matrix.

It turns out that the final two columns of the matrix in \eqref{eq:cubitt} are very similar to one another. In order to capture this, we need to study the continuity of the kernels $H_\alpha^{n,\ell}(s,h)$ near $s=0$. To this end, one can verify using the definition \eqref{eq:kergord} and the equality $\frac{1}{n} \sum_{ z \in \mathcal{R}_{n,\ell}} z^h + \frac{1}{n} \sum_{z \in \mathcal{L}_{n,\ell}} z^h = \mathrm{1}_{ h = 0}$ that for any $\alpha,h$ the left and right limits of $H_\alpha^{n,\ell}(s,h)$ near $s = 0$ satisfy
\begin{align} \label{eq:downjump}
\lim_{s \downarrow 0} H_\alpha^{n,\ell}(s,h) - \lim_{s \uparrow 0} H_\alpha^{n,\ell}(s,h)  = (-1)^{\mathrm{1}_{\{\alpha =o \}}} \mathrm{1}_{\{ h = \mathrm{1}_{\{ \alpha=b\} } \} }.
\end{align}
In particular, $H_\alpha^{n,\ell}$ is continuous on $\mathbb{R}\times \mathbb{Z}_n - \{(0,  \mathrm{1}_{\{ \alpha=b\} }  )\}$. Moreover, $H_\alpha^{n,\ell}$ is continuous on one side at its discontinuity: $H_\alpha^{n,\ell}$ is right-continuous at $(0,  \mathrm{1}_{\{ \alpha=b\} }  )$ for $\alpha=b,u$, and left-continuous at $(0,  \mathrm{1}_{\{ \alpha=b\} }  ) = (0,0)$ for $\alpha = o$. 

We now apply these continuity notions to study the matrix in \eqref{eq:cubitt}, with subsequent $o(1)$ terms referring to the limit as $t \downarrow 0$. First note that since $H_o^{n,\ell}$ is continuous on $\mathbb{R} \times \mathbb{Z}_n -(0,0)$, it follows that
\begin{align} \label{eq:blu1}
K_{j,\ell+1} =K_{j,\ell}+o(1) \text{ for $j<\ell$}.
\end{align}
Note that $H_o^{n,\ell}$ is left-continuous at $(0,0)$, and by \eqref{eq:downjump}, has a downward jump of size $-1$ at $(0,0)$. Thus 
\begin{align} \label{eq:blu2}
\qquad K_{\ell,\ell+1}=-1+K_{\ell,\ell}+o(1).
\end{align}
Finally, since $H^{n,\ell}_b(t,h)$ is continuous at $(t,h) = (0,0)$, we have 
\begin{align} \label{eq:blu3}
J_{\ell+1,\ell+1} = J_{\ell+1,\ell}+o(1).
\end{align}
Using \eqref{eq:blu1}, \eqref{eq:blu2} and \eqref{eq:blu3} in \eqref{eq:cubitt} we have
\begin{align} \label{eq:cubitt2}
\det_{j,k=1}^{\ell+1} H^{n,\ell}_{\alpha_i} ( y_j - y_i) =  \det 
\begin{pmatrix}
  K_{j,k} & K_{j,\ell} & K_{j,\ell}\\
 K_{\ell,j} & K_{\ell,\ell} & - 1 +  K_{\ell,\ell} \\
  J_{\ell+1,k}  &J_{\ell+1,\ell}  & J_{\ell+1,\ell}
  \end{pmatrix} +o(1)
\end{align}
as $t \downarrow 0$. Subtracting the $\ell^{\text{th}}$ column from the $(\ell+1)^{\text{th}}$ in the matrix in \eqref{eq:cubitt2} to obtain the first equality below, and then expanding the determinant through the $(\ell,\ell+1)$ entry to obtain the second, we have 
\begin{align} \label{eq:cubitt3}
\det_{j,k=1}^{\ell+1} H^{n,\ell}_{\alpha_i} ( y_j - y_i) =  \det 
\begin{pmatrix}
  K_{j,k} & K_{j,\ell} & 0\\
 K_{\ell,j} & K_{\ell,\ell} & -1  \\
  J_{\ell+1,k}  & J_{\ell+1,\ell}  & 0
  \end{pmatrix} +o(1) = \det \begin{pmatrix}
  K_{j,k} & K_{j,\ell} \\
  J_{\ell+1,k}  & J_{\ell+1,\ell} 
  \end{pmatrix} +o(1),
\end{align}
where the latter determinant above depicts that of an $(\ell-1)+1$ by $(\ell-1)+1$ block matrix. 

Now note that for $j < \ell, k \leq \ell$,
\begin{align*}
K_{j,k} :=  H^{n,\ell}_o(y_k-y_j)  = H_o^{n,\ell}(0,h_k-h_j) = \frac{1}{n} \sum_{z \in \mathcal{L}_{n,\ell}}z^{h_j-h_k},
\end{align*}
and for $k \leq \ell$
\begin{align*}
J_{\ell+1,k} :=  H^{n,\ell}_b(y_k-y_{\ell+1})  = H^{n,\ell}_b(-t,h_{\ell}+1-h_j) = \frac{1}{n} \sum_{ z \in \mathcal{L}_{n,\ell} } z^{1+h_\ell-h_j} + o(1).
\end{align*}
Define the matrix $(Q_{j,k})_{1 \leq j,k \leq \ell}$ by 
\begin{align*}
\text{$Q_{j,k} := K_{j,k}$ for $j < \ell$ and $Q_{\ell,k} := J_{\ell+1,k}$}.
\end{align*}
We see that we may write $Q_{j,k} = \frac{1}{n} \sum_{ z \in \mathcal{L}_{n,\ell} } z^{h_j'-h_k} + o(1)$, where $h_j'$ are as in the statement of theorem. Then $Q = \frac{1}{n}(A^\mathbf{h})(A^{\mathbf{h}'})^*$, where $A^\mathbf{h}$ and $A^{\mathbf{h}'}$ are as in \eqref{eq:verdi}, and $(A^{\mathbf{h}'})^*$ denotes conjugate transpose. Assuming without loss of generality that the ordering of $h'$ is the same as that of $h$ (this is certainly the case if we assume w.l.o.g.\ that $h_\ell \leq n-2$), by \eqref{eq:cubitt3} and Lemma \ref{lem:rootdet} we have
\begin{align*}
\det_{j,k=1}^{\ell+1} H^{n,\ell}_{\alpha_i} ( y_j - y_i)  = \det_{j,k=1}^\ell Q_{j,k} = n^{-\ell}\det(A^\mathbf{h}) \overline{ \det(A^{\mathbf{h}'}) } = n^{-\ell} \Delta(\mathbf{h}) \Delta(\mathbf{h}').
\end{align*} 
In particular, by \eqref{eq:glasses} we have $\mathbf{P}^{n,\ell}(X_0=E,X_t = E') = tn^{-\ell} \Delta(\mathbf{h}) \Delta(\mathbf{h}') + o(t)$. Dividing through by $\mathbf{P}^{n,\ell}(X_0 = h)$ in \eqref{eq:ellconc} we obtain the result.
\end{proof}

\subsection{Eigenfunctions of Markov chains} \label{sec:generator}
Our work in this section is related to that of K\"onig, O'Connell and Roch \cite{KOR}, who show under moment conditions that the Vandermonde determinant in $k$ variables is harmonic for any random walk in $\mathbb{R}^k$. Their observation is somehow dual to our setting, where we work in continuous-time but discrete space. 

Let $\mathbf{e}_j = (0,\ldots,0,1,0,\ldots)$ be the element of $\mathbb{Z}_n^\ell$ with a $1$ in the $j^{\text{th}}$ slot and zeros in the other slots. For $\mathbf{h} =(h_1,\ldots,h_\ell)$ in $\mathbb{Z}_n^{(\ell)}$, we take $\mathbf{h}+\mathbf{e}_j$ mod $n$ coordinatewise.

Let $\mathcal{P}_\ell(\mathbb{Z}_n)$ denote the collection of subsets of $\mathbb{Z}_n$ of cardinality $\ell$. We can associate the set of functions $\{ f:\mathcal{P}_\ell(\mathbb{Z}_n) \to \mathbb{R} \}$ with the set $\{ f: \mathbb{Z}^{(\ell)}_n \to \mathbb{R} , \text{$f$ symmetric} \}$ of functions $f$ on $\mathbb{Z}^{(\ell)}_n$ satisfying $f(h_1,\ldots,h_\ell) = f(h_{\sigma(1)},\ldots,h_{\sigma(\ell)})$ for permutations $\sigma$. 

According to Lemma \ref{lem:rates}, the generator of the Markov process $(X_t)_{t \in \mathbb{R}}$ under $\mathbf{P}^{n,\ell}$ is the linear operator $\mathcal{G}^{\mathrm{Gordenko},n,\ell}$ acting on $\{ f: \mathbb{Z}^{(\ell)}_n \to \mathbb{R} , \text{$f$ symmetric} \}$ 

and defined by 
\begin{align} \label{eq:gen}
\mathcal{G}^{n,\ell}f(\mathbf{h}) :=  \sum_{ j : h_k \neq h_j +1 \forall k} \frac{\Delta(\mathbf{h}+\mathbf{e}_j)}{\Delta(\mathbf{h})}( f(\mathbf{h}+\mathbf{e}_j) - f(\mathbf{h})).
\end{align}
We now introduce TASEP (the totally asymmetric exclusion process) on $\mathbb{Z}_n$ to be the continuous-time Markov chain $(X_t)_{t \in \mathbb{R}}$ taking values in $\mathcal{P}_\ell(\mathbb{Z}_n)$ and with the generator
\begin{align} \label{eq:genTASEP}
\mathcal{G}^{\mathrm{TASEP},n,\ell}f(\mathbf{h}) := \sum_{ j : h_k \neq h_j +1 \forall k} ( f(\mathbf{h}+\mathbf{e}_j) - f(\mathbf{h})).
\end{align}

We now prepare the proof of Theorem \ref{thm:tasep}, which states that the probability law of TASEP on $\mathbb{Z}_n$ can be recovered from the probability laws $\mathbf{P}^{n,\ell}$ via a simple exponential transform. To this end, for $\mathbf{h} \in \mathbb{Z}_n^{(\ell)}$ let us write
\begin{align*}
\mathbf{P}^{n,\ell}_\mathbf{h} = \mathbf{P}_h^{n,\ell,\mathrm{Gordenko}} \qquad \text{and} \qquad \mathbf{P}^{n,\ell,\mathrm{TASEP}}
\end{align*}
for the respective probability laws of the noncolliding walkers and of TASEP associated with $\ell$ walkers on $\mathbb{Z}_n$ starting from a configuration $X_0 = \{h_1,\ldots,h_\ell\}$. 

Recall from the introduction we defined the \textbf{traffic} of a configuration $\mathbf{h} \in \mathbb{Z}_n^{(\ell)}$ to be the number of elements of $\{h_1,\ldots,h_\ell\}$ \emph{waiting in traffic}:
\begin{align*}
\mathrm{Traffic}(h_1,\ldots,h_\ell) := \# \{ (j,k) : 1 \leq j \neq k \leq \ell : h_k = h_j + 1 \}.
\end{align*}
Alternatively, writing $E$ for the set $\{h_1,\ldots,h_\ell\}$, we may also write 
\begin{align*}
\mathrm{Traffic}(E) := \# \{ h \in E: h+1 \in E \text{ also}\}.
\end{align*}
Before proving Theorem \ref{thm:tasep}, we recall some general facts about Markov chain generators and their associated exponential transforms. Suppose $\{ \mathbf{P}_x : x \in V \}$ is a collection of probability laws for a Markov chain $(X_t)_{t \geq 0 }$ taking values in a finite set $V$. Let $\mathbb{R}^V := \{ \phi : V \to \mathbb{R}\}$. The generator $\mathcal{G}$ of $(X_t)_{t \geq 0}$ is the linear operator on $\mathbb{R}^V$ given by 
\begin{align*}
\mathcal{G}\phi(x) := \lim_{t \downarrow 0} \frac{1}{t} \mathbf{P}_x \left[ \phi(X_t) - \phi(x) \right].
\end{align*}
Suppose we have a positive function $\Delta:V \to (0,\infty)$ of $\mathcal{G}$ satisfying
\begin{align*}
\mathcal{G} \Delta(x) = p(x) \Delta(x).
\end{align*}
Then the stochastic process $M_t := \frac{\Delta(X_t)}{\Delta(X_0)} e^{- \int_0^t p(X_s) \mathrm{d}s }$ is a positive unit-mean $\mathbf{P}_x$-martingale for each $x \in V$, and thus we may define a change of measure
\begin{align*}
\frac{ \mathrm{d}\mathbf{P}_x^\Delta }{ \mathrm{d} \mathbf{P}_x } \Bigg|_{\mathcal{F}_t} := M_t.
\end{align*}

The measures $\mathbf{P}_x$ and $\mathbf{P}_x^\Delta$ are absolutely continuous with respect to one another.

For $x \neq y$ in $V$, if we set $q(x,y) := \lim_{t \downarrow 0} \frac{1}{t} \mathbf{P}_x ( X_t = y )$, then 
\begin{align} \label{eq:tran}
 \lim_{t \downarrow 0} \frac{1}{t} \mathbf{P}^\Delta_x ( X_t = y ) = \frac{\Delta(y)}{\Delta(x)}q(x,y).
\end{align}
Theorem \ref{thm:tasep} then follows from a statement about applying the generator of TASEP to the function $\Delta:\mathbb{Z}_n^{(\ell)} \to (0,\infty)$. Our main computation is the following lemma.

\begin{lemma} \label{lem:eigengen}
For $\mathbf{h} = (h_1,\ldots,h_\ell)$ in $\mathbb{Z}^{(\ell)}_n$, define the function $\phi(\mathbf{h}) := \det_{j,k=1}^\ell A^\mathbf{h}_{j,k}$ where $A^\mathbf{h}_{j,k}$ is the matrix introduced in \eqref{eq:verdi}. Then
\begin{equation*}
\sum_{ j = 1}^\ell  \phi( \mathbf{h}+\mathbf{e}_j) = - \mu_{n,\ell} \phi(\mathbf{h}) \qquad \text{where} \qquad \mu_{n,\ell} := \frac{ \sin(\pi \ell/n)}{ \sin(\pi/n) }.
\end{equation*}
\begin{proof}
A brief calculation expanding the determinant tells us that 
\begin{align*}
\sum_{ i = 1}^\ell  \phi(h_1,\ldots,h_{i-1},h_i + 1,h_{i+1}, \ldots h_\ell) = \sum_{ i = 1}^\ell  \det \limits_{j,k=1}^\ell \left( z_k^{h_j+\mathrm{1}_{j=i}} \right) = \left( \sum_{z \in \mathcal{L}_{n,\ell}} z \right) \phi(h_1,\ldots,h_\ell).
\end{align*}
The result now follows from noting that $\sum_{z \in \mathcal{L}_{n,\ell}} z  =  - \frac{ \sin(\pi \ell/n)}{ \sin(\pi/n) } = - \mu_{n,\ell}$, which can be seen easily from the fact that $ \sum_{z \in \mathcal{L}_{n,\ell}} z$ is a negative real number with the same modulus as the geometric sum $\sum_{ j=0}^{\ell-1} e^{2 \pi \iota j /n }$. 
\end{proof}
\end{lemma}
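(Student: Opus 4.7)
The plan is to exploit the Leibniz expansion of the determinant, where a unit shift $h_i \mapsto h_i + 1$ in a single coordinate cleanly inserts one extra factor $z_{\sigma(i)}$ into each term of the Leibniz sum. I would begin by writing
$$\phi(\mathbf{h}) = \sum_{\sigma \in S_\ell} \mathrm{sgn}(\sigma) \prod_{j=1}^\ell z_{\sigma(j)}^{h_j},$$
so that for each $i$ the shifted value $\phi(\mathbf{h}+\mathbf{e}_i)$ equals $\sum_\sigma \mathrm{sgn}(\sigma)\, z_{\sigma(i)} \prod_j z_{\sigma(j)}^{h_j}$. Summing over $i = 1,\ldots,\ell$ and swapping the order of summation, the inner sum $\sum_i z_{\sigma(i)} = \sum_k z_k$ is independent of $\sigma$ (it is simply a reindexing), so it factors out to give
$$\sum_{i=1}^\ell \phi(\mathbf{h}+\mathbf{e}_i) = \Bigl(\sum_{k=1}^\ell z_k\Bigr) \phi(\mathbf{h}).$$

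Next I would evaluate the constant $\sum_{k=1}^\ell z_k = \sum_{z \in \mathcal{L}_{n,\ell}} z$ geometrically. With the enumeration $z_k = \exp\bigl(\tfrac{2\pi\iota}{n}(\tfrac{n-\ell+1}{2} + k - 1)\bigr)$, this is a geometric progression with ratio $e^{2\pi\iota/n}$. Summing and simplifying via the standard identity $(e^{2\pi\iota\ell/n} - 1)/(e^{2\pi\iota/n} - 1) = e^{\pi\iota(\ell-1)/n}\sin(\pi\ell/n)/\sin(\pi/n)$, the accumulated phase works out to $e^{\pi\iota} = -1$, yielding $\sum_{z \in \mathcal{L}_{n,\ell}} z = -\mu_{n,\ell}$. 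Composing the two steps gives the claimed eigenfunction identity.

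I do not expect any substantive obstacle; the identity is essentially a symbolic consequence of the translation invariance of the Vandermonde structure in the exponents $h_j$. The only bookkeeping point worth noting is that some shifted tuples $\mathbf{h} + \mathbf{e}_i$ may leave $\mathbb{Z}_n^{(\ell)}$ by producing a repeated entry, in which case $\phi(\mathbf{h}+\mathbf{e}_i)$ vanishes automatically because $A^{\mathbf{h}+\mathbf{e}_i}$ then has two equal rows. This is already captured by the Leibniz calculation, so no separate case analysis is required, and the branching condition ``$h_k \neq h_j+1$ for all $k$'' appearing in the generator \eqref{eq:gen} can be dropped harmlessly when we apply the lemma.
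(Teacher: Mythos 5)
Your proof is correct and follows essentially the same route as the paper: the "brief calculation expanding the determinant" in the paper is exactly your Leibniz expansion, in which the shift $h_i\mapsto h_i+1$ inserts a factor $z_{\sigma(i)}$ whose sum over $i$ is $\sum_k z_k$ independently of $\sigma$, and the evaluation $\sum_{z\in\mathcal{L}_{n,\ell}}z=-\mu_{n,\ell}$ is the same geometric-series computation. Your closing remark that $\phi(\mathbf{h}+\mathbf{e}_i)$ vanishes automatically at collisions is also exactly how the paper uses the lemma in the proof of Theorem \ref{thm:tasep}.
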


We are now ready to prove Theorem \ref{thm:tasep}.

\begin{proof}[Proof of Theorem \ref{thm:tasep}]
Computing $\mathcal{G}^{n,\ell,\mathrm{TASEP}} \Delta(E)$ with $E = \{h_1,\ldots,h_\ell\}$ we have
\begin{align} \label{eq:eriksen}
\mathcal{G}^{n,\ell,\mathrm{TASEP}} \Delta(\mathbf{h}) = \sum_{ j : h_k \neq h_j +1 \forall k} ( \Delta(\mathbf{h}+\mathbf{e}_j) - \Delta(\mathbf{h})) = \sum_{j = 1}^\ell \Delta(\mathbf{h}+\mathbf{e}_j)  - (\ell - \mathrm{Traffic}(\mathbf{h})) \Delta(\mathbf{h}).
\end{align}
We consider the first term on the right-hand-side of \eqref{eq:eriksen}. Using Lemma \ref{lem:rootdet} and the definition of $\phi$ in Lemma \ref{lem:eigengen} we have
\begin{align} \label{eq:matlab}
\sum_{ j = 1}^\ell \Delta(\mathbf{h}+\mathbf{e}_j) = \sum_{j =1}^\ell \frac{ \phi(\mathbf{h}+\mathbf{e}_j) }{ \mathrm{sgn}(\sigma_{\mathbf{h}+\mathbf{e}_j} ) (-1)^{ \sum_{ k=1}^\ell (h_k + \mathrm{1}_{j=k} ) } \iota^{\ell(\ell-1)/2} }.
\end{align}
For convenience we assume w.l.o.g.\ that in our enumeration of $E$ we have $h_1 < \cdots < h_\ell \leq n-2$. Then $\sigma_h$ in the setting of Lemma \ref{lem:rootdet} is the identity permutation, and moreover if for some $1 \leq j \leq \ell$, $\mathbf{h}+\mathbf{e}_j$ is a tuple of distinct elements, then $\sigma_{\mathbf{h}+\mathbf{e}_j}$ is also the identity permutation. It follows that $\mathrm{sgn}(\sigma_{\mathbf{h}+\mathbf{e}_j}) = \sgn(\sigma_{\mathbf{h}})=1$ for all $j$ with $\phi(\mathbf{h}+\mathbf{e}_j) \neq 0$ in \eqref{eq:matlab}. Using this fact to obtain the first equality below, then Lemma \ref{lem:eigengen} to obtain the second, we have 
\begin{align} \label{eq:matlab2}
\sum_{ j = 1}^\ell \Delta(\mathbf{h}+\mathbf{e}_j) &= - (-1)^{ \sum_{ k=1}^\ell h_k  } \iota^{-\ell(\ell-1)/2} \sum_{j =1}^\ell  \phi(\mathbf{h}+\mathbf{e}_j) \nonumber \\
&=  (-1)^{ \sum_{ k=1}^\ell h_k  } \iota^{-\ell(\ell-1)/2}  \mu_{n,\ell} \phi(\mathbf{h})  = \mu_{n,\ell} \Delta(\mathbf{h}),
\end{align}
where in the final equality above we simply repackaged everything using Lemma \ref{lem:rootdet}.

Plugging \eqref{eq:matlab2} into \eqref{eq:eriksen}, we have
\begin{align} \label{eq:eriksen2}
\mathcal{G}^{n,\ell,\mathrm{TASEP}} \Delta(\mathbf{h}) = \sum_{ j : h_k \neq h_j +1 \forall k} ( \Delta(\mathbf{h}+\mathbf{e}_j) - \Delta(\mathbf{h})) = ( \mu_{n, \ell} - \ell + \mathrm{Traffic}(\mathbf{h})) \Delta(\mathbf{h}).
\end{align}
In particular, by the discussion preceding Lemma \ref{lem:eigengen}, $M_t := \frac{ \Delta(X_t)}{ \Delta(X_0) } \exp \left\{ - \int_0^t ( \mu_{n, \ell} - \ell + \mathrm{Traffic}(X_s))\mathrm{d}s \right\}$ is a $\mathbf{P}^{n,\ell,\mathrm{TASEP}}$-Martingale, and if we define a change of measure $\mathbf{P}^{n,\ell}$ by $\mathrm{d} \mathbf{P}^{n,\ell}/\mathrm{d} \mathbf{P}^{n,\ell,\mathrm{TASEP}} = M_t$, then according to \eqref{eq:tran} we have
\begin{align*}
\lim_{t \downarrow 0} \frac{1}{t} \mathbf{P}^{n,\ell}( X_t = E' | X_0 = E) = \frac{ \Delta(E')}{\Delta(E) } \lim_{t \downarrow 0} \frac{1}{t}  \mathbf{P}^{n,\ell,\mathrm{TASEP}}( X_t = E' | X_0 = E),
\end{align*}
in concordance with \eqref{eq:gen} and \eqref{eq:genTASEP}, so that $\mathbf{P}^{n,\ell}$ is indeed Gordenko's change of measure. 

Reciprocating the change of measure, we complete the proof of Theorem \ref{thm:tasep}.
\end{proof}

\section*{Acknowledgements}
The author is immensely grateful to Elia Bisi, Dominik Schmid and Piotr Dyszewski for their valuable comments, and to Neil O'Connell for pointing out several references.\\

\noindent
This research was supported in the early stages by the EPSRC funded Project EP/S036202/1 \emph{Random fragmentation-coalescence processes out of 
equilibrium}.

\end{document}